\DeclareMathAlphabet{\mathpzc}{OT1}{pzc}{m}{it}
\newtheorem{theorem}{Theorem}[section]
\newtheorem{proposition}[theorem]{Proposition}
\newtheorem{corollary}[theorem]{Corollary}
\newtheorem{lemma}[theorem]{Lemma}
\newtheorem*{theorem*}{Theorem}
\newtheorem*{proposition*}{Proposition}
\newtheorem*{corollary*}{Corollary}
\newtheorem*{lemma*}{Lemma}
\newtheorem*{conjecture*}{Conjecture}
\theoremstyle{definition}
\newtheorem{definition}[theorem]{Definition}
\newtheorem*{definition*}{Definition}
\theoremstyle{remark}
\newtheorem{example}[theorem]{Example}
\newtheorem{examples}[theorem]{Examples}
\newtheorem{remark}[theorem]{Remark}
\newtheorem{remarks}[theorem]{Remarks}
\newtheorem{properties}[theorem]{Properties}
\newtheorem{assumption}[theorem]{Assumption}
\newtheorem*{example*}{Example}
\newtheorem*{examples*}{Examples}
\newtheorem*{remark*}{Remark}
\newtheorem*{remarks*}{Remarks}
\newtheorem*{exercise*}{Exercise}
\newtheorem*{property*}{Property}
\newtheorem*{properties*}{Properties}
\newcommand\da{\!\downarrow\!}
\newcommand\la{\leftarrow}
\newcommand\lra{\longrightarrow}
\newcommand\lla{\longleftarrow}
\newcommand\id{\mathrm{id}}
\newcommand\ten{\otimes}
\newcommand\vareps{\varepsilon}
\newcommand\eps{\epsilon}
\newcommand\Th{\mathrm{Th}\,}
\renewcommand\H{\mathrm{H}}
\newcommand\z{\mathrm{Z}}
\newcommand\N{\mathbb{N}}
\newcommand\Z{\mathbb{Z}}
\newcommand\Q{\mathbb{Q}}
\newcommand\bA{\mathbb{A}}
\newcommand\bI{\mathbb{I}}
\newcommand\bL{\mathbb{L}}
\newcommand\bS{\mathbb{S}}
\newcommand\bU{\mathbb{U}}
\newcommand\bV{\mathbb{V}}
\newcommand\C{\mathcal{C}}
\newcommand\cA{\mathcal{A}}
\newcommand\cG{\mathcal{G}}
\newcommand\cI{\mathcal{I}}
\newcommand\cM{\mathcal{M}}
\newcommand\cS{\mathcal{S}}
\newcommand\cT{\mathcal{T}}
\newcommand\cU{\mathcal{U}}
\newcommand\cW{\mathcal{W}}
\renewcommand\O{\mathscr{O}}
\newcommand\sA{\mathscr{A}}
\newcommand\sB{\mathscr{B}}
\newcommand\sE{\mathscr{E}}
\newcommand\sF{\mathscr{F}}
\newcommand\sG{\mathscr{G}}
\newcommand\sH{\mathscr{H}}
\newcommand\sI{\mathscr{I}}
\newcommand\fS{\mathfrak{S}}
\newcommand\fX{\mathfrak{X}}
\newcommand\fY{\mathfrak{Y}}
\newcommand\fZ{\mathfrak{Z}}
\renewcommand\L{\Lambda}
\newcommand\n{\mathfrak{n}}
\newcommand\ext{\mathscr{E}\!\mathit{xt}}
\newcommand\Ho{\mathrm{Ho}}
\newcommand\Ring{\mathrm{Ring}}
\newcommand\Alg{\mathrm{Alg}}
\newcommand\Mod{\mathrm{Mod}}
\newcommand\Hom{\mathrm{Hom}}
\newcommand\Map{\mathrm{Map}}
\newcommand\map{\mathrm{map}}
\newcommand\HHom{\underline{\mathrm{Hom}}}
\newcommand\DDer{\underline{\mathrm{Der}}}
\newcommand\Ext{\mathrm{Ext}}
\newcommand\EExt{\mathbb{E}\mathrm{xt}}
\newcommand\Aut{\mathrm{Aut}}
\newcommand\Ch{\mathrm{Ch}}
\newcommand\Ab{\mathrm{Ab}}
\newcommand\hen{\mathrm{hen}}
\newcommand\loc{\mathrm{loc}}
\newcommand\Spec{\mathrm{Spec}\,}
\newcommand\Dec{\mathrm{Dec}\,}
\newcommand\Set{\mathrm{Set}}
\newcommand\Cat{\mathrm{Cat}}
\newcommand\Aff{\mathrm{Aff}}
\newcommand\Sp{\mathrm{Sp}}
\newcommand\Lim{\varprojlim}
\newcommand\LLim{\varinjlim}
\DeclareMathOperator*{\holim}{holim}
\newcommand\ho{\mathrm{ho}\!}
\newcommand\into{\hookrightarrow}
\newcommand\onto{\twoheadrightarrow}
\newcommand\abuts{\implies}
\newcommand\xra{\xrightarrow}
\newcommand\pr{\mathrm{pr}}
\newcommand\inj{\mathrm{inj}}
\newcommand\bt{\bullet}
\newcommand\by{\times}
\newcommand\et{\acute{\mathrm{e}}\mathrm{t}}
\newcommand\cart{\mathrm{cart}}
\newcommand\Tot{\mathrm{Tot}\,}
\newcommand\diag{\mathrm{diag}\,}
\newcommand\pro{\mathrm{pro}}
\newcommand\pd{\partial}
\newcommand\gpd{\mathrm{Gpd}}
\newcommand\Zar{\mathrm{Zar}}
\newcommand\sk{\mathrm{sk}}
\newcommand\cosk{\mathrm{cosk}}
\newcommand\op{\mathrm{opp}}
\newcommand\co{\colon\thinspace}
\newcommand\oC{\mathbf{C}}
\newcommand\oE{\mathbf{E}}
\newcommand\oR{\mathbf{R}}
\newcommand\oP{\mathbf{P}}
\newcommand\oL{\mathbf{L}}
\newcommand\oN{\mathbf{N}}
\newcommand\oSpec{\mathbf{Spec}\,}
\newcommand\uleft\underleftarrow
\newcommand\uline\underline
\newcommand\uright\underrightarrow
\newcommand\open{\mathrm{open}}
\newcommand\HOM{\mathrm{HOM}}
\begin{document}

\begin{abstract}
We show that an $n$-geometric stack may be regarded as a special kind of simplicial scheme, namely  a Duskin $n$-hypergroupoid in affine schemes, where surjectivity is defined in terms of covering maps,  yielding Artin $n$-stacks, Deligne--Mumford $n$-stacks and $n$-schemes as the notion of covering varies. This formulation  adapts to all  HAG contexts, so in particular works for derived $n$-stacks (replacing rings with simplicial rings). We exploit this to describe quasi-coherent sheaves and complexes on these stacks, and to draw comparisons with Kontsevich's dg-schemes. As an application, we show how the cotangent complex controls infinitesimal deformations of higher and derived stacks.
\end{abstract}

\title{Presenting  higher stacks as simplicial schemes}
\author{J.P.Pridham}
\thanks{This work was supported by the Engineering and Physical Sciences Research Council [grant numbers  EP/F043570/1 and EP/I004130/1].}
\maketitle

\tableofcontents

\section*{Introduction}
Although the usual  approach to defining $n$-stacks (\cite{hag2} and \cite{lurie}) undoubtedly  yields the correct geometric objects, it has numerous drawbacks. The inductive construction does not lend itself easily to calculations, while the level of abstract homotopy
theory involved can make $n$-stacks seem inaccessible to many.  In this paper, we introduce a far more elementary concept, namely a Duskin--Glenn $n$-hypergroupoid in affine schemes, and show how it is equivalent to the concept of $n$-geometric stacks introduced in \cite{hag2}. According to \cite{toenemail}, this is essentially the formulation of higher stacks originally envisaged by Grothendieck in \cite{pursuingstacks}. It is also closely related to Zhu's Lie $n$-groupoids. 

In \cite{glenn}, Glenn defined an    $n$-dimensional Kan hypergroupoid to be a simplicial set $X \in \bS$ for which the horn fillers all exist, and are moreover unique in levels greater than $n$. Explicitly, the $k$th horn $\L^m_k \subset \Delta^m$ is defined by deleting the interior and the $k$th face, and he required that the map
$$
\Hom_{\bS}(\Delta^m, X) \to\Hom_{\bS}(\L^m_k, X)   
$$
should be surjective for all $k,m$, and an isomorphism for $m>n$. 
When $n=0$, this gives sets (with constant simplicial structure), and for $n=1$, the simplicial sets arising in this way are precisely nerves of groupoids. This description is not as complicated as it first seems, since it suffices to truncate $X$ at the $(n+2)$th
level (Lemma \ref{truncate}). The combinatorial properties of these hypergroupoids were extensively studied by Glenn in \cite{glenn}.

It is well-known that  Artin $1$-stacks can be resolved by  simplicial schemes, and this idea was exploited in \cite{olssartin},  \cite{olssonstack}, \cite{aoki} and \cite{aokihom} to study  their quasi-coherent sheaves and deformations. It is therefore natural to expect that simplicial resolutions should exist for $n$-geometric Artin stacks.  In Theorem \ref{relstrict}, we show that every (quasi-compact, quasi-separated, \dots) $n$-geometric Artin stack $\fX$ (as defined in \cite{hag2}) can be resolved by a simplicial affine scheme $X$, which is an Artin $n$-hypergroupoid in the sense that the morphism
$$
\Hom_{\bS}(\Delta^m, X) \to\Hom_{\bS}(\L^m_k, X)   
$$
of affine schemes is a smooth surjection for all $k,m$, and an isomorphism for $m>n$. For $n$-geometric Deligne--Mumford stacks or $n$-geometric schemes there are similar statements, replacing smooth morphisms with \'etale morphisms or local isomorphisms.  In particular, this means that any functor satisfying the conditions of Lurie's Representability Theorem (\cite{lurie} Theorem 7.1.6) gives rise to such a simplicial scheme. 

If $\fX$ is a quasi-compact semi-separated scheme, this resolution just corresponds to constructing the \v Cech nerve of an affine cover. We may therefore think of  Artin $n$-hypergroupoids as being  analogues of atlases on manifolds. While they share with atlases an amenability to calculation, they also share the disadvantage of not being canonical. However, there is a notion of trivial relative Artin   
$n$-hypergroupoids $X' \to X$, which is analogous to refinement of an atlas, and we may regard two Artin $n$-hypergroupoids as being equivalent if they admit a common refinement of this type (Theorem \ref{duskinmor}). In this way, we can recover the entire $\infty$-category of $n$-geometric Artin stacks (Theorem \ref{bigthm}).

In fact, Theorem \ref{bigthm} gives a much more general statement, relating geometric stacks in any HAG context to the corresponding hypergroupoids. In particular, 
replacing rings with simplicial rings or dg-rings allows us to  define  derived Artin $n$-hypergroupoids, which are then  equivalent to  the $D^-$ geometric $n$-stacks of \cite{hag2}. \v Cech nerves allow us to make close comparisons between derived Artin stacks and Kontsevich's dg-schemes (Remark \ref{cechrks}.\ref{cechrk}).

Quasi-coherent sheaves on an $n$-geometric stack then just correspond to Cartesian quasi-coherent sheaves on the associated  $n$-hypergroupoid for any HAG context (Corollary \ref{qcohequiv}). This facilitates a relatively simple description of the cotangent complex, and our main application of this theory is to describe infinitesimal deformations of   $n$-geometric stacks in terms of the cotangent complex (Theorem \ref{deformstack}).

The structure of the paper is as follows. Sections \ref{hag} and \ref{nhyp} are mostly a recapitulation of background material, with a few results proved in \S \ref{nhyp} for which the author does not know of any other reference. Section \ref{nhypaff} introduces the main objects to be used in the paper, $n$-hypergroupoids in model categories (with the Artin $n$-hypergroupoids described above as a special case), and establishes their basic properties.

The technical heart of the paper is in Section \ref{ressn}, where we establish an equivalence between the simplicial categories of $n$-geometric stacks and of affine $n$-hypergroupoids. The crucial result is Theorem \ref{relstrict}, which uses an intricate induction taking $2^n-1$ steps to construct an $n$-hypergroupoid resolving a given $n$-geometric stack. 
Readers unfamiliar with the $n$-geometric stacks of \cite{hag2} can skip most of  this section and \S \ref{hag}, instead just  using Theorem \ref{duskinmor}  to define the stack $|Y|$ associated to an  $n$-hypergroupoid $Y$ as the functor
$$
\oR\HHom(-,|Y|): \Aff^{\op} \to \bS,
$$
 and  also to define the simplicial category of geometric stacks. Anyone wishing to make comparisons with \cite{lurie} should also read Remark \ref{cflurie} for  differences in terminology.

Section \ref{qucohsn} is dedicated to studying quasi-coherent sheaves. The equivalence between Cartesian quasi-coherent sheaves on an $n$-hypergroupoid and quasi-coherent sheaves on the associated  $n$-geometric  stack amounts to little more than cohomological descent.  Inverse images of sheaves are easily understood in terms of this comparison. However, (derived) direct images prove far more complicated, and an explicit description is given in \S \ref{directsn}.

In Section \ref{alternatives}, various alternative formulations of derived Artin hypergroupoids are developed. One of these builds on the Quillen equivalence   in characteristic zero between  simplicial algebras and  dg algebras, permitting comparisons with the dg-schemes of \cite{Quot}. The other formulations are based on the observation that in the homotopy category of simplicial rings, every object $R$ can be expressed as a filtered homotopy limit of homotopy nilpotent extensions of the discrete   ring $\pi_0R$. This allows us to replace any  cosimplicial affine scheme $X$ with Zariski or \'etale neighbourhoods $X^l$ or $X^h$ (or sometimes even the formal neighbourhood $\hat{X}$) of $\pi^0X$ in $X$.

Section \ref{dsheaves} looks at the results of Section \ref{qucohsn} in the context of derived Artin stacks. It then applies them to construct the relative cotangent complex of a morphism in \S \ref{cotsn}, compatibly with existing characterisations (Corollary \ref{cotgood} and Proposition \ref{cfolsson}).

In \S  \ref{defsn}, we then show how the cotangent complex governs deformations.
We first consider deformations of morphisms of derived Artin stacks in Theorem \ref{defmorstack}, generalising the deformations of $1$-morphisms of Artin $1$-stacks considered in \cite{aokihom}. We then consider deformations of derived Artin stacks in Theorem \ref{deformstack},  generalising the deformations of Artin $1$-stacks considered in \cite{aoki}.

I would like to thank Barbara Fantechi for raising many important and  interesting questions about the relation between  $n$-stacks and  simplicial schemes. I would also like to thank Bertrand To\"en for his 
helpful 
comments, Mathieu Anel for explaining properties of unique factorisation systems, and the referee for identifying many inaccuracies and imprecisions.

\subsection*{Notation and conventions}

We will denote the category of simplicial sets by $\bS$. A simplicial category is a category enriched in $\bS$. In other words, for all objects $X, Y \in \C$, there is a simplicial set $\HHom_{\C}(X,Y)$, and these $\Hom$-spaces are equipped with the usual composition laws for morphisms. The category underlying $\C$ has morphisms $\Hom_{\C}(X,Y):= \HHom_{\C}(X,Y)_0$, and the homotopy category $\Ho(\C)$ has morphisms $\Hom_{\Ho(\C)}(X,Y):= \pi_0\HHom_{\C}(X,Y)$.

A simplicial structure on a category $\C$ is sometimes given by defining $\ten: \bS \by \C \to \C$ or $(-)^{-}: \bS^{\op} \by \C \to \C$. In these cases, $\HHom_{\C}$ is determined by the formulae
$$
\Hom_{\C}(X\ten K, Y) =\Hom_{\bS}(K, \HHom_{\C}(X,Y))= \Hom_{\C}(X, Y^K).
$$

\begin{definition}
Let $\Delta^n \in \bS$ be the standard $n$-simplex, and $\pd\Delta^n\in \bS$ its boundary.  Given $0 \le k \le n$, define the $k$th horn $\L^n_k$ of $\Delta^n$ to be the simplicial set obtained from $\Delta^n$ by removing the interior and the $k$th face. See \cite{sht} \S I.1 for explicit descriptions. 
\end{definition}

\begin{definition}\label{N^s}
Given a simplicial abelian group $A_{\bt}$, we denote the associated normalised chain complex  by $N^sA$. Recall that this is given by  $N^s(A)_n:=\bigcap_{i>0}\ker (\pd_i: A_n \to A_{n-1})$, with differential $\pd_0$. Then $\H_*(N^sA)\cong \pi_*(A)$.
\end{definition}

\begin{definition}\label{N_c}
Given a cosimplicial abelian group $A^{\bt}$, we denote the associated conormalised cochain complex  by $N_cA$. Recall that this is given by  $N_c(A)^n:=\bigcap_{i}\ker (\sigma^i: A^n \to A^{n-1})$, with differential  $\sum (-1)^i\pd^i$.
\end{definition}

\begin{definition}\label{ox}
Given an affine scheme $X$, we will write $O(X):= \Gamma(X, \O_X)$.
\end{definition}

\begin{definition}
 Given a category $\C$, write $s\C$ for the category $\C^{\Delta^{\op}}$ of simplicial objects in $\C$. 
\end{definition}

\section{Background on $n$-geometric stacks}\label{background}\label{hag}

We now adapt various definitions from \cite{hag2} to slightly more general settings, and recall several results.

\subsection{A general setup}\label{setupsn}

Fix a model category $\cS$ in the sense of \cite{hovey} Definition 1.1.4. In particular, we assume that $\cS$ contains all small limits and colimits.

\subsubsection{Simplicial diagrams}

\begin{definition}\label{rKan}
Given a simplicial object  $X_{\bt}$ in a complete category $\C$, write $\Hom_{\bS}(-,X)\co \bS^{\op} \to \C$ for the right Kan extension of $X$ with respect to the Yoneda embedding $\Delta^{\op} \to \bS$. Explicitly, $\Hom_{\bS}(-,X)$ is the unique  limit-preserving functor determined by $\Hom_{\bS}(\Delta^n,X)=X_n$ together with functoriality for face and boundary maps.
\end{definition}

\begin{definition}\label{mn}
Given a simplicial set $K$ and a simplicial object  $X_{\bt}$ in a complete category $\C$, we follow \cite[Proposition VII.1.21]{sht} in defining the $K$-matching object in $\C$ by 
$$
M_KX:= \Hom_{\bS}(K, X).
$$
Thus $X_n= M_{\Delta^n}X$. Note that for finite simplicial sets $K$, the  matching object $M_KX$ still exists even if $\C$ only contains finite limits.
\end{definition}

\begin{remark}
The matching object $M_{\pd \Delta^n}X$ is usually denoted $M_nX$. In \cite{glenn}, it is called the $n$th simplicial kernel, and denoted $\Delta^{\bt}(n)(X)$, while $M_{\L^n_k}X$ is there denoted $\L^k(n)(X)$. 
\end{remark}

\begin{definition}\label{reedydef}
 Recall from \cite{hovey} Theorem 5.2.5 that for a model category $\C$, the Reedy model structure on $s\C$ is defined as follows. A morphism $f\co X_{\bt}\to Y_{\bt}$ in $s\C$ is said to be a Reedy fibration if the matching maps
$$
X_m \to M_{\pd \Delta^m} (X)\by_{M_{\pd \Delta^m} (Y)}Y_m 
$$
are fibrations in $s\C$, for all $m$, and a weak equivalence if each $f_n$ is a weak equivalence in $\C$.
\end{definition}

\begin{definition}\label{rKanh}\label{mnh}
For a model category  $\C$, write  $\oR\Hom_{\bS}(-,X)\co \bS^{\op} \to \C$ for the  homotopy right Kan extension of $X$. This is analogous to Definition \ref{rKan}, but preserves homotopy limits instead of limits, and can be realised as $\Hom_{\bS}(-, \oR X)$, where $\oR X$ is a Reedy fibrant replacement for $X$ in $s\C$. 

Defining the homotopy $K$-matching object in $\C$ by 
$$
M_K^hX:= \oR\Hom_{\bS}(K, X).
$$
\end{definition}

\subsubsection{$\vareps$-morphisms}\label{coversn}

We now consider $s\cS$  equipped with its Reedy model structure. 

\begin{definition}
 Define $|-|\co s\cS \to \cS$ to be realisation functor $\ho\LLim_{\Delta^{\op}}$.
\end{definition}

Fix a class $\vareps$  of morphisms in $\Ho(\cS)$, containing all weak equivalences and  stable under composition and homotopy  pullback. Refer to a morphism in $\cS$ as an $\vareps$-morphism if its image in $\Ho(\cS)$ is so (the corresponding notion is called  epimorphism in \cite{hag2} and covering in \cite{hag1}). We require $\vareps$-morphisms to satisfy the following properties:

\begin{properties}\label{coverprops}
\begin{enumerate}

\item\label{coverpsurj} If $X_{\bt} \in s\cS$, then the map $X_0 \to |X_{\bt}|$ is an $\vareps$-morphism.

 \item\label{coverptriv} Given a morphism $f\co X_{\bt}\to Y_{\bt}$ in $s\cS$ for which the homotopy matching maps $X_n \to M_n^hX\by_{M_n^hY}^h Y_n$ are $\vareps$-morphisms for all $n \ge 0$, the map 
\[
 | f|\co | X_{\bt}| \to | Y_{\bt}|
\]
is a weak equivalence in $\cS$.

\item\label{coverphprod} Given  morphisms $X_{\bt}\to Y_{\bt} \la Z_{\bt}$  in $s\cS$ for which the  homotopy partial matching maps $X_n \to M_{\L^n_k}^hX\by_{M_{\L^n_k}^hY}^h Y_n$ are $\vareps$-morphisms for all $n \ge 1$ and all $k$, the map 
\[
|(X_{\bt}\by^h_{Y_{\bt}}Z_{\bt})| \to | X_{\bt}|\by^h_{| Y_{\bt}|}|Z_{\bt}|
\]
is a weak equivalence in $\cS$.
\end{enumerate}
\end{properties}

Fix a full subcategory $\cA$ of $\cS$, closed under weak equivalences and finite homotopy limits. In the terminology of \cite{hag1} \S 4.1, $\cA$ is a pseudo-model category.

Fix a class $\oC$ of morphisms in $\Ho(\cA)$, containing all weak equivalences and  stable under composition and homotopy  pullback. Say that a morphism in  $\cA$ lies in $\oC$ if its image in $\Ho(\cA)$ does so. We require that $\cA$ and $\oC$ also satisfy the following properties

 \begin{properties}\label{Cprops}
 \begin{enumerate}

\item\label{coverpaffine} If $X \to Y$ is an $\vareps$-morphism in $\Ho(\cS)$ and we have a map $U \to Y$ for $U \in \Ho(\cA)$, then there is an $\vareps$-morphism $U' \to U$ in $\Ho(\cA)$ such that the composite map $U' \to Y$ in $\Ho(\cS)$ factors through $X$.

 \item Every morphism in $\oC$ is an $\vareps$-morphism.

\item Morphisms in $\oC$ are local with respect to $\vareps$-morphisms. In other words, if $f\co U \to V$ in $\cA$ has the property that the homotopy pullback $f' \co U\by_V^hV'\to V'$ is a $\oC$-morphism for some $\vareps$-morphism $V' \to V$, then $f$ is in $\oC$.
\end{enumerate}
\end{properties}

\subsection{Geometric stacks}\label{geomsn}

\begin{definition}\label{geomdef} 
Refer to the objects of $\cS$ as \emph{stacks}. Then we follow \cite{hag2} Definition 1.3.3.1 by defining $n$-geometric stacks (with respect to $(\cA, \cS, \oC)$)  as follows:
\begin{enumerate}

\item
 A stack is $0$-geometric if it lies in $\cA$.

\item For $n\ge 0$, a morphism of stacks $F \to G$ is $n$-representable if for any  $X \in \cA$ and any morphism $X \to G$ in $\Ho(\cS)$, the homotopy pull-back $F \by_G^h X$ is
$n$-geometric.

\item A morphism of stacks $f : F \to G$ is in $0-\oC$ if it is $0$-representable, and
if for any $X \in \cA$ and any morphism $X \to G$ in $\Ho(\cS)$, the induced
morphism
$
F \by^h_G X \to  X$
is in $\oC$.

\item Now let $n>0$ and let $F$ be any stack. An $n$-atlas for $F$ is a  morphism
$U \to F$ in $\Ho(\cS)$ such that $U$ is in $\cA$, and $ U \to F$ is in $(n -1)-\oC$.

\item For $n>0$, a stack $F$ is $n$-geometric if the diagonal morphism $F \to F\by F$ is $(n - 1)$-representable and the stack $F$ admits an $n$-atlas.

\item For $n > 0$,  a morphism of stacks $F \to G$ in $\Ho(\cS)$ is in $n-\oC$  if it is $n$-representable and if for any 
$X \in \cA$ and  any morphism $X \to G$ in $\Ho(\cS)$, there exists an $n$-atlas  $U \to F\by^h_G  X$, such
that $ U\to X$ is in $\oC$.
\end{enumerate}

Moreover, we follow \cite{hag2} Definition 1.3.3.7 in saying that a morphism  of stacks  is in $\oC$ (or a $\oC$-morphism) if it is in $n-\oC$ for some
integer $n$.
\end{definition}

\begin{lemma}\label{geomlocal}
Take a morphism $f\co X \to Y$ in $\Ho(\cS)$, together with an $\vareps$-morphism $Y' \to Y$. Then $f$ is $n$-representable (resp. in $n -\oC$) if and only if the homotopy pullback  $f' \co X\by^h_{Y}Y'$  is so.
\end{lemma}
\begin{proof}
 The ``only if'' part is standard (see for instance \cite{hag2} Proposition 1.3.3.3). The ``if'' part just uses the fact that $\oC$-morphisms are local with respect to $\vareps$-morphisms --- the argument of  \cite{hag2} Proposition 1.3.3.4 adapts to give the required result.
 \end{proof}

\begin{proposition}\label{hagcover}
   Let $f : F \to G$ be a $\oC$-morphism and $n\ge  1$. If $F$ is
$n$-geometric and $f$ is in $(n- 1)-\oC$, then $G$ is $n$-geometric.
\end{proposition}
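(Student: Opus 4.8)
The plan is to verify the two defining conditions for $G$ to be $n$-geometric: that $G$ admits an $n$-atlas, and that the diagonal $G \to G\by G$ is $(n-1)$-representable. Throughout I will use the standard stability properties of $m$-geometric stacks and $m$-$\oP$-morphisms under composition, homotopy pull-back and disjoint union, together with the fact that representability of a morphism is local on the target along covers (all from \cite{hag2} \S 1.3.3).

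For the atlas, I would start from an $n$-atlas $U \to F$ of the $n$-geometric stack $F$, so that $U$ is $0$-geometric and $U \to F$ is a covering in $(n-1)$-$\oP$. Since $f$ is $(n-1)$-representable and in $\oP$, the Lemma above (\cite{hag2} Proposition 1.3.3.5) shows $f \in (n-1)$-$\oP$; as $f$ is moreover a cover, the composite $U \to F \to G$ is a covering lying in $(n-1)$-$\oP$ by composition-stability, and is therefore an $n$-atlas for $G$. This part is routine.

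The substance is the diagonal, and here the key is to route the fibre products through the atlas $U$, which is $0$-geometric, rather than through $F$, which is only $n$-geometric. The naive attempt to descend $(n-1)$-representability of $G\to G\by G$ along the cover $f\by f\colon F\by F\to G\by G$ is circular, since the base change of the diagonal is $F\by_G^h F\to F\by F$, and recognising this as $(n-1)$-representable is equivalent to the original question (indeed $F\by_G^h F$ is in general only $n$-geometric). Instead I set $V := F\by_G^h U$; because $f$ is $(n-1)$-representable and $U$ is a disjoint union of affines, $V$ is $(n-1)$-geometric. Using associativity of homotopy fibre products one identifies
$$
U\by_G^h U \;\simeq\; U\by_F^h V,
$$
the base change of the atlas morphism $U \to F$ along $V\to F$; since $U\to F$ lies in $(n-1)$-$\oP$ and $V$ is $(n-1)$-geometric, this exhibits $U\by_G^h U$ as $(n-1)$-geometric.

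Finally I would observe that $U\by U \to G\by G$ is a cover (the product of the covering $U\to G$ with itself) and that $U\by_G^h U \simeq (U\by U)\by_{G\by G}^h G$ is exactly the base change of the diagonal $G\to G\by G$ along it. The projection $U\by_G^h U \to U\by U$ is $(n-1)$-representable, since its source is $(n-1)$-geometric and its target is $0$-geometric. By locality of representability on the target along covers, the diagonal is therefore $(n-1)$-representable, and together with the atlas this gives that $G$ is $n$-geometric. The main obstacle is precisely the middle step: one must resist descending along $F\by F$ and instead establish directly that $U\by_G^h U$ drops to level $n-1$, which succeeds only because the intermediate stack $V=F\by_G^h U$ is forced down to $(n-1)$-geometric by the representability of $f$ over the $0$-geometric atlas $U$.
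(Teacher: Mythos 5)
First, a point of reference: the paper does not prove this proposition — it is quoted verbatim as \cite{hag2} Corollary 1.3.4.5 — so your argument can only be measured against \emph{loc.\ cit.}, not against anything in this paper. That said, your reconstruction captures the right mechanism, and the genuinely good step is the middle one: writing $U\by_G^h U \simeq U\by_F^h V$ with $V = F\by_G^h U$, so that $(n-1)$-representability of $f$ over the $0$-geometric atlas $U$ forces $V$, and hence $U\by_G^h U$, down to level $n-1$. The atlas half is routine, as you say.

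The step you should not wave through is the last one. ``Locality of representability on the target along covers'' is not among the formal stability properties of \cite{hag2} \S 1.3.3 (those give only composition, pull-back and disjoint unions); it is the substance of \S 1.3.4 (Proposition 1.3.4.2 and Corollary 1.3.4.3: being $m$-geometric, hence $m$-representable, is local for the topology $\tau$), a result proved by its own induction on $m$. Moreover it must be applied to the covering family $\{U_i\by U_j \to G\by G\}$ by \emph{affines} (writing $U=\coprod_i U_i$), not to the single morphism $U\by U \to G\by G$ regarded as a cover: that morphism is only $(n-1)$-representable, and the statement one would need in order to descend along it directly --- ``a stack admitting a covering $(n-1)$-$\oP$-morphism from an $(n-1)$-geometric stack is $(n-1)$-geometric'' --- is false (consider $*\to BG$; such a cover only certifies $n$-geometricity, which is not good enough here). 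Concretely, for an affine $X \to G\by G$ one first refines by a $\tau$-covering family $\{X_j\to X\}$ of affines over which both components lift to $U$ (possible because $U\to G$ is a local epimorphism), identifies $G\by^h_{G\by G}X_j \simeq (U\by^h_G U)\by^h_{U\by U}X_j$, which is $(n-1)$-geometric by your middle step, and only then invokes locality of $(n-1)$-geometricity. Since 1.3.4.2--1.3.4.3 are established in \cite{hag2} before, and independently of, 1.3.4.5, there is no circularity, and with this repair your proof is complete; but as written the final sentence conceals the only hard ingredient and attributes it to the wrong place.
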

\begin{proof}
 The proof of \cite{hag2} Corollary 1.3.4.5 carries over.
\end{proof}

\subsection{HAG contexts}\label{HAGcontext}

\subsubsection{Hyperdescent and hypersheaves}\label{inftysheaves}

\begin{definition}
Given a cosimplicial object $X^{\bt}$ in a model category $\cM$, define $\Tot_{\cM}X \in \Ho(\cM)$ by 
$$
\Tot_{\cM}X:= \holim_{\substack{ \lla \\n \in \Delta}} X^n.
$$
\end{definition}

\begin{definition}
Given a model category $\cM$ and a site $X$, an  $\cM$-valued presheaf $\sF$ on $X$ is said to be a hypersheaf if for all hypercovers $\tilde{U}_{\bt} \to U$, the natural map
$$
\sF(U) \to \Tot_{\cM} \sF(\tilde{U}_{\bt})
$$
is a weak equivalence. 
\end{definition}

\begin{remarks}
If $\cM$ has trivial model structure, note that hypersheaves in $\cM$ are precisely $\cM$-sheaves. A  sheaf $\sF$ of modules is a hypersheaf when regarded as a presheaf of non-negatively graded chain complexes, but not a hypersheaf  when regarded as a presheaf of unbounded chain complexes unless $\H^i(U, \sF)=0$ for all $i>0$ and all $U$. Beware that the sheafification of a hypersheaf will not, in general, be a hypersheaf.

Hypersheaves are  known as such in  \cite{lurie} \S 4.1, but are more usually called  $\infty$-stacks or hypercomplete $\infty$-sheaves, and are referred to as stacks in \cite{hag1} Definition 4.6.5. They are also sometimes known as homotopy sheaves, but we avoid this terminology for fear of  confusion  with homotopy groups of a simplicial sheaf. 
\end{remarks}

\begin{examples}\label{categs}
If $\cM$ is a category with trivial model structure (all morphisms are fibrations and cofibrations, isomorphisms are the only weak equivalences), then $\Tot_{\cM} X$ is the equaliser in $\cM$ of the maps $\pd^0, \pd^1 : X^0 \to X^1$.

$\Tot_{\bS}$ is the functor $\Tot$ of a cosimplicial space defined in \cite{sht} \S VIII.1.  Explicitly, 
$$
\Tot_{\bS} X^{\bt} =\{ x \in \prod_n (X^n)^{\Delta^n}\,:\, \pd^i_Xx_n = (\pd^i_{\Delta})^*x_{n+1},\,\sigma^i_Xx_n = (\sigma^i_{\Delta})^*x_{n-1}\}, 
$$ 
for $X$ Reedy fibrant. Homotopy groups of the total space are related to a spectral sequence given in  \cite{sht} Proposition VIII.1.15.

In the category $s\Ab$ of simplicial abelian groups, all cosimplicial objects are Reedy fibrant, and  $\Tot_{s\Ab}A= \Tot_{\bS}A$. Likewise for simplicial modules and simplicial rings. In all  these cases, we may compute homotopy groups by $\pi_n(\Tot_{s\Ab}A) = \H_n (\Tot_{\Ch_{\ge 0}} N^sA)$, for $N^sA$ the normalised cochain complex associated to $A$, and $\Tot_{\Ch_{\ge 0}}$ defined below. The unnormalised associated chain complex also gives the same result.

For the category $\Ch$ of unbounded chain complexes, $\Tot_{\Ch}\simeq \Tot^{\Pi}$, or the quasi-isomorphic functor $\Tot^{\Pi}N_c$, where $N_c$ is conormalisation (see Definition \ref{N_c}), and $(\Tot^{\Pi} V)_n= \prod_i V^i_{n+i}$. Similarly, $\Tot_{\mathrm{CoCh}^{\ge 0}}\simeq \Tot^{\Pi}$  for the category $\mathrm{CoCh}^{\ge 0}$ of non-negatively graded cochain complexes. However, for non-negatively graded chain complexes $\Ch_{\ge 0}$, we have $\Tot_{\Ch_{\ge 0}} \simeq \tau_{\ge 0} \Tot^{\Pi}$, where $\tau$ is good truncation.

For the categories $dg\Alg$ and $dg_{\le 0}\Alg$ of unbounded and non-positively graded  differential graded (chain) algebras, $\Tot_{dg\Alg}$ and $\Tot_{dg_{\le 0}\Alg}$ are  modelled by  the functor $\Th$ of Thom-Sullivan (or Thom-Whitney) cochains. In the category $dg_{\ge 0}\Alg$ of differential non-negatively graded (chain)   algebras, $\Tot_{dg_{\ge 0}\Alg} \simeq \tau_{\ge 0} \Th$.
\end{examples}

\subsubsection{HA contexts and HAG contexts}\label{HAHAG}

By \cite{hag2} Definition 1.1.0.11, a homotopical algebra (or HA) context is a symmetric monoidal model category $\C$, endowed with some additional structures which will not affect our ability to define $n$-geometric stacks. We write $\Aff_{\C}$ for the opposite category to the category of commutative monoids in $\C$.

By \cite{hag2} Definition 1.3.2, a homotopical algebraic geometry (HAG) context is a HA context $\C$, together with a model pre-topology $\tau$ having various good properties summarised in \cite{hag2} Assumption 1.3.2.2, and a class $\oP$ of morphisms in $\Aff_{\C}$ satisfying the conditions of \cite{hag2} Assumption 1.3.2.11.

The setting for $n$-geometric stacks is the model category  $\Aff_{\C}^{\sim, \tau}$ of stacks (defined as in \cite{hag2} \S 1.3.2). The underlying category of $\Aff_{\C}^{\sim, \tau}$ is the category of simplicial presheaves on $\Aff_{\C}$, and every fibrant object is a hypersheaf  with respect for the topology  $\tau$. Morphisms between hypersheaves are weak equivalences if and only if they induce isomorphisms on the sheafifications of the homotopy groups.

\begin{definition}
 As in \cite{hag2} \S 1.3.1, we have a Yoneda embedding 
\[
 \oR \uline{h}\co \Aff_{\C} \to \Aff_{\C}^{\sim, \tau}
\]
given by
\[
  (\oR \uline{h}X)(U) = \map_{\Aff_{\C}}(U,X),    
\]
where $\map$ is a functorial choice of homotopy function complex (as in \cite{hirschhorn} \S 17.4). Specifically, \cite{hag2} \S 1.3.1 takes a left homotopy function complex (denoted by $\Map_l$ in \cite{hovey} \S 5.2).
In a simplicial model category, we can just take $\map = \oR \HHom$, the right-derived functor of the simplicial $\Hom$-functor. 
\end{definition}

\begin{lemma}\label{Rhff}
For $X,Y \in \Aff_{\C}$, the canonical map
\[
 \map_{ \Aff_{\C}}(X,Y) \to   \oR\HHom_{ \Aff_{\C}^{\sim, \tau}}  (\oR \uline{h}X , \oR \uline{h}Y)       
\]
 is a weak equivalence.
 \end{lemma}
\begin{proof}
By \cite{hag2} Corollary 1.3.2.5, the presheaf $ \oR \uline{h}X$ is a hypersheaf, so the result is just \cite{hag1} Proposition 4.6.7.       
\end{proof}

\subsubsection{Geometric stacks}

\begin{definition}\label{hagndef}
The simplicial category of strongly  quasi-compact $n$-geometric stacks for the HAG context $(\C, \tau, \oP)$ is given by applying the construction of  \S \ref{geomsn} to the data $(\cA, \cS, \vareps, \oC)$ for $\cS:=\Aff_{\C}^{\sim, \tau}$, with $\cA$ consisting of objects in the essential image of $\oR \uline{h}$, $\vareps$ the class of local surjections (epimorphisms in the terminology of \cite{hag2} Remark 1.3.2.6),   and $\oC$ consisting of those $\oP$-morphisms in $\cA$ which are also $\vareps$-morphisms. 

For the simplicial category of all  $n$-geometric stacks, we leave $\cS$ unchanged, but take  $\cA$ to consist of objects of the form $\coprod_i^{\oL} \oR \uline{h} X_i$, for sets $\{X_i\}_i$ of objects of $\Aff_{\C}$. We then let $\oC$  consist of $\vareps$-morphisms $\coprod_i^{\oL}  U_i \to X$  for  families $\{U_i\to X\}_i $ of  representable
$\oP$-morphisms.
\end{definition}

We now verify the conditions of \S \ref{setupsn}. 

\begin{proposition}\label{haghprod}
The setting $(\cA,\cS, \oP, \vareps)$ above  satisfies Properties \ref{coverprops} for the class of $\vareps$-morphisms. The class $\oC$ satisfies Properties \ref{Cprops}.
\end{proposition}
\begin{proof}
Properties \ref{Cprops} are immediate, because the topology $\tau$ is defined using coverings in $\Aff_{\C}$ and the property $\oP$ is, by assumption,  local with respect to $\tau$. 

Now, let $\cT$ be the simplicial localisation of $\Aff_{\C}$ with respect to the weak equivalences (as constructed in  \cite{simploc2}). This has the properties that $\pi_0\cT= \Ho(\Aff_{\C})$, and that $\HHom_{\cT}(X,Y) \simeq \map_{\Aff_{\C}}(X,Y)$. 
By \cite{hag1} Theorem 4.7.1, the  model category $\Aff_{\C}^{\sim, \tau} $ is Quillen equivalent to the model category $s\Pr_{\tau}(\cT)$ of \cite{hag1} \S 3.4, and hence to the model category $s\Pr_{\tau, \inj}(\cT)$ of \cite{hag1} \S 3.6. The equivalence is given by sending a fibrant object  in  $\Aff_{\C}^{\sim, \tau} $ to its associated simplicial functor. Objects of the model category  $\cI:= s\Pr_{\tau, \inj}( \cT)$ are simplicial functors $\cT \to \bS$, cofibrations are the objectwise monomorphisms, and weak equivalences are the local weak equivalences. 

For Property \ref{coverprops}.\ref{coverpsurj}, take $X_{\bt} \in s\cS$. Replacing $X_{\bt}$ with a weakly  equivalent diagram if necessary, we may assume that $X_{\bt} \in s\cI$, with each $X_n$ fibrant (hence a stack). 
Now, $\ho\LLim_{\Delta^{\op}}\co s\bS \to \bS$ is given by the diagonal functor $(\diag X)_n =X_{nn}$,  so the same is true for the injective model structure on simplicial presheaves. Therefore
\[
 |X_{\bt}| \simeq a(\diag X_{\bt}),
\]
where $a$ is the fibrant replacement functor in $\cI$.

Now, by  \cite{sht}  Proposition IV.1.7 the functor $\diag$ takes levelwise weak equivalences to weak equivalences. Since each $X_n$ is a stack,  this means that $ \diag X_{\bt}$ sends weak equivalences in $\Aff_{\C}$ to weak equivalences in $\bS$ --- in the terminology of \cite{hag2}, it is a prestack. Also note that the map $X_0 \to \diag X_{\bt}$ induces surjections $\pi_0(X_0(U)) \onto \pi_0(\diag X_{\bt}(U)) $ for all $U\in \Aff_{\C}$.  This necessarily induces surjections on the associated presheaves of sets, so   by \cite{hag1} Definition 4.3.7, $X_0 \to a(\diag X_{\bt})$ is an $\vareps$-morphism, as required.

For Property \ref{coverprops}.\ref{coverptriv}, we may take a Reedy fibration $f\co X_{\bt} \to Y_{\bt}$ between Reedy fibrant objects in  $s\cI$. Now, a fibration in $\cI$ is a fibration objectwise, so a fibration is an $\vareps$-morphism if and only if it is locally a surjective map of simplicial sets.
Since $f$ is a Reedy fibration, the homotopy matching object $M_n^hX\by^h_{M_n^hY}Y_n$ is represented by the ordinary matching object $M_nX\by_{M_nY}Y_n$, and the condition that the map from $X_n$ to this be an $\vareps$-morphism says precisely that $f$ is locally a  horizontal trivial Kan fibration. 
 In particular, it is a horizontal local weak equivalence, so $\diag X_{\bt} \to \diag Y_{\bt}$ is a local weak equivalence. Applying $a$ gives $|X_{\bt}| \simeq |Y_{\bt}|$, as required.

Finally, for Property \ref{coverprops}.\ref{coverphprod}, we may once again take Reedy fibrations $X_{\bt} \xra{f} Y_{\bt}\la Z_{\bt}$ between Reedy fibrant objects in  $s\cI$. The homotopy partial matching objects are now ordinary matching objects, so the condition that the homotopy partial matching maps of $f$ be  $\vareps$-morphisms says precisely that $f$ is a locally a horizontal Kan fibration. Then  \cite{sht} Lemma IV.4.8 implies that $\diag f$ is a local fibration, and now \cite{jardineLocal} Lemma 5.16 (adapted to simplicial sites) shows that
\[
 (\diag X)\by^h_{\diag Y}(\diag Z) \simeq (\diag X)\by_{\diag Y}(\diag Z).
\]
The right-hand side is just $\diag(X\by_YZ)$,  and since 
we started with Reedy fibrations between fibrant objects, this is a model for $\diag( X\by^h_YZ)$. 
Thus we have shown 
\[
 |X|\by^h_{|Y|}|Z| \simeq |X\by^h_YZ|,
\]
 as required.

\end{proof}

\subsubsection{Higher Artin and Deligne--Mumford stacks}\label{hagn}

The first main example of a HA context is to take $\Aff_{\C}$ to be the category of affine schemes with its trivial model structure. In the trivial  model structure, all morphisms are both fibrations and cofibrations, and the weak equivalences are just isomorphisms. 
Then $\cA$ is Quillen equivalent to  $\Aff_{\C}$  and 
$\Aff_{\C}^{\sim, \tau}$ is the category of simplicial presheaves on $\Aff_{\C}$ with its local projective model structure. 
Taking $\tau$ to be the \'etale topology,
the fibrant objects of $\Aff_{\C}^{\sim, \tau}$ are hypersheaves for the \'etale site. The functor $\oR\uline{h}\co \Aff_{\C} \to \cS$ just sends $X$ to its associated sheaf
\[
 U \mapsto \Hom_{\Aff_{\C}}(U,X).       
\]

There are two common choices of HAG context associated to this HA context, given by taking $\oP$ to be the class either of smooth morphisms or of \'etale  morphisms. In these contexts, the $(n, \oP)$-geometric stacks of \S \ref{geomsn} are known as $n$-geometric Artin and   Deligne--Mumford stacks, respectively. A rarer context is that of $n$-geometric schemes, which takes $\oP$ to be the class of local isomorphisms.

\subsubsection{Derived Artin stacks}\label{hagd}

We now recall the geometric $D^-$-stacks of \cite{hag2} Chapter 2.2. Set $\Aff_{\C}$ to be $c\Aff$, the category of cosimplicial affine schemes, so $\C$ is the category of simplicial algebras. Given a simplicial algebra $A$, write $\Spec A$ for the corresponding object of $c\Aff$. Let $s\Pr(c\Aff)$ denote the category of simplicial presheaves on $c\Aff$.

We give $c\Aff$ its usual simplicial model structure, in which a map $\Spec B \to \Spec A$ is a weak equivalence if $\pi_n(A) \cong \pi_n(B)$ for all $n$.  The map is cofibration if $A \to B$ is a Kan fibration, or equivalently if $N_i^sA \to N_i^sB$ is surjective for all $i>0$, for $N^s$ the simplicial normalisation of Definition \ref{N^s}. 

\begin{definition}\label{sstr}
The Reedy model category $sc\Aff$ has a natural simplicial structure. If we regard an object $X \in sc\Aff$ as a contravariant functor from $c\Aff$ to $\bS$, then for $K \in \bS$, the object $X^K \in sc\Aff$ is given by
$$
X^K(U):= X(U)^K,
$$ 
for $U \in c\Aff$. Note that this simplicial structure is independent of the natural simplicial structure on $c\Aff$. 

We then define $\HHom_{sc\Aff}(X,Y) \in \bS$ by 
$$
\HHom_{sc\Aff}(X,Y)_n:= \Hom_{sc\Aff}(X,Y^{\Delta^n}).
$$
\end{definition}

\begin{definition}\label{dsmooth}
A map  $\Spec B \to \Spec A$ in $c\Aff$ is said to be flat (resp. smooth, resp. \'etale, resp. a local isomorphism) if $\pi_0(A) \to \pi_0(B)$ is flat (resp. smooth, resp. \'etale, resp. a local isomorphism), and $\pi_n(B)\cong \pi_n(A)\ten_{\pi_0(A)}\pi_0(B)$ for all $n$. 
\end{definition}

\begin{definition}\label{h0def}
Define the functor $\pi^0:c\Aff \to \Aff$ by $\Spec A \mapsto \Spec \pi_0A$. A morphism $f$ in $c\Aff$  is said to be surjective if $\pi^0(f)$ is surjective.
\end{definition}

\begin{definition}\label{uline}
Given a fibrant object $X \in c\Aff$, define $\underline{X} \in s\Pr(c\Aff)$ by
$$
\underline{X}(U):= \HHom_{c\Aff}(U, X)\in \bS,
$$
for $U \in c\Aff$.

\end{definition}

In the notation of \S \ref{HAHAG}, $\uline{X}$ is the stack $\oR \uline{h} X$.
 Thus an object of $s\Pr(c\Aff)$ is a $0$-geometric stack if and only if it is weakly equivalent to a disjoint union of such stacks $\underline{X}$. 

For the  HAG context given by taking $\oP$ to be the class of smooth morphisms (resp. \'etale morphisms, resp.  local isomorphisms) and $\tau$ the \'etale topology,  we will refer to the associated  geometric stacks   as derived Artin (resp. Deligne--Mumford, resp. Zariski) $n$-geometric stacks. In the terminology of \cite{hag2}, the  derived Artin $n$-geometric stacks are called $n$-geometric $D^-$-stacks.

\subsection{Other examples}

Another important example of a HAG context given in \cite{hag2} is that of symmetric spectra, the setting of spectral (or brave new) algebraic geometry. The precise formulation given depends on the choices $\tau, \oP$ of topology and morphisms. 

If our sole aim is to define $n$-geometric stacks, then HAG contexts might be regarded as unnecessarily restrictive. Rather than taking $\cA$ to be a category opposite to commutative monoids, Definition \ref{geomdef} will still work for non-commutative rings or for simplicial $\C^{\infty}$-rings, giving non-commutative and differential stacks respectively. In both cases, the main difficulty lies in finding a suitable class $\oP$ of morphisms. 

It is also worth noting that the simplicial category $\cA$ in Definition \ref{geomdef} need not be the localisation of a model category. This makes it possible to consider localisations of small categories, such as of finitely generated algebras. A better-known example of this type is given by Lurie's $n$-stacks in \cite{lurie}, where $\cA$ is taken to be the category of algebraic spaces and $\oC$ the category of smooth surjections. 


We now collect some useful lemmas and a consequence for comparing the stacks of \cite{hag2, lurie}.

\begin{definition}
Given $\fX \in \cS$ and $K \in\bS$, denote by $\fX^{\oR K}$ the object of $\cS$ defined as the homotopy limit $M^h_K(c\fX)$ of Definition \ref{mnh}, where $c\fX \in s\cS$ is given by $(c\fX)_n= \fX$ for all $n$.
\end{definition}

\begin{lemma}\label{higherdiagchar}
Given a morphism $f:\fX \to \fY$ of stacks,  the diagonal of the matching morphism  
$$
\fX \simeq \fX^{\oR \Delta^{r-1}} \to \fX^{\oR \pd \Delta^{r-1}}\by_{\fY^{\oR \pd \Delta^{r-1}}}^h\fY.
$$
is equivalent  to 
$$
\fX \to \fX^{\oR \pd \Delta^r}\by_{\fY^{\oR \pd \Delta^r}}^h\fY
$$
\end{lemma}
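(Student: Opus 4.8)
The plan is to realise both morphisms as instances of a single functor applied to terminal maps of simplicial sets, and then to deduce the statement from a combinatorial decomposition of $\pd\Delta^r$. For a simplicial set $K$, write
$$
\Phi(K) := \fX^K\by_{\fY^K}^h\fY,
$$
the homotopy fibre product taken along $f^K\co \fX^K \to \fY^K$ and along the map $\fY \to \fY^K$ induced by the terminal map $K \to \Delta^0$. Since the cotensors $\fX^{(-)}$ and $\fY^{(-)}$ are contravariant in $K$, so is $\Phi$; and one checks immediately that $\Phi(\Delta^0)\simeq \fX$, that $\Phi(\pd\Delta^{r-1})$ and $\Phi(\pd\Delta^r)$ are the two targets appearing in the statement, and that the morphism $g\co\fX \to \fX^{\pd\Delta^{r-1}}\by^h_{\fY^{\pd\Delta^{r-1}}}\fY$ is exactly $\Phi$ applied to the terminal map $\pd\Delta^{r-1}\to\Delta^0$.

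The first key point is that $\Phi$ carries homotopy pushouts of simplicial sets to homotopy pullbacks. This holds because cotensoring a fibrant object with simplicial sets sends homotopy colimits to homotopy limits, because the constant functor $K \mapsto \fY$ trivially sends pushouts to pullbacks, and because homotopy limits commute with one another; thus the homotopy fibre product $\Phi$ inherits the property. Applying this to the homotopy pushout of the span $\Delta^0 \leftarrow \pd\Delta^{r-1}\to\Delta^0$ (both legs the terminal map) and using contravariance gives
$$
\Phi(\Delta^0\sqcup^h_{\pd\Delta^{r-1}}\Delta^0)\;\simeq\;\Phi(\Delta^0)\by^h_{\Phi(\pd\Delta^{r-1})}\Phi(\Delta^0)\;\simeq\;\fX\by^h_{B}\fX,
$$
where $B := \fX^{\pd\Delta^{r-1}}\by^h_{\fY^{\pd\Delta^{r-1}}}\fY$ and the two maps $\fX \to B$ are both equal to $g$. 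The right-hand side is precisely the target of the diagonal of $g$.

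It therefore remains to identify $\Delta^0\sqcup^h_{\pd\Delta^{r-1}}\Delta^0$ with $\pd\Delta^r$. For this I would use the decomposition
$$
\pd\Delta^r \;=\; \L^r_r\,\cup_{\pd\Delta^{r-1}}\,d_r\Delta^r,
$$
in which $d_r\Delta^r \cong \Delta^{r-1}$ is the face omitting the last vertex, $\L^r_r = \bigcup_{i<r} d_i\Delta^r$ is the $r$-th horn, and the two subcomplexes meet exactly in $\pd\Delta^{r-1}$. As this is a union of subcomplexes along a cofibration, it is a homotopy pushout; since both $d_r\Delta^r$ and $\L^r_r$ are contractible, collapsing them to $\Delta^0$ yields a weak equivalence $\Delta^0\sqcup^h_{\pd\Delta^{r-1}}\Delta^0 \simeq \pd\Delta^r$. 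Finally, the structure maps $\fX \to \Phi(\pd\Delta^r)$ and $\fX \to \fX\by^h_B\fX$ are both $\Phi$ of maps into the terminal object $\Delta^0$, and hence agree automatically, giving the claimed isomorphism of morphisms.

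The step I expect to demand the most care is the first key point: making "$\Phi$ of a homotopy pushout is a homotopy pullback" rigorous rests on interpreting every fibre product as a genuine homotopy limit (with the simplicial sets cofibrant and $\fX,\fY$ replaced by fibrant objects) and on a Fubini interchange of the two homotopy limits. By comparison the combinatorial decomposition of $\pd\Delta^r$ is elementary, and the compatibility of the structure maps is automatic, precisely because every map in sight factors through the terminal simplicial set $\Delta^0$.
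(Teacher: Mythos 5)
Your argument is correct and is essentially the paper's: both proofs rest on the decomposition of $\pd\Delta^r$ as a pushout of a horn and a face glued along $\pd\Delta^{r-1}$ (you use $\L^r_r\cup_{\pd\Delta^{r-1}}d_r\Delta^r$, the paper $\L^r_0\cup_{\pd^0,\pd\Delta^{r-1}}\Delta^{r-1}$), together with contractibility of both pieces, to identify $\fX^{\pd\Delta^r}\simeq\fX\by^h_{\fX^{\pd\Delta^{r-1}}}\fX$. Packaging the base change over $\fY$ into the functor $\Phi$ and invoking Fubini for homotopy limits is just a slightly more systematic bookkeeping of the same computation.
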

\begin{proof}
The key observation is that we may express $\pd \Delta^r$ as the pushout
$$
\pd \Delta^r = \L^r_0 \cup_{\pd^0, \pd \Delta^{r-1}}\Delta^{r-1}.
$$
Since this is a pushout of cofibrations, we have
$$
\fX^{\oR \pd \Delta^r} \simeq \fX^{\oR \L^r_0}\by^h_{\fX^{\oR \pd \Delta^{r-1}}}\fX^{\oR \Delta^{r-1}} \simeq \fX\by^h_{\oR \fX^{\pd \Delta^{r-1}}}\fX,
$$
the second equivalence following because $\L^r_0$ and $\Delta^{r-1}$ are contractible.

This implies that the diagonal
$$
\fX \to (\fX\by^h_{\fX^{\oR \pd \Delta^{r-1}} }\fX)\by^h_{(\fY\by^h_{\fY^{\oR \pd \Delta^{r-1}} }\fY)}\fY
$$
of  $\fX \to \fX^{\oR \pd \Delta^{r-1}}\by_{\fY^{\oR \pd \Delta^{r-1}}}^h\fY$ is just
$$
\fX \to \fX^{\oR \pd \Delta^r}\by_{\fY^{\oR \pd \Delta^r}}^h\fY.
$$
\end{proof}

\begin{lemma}\label{higherdiag}
Given an $n$-representable morphism $\fX \to \fY$ of stacks, the matching  map
$$
\fX \to \fX^{\oR \pd \Delta^r}\by_{\fY^{\oR \pd \Delta^r}}^h\fY
$$
is $(n-r)$-representable for all $0 \le r \le n$, a $0$-representable closed immersion for $r=n+1$, and an equivalence for $r > n+1$. 
\end{lemma}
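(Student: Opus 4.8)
The plan is to induct on $r$, using Lemma \ref{higherdiagchar} to identify the morphism in question with an iterated diagonal, so that the three regimes of the statement correspond to representability dropping by one at each step until it bottoms out. Write $g_r$ for the morphism $\fX \to \fX^{\pd \Delta^r}\by^h_{\fY^{\pd \Delta^r}}\fY$. For the base case $r=0$ we have $\pd \Delta^0 = \emptyset$, so $\fX^{\pd \Delta^0}$ and $\fY^{\pd \Delta^0}$ are both terminal and $g_0$ is just $f\colon \fX \to \fY$, which is $n$-representable by hypothesis. Lemma \ref{higherdiagchar} then identifies $g_r$ with the diagonal of $g_{r-1}$, so the whole statement reduces to understanding how passing to the diagonal changes the degree of representability of a morphism.

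The engine of the induction is the claim that the diagonal of an $m$-representable morphism $h\colon A \to B$, with $m\ge 1$, is $(m-1)$-representable. To prove it, I would take an affine scheme $U$ with a map to $A\by^h_B A$, set $P := A\by^h_B U$ for the induced morphism $U \to B$, and check by base change along $U \to B$ that the homotopy fibre $A\by^h_{A\by^h_B A}U$ is equivalent to $P\by^h_{P\by^h_U P}U$, i.e. to the homotopy fibre over $U$ of the relative diagonal $P \to P\by^h_U P$. Since $h$ is $m$-representable, $P$ is $m$-geometric, so its absolute diagonal $P \to P\by P$ is $(m-1)$-representable; the relative diagonal is the base change of the absolute one along $P\by^h_U P \to P\by P$ (through the Cartesian square involving $\Delta_U\colon U \to U\by U$), and $(m-1)$-representability is stable under homotopy base change immediately from the definition. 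Hence the fibre is $(m-1)$-geometric, as required. Applying this with $m = n-r+1$ shows that $g_r$ is $(n-r)$-representable for $1 \le r \le n$.

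It then remains to treat the degenerate regimes, where the sub-lemma no longer applies because $m$ has dropped to $0$. For $r=n+1$, the morphism $g_n$ is $0$-representable, hence representable by disjoint unions of affine schemes; as any morphism of affine schemes is separated, its relative diagonal is a closed immersion, so $g_{n+1}$ is a $0$-representable closed immersion. For $r=n+2$, the morphism $g_{n+1}$ is in particular a monomorphism, and the diagonal of a monomorphism is an equivalence, so $g_{n+2}$ is an equivalence; since the diagonal of an equivalence is again an equivalence, the same holds for all $r \ge n+2$, giving the last clause.

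I expect the main obstacle to be the engine sub-lemma, and specifically the base-change identification of the homotopy fibre of the diagonal of $h$ with the homotopy fibre of the relative diagonal of the $m$-geometric stack $P = A\by^h_B U$, together with the verification that this relative diagonal is a genuine homotopy pullback of the absolute diagonal of $P$. The remaining two regimes rely only on standard facts (separatedness of affine schemes, and that monomorphisms are exactly the morphisms with invertible diagonal), and so should be routine once the engine is in place.
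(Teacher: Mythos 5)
Your proposal is correct and follows essentially the same route as the paper: induction on $r$ using Lemma \ref{higherdiagchar} to identify the map with the diagonal of the previous stage, with the degenerate cases $r=n+1$ (diagonal of a morphism of disjoint unions of affine schemes is a closed immersion) and $r>n+1$ (diagonal of a closed immersion/monomorphism is an equivalence) handled identically. The only difference is that you spell out the base-change verification that the diagonal of an $(m)$-representable morphism is $(m-1)$-representable, which the paper simply invokes as a consequence of the definitions in \cite{hag2}; your verification of that step is sound.
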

\begin{proof}  
We prove this by induction on $r$. When $r=0$, $\pd \Delta^0=\emptyset$, so the statement is trivially true. 
Now assume the statement for $r-1$, with $r\le n$. Since  $\fX \to \fX^{\oR \pd \Delta^{r-1}}\by_{\fY^{\oR \pd \Delta^{r-1}}}^h\fY$ is $(n-r+1)$-representable,  its diagonal must be  $(n-r)$-representable, giving the required result by Lemma \ref{higherdiagchar}. The statement for $r=n+1$ follows because the diagonal of an affine morphism is a closed immersion. For $r>n+1$, observe that the diagonal of a closed immersion is an isomorphism.
\end{proof}

\begin{remark}\label{cflurie}
When we take $\oP$ to be the class of smooth morphisms, there are slight differences in terminology between \cite{hag2} and \cite{lurie} in relation to higher stacks. In the former, only disjoint unions of affine schemes are $0$-representable, so arbitrary schemes are $2$-geometric stacks, and Artin stacks are $1$-geometric stacks if and only if they have affine diagonal. In the latter, algebraic spaces are $0$-stacks.  An $n$-stack in the sense of \cite{lurie} is called $n$-truncated in \cite{hag2}, and it follows easily that every $n$-geometric stack in \cite{hag2} is $n$-truncated.

Conversely, for any  $n$-truncated stack $X$, the map $X \to X^{\oR \pd \Delta^{n+2}}$ is an isomorphism (by Lemma \ref{higherdiag}), hence $0$-representable. Thus $X$   must be  $(n+2)$-geometric.

We can summarise this by saying that for a geometric stack $\fX$ to be $n$-truncated means that $\fX \to \fX^{\oR \pd \Delta^{n+2}}$ is an equivalence, or equivalently that $\fX \to \fX^{\oR \pd \Delta^{n}}$ is representable by algebraic spaces. For $\fX$ to be $n$-geometric means that $\fX \to \fX^{\oR \pd \Delta^{n}}$ is representable by disjoint unions of affine schemes.

As an example of this difference, consider the scheme $X$ obtained by gluing two copies of $\bA^2$, with intersection  $\bA^2-\{0\}$. This is certainly an algebraic space, so is $0$-truncated. However, the diagonal $X \to X\by X$ is not an affine morphism, so $X$ is not even $1$-geometric, but merely $2$-geometric.

Nevertheless, any Artin stack with affine diagonal (in particular any separated algebraic space) is $1$-geometric.
\end{remark}

\section{$n$-hypergroupoids}\label{nhyp}

\begin{definition}\label{duskindef}
An $n$-hypergroupoid  is an object $X \in \bS$ for which the partial matching maps
$$
X_m \to M_{\L^m_k} X 
$$
are surjective for all $k,m$ (i.e. $X$ is a Kan complex), and are isomorphisms for all $m> n$.
\end{definition}

\begin{remark}
 Terminology for $n$-hypergroupoids varies considerably. Other names include  ``$n$-dimensional Kan hypergroupoids'' (\cite[\S 3]{glenn}), ``weak $n$-groupoids'' (\cite[Definition 2.3]{getzler}) and ``exact $n$-types'' (\cite[Definition 3.3]{bekecech}).  The relative notion of Definition \ref{relhyp} below is called a ``fibration exact in $\dim \ge n$'' in \cite[Definition 1.5]{duskinsheaf}).
\end{remark}

\begin{remarks}
\begin{enumerate}
\item
A $0$-hypergroupoid is just a set $X=X_0$. A $1$-hypergroupoid is the nerve of a groupoid $G$, which can be recovered  by taking objects $X_0$, morphisms $X_1$, source and target $\pd_0, \pd_1: X_1 \to X_0$, identity $\sigma_0 : X_0 \to X_1$ and multiplication
$$
X_1 \by_{\pd_0, X_0, \pd_1} X_1 \xra{(\pd_2,\pd_0)^{-1}} X_2 \xra{\pd_1} X_1.
$$
Equivalently, $G$ is the fundamental groupoid $\pi_fX$ of $X$.

\item For an $n$-hypergroupoid $X$, the factorisation $\L^m_k \to \pd \Delta^m \to \Delta^m$ ensures that for all $m > n$, $X_m \to M_{\pd\Delta^m}$ has a section, so is surjective. This implies that $\pi_mX=0$ for all $m>n$.

\item Giving an $n$-hypergroupoid $X$ is equivalent to giving the sets $X_i$ for $i \le n$, together with the operations between them, and an operation on certain $(n+1)$-tuples in  $X_n$, satisfying the hyper-associativity and hyper-unit laws of \cite{glenn} \S3.2. A related result will be given in Lemma \ref{truncate}.

\item Under the Dold--Kan correspondence between $\N_0$-graded chain complexes and abelian groups, $n$-hypergroupoids in abelian groups correspond to chain complexes concentrated in degrees $\le n$. This is essentially because the normalisation functor is given by $N_nA = \ker(A_n \to M_{\L^n_0}A)$. 
\end{enumerate}
\end{remarks}

We also have a relative version of this definition.
\begin{definition}\label{relhyp}
Given   $Y\in \bS$, define a (relative) $n$-hypergroupoid  over $Y$ to be a morphism $f:X\to Y$ in $\bS$, such that  the   relative partial matching maps
$$
X_m \to M_{\L^m_k} (X)\by_{M_{\L^m_k} Y}Y_m 
$$
are surjective for all $k,m$ (i.e. $f$ is a Kan fibration), and isomorphisms for all $m> n$. In the terminology of \cite{glenn}, this says that $f$ is a Kan fibration which is an exact fibration in all dimensions $>n$.
\end{definition}

\begin{examples}
\begin{enumerate}
\item A  morphism $f:X \to Y$ between  $1$-hypergroupoids $X,Y$ makes $X$  a    $1$-hypergroupoid  over $Y$ if and only if it satisfies the path-lifting property that for all objects $x \in X_0$, and all morphisms $m$ in $\pi_fY$ with source $fx$, there exists a unique morphism $\tilde{m}$ in $\pi_fX$ with source $x$ and $f(\tilde{m})=m$.

\item A  $0$-hypergroupoid $f:X\to Y$ is just a space which is Cartesian over $Y$, in the sense that the maps
$$
X_n\xra{(\pd_i, f)} X_{n-1}\by_{Y_{n-1}, \pd_i}Y_n
$$
are all isomorphisms.
\end{enumerate}
\end{examples}

\begin{lemma}
Given a Kan fibration $f:X \to Y$ and a morphism $g:Y \to Z$ such that  $gf: X\to Z$ is  an $n$-hypergroupoid over $Z$,  the morphism $f$ must be an $n$-hypergroupoid. 
\end{lemma}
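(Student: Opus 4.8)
The plan is to check the two defining conditions of Definition \ref{relhyp} for $f$ directly. Surjectivity of the relative partial matching maps
$$
X_m \to M_{\L^m_k}(X)\by_{M_{\L^m_k}Y}Y_m
$$
for all $k,m$ is nothing but the hypothesis that $f$ is a Kan fibration, so that half of the definition is free. The only real content is to show that these maps are \emph{injective}, and hence isomorphisms, for every $m>n$.

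The key step is to exhibit the relative matching map of the composite $gf$ as a factorisation through that of $f$. First I would construct the natural comparison morphism
$$
M_{\L^m_k}(X)\by_{M_{\L^m_k}Y}Y_m \to M_{\L^m_k}(X)\by_{M_{\L^m_k}Z}Z_m,\qquad (a,y)\mapsto (a, g(y)),
$$
and check that it indeed lands in the stated fibre product. One then observes that post-composing the relative matching map of $f$ with this morphism recovers the relative matching map of $gf$, since both send $x\in X_m$ to the pair consisting of the image of $x$ in $M_{\L^m_k}(X)$ together with $(gf)(x)\in Z_m$.

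With the factorisation in hand the conclusion is immediate: because $gf$ is a relative $n$-dimensional hypergroupoid, its relative matching map is an isomorphism, in particular injective, for $m>n$; an injective map that factors through the relative matching map of $f$ forces the latter to be injective as well. Combined with the surjectivity supplied by the fibration hypothesis, this shows that the relative matching maps of $f$ are isomorphisms for all $m>n$ and surjective for all $k,m$, which is exactly the assertion. I expect no serious obstacle here; the only point needing genuine care is the well-definedness of the comparison morphism above, which follows from naturality of the matching-object construction $K\mapsto M_K(-)$ applied to $g$, ensuring compatibility with the maps into $M_{\L^m_k}Z$.
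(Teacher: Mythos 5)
Your proof is correct and follows essentially the same route as the paper: surjectivity of the relative partial matching maps is exactly the fibration hypothesis, and injectivity for $m>n$ is deduced by factoring the relative matching map of $gf$ through that of $f$, which is precisely what the paper means by ``uniqueness follows from uniqueness for $gf$.'' Your version just makes the comparison morphism $(a,y)\mapsto(a,g(y))$ and its well-definedness explicit, which the paper leaves implicit.
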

\begin{proof}
Existence of the lifts follows from $f$ being a fibration, and uniqueness follows from uniqueness for $gf$.
\end{proof}

\begin{definition}\label{skdef}
Let $\Delta_n \subset \Delta$ be the full subcategory on objects $\mathbf{i}$, for $i \le n$. The category of $n$-truncated simplicial sets is $\Set^{\Delta_n^{\op}}$. The forgetful functor $i_{n*}$ from $\bS$ to $\Set^{\Delta_n^{\op}}$ has a left adjoint $i_n^*$, and a right adjoint $i_n^!$.  The composites are denoted $\sk_n:= i_n^*i_{n*}$ and $\cosk_n:= i_n^!i_{n*}$, giving $\Hom(K, \cosk_nX)= \Hom(\sk_nK, X)$. We sometimes denote $i_n^*X$ by $X_{\le n}$.
\end{definition}

\begin{definition}\label{trelhyp}
Given   $Y\in \bS$, define a trivial    $n$-hypergroupoid  over $Y$ to be a morphism $f:X\to Y$ in $\bS$, such that  the   relative  matching maps
$$
X_m \to M_{\pd \Delta^m} (X)\by_{M_{\pd \Delta^m} Y}Y_m 
$$
are surjective for all $m$ (i.e. $f$ is a trivial Kan fibration), and isomorphisms for all $m\ge n$. 
\end{definition}

Note that this gives the following lemma immediately.
\begin{lemma}\label{ttruncate}
A morphism $f:X\to Y$ is a trivial    $n$-hypergroupoid over $Y$ if and only if $X= Y\by_{\cosk_{n-1}Y}\cosk_{n-1}X$, and the $(n-1)$-truncated morphism $X_{\le n-1}\to Y_{\le n-1}$ satisfies the conditions of Definition \ref{trelhyp} (up to level $n-1$).
\end{lemma}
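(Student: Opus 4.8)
The plan is to separate the two requirements of Definition \ref{trelhyp} according to the level $m$. Since an isomorphism is in particular surjective, the condition of Definition \ref{trelhyp} is equivalent to the conjunction of (i) surjectivity of the relative matching maps $X_m \to M_{\pd\Delta^m}X\by_{M_{\pd\Delta^m}Y}Y_m$ for $m \le n-1$, and (ii) these maps being isomorphisms for all $m \ge n$. Condition (i) involves only the simplices of $X$ and $Y$ in levels $\le n-1$, since $M_{\pd\Delta^m}X$ depends only on levels $\le m-1$; this is exactly the assertion that the truncation $X_{<n}\to Y_{<n}$ satisfies Definition \ref{trelhyp} up to level $n-1$. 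So the whole lemma reduces to showing that (ii) is equivalent to the pullback identity $X = Y\by_{\cosk_{n-1}Y}\cosk_{n-1}X$.

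For this key equivalence I would first record three facts. From the adjunction $\Hom(K,\cosk_{n-1}X)=\Hom(\sk_{n-1}K,X)$ one gets $(\cosk_{n-1}X)_m = M_{\sk_{n-1}\Delta^m}X$, and because $\sk_{n-1}\Delta^n = \pd\Delta^n$ this already matches the level-$n$ instance of (ii) with the level-$n$ instance of the pullback condition. Secondly, $\cosk_{n-1}X$ is $(n-1)$-coskeletal, i.e.\ its own matching maps $(\cosk_{n-1}X)_m \to M_{\pd\Delta^m}\cosk_{n-1}X$ are isomorphisms for all $m \ge n$, and likewise for $\cosk_{n-1}Y$. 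Thirdly, the matching functor $M_{\pd\Delta^m}(-)=\Hom_{\bS}(\pd\Delta^m,-)$ is a limit and so preserves the pullback defining $Z := Y\by_{\cosk_{n-1}Y}\cosk_{n-1}X$.

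Granting these, the backward implication is a direct computation: for $m \ge n$, preservation of pullbacks together with coskeletality give
$$
M_{\pd\Delta^m}Z = M_{\pd\Delta^m}Y\by_{(\cosk_{n-1}Y)_m}(\cosk_{n-1}X)_m ,
$$
and pulling this back along the matching map $Y_m \to M_{\pd\Delta^m}Y$ collapses it to $Z_m$ itself, so $Z$ already satisfies (ii); hence so does $X$ whenever $X = Z$. For the forward implication I would argue by induction on $m \ge n$ that the canonical map $\phi:X \to Z$ (an isomorphism below level $n$) is an isomorphism in level $m$. Assuming $\phi$ is an isomorphism in all levels $<m$, the objects $X$ and $Z$ agree below level $m$, so $M_{\pd\Delta^m}X = M_{\pd\Delta^m}Z$ and the relative matching targets of $X$ and $Z$ coincide; comparing the two relative matching maps through the naturality square, the one for $X$ is an isomorphism by (ii), the one for $Z$ by the computation just made, and the comparison of targets is the identity, forcing $\phi_m$ to be an isomorphism.

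The one genuinely delicate point is the claim that $\cosk_{n-1}$ produces an $(n-1)$-coskeletal object and that this coskeletality, combined with matching objects being limits, lets the relative matching target at every level $m \ge n$ be rewritten purely in terms of level-$m$ coskeleton data. This is precisely what converts the infinitely many isomorphism conditions of (ii) into the single pullback identity, and it is where I would spend the most care; everything else is the routine bookkeeping of which simplices each matching object sees.
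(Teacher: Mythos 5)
Your proof is correct, and it takes the same route the paper intends: the paper offers no written proof beyond the remark that the lemma follows from the adjunction $\Hom(K,\cosk_{n-1}X)=\Hom(\sk_{n-1}K,X)$, and your argument (splitting the conditions at level $n$, identifying $(\cosk_{n-1}X)_m$ with $M_{\sk_{n-1}\Delta^m}X$, using $(n-1)$-coskeletality and the fact that $M_{\pd\Delta^m}$ preserves pullbacks, then inducting on $m\ge n$) is exactly the standard unwinding of that remark. No gaps.
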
 

\begin{lemma} 
A morphism $f$ is  a trivial   $n$-hypergroupoid if and only if it is an  $n$-hypergroupoid and a weak equivalence. 
\end{lemma}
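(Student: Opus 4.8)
The plan is to split the statement into a model-categorical bookkeeping part, which is standard, and a purely combinatorial comparison of the two ``isomorphism'' conditions, which is the real content. Recall the standard fact that in $\bS$ a map is a trivial Kan fibration if and only if it is a Kan fibration and a weak equivalence. By Definitions \ref{relhyp} and \ref{trelhyp}, being a relative $n$-dimensional hypergroupoid \emph{includes} ``$f$ is a Kan fibration'', while being a trivial relative $n$-dimensional hypergroupoid \emph{includes} ``$f$ is a trivial Kan fibration''. Hence the surjectivity clauses and the weak-equivalence clause automatically match up, and the whole statement reduces to the following, assuming throughout that $f$ is a trivial Kan fibration (so all matching maps below are already surjective). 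Write $\alpha_m\co X_m \to M_{\pd\Delta^m}X\by_{M_{\pd\Delta^m}Y}Y_m$ for the relative matching map and $\beta^k_m\co X_m \to M_{\L^m_k}X\by_{M_{\L^m_k}Y}Y_m$ for the relative partial matching map; I must show
\[
\bigl[\,\alpha_m \text{ iso for all } m\ge n\,\bigr] \iff \bigl[\,\beta^k_m \text{ iso for all } k \text{ and all } m> n\,\bigr].
\]

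The key input is the pushout $\pd\Delta^m = \L^m_k \cup_{\pd\Delta^{m-1}}\Delta^{m-1}$ (the same decomposition used in Lemma \ref{higherdiagchar}), which on applying $\Hom_{\bS}(-,X)$ gives $M_{\pd\Delta^m}X = M_{\L^m_k}X\by_{M_{\pd\Delta^{m-1}}X}X_{m-1}$. Feeding this through the relative constructions, I would verify two things: first, that $\beta^k_m = \pi^k_m\circ \alpha_m$, where $\pi^k_m\co M_{\pd\Delta^m}X\by_{M_{\pd\Delta^m}Y}Y_m \to M_{\L^m_k}X\by_{M_{\L^m_k}Y}Y_m$ is the evident restriction; and second --- the crucial identity --- that $\pi^k_m$ is exactly the base change of $\alpha_{m-1}$ along the map $q\co M_{\L^m_k}X\by_{M_{\L^m_k}Y}Y_m \to M_{\pd\Delta^{m-1}}X\by_{M_{\pd\Delta^{m-1}}Y}Y_{m-1}$ sending a relative horn to its inner boundary. (Elementwise, an element of the source of $\pi^k_m$ is a triple $(a_{\mathrm{horn}},a_{\mathrm{face}},b)$, and forgetting $a_{\mathrm{face}}$ realises the source as the fibre product of $\alpha_{m-1}$ with $q$.)

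Granting this, the forward implication is immediate: for $m>n$ both $\alpha_m$ (since $m\ge n$) and $\pi^k_m$ (base change of the iso $\alpha_{m-1}$, as $m-1\ge n$) are isomorphisms, so $\beta^k_m=\pi^k_m\circ\alpha_m$ is an isomorphism. For the converse, injectivity of $\beta^k_m=\pi^k_m\alpha_m$ forces $\alpha_m$ injective; together with surjectivity (trivial fibration) this gives $\alpha_m$ iso for all $m>n$.

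The genuinely delicate point, which I expect to be the main obstacle, is the boundary case $m=n$ of the converse. Here I would look at $\pi^k_{n+1}=\beta^k_{n+1}\circ\alpha_{n+1}^{-1}$, which is an isomorphism by the cases just proved, and which is the base change of $\alpha_n$ along $q$. Base change being injective forces $\alpha_n$ injective \emph{provided $q$ is surjective}: if $\alpha_n(e_1)=\alpha_n(e_2)=b_0$ and $q(t)=b_0$, then $(t,e_1)$ and $(t,e_2)$ have the same image under the injective base change, so $e_1=e_2$. To prove $q$ surjective I would use a lifting argument: given a compatible pair $(c,d)$ in the target, set $b=\sigma_k d$ (so $\pd_k b = d$) and solve the lifting problem whose left-hand map is the cofibration $\pd\Delta^n\into\L^{n+1}_k$ (the boundary of the missing $k$-th face), whose bottom edge is the horn-boundary of $b$ and whose top edge is $c$; the square commutes by compatibility of $(c,d)$, and a lift exists because the trivial Kan fibration $f$ has the right lifting property against this monomorphism. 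Combined with surjectivity of $\alpha_n$ this yields $\alpha_n$ iso, completing the converse and hence the proof.
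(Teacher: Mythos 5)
Your proof is correct and follows essentially the same route as the paper's: the factorisation $\L^m_k \to \pd\Delta^m \to \Delta^m$ disposes of all $m>n$, and the boundary case $m=n$ is settled via the pushout $\pd\Delta^{n+1}=\L^{n+1}_k\cup_{\pd\Delta^n}\Delta^n$ together with a lift over the degenerate bottom simplex along the cofibration $\pd\Delta^n\into\L^{n+1}_k$. You phrase everything in terms of matching objects and base change where the paper speaks of (exact) right lifting properties, but this is only a change of language, not of argument.
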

\begin{proof}
The ``only if'' part is immediate. For the converse, note that $f$ is a trivial fibration, so has the right lifting property (RLP) with respect to all cofibrations, and consider the maps $\L^i_0 \to \pd\Delta^i \to \Delta^i$. Since $f$ has exact RLP with respect to $\L^i_0 \to \Delta^i$ for $i>n$, and RLP with respect to $\pd\Delta^i \to \Delta^i$ for all $i\ge n$, it follows that $f$ has exact RLP with respect to $\pd\Delta^i \to \Delta^i$ for all $i>n$. It therefore remains only to prove uniqueness of the lift for $i=n$. 

Consider the pushout $\pd \Delta^{n+1}=\L^{n+1}_0\cup_{\pd \Delta^n}\Delta^n$. Since $\pd\Delta^n \to \L^{n+1}_0$ is a cofibration, exact RLP with respect to $\pd\Delta^n \to \Delta^n$ will follow from exact RLP with respect to $\L^{n+1}_0\to \pd\Delta^{n+1}$. This now follows from exact RLP with respect to $\L^{n+1}_0\to \Delta^{n+1}$, since both $\L^{n+1}_0\to \pd\Delta^{n+1}$ and $\pd\Delta^{n+1}\to \Delta^{n+1}$ are cofibrations.
\end{proof}

\begin{definition}
Say that a morphism of $l$-truncated simplicial sets is an $n$-hypergroupoid if it satisfies the conditions of Definition \ref{relhyp} (up to level $l$).
\end{definition}

\begin{lemma}\label{truncate}
A morphism $f:X\to Y$ is an  $n$-hypergroupoid if and only if $X= Y\by_{\cosk_{n+1}Y}\cosk_{n+1}X$, and the $(n+2)$-truncated morphism $X_{\le n+2}\to Y_{\le n+2}$  is an $n$-hypergroupoid.
\end{lemma}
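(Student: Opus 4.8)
The plan is to reformulate both conditions in terms of relative matching maps and compare them dimension by dimension via the pushout structure of $\pd\Delta^m$. Two observations organise this. First, the canonical map $X \to Y\by_{\cosk_{n+1}Y}\cosk_{n+1}X$ is automatically an isomorphism in levels $\le n+1$, and in level $m\ge n+2$ it is the relative matching map $X_m \to M_{\sk_{n+1}\Delta^m}(X)\by_{M_{\sk_{n+1}\Delta^m}Y}Y_m$; filtering $\pd\Delta^m$ by skeleta and attaching the cells of dimensions $n+2,\dots,m-1$ one at a time shows, by a routine induction, that this whole family consists of isomorphisms if and only if the relative matching maps
$$\mu^m\co X_m \to M_{\pd\Delta^m}(X)\by_{M_{\pd\Delta^m}Y}Y_m$$
are isomorphisms for all $m\ge n+2$. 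Second, for each $k$ the pushout $\pd\Delta^m = \L^m_k\cup_{\pd\Delta^{m-1}}\Delta^{m-1}$ (attaching the $k$th face along its boundary) gives $M_{\pd\Delta^m}(X)=M_{\L^m_k}(X)\by_{M_{\pd\Delta^{m-1}}(X)}X_{m-1}$ and hence factors the relative partial matching map as
$$\lambda^m_k\co X_m \xra{\ \mu^m\ } M_{\pd\Delta^m}(X)\by_{M_{\pd\Delta^m}Y}Y_m \xra{\ \pi^m_k\ } M_{\L^m_k}(X)\by_{M_{\L^m_k}Y}Y_m,$$
where $\pi^m_k$ forgets the $k$th face. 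Reading off the fibres of $\pi^m_k$ from this pushout identifies them with the fibres of $\mu^{m-1}$, so $\pi^m_k$ is injective (resp. an isomorphism) as soon as $\mu^{m-1}$ is.

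For the forward implication, suppose $f$ is a relative $n$-dimensional hypergroupoid. Condition (ii) is then immediate, being the restriction of the hypothesis to levels $\le n+2$. For the coskeletal condition I must show $\mu^m$ is an isomorphism for every $m\ge n+2$. Since $\lambda^m_k=\pi^m_k\circ\mu^m$ is an isomorphism for all $m>n$, its factor $\mu^m$ is in particular injective for all $m>n$. Fixing $m\ge n+2$, we have $m-1\ge n+1>n$, so $\mu^{m-1}$ is injective and hence $\pi^m_k$ is injective; combined with $\lambda^m_k$ being an isomorphism, a one-line chase (pull an element back along $\lambda^m_k$ and invoke injectivity of $\pi^m_k$) shows $\mu^m$ is also surjective, hence an isomorphism.

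For the converse, assume the coskeletal condition, so that $\mu^m$ is an isomorphism for all $m\ge n+2$, together with the truncated hypergroupoid condition (ii). For $m\le n+2$ the surjectivity of $\lambda^m_k$ for all $k$, and its bijectivity for $n<m\le n+2$, are exactly (ii). For $m\ge n+3$ both $\mu^m$ and $\mu^{m-1}$ are isomorphisms, so $\pi^m_k$ is an isomorphism and therefore so is $\lambda^m_k=\pi^m_k\circ\mu^m$; this gives the required bijectivity, and a fortiori surjectivity, in all remaining dimensions.

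The main obstacle to guard against is that for a relative $n$-hypergroupoid the level-$(n+1)$ matching map $\mu^{n+1}$ need not be an isomorphism but only injective, as the relative $0$-dimensional (Cartesian) case already illustrates; consequently the naive induction ``$\mu^m$ iso needs $\mu^{m-1}$ iso'' fails at the bottom. The point of the argument above is to separate injectivity from surjectivity: injectivity of $\mu^{m-1}$ alone suffices to make $\pi^m_k$ injective, and that, together with $\lambda^m_k$ being an isomorphism for $m>n$, is exactly what forces $\mu^m$ to be an isomorphism from dimension $n+2$ on. The remaining care is to verify the fibre identification of $\pi^m_k$ over the correct base points, which follows directly from the pushout decomposition of $\pd\Delta^m$.
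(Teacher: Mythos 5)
Your proof is correct and follows essentially the same route as the paper: injectivity of the relative matching map $\mu^m$ for $m>n$ via the factorisation $\L^m_k\to\pd\Delta^m\to\Delta^m$, then the pushout $\pd\Delta^m=\L^m_k\cup_{\pd\Delta^{m-1}}\Delta^{m-1}$ to transfer injectivity of $\mu^{m-1}$ to the face-forgetting map and conclude $\mu^m$ is an isomorphism for $m\ge n+2$, with the converse reduced to levels $\le n+2$ by the same cell-attachment argument. The only cosmetic difference is that the paper phrases this with retractions and the specific horn $\L^i_0$, whereas you work with general $k$ and an explicit fibre identification.
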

\begin{proof}
For the first part, note that for $i \ge n+1$, the factorisation $\L^i_0\to \pd\Delta^i \to \Delta^i$ implies that the map
$$
X_i \to Y_i \by_{M_iY}M_iX
$$
has a retraction $\rho$, so is injective.
Since $\L^i_0 \to \pd \Delta^i$ is the pushout of $\pd\Delta^{i-1} \to \Delta^i$ along $\pd^0: \pd\Delta^{i-1} \to \L^i_0$, for $i \ge n+2$ the map
$$
M_iX \to M_iY \by_{M_{\L^i_0}Y}M_{\L^i_0}X 
$$
also has a retraction, so
$$ 
\rho:Y_i \by_{M_iY}M_iX \to Y_i \by_{M_{\L^i_0}Y}M_{\L^i_0}X \cong Y_i\by_{Y_i}X_i= X_i
$$
has a retraction, and so must be an isomorphism.

Thus the matching maps 
$
X_i \to Y_i \by_{M_iY}M_iX
$
are isomorphisms for all $i \ge n+1$, so
$$X= Y\by_{\cosk_{n+1}Y}\cosk_{n+1}X.$$

Finally, note that  $\sk_{n+1}\L^i_k \to \sk_{n+1}\Delta^i$ is an isomorphism for all $i \ge n+3$, so automatically has the exact LLP with respect to 
$X= Y\by_{\cosk_{n+1}Y}\cosk_{n+1}X$. This only leaves the lifting conditions from Definition \ref{relhyp} up to level $n+2$.
\end{proof}

\begin{remarks}
\begin{enumerate}
\item When $n=1$ and $Y$ is a point, we can compare this with the data required to define a groupoid. Levels $0$ and $1$ determine the objects, morphisms and identities, but the face maps from level $2$ are needed to define the multiplication, and hence to construct the groupoid. Only once the face maps from level $3$ have been defined can we ensure that the multiplication is associative.

\item When $n=0$, a  $0$-hypergroupoid $X \to Y$ is a Cartesian morphism. Thus level $0$ gives the fibres, level $1$ the descent datum (thereby determining the fibration), and level $2$ gives the cocycle condition, ensuring that the descent datum is effective.

\item In  Lemma \ref{ttruncate}, only the $(n-1)$-truncation was required  to describe a trivial  $n$-hypergroupoid over a base. In the $1$-dimensional case, this says that a contractible groupoid is just a set, with a unique morphism between any pair of points.
\end{enumerate}
\end{remarks}

Observe that an $n$-hypergroupoid $X$ is determined by $X_m$ for $m\le n+1$, while a trivial $n$-hypergroupoid $X$ is determined by $X_m$ for $m< n$. The next Lemma accounts for this apparent discrepancy.

\begin{lemma}\label{skeleta}
If $X \to Y$ is a relative  $n$-hypergroupoid (resp. a trivial $n$-hypergroupoid), and $K \to L$ is a trivial cofibration (resp. a cofibration) giving an isomorphism $K_{\le n-1} \to L_{\le n-1}$ on $(n-1)$-truncations, then the maps
$$
M_LX \to M_LY\by_{M_KY}M_KX
$$ 
are isomorphisms. 
\end{lemma}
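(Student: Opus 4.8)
The plan is to reduce the statement to the defining isomorphism conditions of Definitions \ref{relhyp} and \ref{trelhyp}, applied to a cellular decomposition of $K \to L$. For a monomorphism $i \colon A \to B$ in $\bS$, write $\Phi(i)$ for the induced relative matching map $M_B X \to M_B Y \by_{M_A Y} M_A X$. Since $M_{(-)}X = \Hom_{\bS}(-,X)$ sends colimits in its first argument to limits, the formation of $\Phi$ is compatible with the cell structure of $i$: if $B' = A'\cup_A B$ is a cobase change of $i$ then $\Phi(A'\to B')$ is a base change of $\Phi(i)$; for a composite $A \to B \to C$, $\Phi(A\to C)$ is an isomorphism whenever $\Phi(A\to B)$ and $\Phi(B\to C)$ are; and the isomorphism property is inherited under transfinite composition and retracts. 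Hence it suffices to check that $\Phi$ is an isomorphism on a set of generators out of which $K \to L$ may be built, and here the hypothesis that $\sk_{n-1}K \to \sk_{n-1}L$ is an isomorphism will pin down the dimensions of the generators that can occur.

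In the trivial case $K \to L$ is an arbitrary cofibration, hence a relative cell complex assembled from boundary inclusions $\pd\Delta^m \to \Delta^m$; as $\sk_{n-1}K \to \sk_{n-1}L$ is an isomorphism, no cells of dimension $<n$ are attached, so only $m \ge n$ arise. For such a generator $\Phi(\pd\Delta^m \to \Delta^m)$ is exactly the relative matching map $X_m \to M_{\pd\Delta^m}X\by_{M_{\pd\Delta^m}Y}Y_m$, which is an isomorphism for $m \ge n$ by Definition \ref{trelhyp}. The stability properties above then give that $\Phi(K\to L)$ is an isomorphism.

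In the remaining case $K \to L$ is a trivial cofibration, and the relevant generators are the horn inclusions $\L^m_k \to \Delta^m$, for which $\Phi(\L^m_k \to \Delta^m)$ is the relative matching map $X_m \to M_{\L^m_k}X\by_{M_{\L^m_k}Y}Y_m$, an isomorphism for every $m > n$ by Definition \ref{relhyp}. So the same argument applies provided $K\to L$ lies in the saturation (under cobase change, transfinite composition and retracts) of the horns $\{\L^m_k\to\Delta^m : m \ge n+1\}$. This is plausible on dimension grounds, since attaching $\L^m_k \to \Delta^m$ creates non-degenerate simplices precisely in dimensions $m-1$ and $m$, so leaving $\sk_{n-1}$ fixed forces $m \ge n+1$.

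The main obstacle is to make this last reduction rigorous: one must show that \emph{every} trivial cofibration inducing an isomorphism on $(n-1)$-skeleta lies in the saturated class generated by $\{\L^m_k \to \Delta^m : m \ge n+1\}$ (the reverse containment being the easy direction). I would prove this by applying the small object argument for this set of horns to factor $K \to L$ as $K \to L' \to L$, with $K \to L'$ in the saturated class and $L'\to L$ having the right lifting property against all $\L^m_k\to\Delta^m$ with $m \ge n+1$. Two-out-of-three shows $L' \to L$ is a weak equivalence, and it is again an isomorphism on $\sk_{n-1}$; promoting its horn-filling property to a boundary-filling property via the pushout $\pd\Delta^{m} = \L^{m}_0\cup_{\pd\Delta^{m-1}}\Delta^{m-1}$ exactly as in the proof of Lemma \ref{truncate}, and invoking uniqueness of fillers in dimensions $>n$, one finds $L'\to L$ is bijective on simplices, hence an isomorphism. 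This identifies $K \to L$ with the cellular map $K \to L'$ and completes the argument. The genuinely delicate point throughout is dimension $n$, where the hypergroupoid supplies no isomorphism of its own, so that one must rely on the weak equivalence $K \to L$ to cancel each new $n$-cell against an $(n+1)$-cell.
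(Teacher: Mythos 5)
Your overall strategy---reduce to a set of generating cofibrations via the fact that $M_{(-)}X$ turns colimits into limits, together with the stability of the isomorphism property under cobase change, composition, transfinite composition and retracts---is exactly the paper's (whose proof consists of asserting precisely the generation statement you are trying to prove), and your treatment of the trivial-hypergroupoid case via the skeletal filtration of a cofibration is complete and correct. The gap is in the other case, in your argument that a trivial cofibration inducing an isomorphism on $\sk_{n-1}$ lies in the saturation of $\{\L^m_k\to\Delta^m : m>n\}$. The concluding step, ``one finds $L'\to L$ is bijective on simplices, hence an isomorphism,'' is false: the small object argument attaches a cell for \emph{every} lifting problem, so $L'$ is in general vastly larger than $L$ (already for $n=0$, $K=\Delta^0\to L=\Delta^1$, the object $L'$ is infinite-dimensional while $L$ is not), and the ``uniqueness of fillers in dimensions $>n$'' you invoke has no source---$L'\to L$ is only known to have the right lifting property against the high horns, which gives existence of fillers but never uniqueness. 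The ``promotion'' via the pushout $\pd\Delta^{m}=\L^{m}_0\cup_{\pd\Delta^{m-1}}\Delta^{m-1}$ also runs the wrong way: RLP against $\L^m_0\to\Delta^m$ does not yield RLP against $\L^m_0\to\pd\Delta^m$, since a lifting problem against the latter need not extend to one against the former. (In the paper this pushout is used only in the presence of \emph{exact} lifting and of a trivial fibration, neither of which you have for $L'\to L$.)

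What you actually need is the weaker assertion that $K\to L$ is a \emph{retract} of $K\to L'$, which your stability properties do cover; but the standard retract argument then requires a lift in the square with $K\to L$ on the left and $L'\to L$ on the right. For $n=0$ this is supplied by Quillen's theorem that a Kan fibration which is a weak equivalence has the right lifting property against all cofibrations; for general $n$ one needs the analogue for maps having RLP only against horns of dimension $>n$, namely that such a map which is also a weak equivalence has RLP against every cofibration inducing an isomorphism on $\sk_{n-1}$. That statement is the genuine content of the generation claim; it does not follow formally from two-out-of-three, and it is the point at which your write-up (and, to be fair, the paper's one-line proof) stops short. Your closing remark correctly identifies where the difficulty sits---each new $n$-cell must be cancelled against an $(n+1)$-cell using the weak equivalence---but the small-object-argument factorisation as you have deployed it does not effect that cancellation.
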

\begin{proof}
This is essentially the same as the proof that the maps $\L^m_k \to \Delta^m$ (resp. $\pd\Delta^m \to \Delta^m$) generate the trivial cofibrations (resp. all cofibrations) in $\bS$. The same proof adapts to show that every map $K \to L$ satisfying the conditions above is a retract of a transfinite composition of  pushouts of maps $\L^m_k \to \Delta^m$ for $m>n$ (resp. maps $\pd\Delta^m \to \Delta^m$ for $m\ge n$).
 \end{proof}

Now, for a $1$-hypergroupoid $X$, the characterisation of $X$ as the nerve of a groupoid means that we can describe each set $X_m$ in terms of $X_0, X_1$ by
\[
 X_m\cong \overbrace{X_1\by_{\pd_0,X_0,\pd_1}X_1\by_{\pd_0,X_0,\pd_1}  \ldots \by_{\pd_0,X_0,\pd_1}X_1}^m.
\]
The following lemma gives an analogue  of this result for $n$-hypergroupoids. 

\begin{definition}\label{delta*}
Define $\Delta_*$ to be the subcategory of the ordinal number category $\Delta$ containing only  those morphisms $f$ with $f(0)=0$.  Given a category $\C$, define the category $s_+\C$ of almost simplicial diagrams   over $\C$ to consist of  functors $\Delta_*^{\op} \to \C$. Thus an almost simplicial object $X_*$ consists of objects $X_n \in \C$, with all of the operations $\pd_i, \sigma_i$ of a simplicial diagram except $\pd_0$, and satisfying the usual relations. 
\end{definition}


\begin{lemma}\label{truncate2}
If $X$ is a relative  $n$-hypergroupoid over $Y$, then the underlying almost simplicial set over $Y$ is determined by the truncation $X_{\le n}$.
\end{lemma}
\begin{proof}
We can write $\Delta^m$ as the left cone $(\Delta^{m-1})^{\lhd}$, i.e. as the join $\Delta^0 \star \Delta^{m-1}$. Consider the subspaces $K^m := (\sk_{n-1}\Delta^{m-1})^{\lhd} \subset (\Delta^{m-1})^{\lhd}= \Delta^m$; the map $K^m \to \Delta^m$ is a trivial cofibration inducing an isomorphism on $(n-1)$-skeleta. Thus Lemma \ref{skeleta} gives $X_n \cong M_{K^n}X\by_{M_{K^nY}}Y_n$. 

%
Now, $K^m= \sk_n(K^m)$, so $M_{K^m}X$ is determined by $X_{\le n}$. For the  degeneracy maps $\sigma^i\co \Delta^m \to \Delta^{m-1}$ we have $\sigma^i(K^m) \subset K^{m+1}$, and for the face maps  $\pd^i\co \Delta^m \to \Delta^{m+1}$ we have $\pd^i(K^m) \subset K^{m+1}$ provided $i>0$. Thus the operations $\sigma_j$ and $\pd_i$ for $i>0$ on $X$ can be recovered from the corresponding maps $\sigma_j\co M_{K^m}X \to M_{K^{m+1}}X$, $\pd_i\co M_{K^m}X \to M_{K^{m-1}}X$. 
\end{proof}

\begin{definition}
Given a simplicial set $K$ and a simplicial object $X_{\bt}$ in a category $\C$, define the  object $X^K \in s\C$ by 
$$
(X^K)_n:= \Hom_{\bS}(K\by \Delta^n, X),
$$
for $\Hom_{\bS}$ defined as in Definition \ref{rKan}.
Thus $M_KX= (X^K)_0$.
\end{definition}

\begin{lemma}\label{powers}
If $f:X \to Y$ is an   $n$-hypergroupoid, and $g:K \to L$ a cofibration  in $\bS$, then the map
$$
F:X^L\to Y^L\by_{Y^K}X^L 
$$
is an  $n$-hypergroupoid, which is trivial if either $f$ or $g$ is trivial. Moreover, if $g_{\le n-1}:K_{\le n-1} \to L_{\le n-1}$ is an isomorphism, then $F$ is a  $0$-hypergroupoid.
\end{lemma}
\begin{proof}
 This amounts to describing when a diagram
$$
\xymatrix{ (A\by L)\cup_{(A \by K)}(B\by K) \ar[r] \ar[d]& X \ar[d]^f \\
 B\by L \ar@{-->}[ur] \ar[r] &Y}
 $$
admits the lifting shown, and determining when the lift is unique. For all cofibrations $h:A \to B$, the map $h':(A\by L)\cup_{(A \by K)}(B\by K) \to B\by L$ is  a  cofibration, so the lift must exist if $f$ is trivial. If either $g$ or $h$ is trivial, then $h'$ is trivial, so the lift exists.

Now, 
$$
((A\by L)\cup_{(A \by K)}(B\by K))_{\le n-1} \to (B \by L)_{\le n-1}
$$
is an isomorphism whenever either of the maps $A_{\le n-1} \to B_{\le n-1}$ or $K_{\le n-1} \to L_{\le n-1}$ is an isomorphism, so  Lemma \ref{skeleta} then implies that the lift is unique in this case. 
\end{proof}

\begin{corollary}\label{loopsgpd}
If $f:X \to Y$ is an   $n$-hypergroupoid, then 
$$
X^{\Delta^n}\to Y^{\Delta^n}\by_{Y^{\pd \Delta^n}}X^{\Delta^n} 
$$
is a  $0$-hypergroupoid.
\end{corollary}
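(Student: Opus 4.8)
The plan is to deduce this immediately from Lemma \ref{powers} by specialising the cofibration $g$. First I would set $K=\pd\Delta^n$ and $L=\Delta^n$, with $g$ the boundary inclusion $\pd\Delta^n \into \Delta^n$; this is a cofibration in $\bS$, so Lemma \ref{powers} applies, and its corner map
$$
F:X^{\Delta^n}\to Y^{\Delta^n}\by_{Y^{\pd\Delta^n}}X^{\pd\Delta^n}
$$
is exactly the morphism in the statement. The lemma already guarantees that $F$ is a relative $n$-dimensional hypergroupoid; to sharpen this to the relative $0$-dimensional conclusion it remains only to check the hypothesis of the final clause of Lemma \ref{powers}, namely that $\sk_{n-1}g$ is an isomorphism.

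The sole point requiring verification is thus that $\sk_{n-1}\pd\Delta^n \to \sk_{n-1}\Delta^n$ is an isomorphism. This holds because $\Delta^n$ has a single nondegenerate simplex not contained in its boundary, namely the top $n$-cell $\id_{[n]}$, so $\pd\Delta^n$ and $\Delta^n$ coincide in all dimensions $<n$. Passing to $(n-1)$-skeleta therefore discards the only difference between them, and $g$ becomes an isomorphism on $(n-1)$-skeleta.

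With this observation in hand, the final clause of Lemma \ref{powers} applies directly and shows that $F$ is a relative $0$-dimensional hypergroupoid, which is the assertion. I do not expect any genuine obstacle here: all the substantive work is contained in Lemma \ref{powers}, and the corollary is merely the instance $(K,L)=(\pd\Delta^n,\Delta^n)$, the only subtlety being the elementary skeleton computation above.
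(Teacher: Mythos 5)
Your proposal is correct and is exactly the derivation the paper intends: the corollary is stated without a separate proof precisely because it is the instance $(K,L)=(\pd\Delta^n,\Delta^n)$ of the final clause of Lemma \ref{powers}, and your observation that $\sk_{n-1}(\pd\Delta^n)\to\sk_{n-1}(\Delta^n)$ is an isomorphism is the only thing that needs checking. (You also correctly read the target of the corner map as $Y^{\Delta^n}\by_{Y^{\pd\Delta^n}}X^{\pd\Delta^n}$, fixing the evident typo in the displayed statement.)
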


The following definition is taken from \cite[Remarks VI.1.4.1]{Ill2}:
\begin{definition}\label{decdef}
Define the d\'ecalage functor $\Dec_+ : \bS \to \bS$ by $\Dec_+ (X)_n=X_{n+1}$, with $\pd_i^{\Dec_+  X}=\pd_{i}^X$ and $\sigma_i^{\Dec_+  X}=\sigma_{i}^X$. This is a comonad, with counit $\pd_{n+1}^X: \Dec_+ (X)_n \to X_n$, and comultiplication $\sigma_{n+1}^X : \Dec_+ (X)_n \to \Dec_+ ^2(X)_n$.  
\end{definition}

\begin{remark}
$\Dec_+$ is the functor denoted by $\mathrm{DEC}$ in \cite{glenn}, and known to homotopy theorists as the simplicial path space $P$.
\end{remark}

\begin{lemma}\label{decgood}
The maps $ X_0 \xra{\sigma_0^X} \Dec_+ (X)\xra{\pd_0^X} X_0 $ form a deformation retract, for all $X \in \bS$. 
\end{lemma}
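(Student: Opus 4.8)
The plan is to show that these two maps exhibit $X_0$, regarded as a constant simplicial set, as a \emph{strong} deformation retract of $\Dec_+(X)$; the retraction identity is immediate, and essentially all the content lies in producing the contracting homotopy.

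First I would spell out the two maps as morphisms of simplicial sets. Writing $\Dec_+(X)_n = X_{n+1}$, the map $\sigma_0^X$ is the iterated degeneracy $\sigma_0^{n+1}\co X_0 \to X_{n+1}$, and $\pd_0^X$ is the iterated face $\pd_0^{n+1}\co X_{n+1}\to X_0$. Using the simplicial identities, which yield $\pd_0^{n}\pd_i = \pd_0^{n+1}$ and $\pd_i\sigma_0^{n+1} = \sigma_0^n$ for all $0\le i\le n$ (together with the analogous relations for the remaining degeneracies), a routine induction shows that $\{\pd_0^{n+1}\}_n$ and $\{\sigma_0^{n+1}\}_n$ genuinely assemble into simplicial maps out of, resp.\ into, the constant object $X_0$, and that $\pd_0^{n+1}\sigma_0^{n+1} = \id_{X_0}$ follows from $\pd_0\sigma_0 = \id$. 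This settles the retraction condition $\pd_0^X\circ\sigma_0^X = \id$.

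For the homotopy I would exploit the cone structure underlying $\Dec_+$. Via Yoneda, $\Dec_+(X)_n = X_{n+1} = \Hom_{\bS}(\Delta^{n+1},X)$, and $\Delta^{n+1} = \Delta^n\star\Delta^0$ is the cone on $\Delta^n$ whose apex is the final vertex $n+1$; indeed the operators $\pd_i,\sigma_i$ ($i\le n$) of $\Dec_+$ are exactly those that fix this apex, while the comonad counit $\pd_{n+1}$ and comultiplication $\sigma_{n+1}$ act on it (Definition \ref{decdef}). To contract the cone to its apex I would introduce, for each $n$, the order-preserving map $\gamma_n\co [n+1]\times[1]\to[n+1]$ with $\gamma_n(v,0)=v$ and $\gamma_n(v,1)=n+1$, and check that the family $(\gamma_n)_n$ is natural in $[n]$ with respect to the maps $\theta\star\id_{[0]}$. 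Precomposition with $\gamma_\bullet$ then defines a homotopy $H\co \Dec_+(X)\times\Delta^1\to\Dec_+(X)$: since $\gamma_n(-,0)=\id$ we get $H|_{\Dec_+(X)\times\{0\}}=\id$, whereas $\gamma_n(-,1)$ is constant at the apex and factors as $[n+1]\onto[0]\into[n+1]$, so $H|_{\Dec_+(X)\times\{1\}}$ is restriction to the last vertex followed by total degeneracy, i.e.\ exactly $\sigma_0^X\circ\pd_0^X$. As $\gamma_n$ fixes the apex, $H$ is constant on the image of $\sigma_0^X$, so $H$ is a strong deformation retraction.

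The main obstacle is purely bookkeeping: verifying that $(\gamma_n)_n$ is natural (equivalently, that precomposition respects all face and degeneracy operators of both $\Dec_+(X)$ and $\Delta^1$) and correctly identifying the two endpoints of $H$ with $\id$ and $\sigma_0^X\circ\pd_0^X$. Equivalently, and perhaps more cleanly, I could phrase the same contraction through the top extra degeneracy $\sigma_{n+1}^X\co \Dec_+(X)_n\to\Dec_+(X)_{n+1}$, which satisfies $\pd_{n+1}\sigma_{n+1}=\id$ together with $\pd_i\sigma_{n+1}=\sigma_n\pd_i$ and $\sigma_i\sigma_{n+1}=\sigma_{n+2}\sigma_i$ for $i\le n$; a top extra degeneracy is precisely the datum of a simplicial contraction, and again the only care needed is to match its endpoints. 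Either way the simplicial identities do all the work, and I expect no genuine difficulty beyond keeping the indices straight.
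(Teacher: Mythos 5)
Your argument is correct and complete. Note that the paper gives no proof of this lemma at all --- it simply cites Duskin's \S 2.6 --- so there is nothing to compare against except the standard literature, and what you have written is precisely the standard argument that the citation stands in for. Your identifications check out: with the paper's convention $\Dec_+(X)_n=X_{n+1}$, $\pd_i^{\Dec_+X}=\pd_i^X$, $\sigma_i^{\Dec_+X}=\sigma_i^X$ for $i\le n$, the new vertex is indeed the top one, $\pd_0^{n+1}x$ is the last vertex of $x$, and the simplicial identities $\pd_0^n\pd_i=\pd_0^{n+1}$, $\pd_0^{n+2}\sigma_i=\pd_0^{n+1}$, $\pd_i\sigma_0^{n+1}=\sigma_0^n$ give both that the two maps are simplicial and that the composite is the identity. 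Your homotopy $H(x,\phi)=g_\phi^*x$, with $g_\phi(v)=v$ or $n+1$ according to the value of the extension of $\phi$ sending the apex to $1$, is natural because $(\theta\star\id)\circ g_{\phi\theta}=g_\phi\circ(\theta\star\id)$, has the right endpoints, and is constant on degenerate simplices coming from $X_0$, so you even get a \emph{strong} deformation retract. Your two formulations are genuinely the same homotopy: unwinding the classical ``extra degeneracy implies contractibility'' construction applied to $\sigma_{n+1}$ produces exactly the operators $g_\phi$, and that version is the one most often found written out (and is presumably what Duskin's \S 2.6 contains). The only caveat is that your second formulation is left as a sketch; if you want a self-contained proof, the first formulation is already complete and you can drop the second.
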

\begin{proof}
See \cite{duskinmatrices} \S 2.6.
\end{proof}

The following is immediate:
\begin{lemma}
A left adjoint to $\Dec_+ $ is given by the right cone $(-)^{\rhd}$, defined on simplices by $(\Delta^n)^{\rhd}= \Delta^{n+1}$, with $(\Delta^{n-1}\xra{\pd^i} \Delta^n)^{\rhd}=(\Delta^n\xra{\pd^i} \Delta^{n+1})$ and $(\Delta^{n+1}\xra{\sigma^i}\Delta^n)^{\rhd} = (\Delta^{n+2}\xra{\sigma^i}\Delta^{n+1})$. 
\end{lemma}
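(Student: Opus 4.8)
The plan is to recognise $\Dec_+$ as the restriction functor along an endofunctor of the simplex category, whereupon the existence of a left adjoint is formal and its description on representables is forced. Let $D\co \Delta \to \Delta$ be the functor $[n] \mapsto [n+1]$ which on an order-preserving map $g\co [m]\to[n]$ is given by $D(g)(j)=g(j)$ for $j \le m$ and $D(g)(m+1)=n+1$; concretely $D$ is the join $(-)\star [0]$ adjoining a new top vertex and sending new top to new top. A direct check shows $D(\pd^i)=\pd^i\co [n]\to[n+1]$ and $D(\sigma^i)=\sigma^i\co [n+2]\to[n+1]$, since the coface omitting the value $i\le n$ (resp.\ the codegeneracy repeating $i\le n$) is compatible with appending a top element fixed by the map. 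Writing a simplicial set as a presheaf $X\co \Delta^{\op}\to \Set$, one verifies that $\Dec_+X = X\circ D^{\op}$: indeed $(X\circ D^{\op})([n])=X([n+1])=X_{n+1}$, and the above computation of $D$ on cofaces and codegeneracies recovers exactly the operators $\pd_i^{\Dec_+ X}=\pd_i^X$ and $\sigma_i^{\Dec_+ X}=\sigma_i^X$ of Definition \ref{decdef}. Thus $\Dec_+=D^*$ is a restriction functor.

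Since $\Dec_+$ is restriction along a functor of small categories between presheaf categories, it automatically admits a left adjoint $F:=D_!$, the left Kan extension along $D$; this is a genuine pointwise construction (the target $\Set$ is cocomplete) and needs no appeal to an adjoint functor theorem. Being a left adjoint, $F$ preserves all colimits and is determined by its restriction to representables. The universal property of $D_!$ gives the standard identity $D_!\circ y = y\circ D$, where $y$ denotes the Yoneda embedding, so $F(\Delta^n)=\Delta^{D[n]}=\Delta^{n+1}$, and $F$ carries the generating coface $\pd^i\co \Delta^{n-1}\to\Delta^n$ and codegeneracy $\sigma^i\co\Delta^{n+1}\to\Delta^n$ to $y(D\pd^i)$ and $y(D\sigma^i)$, i.e.\ to $\pd^i\co\Delta^n\to\Delta^{n+1}$ and $\sigma^i\co\Delta^{n+2}\to\Delta^{n+1}$ respectively. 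These are precisely the formulae in the statement.

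Finally, one confirms the adjunction directly on representables by Yoneda:
$$
\Hom_{\bS}(F(\Delta^n), X)=\Hom_{\bS}(\Delta^{n+1}, X)=X_{n+1}=(\Dec_+ X)_n=\Hom_{\bS}(\Delta^n, \Dec_+ X),
$$
naturally in $[n]$ and $X$. Since both $K\mapsto \Hom_{\bS}(F(K),X)$ and $K\mapsto\Hom_{\bS}(K,\Dec_+ X)$ carry colimits in $K$ to limits and every $K\in\bS$ is a colimit of representables, the bijection extends to all $K$, establishing $F\dashv \Dec_+$. The only real content, as the ``immediate'' label suggests, is the combinatorial bookkeeping of the first paragraph verifying that appending a fixed top vertex sends $\pd^i\mapsto\pd^i$ and $\sigma^i\mapsto\sigma^i$ with indices and dimensions shifted as stated; this is the sole place where an index error could creep in, and is the step I would check most carefully.
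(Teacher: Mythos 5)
Your proof is correct: the paper offers no argument (the lemma is labelled ``immediate''), and your identification of $\Dec_+$ as restriction $D^*$ along the join functor $D=(-)\star[0]\co\Delta\to\Delta$, with $F=D_!$ and $D_!\circ y=y\circ D$ forcing the stated formulae on representables, is exactly the standard justification the paper has in mind. The index bookkeeping in your first paragraph is right — $D$ hits every coface and codegeneracy except the top ones $\pd^{n+1},\sigma^{n+1}$, matching the faces and degeneracies forgotten by $\Dec_+$ — so nothing is missing.
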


\begin{corollary}\label{fibres}
If $f: X\to Y$ is an   $n$-hypergroupoid, then  $\Dec_+  (X) \to \Dec_+ (Y)\by_YX$ is an $(n-1)$-hypergroupoid. 
\end{corollary}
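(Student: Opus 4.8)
The plan is to use the adjunction $F \dashv \Dec_+$ from the Lemma immediately preceding, which exhibits the left adjoint $F$ to $\Dec_+$, in order to translate the defining lifting conditions for $\Dec_+(X) \to \Dec_+(Y)\by_Y X$ directly into the hypergroupoid conditions on $f$. Recall (Definition \ref{relhyp}) that asserting $\Dec_+(X) \to \Dec_+(Y)\by_Y X$ is a relative $(n-1)$-dimensional hypergroupoid amounts to saying it has the right lifting property against every horn inclusion $\L^m_k \to \Delta^m$, with the lift moreover unique whenever $m > n-1$. I would therefore analyse an arbitrary lifting square with top edge $\L^m_k \to \Dec_+(X)$ and bottom edge $\Delta^m \to \Dec_+(Y)\by_Y X$.

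First I would unwind both edges using $\Hom_{\bS}(K, \Dec_+ W) = \Hom_{\bS}(FK, W)$. Since $F(\Delta^m) = \Delta^{m+1}$, the component $\Delta^m \to \Dec_+(Y)$ is the same datum as a full simplex $v: \Delta^{m+1} \to Y$, while the component $\Delta^m \to X$ of the pullback is a simplex $w: \Delta^m \to X$; the pullback condition (using that the counit $\Dec_+(Y) \to Y$ corresponds to the inclusion $\Delta^m \into \Delta^{m+1}$ of the facet opposite the last vertex) says precisely that $f\circ w$ is the $(m+1)$th face $\pd_{m+1} v$ of $v$. The combinatorial point I would establish next is that $F$, being a left adjoint and hence colimit-preserving, sends $\L^m_k \into \Delta^m$ to the inclusion into $\Delta^{m+1}$ of the union of the facets indexed by $\{0,\dots,m\}\setminus\{k\}$ — that is, of all facets of $\Delta^{m+1}$ except the two numbered $k$ and $m+1$. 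Thus the top edge $u : F\L^m_k \to X$ fills every facet except those numbered $k$ and $m+1$.

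The heart of the argument, and the step I expect to be the main obstacle to state cleanly, is the observation that the pullback datum supplies exactly the missing facet $m+1$: compatibility of the square forces $w$ to agree with $u$ along their common boundary, so $u$ and $w$ glue to a single map $\tilde u : \L^{m+1}_k \to X$ out of the genuine $k$th horn of $\Delta^{m+1}$, and the conditions $f u = v|_{F\L^m_k}$ and $f w = \pd_{m+1} v$ combine to give $f \tilde u = v|_{\L^{m+1}_k}$. We are then left with an ordinary lifting problem for $f$ against $\L^{m+1}_k \to \Delta^{m+1}$.

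Finally I would invoke the hypothesis that $f$ is a relative $n$-dimensional hypergroupoid: such a lift $s : \Delta^{m+1} \to X$ exists for every $m$ and $k$, and is unique once $m+1 > n$. Reading $s \in X_{m+1} = (\Dec_+ X)_m$ back through the adjunction produces a lift in the original square, and one checks directly from $s|_{\L^{m+1}_k} = \tilde u$ and $f s = v$ that it restricts to $u$ on $\L^m_k$ and projects to the prescribed element of $\Dec_+(Y)\by_Y X$. This correspondence is a bijection between the two sets of lifts, so existence for all $m,k$ and uniqueness for $m > n-1$ transfer verbatim, which is exactly the assertion that $\Dec_+(X)\to \Dec_+(Y)\by_Y X$ is a relative $(n-1)$-dimensional hypergroupoid.
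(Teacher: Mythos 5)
Your proposal is correct and follows essentially the same route as the paper: both use the left adjoint $F$ of $\Dec_+$, compute that $F(\L^m_k)$ is the union of all facets of $\Delta^{m+1}$ except the $k$th and $(m+1)$th, identify the glued object $F(\L^m_k)\cup_{\L^m_k}\Delta^m$ (your $\tilde u$-domain) with $\L^{m+1}_k$, and then read off existence and uniqueness of lifts from the hypergroupoid conditions on $f$ at level $m+1$. The only difference is expository: the paper states the gluing as a pushout along the natural transformation $\id\to F$, whereas you unwind the two components of the pullback explicitly.
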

\begin{proof}
It suffices to describe the maps $\alpha^m_k:(\L^m_k)^{\rhd}\cup_{\L^m_k}\Delta^m \to (\Delta^m)^{\rhd}$, where the natural transformation $\id \to (-)^{\rhd}$ is adjoint to the co-unit $\Dec_+  \to \id$ of the adjunction. Now, $(\L^m_k)^{\rhd}$ is the union of all faces of $\Delta^{m+1}$ except the $k$th and the $(m+1)$th, so $(\L^m_k)^{\rhd}\cup_{\L^m_k}\Delta^m = \L^{m+1}_{k}$, and $\alpha^m_k$ is just the natural inclusion $\L^{m+1}_{k} \into \Delta^{m+1}$.     

The result now follows immediately since $\alpha^m_k$ lifts with respect to $(-)^{\rhd}$ for all $m$, and lifts uniquely for $m > n-1$. 
\end{proof}

\begin{remark}
Given an $n$-hypergroupoid $X$, this gives us a way to regard the homotopy fibre product $X_0 \by^h_X X_0$ as an $(n-1)$-hypergroupoid, since a model for it is given by the $(n-1)$-hypergroupoid
$$
\Dec_+ (X)\by_X X_0,
$$
as $\Dec_+ (X) \to X$ is a fibration. This object is usually called the path-homotopy complex (\cite{duskinmatrices} \S 2.6).
\end{remark}

\begin{lemma}\label{decvgood}
If $f: X\to Y$ is a relative  $n$-hypergroupoid, then $\Dec_+  (X) \to \Dec_+ (Y)\by_{Y_0}X_0$ is a trivial relative $n$-hypergroupoid.
\end{lemma}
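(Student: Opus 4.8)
The plan is to verify the conditions of Definition \ref{trelhyp} directly, by transporting the lifting problems for the map
$$
\phi\colon \Dec_+(X) \to \Dec_+(Y)\by_{Y_0}X_0
$$
across the adjunction $F \dashv \Dec_+$, exactly in the style of the proof of Corollary \ref{fibres}. Here the two structure maps defining the fibre product are the retraction $\pd_0^Y\colon \Dec_+(Y) \to Y_0$ of Lemma \ref{decgood} and $f_0\colon X_0 \to Y_0$, and $\phi = (\Dec_+(f),\pd_0^X)$ lands in the fibre product by naturality of that retraction. Note that, since $\Dec_+$ keeps the faces $\pd_0,\dots,\pd_m$, iterating $\pd_0$ records the \emph{final} vertex, so $\pd_0^X$ extracts the value of a simplex at its last vertex.

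First I would compute the effect of the left adjoint $F$ on boundaries: for $m \ge 1$ one has $F(\pd\Delta^m) = \L^{m+1}_{m+1}$, the horn obtained by omitting the last face, because $F$ preserves colimits and sends each coface $\pd^i\colon \Delta^{m-1} \to \Delta^m$ (for $0 \le i \le m$) to $\pd^i\colon \Delta^m \to \Delta^{m+1}$. Thus, by adjunction, a commutative square with left-hand map $\pd\Delta^m \to \Delta^m$ and right-hand map $\phi$ is the same as a horn $\hat a\colon \L^{m+1}_{m+1} \to X$, an element $\hat b \in Y_{m+1}$ with $\hat b|_{\L^{m+1}_{m+1}}=f\hat a$, and (from the $X_0$-component) a point $c \in X_0$; a lift of the square corresponds to an $\hat\ell \in X_{m+1}$ with $\hat\ell|_{\L^{m+1}_{m+1}} = \hat a$ and $f(\hat\ell)=\hat b$.

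The only point needing care is the $X_0$-factor, and this is where the last-vertex retraction does its work. For $m \ge 1$ every vertex of $\Delta^{m+1}$ already lies in $\L^{m+1}_{m+1}$, so the constraint $\pd_0^X\ell = c$ is automatic once $\hat\ell$ extends $\hat a$ (both sides equal the value of $\hat a$ at the final vertex, as $\Delta^m$ is connected). For $m=0$, where $F(\pd\Delta^0)=\emptyset$, the point $c$ instead supplies precisely the image of the single vertex of $\L^1_1$. In either case the combined data $(\hat a,c)$ is exactly a map $\L^{m+1}_{m+1} \to X$ together with a compatible $\hat b$, so lifting the square against $\phi$ is equivalent to the relative matching problem for $f$ against $\L^{m+1}_{m+1} \to \Delta^{m+1}$.

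It then remains to read off the two conditions of Definition \ref{trelhyp}. Surjectivity of the relative matching maps of $\phi$ in every degree $m$ amounts to $f$ having the right lifting property against $\L^{m+1}_{m+1} \to \Delta^{m+1}$ for all $m$, which holds because a relative $n$-dimensional hypergroupoid is in particular a Kan fibration (Definition \ref{relhyp}). Bijectivity of the relative matching maps of $\phi$ for $m \ge n$ amounts to the relative matching map of $f$ at the horn $\L^{m+1}_{m+1}$ being an isomorphism for $m+1 > n$, which is exactly the defining isomorphism condition for $f$ in dimensions greater than $n$. Hence $\phi$ is a trivial relative $n$-dimensional hypergroupoid. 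I expect the main obstacle to be the careful bookkeeping of the $X_0$-factor together with the degenerate case $m=0$, that is, checking that pulling back over $Y_0$ via the last-vertex retraction converts the boundary inclusion $\pd\Delta^m\to\Delta^m$ into the full last-face horn inclusion $\L^{m+1}_{m+1}\to\Delta^{m+1}$, rather than into something weaker that would only yield a relative $(n-1)$-dimensional hypergroupoid as in Corollary \ref{fibres}.
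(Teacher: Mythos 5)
Your proposal is correct and follows essentially the same route as the paper: transporting the relative matching problem across the adjunction $F \dashv \Dec_+$ (with the last-vertex retraction supplying the $\pi_0$-section into $F(K)$) and computing that the resulting inclusion is $\L^{m+1}_{m+1} \to \Delta^{m+1}$, so that surjectivity for all $m$ and bijectivity for $m \ge n$ follow from the defining properties of $f$ at level $m+1$. The paper compresses the bookkeeping of the $X_0$-factor and the $m=0$ case into the single pushout $F(\pd\Delta^m)\cup_{\pi_0\pd\Delta^m}\pi_0(\Delta^m)$, which your more explicit case analysis correctly reproduces.
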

\begin{proof}
The retraction $\Dec_+X \to X_0$ corresponds to a section  $\pi_0K \to K^{\rhd}$, functorial in $K$. It thus suffices to describe the maps $(\pd\Delta^m)^{\rhd}\cup_{\pi_0\pd\Delta^m}\pi_0(\Delta^m) \to (\Delta^m)^{\rhd}$. Calculation  gives this map as $ \L^{m+1}_{m+1} \to \Delta^{m+1}$. Therefore the $m$th relative matching map of $\Dec_+  (X) \to \Dec_+ (Y)\by_{Y_0}X_0$ is surjective for $m<n$, and an isomorphism for $m \ge n$, as required.
\end{proof}

\section{Homotopy $n$-hypergroupoids in model categories}\label{nhypaff}

Fix a pseudo-model category $\cA$ (as in \S \ref{coversn}), and a class $\oC$ of morphisms in $\Ho(\cA)$, containing all  isomorphisms and  stable under composition  and homotopy pullback. Say that a morphism in $\cA$ is a $\oC$-morphism if its image in $\Ho(\cA)$ is so. Refer to fibrations in $\oC$ as $\oC$-fibrations. 

\subsection{Definitions and basic properties}\label{nhypaffdefsn}

\begin{definition}\label{npreldef}
Given  $Y\in s\cA$, define a (relative) homotopy  $(n,\oC)$-hypergroupoid  over $Y$ to be a morphism $X_{\bt}\to Y_{\bt}$ in $s\cA$, for which  the homotopy  partial matching maps
$$
X_m \to M_{\L^m_k}^h (X)\by_{M^h_{\L^m_k} (Y)}^hY_m 
$$
of Definition \ref{mnh}
are   $\oC$-morphisms for all  $k,m$, and are weak equivalences for all $m>n$ and all $k$. When $Y$ is the final object, we will simply refer to $X$ as a homotopy  $(n,\oC)$-hypergroupoid.

Define a (relative)   $(n,\oC)$-hypergroupoid  over $Y$ to be a homotopy  $(n,\oC)$-hypergroupoid  over $Y$ which is also a Reedy fibration (as in Definition \ref{reedydef}).
\end{definition}

Note that for any homotopy $(n,\oC)$-hypergroupoid $X \to Y$, the Reedy model structure on $s\cA$ gives a factorisation $X \to \hat{X} \to Y$, with $X \to \hat{X}$ a levelwise weak equivalence, and $\hat{X} \to Y$ an $(n,\oC)$-hypergroupoid.

\begin{definition}\label{nptreldef}
Given  $Y\in s\cA$, define a trivial homotopy  $(n,\oC)$-hypergroupoid  over $Y$ to be a morphism $X_{\bt}\to Y_{\bt}$ in $s\cA$, for which  the homotopy  matching maps
$$
X_m \to M^h_{\pd \Delta^m} (X)\by_{M^h_{\pd \Delta^m} (Y)}^hY_m 
$$
are $\oC$-morphisms for all  $m$, and are weak equivalences for all $m\ge n$.

Define a trivial   $(n,\oC)$-hypergroupoid  over $Y$ to be a homotopy  $(n,\oC)$-hypergroupoid  over $Y$ which is also a Reedy fibration.
\end{definition}

\begin{remark}
Note that  trivial relative  $(n,\oC)$-hypergroupoids of affine schemes correspond to the truncated hypercovers considered in \cite{bekecech}.
\end{remark}

The following is a straightforward uncoiling of the definitions:
\begin{lemma}
A morphism $X\to Y$ in $s\cA$ is a  relative $(n,\oC)$-hypergroupoid if and only if
\begin{enumerate}
 \item the matching maps
$$
X_m \to M_{\pd \Delta^m} (X)\by_{M_{\pd \Delta^m} (Y)}Y_m 
$$
of Definition \ref{mn}
are fibrations for all $m\ge 0$;

\item  the   partial matching maps
$$
X_m \to M_{\L^m_k} (X)\by_{M^h_{\L^m_k} (Y)}^hY_m 
$$
are  $\oC$-fibrations for all  $k,m$, and are trivial fibrations for all $m>n$ and all $k$.
\end{enumerate}

A morphism $X\to Y$ in $s\cA$ is a  trivial $(n,\oC)$-hypergroupoid if and only if the matching maps
$$
X_m \to M_{\pd \Delta^m} (X)\by_{M_{\pd \Delta^m} (Y)}Y_m 
$$
of Definition \ref{mn}
are $\oC$-fibrations  for all  $m$, and are trivial fibrations for all $m\ge n$.
\end{lemma}

\begin{examples}\label{affhgpddef}\label{cfddt}
The simplest examples of homotopy hypergroupoids arise when the category $\cA$ has trivial model structure (so all morphisms are fibrations, and weak equivalences are isomorphisms). In these cases, the Reedy fibration conditions are vacuous, and $M^h_K= M_K$. 

If $\cA= \Set$ and $\oC$ is the  class of surjective morphisms, then $(n, \oC)$-hypergroupoids are just the $n$-hypergroupoids  of \S \ref{nhyp}. Other important  examples with trivial model structure arise when we take $\cA$ to be the category of affine schemes. Taking  $\oC$ to be the class of smooth surjections (resp. \'etale surjections, resp. surjective local isomorphisms), we refer to $(n, \oC)$-hypergroupoids as Artin  (resp. Deligne--Mumford, resp. Zariski) $n$-hypergroupoids. We will see in Theorem \ref{bigthm} that these model strongly quasi-compact Artin (resp. Deligne--Mumford, resp. Zariski) $n$-geometric stacks.

Our main motivating example of a non-trivial model category is to take $\cA$ to be the category of cosimplicial affine schemes with the model structure of \S \ref{hagd}. Taking   $\oC$ to be the class of smooth surjections (resp. \'etale surjections, resp. surjective local isomorphisms), we refer to  $(n, \oC)$-hypergroupoids as derived Artin  (resp. derived Deligne--Mumford, resp. derived Zariski) $n$-hypergroupoids. Again, Theorem \ref{bigthm} will show that these model the corresponding $n$-geometric stacks.

For an infinitesimal version, we could take $\cA$ to be the category of left-exact set-valued functors on Artinian simplicial rings (so $\cA=c\Sp$ in the notation of \cite{ddt1}), and $\oC$ the class of formally smooth morphisms. Then the key notions in \cite{ddt1} of quasi-smooth (resp. trivially smooth) morphisms correspond to $(\infty, \oC)$-hypergroupoids (resp. trivial $(\infty, \oC)$-hypergroupoids), for the obvious generalisation of the definitions above to $n= \infty$.

Finally, note that the definitions make sense if we only require $\cA$ to contain pullbacks by morphisms in $\oC$, rather than all pullbacks. Taking $\cA$ to be the category of differential manifolds, and taking $\oC$ the class of surjective submersions, 
 $(n,\oC)$-hypergroupoids are just  Zhu's Lie $n$-groupoids from \cite{zhu}.
\end{examples}

\begin{definition}\label{pskdef}
The functor $i_n^!\co\cA^{\Delta_n^{\op}}\to s\cA $ of Definition \ref{skdef} is a right Quillen functor of Reedy model categories. Write  $\oR i_n^!$ for the associated derived functor, and set $\cosk_n^h:= \oR i_n^!i_{n*}$, giving $\map_{s\cA}(K, \cosk_n^hX)\simeq \map_{\cA^{\Delta_n^{\op}}}(i_{n*}K, X)$.
\end{definition}

The following is immediate:
\begin{lemma}\label{pttruncate}
A morphism $f:X\to Y$ in $s\cA$ is a trivial  homotopy   $(n,\oC)$-hypergroupoid over $Y$ if and only if $X\simeq Y\by_{\cosk_{n-1}^hY}^h\cosk_{n-1}^hX$, and the $(n-1)$-truncated morphism $i_{n-1,*}X\to i_{n-1,*}Y$ satisfies the conditions of Definition \ref{nptreldef} (up to level $n-1$).
\end{lemma} 

\begin{lemma}
Any composition of (trivial) homotopy $(n,\oC)$-hypergroupoids is a (trivial) homotopy   $(n,\oC)$-hypergroupoid. 
\end{lemma}
\begin{proof}
This follows immediately by verifying the axioms.
\end{proof}

Now observe that the functor $\Dec_+$ of Definition \ref{decdef} extends to any simplicial category; in particular, it gives $\Dec_+\co s\cA \to s\cA$.

\begin{lemma}\label{pfibres}
If $f: X\to Y$ is a homotopy $(n,\oC)$-hypergroupoid, then  $\Dec_+  (X) \to \Dec_+ (Y)\by_Y^hX$ is a homotopy $(n-1, \oC)$-hypergroupoid, and $\Dec_+  (X) \to \Dec_+ (Y)\by_{Y_0}^hX_0$ is a trivial homotopy $(n,\oC)$-hypergroupoid.
\end{lemma}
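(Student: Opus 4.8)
The plan is to recognise this as the scheme-theoretic counterpart of Corollary \ref{fibres} and Lemma \ref{decvgood}, whose proofs are purely combinatorial and transcribe verbatim once "surjective'' is read as "representable by a $\oP$-covering''. Both of those proofs identify the relevant relative matching maps of a d\'ecalage with partial matching maps of $f$ one simplicial level higher, via the adjunction $F\dashv \Dec_+$. Since the matching objects attached to the finite simplicial sets $\L^m_k$, $\pd\Delta^m$ and $\Delta^m$ are finite limits, these identifications are valid for simplicial objects in any complete category, in particular for simplicial presheaves on $\Aff$; so the whole argument reduces to bookkeeping of the indices and checking the representability condition (1) of Definitions \ref{npreldef} and \ref{npreldeftriv}.

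First I would dispose of condition (1). In simplicial degree $m$ the map $\Dec_+(X)\to \Dec_+(Y)\by_Y X$ is the canonical map
$$
X_{m+1}\xra{(f_{m+1},\,\pd_{m+1}^X)} Y_{m+1}\by_{Y_m}X_m
$$
induced by the square relating $f_{m+1}$ and $f_m$. The first projection $Y_{m+1}\by_{Y_m}X_m\to Y_{m+1}$ is a base change of $f_m$, hence a disjoint union of affine morphisms, and its composite with the displayed map is $f_{m+1}$. As affine morphisms are separated, the cancellation property for affine morphisms shows $(f_{m+1},\pd_{m+1}^X)$ is itself a disjoint union of affine morphisms. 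The same argument applies to $\Dec_+(X)\to \Dec_+(Y)\by_{Y_0}X_0$, whose degree-$m$ component lands in $Y_{m+1}\by_{Y_0}X_0$ with first projection equal to $f_{m+1}$ (the projection now being a base change of $f_0$).

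For condition (2), the first statement follows from the identification in Corollary \ref{fibres}: the $m$th relative partial matching map of $\Dec_+(X)\to \Dec_+(Y)\by_Y X$ with respect to $\L^m_k$ coincides with the $(m+1)$th partial matching map of $f$ with respect to $\L^{m+1}_k\into\Delta^{m+1}$. Since $f$ is a relative $(n,\oP)$-hypergroupoid, this is representable by a $\oP$-covering for all $k,m$ and an isomorphism once $m+1>n$, i.e. for $m>n-1$ — exactly the defining condition for a relative $(n-1,\oP)$-hypergroupoid. For the trivial statement, Lemma \ref{decvgood} identifies the $m$th relative matching map of $\Dec_+(X)\to \Dec_+(Y)\by_{Y_0}X_0$ with respect to $\pd\Delta^m$ with the $(m+1)$th partial matching map of $f$ with respect to the top horn $\L^{m+1}_{m+1}\into\Delta^{m+1}$; this is a $\oP$-covering for all $m$ and an isomorphism once $m+1>n$, i.e. for $m\ge n$, which is precisely Definition \ref{npreldeftriv}.

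There is no genuine obstacle here: the combinatorial core has already been carried out in \S\ref{nhyp}, and the only point needing care is that each identification takes place at the level of representable (disjoint unions of affine) schemes, so that the words "$\oP$-covering'' and "isomorphism'' are meaningful. This is exactly guaranteed by condition (2) for $f$, which asserts representability of its partial matching maps, together with the fact that the matching objects involved are finite limits and hence exist on the nose.
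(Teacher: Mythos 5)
Your proposal is correct and takes essentially the same route as the paper, whose proof of this lemma consists of the single remark that the arguments of Corollary \ref{fibres} and Lemma \ref{decvgood} carry over; you have simply made explicit why they carry over (the matching objects are finite limits, the identifications $\alpha^m_k = (\L^{m+1}_k \into \Delta^{m+1})$ and $F(\pd\Delta^m)\cup_{\pi_0\pd\Delta^m}\pi_0\Delta^m \to F(\Delta^m) = (\L^{m+1}_{m+1}\into\Delta^{m+1})$ are purely combinatorial) and added the routine verification of condition (1) by cancellation for affine morphisms. No gaps.
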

\begin{proof}
The proofs of Corollary \ref{fibres} and Lemma \ref{decvgood} carry over to this context.
\end{proof}

\begin{lemma}\label{ptruncate2}
If $X$ is a homotopy   $(n,\oC)$-hypergroupoid over $Y$, then the underlying almost simplicial object over $Y$ in $s_+\cA$ is determined by up to equivalence by the truncation $X_{\le n}$.
\end{lemma}
\begin{proof}
 The proof of Lemma \ref{truncate2} carries over immediately to this generality.
\end{proof}

\begin{remark}
Beware that the natural analogue of our other truncation result, Lemma \ref{truncate}, does not hold for homotopy   $(n,\oC)$-hypergroupoids in general. Say that a map in $\Ho(\cA)$ is a section if it has a left inverse. Then the  proof only carries over  contexts where the homotopy pullback of a section  in $\Ho(\cA)$ must be a  weak equivalence if it has a right inverse.  It will thus apply when $\cA$ is the category of affine schemes with trivial model structure, since the pullback of a section is a closed immersion.
\end{remark}

\begin{lemma}\label{ppowers}
If $f:X \to Y$ is a homotopy $(n,\oC)$-hypergroupoid, and $g:K \to L$ a cofibration  of finite simplicial sets, then the map
$$
F:X^{\oR L}\to Y^{\oR L}\by_{Y^{\oR K}}^hX^{\oR L} 
$$
is a homotopy $(n,\oC)$-hypergroupoid, which is trivial if either $f$ or $g$ is trivial. Moreover, if $\sk_{n-1}g:\sk_{n-1}K \to \sk_{n-1}L$ is an isomorphism, then $F$ is a homotopy $(0,\oC)$-hypergroupoid.
\end{lemma}
\begin{proof}
The proof of Lemma \ref{powers} carries over.
\end{proof}

\begin{lemma}\label{pdecvgood}
If $f: X\to Y$ is a homotopy $(n, \oC)$-hypergroupoid, then $\Dec_+  (X) \to \Dec_+ (Y)\by_{Y_0}X_0$ is a trivial homotopy $(n, \oC)$-hypergroupoid.
\end{lemma}

\subsection{$\Hom$-spaces}

\begin{definition}\label{HHomhgd}
Given objects $X,Y$ over $S \in s\cA $, let $\HHom_{s\cA \da S}(X,Y)\in \bS$ be given by 
$$
\HHom_{s\cA\da S}(X,Y)_n:= \Hom_{s\cA\da S}(X, Y^{\Delta^n}\by_{S^{\Delta^n}}S)= \Hom_{s\cA\da S}(X\by \Delta^n, Y),
$$
where $(X\by \Delta^n)_i$ is given by the coproduct of $\Delta^n_i$ copies of $X_i$.
\end{definition}

However, the category $s\cA$ does not have enough morphisms to model the theory of stacks:

\begin{example}
If we take $X$ to be an affine scheme, and $Y=BG$, for $G$ a smooth affine group scheme, we have
$$
\Hom_{s\Aff}(X, BG)=\Hom_{\Aff}(X, (BG)_0)= \Hom_{\Aff}(X, \Spec \Z) = \bt, 
$$
the one-point set, whereas  in the category of Artin stacks, $\Hom(X, BG)$ is given by isomorphism classes in the groupoid of $G$-torsors on $X$. 

Now, a trivial relative Artin $1$-hypergroupoid $\tilde{X} \to X$ is just given by $\tilde{X} =\cosk_0(X'/X)$, for   some smooth surjection $X' \to X$, with $X':=\tilde{X}_0$.  
Then an element of
$
\Hom(\tilde{X}, BG)
$
is a $G$-torsor $P$ on $X$ equipped with a trivialisation $\theta: P\by_XX'\cong G\by X'$, while an element of 
$
\Hom(\tilde{X}, (BG)^{\Delta^1})
$
consists of two such pairs $(P_1,\theta_1), (P_2,\theta_2)$, and an isomorphism $g \in G(X')$ between them.

Thus isomorphism classes of $G$-torsors on $X$ will be given by the colimit over all $\tilde{X}$ of $\pi_0 \HHom(\tilde{X}, BG)$. 
\end{example}

Our solution is thus to take a form of localisation with respect to trivial $(n,\oC)$-hypergroupoids.

\subsection{Morphisms via pro-objects}\label{morphismspro}

\begin{definition}\label{procat}
Given a category $\C$, recall that the category $\pro(\C)$ 
has objects consisting of inverse systems $\{A_{\alpha}\in \C\}$, with 
$$
\Hom_{\pro(\C)}(\{A_{\alpha}\}, \{B_{\beta}\})= \lim_{\substack{\lla \\ \beta}} \lim_{\substack{\lra \\ \alpha}} \Hom_{\C}(A_{\alpha},B_{\beta}).
$$
We regard $\C$ as a full subcategory of $\pro(\C)$ by sending $A$ to the singleton inverse system $\{A\}$.
\end{definition}

\begin{definition}\label{TCndef}
 Let $T\oC_n$ be the subcategory of $s\cA$  containing all objects, with morphisms consisting of trivial $(n, \oC)$-hypergroupoids. 
\end{definition}

\begin{definition}\label{tcncof}
Say that an object $X= \{X_{\alpha}\}_{\alpha \in I}$ of $\pro(s\cA)$ is $T\oC_n$-projective if for every $\alpha \in I$ and every trivial $(n, \oC)$-hypergroupoid $Y \to X_{\alpha}$, there exists $\beta \ge \alpha$ and a morphism $X_{\beta} \to Y$ compatible with the structure map $X_{\beta} \to X_{\alpha}$.
\end{definition}

\begin{lemma}\label{hhomfibrant}
If $f\co Y\to S$ is a (trivial) $(n,\oC)$-hypergroupoid, then for all $T\oC_n$-projectives $X \in \pro(s\cA)$, the morphism
\[
f_*\co  \HHom_{\pro(s\cA)}(X,Y)\to \HHom_{\pro(s\cA)}(X,S)      
\]
is a (trivial) Kan fibration.

Moreover, if the model structure on $\cA$ is trivial then $f_*$ is a  (trivial) $n$-hypergroupoid.      
\end{lemma}
\begin{proof}
Take a cofibration $i\co K \to L$ of  finite simplicial sets, with either $i$ or $f$ being trivial. Thus the map  $Y^L\by_{S^L}S \to Y^K\by_{S^K}S  $ is a  relative $(n, \oC)$-hypergroupoid,
For finite simplicial sets $K$, matching objects are  given by
\[
M_K\HHom_{\pro(s\cA)}(X,Y)=\Hom_{\pro(s\cA)}(X, Y^K),
\]
so the $T\oC_n$-projective property of $X$ ensures that $f_*$ is a (trivial) Kan fibration.  

Moreover, if $K_{\le n-1} \cong L_{\le n-1}$, then the map $Y^L\by_{S^L}S \to Y^K\by_{S^K}S  $ is a weak equivalence, and hence an isomorphism when $\cA$ has trivial model structure. In that case, the matching maps on $\HHom(X,-)$ are also isomorphisms, so $f_*$ is an $n$-hypergroupoid.
\end{proof}

We now establish existence of  $T\oC_n$-projectives.

\begin{assumption}\label{smallcat}
From now on, assume that there exists a small full subcategory $\cA^s \subset \cA$ with the property that for every trivial $(n, \oC)$-hypergroupoid $X \to Y$  in $s\cA$, there is a commutative diagram
\[
 \begin{CD}
  X @>>> \bar{X}\\
@VVV @VVV \\
Y @>>> \bar{Y},
 \end{CD}
\]
 with $\bar{X} \to \bar{Y}$ a trivial $(n, \oC)$-hypergroupoid in $s\cA^s$, and $X \to Y\by_{\bar{Y}}\bar{X}$ a Reedy trivial fibration.
\end{assumption}

\begin{examples}\label{smallcategs}
For our main motivating examples (Artin and Deligne--Mumford hypergroupoids, together with their derived analogues), Assumption \ref{smallcat} is satisfied by taking $\cA^s$ to consist of objects of finite type. This works because every smooth (resp. \'etale)  morphism $A \to B$ of rings is the pullback of a smooth (resp. \'etale) morphism $A' \to B'$ for some finitely generated subring $A' \subset A$. 

The subring $A'$ is constructed by  choosing generators and relations for $B$ over $A$, then taking $A'$ to be generated by the coefficients of the relations together with various discriminants. 
We know by Lemma \ref{pttruncate} that a trivial $(n, \oC)$-hypergroupoid $f\co X \to Y$ is determined by the morphism $f_{<n}\co X_{<n} \to Y_{<n}$ of finite diagrams, and the discussion above shows that  $f_{<n}$ must be the pullback of a  trivial $(n, \oC)$-hypergroupoid in $s_{n-1}\cA^s$, from which it follows that Assumption \ref{smallcat} is satisfied.   

 If we wish to work in the full generality of \cite{hag2} or of \S \ref{setupsn}, then we can follow \cite{hag2} in picking two universes $\bU \subset \bV$, with $\Aff_{\C}$ being a $\bV$-small category with $\bU$-small $\Hom$-sets, and containing only $\bU$-small limits. We can then take $\cA^s$ to be the image of $\Aff_{\C}$ under $\oR \uline{h}$, and Assumption \ref{smallcat} applies provided we interpret ``small'' as ``$\bV$-small''. We must then permit $\bV$-small inverse systems in Definition \ref{procat}. The corresponding application of this assumption in \cite{hag2} lies in the construction of $\Aff_{\C}^{\sim, \tau}$ via Bousfield localisation of a $\bV$-small set. 
 In practice, the only interesting example  known to the author of a HAG context requiring the introduction of a larger universe is when we take $\oC$ to be the class of fpqc morphisms of affine schemes.
\end{examples}

We now give two definitions adapted from \cite{hovey} \S 2.1.

\begin{definition}
Given an ordinal $\lambda$, define a $\lambda$-cosequence in a category $\C$ to be a limit-preserving functor $X \co \lambda^{\op} \to \C$. We say that the \emph{composition} of $X$ is the map $\Lim_{\beta < \lambda} X_{\beta} \to X_0$.  
\end{definition}

\begin{definition}
Given a class $P$ of morphisms in a category $\C$, we say that a morphism $f$ in $\C$ is a relative $P$-cocell if it can be written as a transfinite composition of pullbacks of elements of $P$. In other words, 
for some ordinal $\lambda$,   $f$ can be written as the composition of a $\lambda$-cosequence $X \co \lambda^{\op} \to \C$ with the property that for all $\beta$ with $\beta+1< \lambda$, there is a pullback diagram
\[
\begin{CD}
 X_{\beta+1} @>>> X_{\beta} \\
@VVV @VVV \\
D_{\beta} @>{g_{\beta}}>> C_{\beta}
\end{CD}
\]
with $g_{\beta}$ in $P$.
\end{definition}

\begin{proposition}\label{procofibrant}
There is a functor $Q\co \pro(s\cA) \to \pro(s\cA)$ equipped with a natural transformation $Q \to \id$  such that for all $X \in \cA$,
the morphism $QX \to X$ is a relative $T\oC_n$-cocell, with  $QX$  a $T\oC_n$-projective object of $\pro(s\cA)$.
\end{proposition}
\begin{proof}
The factorial factorisation provided by the Reedy model structure on $s\cA$ gives us a functor $Q_R \co s\cA \to s\cA$   for which each $Q_RX$ is   Reedy cofibrant, together with a natural transformation $Q_R \to \id$ with $Q_RX \to X$ a Reedy trivial fibration. 

Applying the small object argument (\cite{hovey} Theorem 2.1.14) to the set $T\oC_n^s:= \cA^s\cap T\oC_n$ yields a functor $Q^s\co \pro(\cA) \to \pro(\cA)$ and a natural transformation $Q^s \to \id$ with each  $Q^sX \to X$ a relative $T\oC_n$-cocell with $Q^sX$ being $T\oC_n$-projective. 

Now set $QX:= \Lim_n (Q^sQ_R)^n$, the limit taken in $\pro(s\cA)$. This is automatically both $T\oC_n^s$-projective and Reedy cofibrant. By assumption \ref{smallcat}, every element of $T\oC_n$ can be written as the composition of a Reedy trivial fibration with the pullback of a morphism in $T\oC_n^s$, so it follows that $QX$ is $T\oC_n$-projective. 
\end{proof}

\begin{definition}\label{hgdscatdef}
Given $S \in s\cA$ Reedy fibrant, define $\cG_n(\cA, \oC,S)$, the \emph{localised simplicial category of  $(n,\oC)$-hypergroupoids over $S$} as follows. Objects of $\cG_n(\cA, \oC,S)$ are  $(n,\oC)$-hypergroupoids $X \to S$, while morphisms are given by
\[
 \HHom_{ \cG_n(\cA, \oC,S)}(X,Y):= \HHom_{\pro(s\cA)\da S}(QX,QY),       
\]
for the $T\oC_n$-projective replacement functor $Q\co s\cA \to \pro(s\cA)$ of Proposition \ref{procofibrant}.
\end{definition}

The choice of $T\oC_n$-projective replacement does not matter:
\begin{lemma}\label{morsimpler}
Take $\tilde{X} \to X$  a relative  $T\oC_n$-cocell in $\pro(s\cA)$ with $\tilde{X}$ a $T\oC_n$-projective. Then for all 
$(n,\oC)$-hypergroupoids $Y \to S$, the spaces $\HHom_{ \cG_n(\cA, \oC,S)}(X,Y) $ and $ \HHom_{\pro(s\cA)\da S}(\tilde{X},Y) $ are weakly equivalent.
\end{lemma}
\begin{proof}
By Lemma \ref{hhomfibrant}, the map
\[
 \HHom_{\pro(s\cA)\da S}(QX,QY) \to \HHom_{\pro(s\cA)\da S}(QX,Y)       
\]
is a trivial fibration. It therefore suffices to show that for any two choices $\tilde{X}_1, \tilde{X}_2$ of $\tilde{X}$ above, we have $ \HHom_{\pro(s\cA)\da S}(\tilde{X}_1,Y) \simeq  \HHom_{\pro(s\cA)\da S}(\tilde{X}_2,Y) $

Now, let $\tilde{X}'= Q(\tilde{X}_1\by_X \tilde{X}_2)$; this is a $T\oC_n$-projective, and the canonical maps $p \co \tilde{X}' \to \tilde{X}_i$,  are both relative  $T\oC_n$-cocells. We will show that $p^*\co \HHom_{\pro(s\cA)\da S}(\tilde{X}_i,Y)\to \HHom_{\pro(s\cA)\da S}(\tilde{X}',Y) $ is a homotopy equivalence.  

Since $\tilde{X}_i$ is $T\oC_n$-projective, the map $p$ has a section $s$. Then we have $(\id, sp) \co \tilde{X}' \to\tilde{X}'\by_{\tilde{X}_i}\tilde{X}'$, but since $\tilde{X}'$ is $T\oC_n$-projective, this lifts to a map $h\co \tilde{X}' \to(\tilde{X}')^{\Delta^1}\by_{\tilde{X}_i^{\Delta^1}}\tilde{X}_i$. For any $Z \in \pro(s\cA)$, this induces a map
\[
 h^*\co \Hom_{\pro(s\cA)}(\tilde{X}',Z) \to \Hom_{\pro(s\cA)}(\tilde{X}',Z^{\Delta^1}),
\]
given by $f \mapsto (f^{\Delta^1}) \circ h$. Considering $Z$ of the form $Y^{\Delta^n}, S^{\Delta^n}$, this gives us a homotopy
\[
 h^*\co \HHom_{\pro(s\cA)\da S}(\tilde{X}',Y)\to \HHom_{\pro(s\cA)\da S}(\tilde{X}',Y)^{\Delta^1}
\]
between $p^*s^*$ and $\id$, so $p^*$ is a deformation retract.
\end{proof}

\begin{remark}\label{holimrk}
Considering \cite{bousfieldkan} XI.9.2, it is natural to ask whether the diagram $\tilde{X}$ is cofinal in some more natural category $I$ over $X$, in which case $ \HHom_{\pro(s\cA)\da S}(\tilde{X},Y)\simeq \ho\LLim_{X' \in I} \HHom_{s\cA\da S}(X',Y)$. Unfortunately,  $\tilde{X}$ is not cofinal in the category $\cT_n(X)$ of trivial $(n, \oC)$-hypergroupoids over $X$, in general. However, if we regard $\cT_n(X)$ as a full simplicial subcategory of $s\cA\da X$, then $\tilde{X}$  is cofinal in the homotopy category $\pi_0\cT_n(X)$. This means that
\begin{align*}
 \pi_0\HHom_{\pro(s\cA)\da S}(\tilde{X},Y)&\simeq \LLim_{X' \in \pi_0\cT_n(X) }\pi_0\HHom_{s\cA\da S}(X',Y),\\
\pi_n(\HHom_{\pro(s\cA)\da S}(\tilde{X},Y),f) &\simeq \LLim_{X' \in \pi_0\cT_n(X) }\pi_n(\HHom_{s\cA\da S}(X',Y),f).
\end{align*}

More generally, if we write  $\tilde{X}= \{\tilde{X}_i\}_i$, then for any $T \in \cT_n(X)$, any  $m \ge 0$  and any element   $f \in M_m\HHom_{s\cA\da X}(\tilde{X}_i,T)$, there is a $j\ge i$ for which $f$ lifts from $M_m\HHom_{s\cA\da X}(\tilde{X}_j,T)$ to $\HHom_{s\cA\da X}(\tilde{X}_j,T)_m$. This suggests that we should we working with a notion of homotopy colimits indexed by the \emph{simplicial} category $\cT_n(X)$, possibly by generalising the formulae of \cite{bousfieldkan} Ch. XII to involve nerves of simplicial categories.
\end{remark}

\begin{remark}\label{modelstr}
The full subcategory of $s\cA \da S$ consisting of  the $(n, \oC)$-hypergroupoids over $S$ has the structure of a category of  fibrant objects in the sense of \cite{brownAHT} \S 1.  We can say that $f:X \to Y$ is a weak equivalence if $\HHom_{\pro(s\cA)\da S}(QU, X) \to  \HHom_{\pro(s\cA)\da S}(QU, Y)$ is a weak equivalence in $\bS$ for all $U \in \cA$, and fibrations are relative $(n, \oC)$-hypergroupoids. The path object of $X$ is given by $X^{\Delta^1}$. We also obtain a  category of fibrant objects if we take the union over all $n$. 

This raises the question of whether  $(n, \oC)$-hypergroupoids can be realised as fibrant objects of some simplicial model structure on $s\cA$. Since a model structure is determined by the fibrant objects and the trivial fibrations, this is very unlikely to be true in general. On the other hand, $\cG_n(\cA, \oC,S)$ can be regarded as a full simplicial subcategory  of  the right Bousfield localisation $R_{T\oC_n}(\pro(s\cA))$ of  $\pro(s\cA)$ with respect to trivial $(n, \oC)$-hypergroupoids, with $Q$ becoming a cofibrant replacement  functor. However, this model category has many more fibrant objects than just the $(n, \oC)$-hypergroupoids.
\end{remark}

\subsection{Artin $n$-hypergroupoids and \'etale hypercoverings}\label{etcoversn}

We now specialise to the case where $\cA=\Aff$, the category of affine schemes, with trivial model structure. If $\oC$ is the class of smooth surjections, then Proposition \ref{procofibrant} seems inadequate,  since we would expect to be able to define the sheafification by considering just \'etale hypercoverings, rather than smooth hypercoverings. We will now show how to establish this in a slightly more general setting.

For a class $\oC$ as in \S \ref{nhypaffdefsn}, let $\oE \subset \oC$ be a class of morphisms  of affine schemes containing all  isomorphisms and  stable under composition  and homotopy pullback, together with the following condition:

\begin{enumerate}
\item[(E)] for all diagrams $Z \xra{f} X \xra{g} Y$, with $g$ in $\oC$ and $gf$ a closed immersion, there exists a factorisation $Z \xra{\alpha} W \xra{\beta} X$ of $f$, for which $g\beta$ is  in $\oE$. 
\end{enumerate}
 
\begin{lemma}\label{smoothlift}
If $\oC$ is the class of smooth surjections, then the class of  \'etale surjections satisfies condition (E).
\end{lemma}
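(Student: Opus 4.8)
The plan is to verify three things in turn: that $\oE$ (\'etale morphisms) is contained in $\oP$ (smooth morphisms) and satisfies \cite{hag2} Assumption 1.3.2.11, and then that conditions E1 and E2 hold. The first point is standard: an \'etale morphism is smooth of relative dimension $0$, and \'etale morphisms are stable under composition, base change and isomorphism, are local on source and target, and contain the coprojections $X \into X\sqcup Y$ (open immersions); these are exactly the clauses of Assumption 1.3.2.11, so I would simply record them.

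For E1, I would start from a smooth covering $g:X\to Y$ and reduce to producing, for each $y\in Y$, an \'etale morphism $Y'\to Y$ whose image contains $y$ together with a $Y$-section $Y'\to X$; taking $W$ to be the disjoint union of such $Y'$ over a cover of $Y$ then gives $W\to X$ with $W\to Y$ an \'etale covering, as required. The existence of these \'etale-local sections is the structure theory of smooth morphisms: choosing $x\in X_y$, the local structure theorem gives an open $U\ni x$ and an \'etale $Y$-morphism $U\to\bA^n_Y$, and a section is obtained by selecting a closed point with separable residue field in the (open) image of the fibre $U_y\to\bA^n_{k(y)}$ and realising the corresponding separable residue extension by an \'etale neighbourhood of $y$.

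For E2, the key preliminary observation is that $f$ is itself a closed immersion: since every morphism of affine schemes is separated, $f$ factors as $Z\into Z\by_Y X\to X$, a composite of closed immersions (the first a base change of the diagonal of $g$, the second a base change of the closed immersion $gf$). Writing $J\subset A=\O(X)$ for the ideal of $Z$ and using $\Omega_{Z/Y}=0$, the conormal sequence shows that $J/J^2\to \Omega_{X/Y}\ten_A(A/J)$ is a surjection of locally free $A/J$-modules of rank $n=\dim(X/Y)$, hence an isomorphism. I would then pick $t_1,\dots,t_n\in J$ whose differentials trivialise $\Omega_{X/Y}$ near $Z$; these cut out $Z$ near $Z$ and define an \'etale morphism $p:U\to\bA^n_Y$ on a neighbourhood $U$ of $Z$ carrying $Z$ into the zero section. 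Setting $W:=U\by_{\bA^n_Y,0}Y$, the projection $g\beta:W\to Y$ is \'etale (a pullback of $p$ along the zero section), and the factorisation $Z\xra{\alpha}W\xra{\beta}X$ comes from the universal property of the pullback, since $Z\to U$ lands in $\{t_i=0\}$.

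The main obstacle is globalisation in E2. The coordinates $t_i$, and hence the chart $U$, are only guaranteed on a neighbourhood of $Z$ where $\Omega_{X/Y}|_Z$ is trivial, so in general $W$ must be assembled from a finite cover of the quasi-compact scheme $Z$ and taken as a disjoint union of \'etale affine neighbourhoods. (By contrast, E1 only asks for an $\oE$-covering, so the residue-field extensions that force us onto an \'etale cover cause no trouble there.) I expect the one delicate point to be checking that this assembled $W$ still meets the literal requirement that $g\beta$ lie in $\oE$, which should be settled using the stability and local-on-source/target properties recorded in the first step.
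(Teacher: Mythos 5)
Your treatment of (E1) matches the paper's, which simply cites \cite{EGA4.4} 17.16.3 for the existence of \'etale-local sections of a smooth surjection; that part is fine. The problem is (E2), where the argument has a genuine gap at exactly the point you flag as ``the main obstacle''. First, a false intermediate claim: $J/J^2$ is not in general locally free of rank $n$ over $A/J$, and $J/J^2\to\Omega_{X/Y}\otimes_A(A/J)$ is not an isomorphism (take $Y=\Spec k[x]$, $Z=V(x)$, $X=\bA^1_Y$ and $f$ the origin: $J=(x,t)$ has $J/J^2$ free of rank $2$ while $\Omega_{X/Y}\otimes k$ has rank $1$). The immersion $Z\to X$ factors as the section $Z\to X\times_YZ$ followed by the base change of $Z\hookrightarrow Y$, so its conormal module picks up a contribution from the ideal of $Z$ in $Y$. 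This error happens to be harmless --- surjectivity onto $\Omega_{X/Y}\otimes_A (A/J)$ is all your construction uses --- but it should be removed.

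Second, and fatally: the coordinates $t_1,\dots,t_n\in J$ exist only Zariski-locally on $Z$ (the restriction of $\Omega_{X/Y}$ to $Z$ is projective, not free), and the resulting charts $W_i\to Y$ do not assemble into a factorisation of $f$. Condition (E2) demands a single morphism $\alpha:Z\to W$ with $\beta\alpha=f$; if $W=\coprod_iW_i$, such an $\alpha$ forces $Z$ to decompose into disjoint clopen pieces refining your cover, which fails already for $Z$ connected. Nor can you glue the $W_i$: an extension of $f$ to an \'etale neighbourhood is not unique, so the local extensions need not agree on overlaps. This globalisation is precisely the content of the paper's proof, which henselises $Y$ along the closed subscheme $Z$, invokes \cite{gruson} Theorem I.8 (henselian pairs lift against smooth morphisms) to extend $Z\to X$ to a single map $Y^h\to X$ over $Y$, and then uses finite presentation of $g$ to descend this map from the pro-\'etale limit $Y^h$ to some honest \'etale $W\to Y$. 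Your local-coordinate computation essentially reproves the pointwise case of that lifting theorem but does not substitute for it; to repair the proof you need the henselian-pair lifting theorem or an equivalent patching statement.
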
 
\begin{proof}
If $Z=\emptyset$, then this follows from \cite{EGA4.4} 17.16.3, giving $W_0 \to X$ for which the composition $W_0 \to Y$ is an \'etale surjection. 

In general, since $Z \to Y$ is a closed immersion, we may form the henselisation $Z \to Y^h \xra{f'} Y$, as in \cite{greco}; then $(Z,Y^h)$ is a henselian pair (as in \cite{lafon}), and $Y^h \to Y$ is a filtered inverse limit of \'etale morphisms.
Now, \cite{gruson}  Theorem I.8 shows that the map $Z \to X$ extends to $Y^h \to X$ over $Y$, since $g$ is smooth. Finally, since $g$ is finitely presented, and $f'$ is pro-\'etale, the map $Y^h \to X$ factors through some quotient $W'$ with $W' \to Y$ \'etale. Setting $W= W_0 \coprod W'$ completes the proof. 
\end{proof}

\begin{definition}
Given a simplicial diagram $X_{\bt}$ in a cocomplete category $\C$, recall from \cite{sht} \S VII.1 that the $n$th latching object $L_nX$ is defined to be $(\sk_{n-1}X)_n$. Explicitly, this is  the coequaliser
$$
\xymatrix@1{ \coprod_{i=0}^{n-1} \coprod_{j=0}^{i-1} X_{n-2}\ar@<.5ex>[r]^-{\alpha} \ar@<-.5ex>[r]_-{\beta} & \coprod_{i=0}^{n-1} X_{n-1} \ar[r]& L_nX,}
$$
where for $x \in X_{n-2}^{(i,j)}$, we define $\alpha(x)= \sigma_jx \in X_{n-1}^{(i)}$, and $\beta(x)= \sigma_{i-1}x\in X_{n-1}^{j}$.

Note that there is a map $L_nX \to X$ given by $\sigma_i$ on $X_{n-1}^{(i)}$ --- this comes from the counit $\sk_{n-1}X \to X$ of the adjunction $i_{n-1}^* \dashv i_{n-1,*}$ of Definition \ref{skdef}. 
\end{definition}

\begin{proposition}\label{etalecover}
Given a diagram $Z \xra{f} X \xra{g} Y$ in $s\Aff$, with $g$ a  trivial relative  $(n,\oC)$-hypergroupoid  and $gf$ a levelwise closed immersion, there exists a factorisation $Z \xra{\alpha} W \xra{\beta} X$ of $f$, for which $g\beta$ is  a  trivial relative  $(n,\oE)$-hypergroupoid. 
\end{proposition}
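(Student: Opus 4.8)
The statement is the simplicial globalization of condition (E2): the plan is to reproduce the non-simplicial factorization of (E2) level by level, with (E1) supplying the surjectivity that (E2) alone does not guarantee. By Lemma \ref{ttruncate} a trivial relative $(n,\oE)$-hypergroupoid is the same as its $(n-1)$-truncation together with $W= Y\by_{\cosk_{n-1}Y}\cosk_{n-1}W$, so it suffices to construct $W_m$ for $0\le m\le n-1$ (the matching maps at levels $m\ge n$ being forced to be isomorphisms by the coskeleton). I would therefore induct on the simplicial level $m$, producing at each stage $W_m\in\coprod\Aff$ together with maps $\alpha_m:Z_m\to W_m$ and $\beta_m:W_m\to X_m$ refining the factorization built on $W_{<m}$.

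For the inductive step, suppose $W_{<m}$ has been built with its relative matching maps being $\oE$-coverings. Write $P_m(X):=M_{\pd\Delta^m}X\by_{M_{\pd\Delta^m}Y}Y_m$ and, using the already-constructed $\beta_{<m}$ and $\alpha_{<m}$, the analogous object $P_m(W)$ together with the induced map $P_m(W)\to P_m(X)$. Since $g$ is a trivial relative $(n,\oP)$-hypergroupoid and $m<n$, the relative matching map $X_m\to P_m(X)$ is a $\oP$-covering, so the pullback $X'_m:=X_m\by_{P_m(X)}P_m(W)$ has $q:X'_m\to P_m(W)$ a $\oP$-covering. The map $f_m$ together with $Z_m\to P_m(W)$ gives a canonical $f'_m:Z_m\to X'_m$, and the composite $Z_m\to P_m(W)\to Y_m$ is just $(gf)_m$, a closed immersion by hypothesis; because $P_m(W)$ and $Y_m$ are disjoint unions of affine schemes, surjectivity of the composite ring map forces $Z_m\to P_m(W)$, and hence $f'_m$, to be a closed immersion. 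Thus condition (E2) applies to $Z_m\xra{f'_m}X'_m\xra{q}P_m(W)$, yielding a factorization $Z_m\to W'_m\to X'_m$ with $W'_m\to P_m(W)$ in $\oE$.

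Since (E2) only produces an $\oE$-morphism, I would restore the covering property by applying (E1) to the $\oP$-covering $q$ to obtain $V_m\to X'_m$ with $V_m\to P_m(W)$ an $\oE$-covering, and take the new (non-degenerate) cells to be $W'_m\sqcup V_m$: this maps to $X'_m$, receives $Z_m$, and covers $P_m(W)$ since enlarging an $\oE$-morphism by an $\oE$-covering keeps it an $\oE$-covering. The degeneracies are then incorporated by adjoining the latching object $L_mW$ as a clopen summand, exactly as clopen immersions arise in Lemma \ref{nocoprod}, so that $W$ becomes a split simplicial object whose degeneracy maps are clopen immersions and whose matching map on the degenerate summand is controlled; one sets $\beta_m$ via $X'_m\to X_m$. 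Finally, if $g$ is strongly quasi-compact then each $X_m$, hence $P_m(W)$ and $X'_m$, is quasi-compact, and (using the finitely presented henselization quotient in the smooth–\'etale incarnation of Lemma \ref{smoothlift}) the outputs of (E1) and (E2) can be chosen quasi-compact, so each $W_m$ is quasi-compact and $g\beta$ is strongly quasi-compact.

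I expect the genuine obstacle to lie not in any single application of (E1)/(E2) but in the simplicial bookkeeping: assembling the level-by-level outputs into an honest object of $s(\coprod\Aff)$ with $\alpha$ and $\beta$ \emph{simplicial}. Concretely, one must arrange that the degenerate part of $Z_m$ maps into the adjoined summand $L_mW$ rather than into the freshly constructed cells, and that the clopen degeneracy summand does not destroy the $\oE$-covering matching condition; verifying this compatibility (equivalently, that the construction respects the Reedy latching–matching structure) is the delicate point, and is where the hypothesis that $gf$ is a \emph{levelwise} closed immersion, rather than merely a closed immersion in the diagonal, must be used uniformly across levels.
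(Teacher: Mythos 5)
Your overall strategy is the paper's: induct on the simplicial level up to $n-1$, use (E1) to supply the $\oE$-covering part, use (E2) to factor the closed immersion coming from $Z$, take the disjoint union of the two outputs, and finish with a coskeleton. But the degeneracy bookkeeping you defer to the end is precisely where the construction as you describe it breaks, and the mechanism you propose for it would fail. If $L_mW$ is adjoined as a separate clopen summand of $W_m$, then the relative matching map restricted to that summand is the canonical map $L_mW \to M_mW\by_{M_mY}Y_m$; already at level $1$ this is essentially the diagonal $W_0\to W_0\by_{Y_0}W_0$ followed by a base change of $\sigma^0\co Y_0\to Y_1$, and since $Y$ is arbitrary the latter is in general a non-flat closed immersion, hence not an $\oE$-morphism. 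An $\oE$-covering out of a disjoint union requires each summand to map by an $\oE$-morphism, so the matching condition fails on the $L_mW$ summand. Moreover, you cannot both send all of $Z_m$ into the cell $W'_m$ produced by (E2) and send its degenerate part into $L_mW$; simpliciality of $\alpha$ forces a gluing.

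The missing idea is to apply (E2) not to $Z_m$ alone but to the single map $L_mW\cup_{L_mZ}Z_m \to M_mW\by_{M_mX}X_m \to M_mW\by_{M_mY}Y_m$. The paper first checks that $L_mW\cup_{L_mZ}Z_m \to M_mW\by_{M_mY}Y_m$ is again a closed immersion --- this uses the observation, proved by passing to cosimplicial rings, that the latching maps of a \emph{levelwise} closed immersion of simplicial affine schemes are closed immersions --- and then one application of (E2) produces a single summand $T$ containing both the degenerate simplices and the image of $Z_m$, with $T \to M_mW\by_{M_mY}Y_m$ in $\oE$; setting $W_m=S\sqcup T$ with $S$ supplied by (E1) closes the induction. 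This is exactly why (E2) is formulated for arbitrary closed immersions into the source of a $\oP$-morphism rather than only for $Z$ itself; the levelwise closed immersion hypothesis enters through this latching-object statement, not merely through the surjectivity of $O(M_mW\by_{M_mY}Y_m)\to O(Z_m)$. (The preliminary reduction to $s\Aff$ via Proposition \ref{gpdlim} and the quasi-compactness assertion are then routine, as you indicate.)
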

\begin{proof}
 The construction of $W$ is inductive. For $r< n$, assume that  we have constructed an   $(r-1)$-truncated simplicial  affine scheme   $W_{<r} $, with $Z_{<r} \to W_{<r} \to X_{<r}$ satisfying the required properties up to level $r-1$.

Since we are working with affine schemes, all colimits exist, including latching objects, and we now seek $W_r$ fitting into the diagram
$$
L_rW\cup_{L_rZ}Z_r \to W_r \to  M_rW\by_{M_rX}X_r, 
$$ 
with the relative matching map $W_r \to M_rW\by_{M_rY}Y_r$ an  $\oE$-morphism. 

Since $X_r \to  M_rX\by_{M_rY}Y_r$ is a $\oC$-morphism, pulling back along $M_rW \to M_rX$ gives a $\oC$-morphism $ M_rW\by_{M_rX}X_r\to M_rW\by_{M_rY}Y_r $. 
Since the relative matching map of a surjection of cosimplicial abelian groups is abelian, the same is true for cosimplicial rings. Thus the latching maps of a levelwise closed immersion of  simplicial affine schemes are all closed immersions. In particular, $L_rW\cup_{L_rZ}Z_r \to M_rW\by_{M_rY}Y_r$ is a closed immersion.

Applying condition (E) to 
$L_rW\cup_{L_rZ}Z_r \to M_rW\by_{M_rX}X_r \to M_rW\by_{M_rY}Y_r$
 provides a factorisation $L_rW\cup_{L_rZ}Z_r \to W_r \to M_rW\by_{M_rX}X_r$, with the composition $W_r \to M_rW\by_{M_rY}Y_r$ an  $\oE$-morphism. This completes the inductive step. Having constructed $W_{\le n-1}$, we set $W= (i_{n-1}^!W_{\le n-1})\by_{\cosk_{n-1}Y}Y$.
\end{proof}

\begin{corollary}\label{procofibrantet}
There is a functor $Q\co \pro(s\Aff) \to \pro(s\Aff)$ equipped with a natural transformation $Q \to \id$  such that for all $X \in \cA$,
the morphism $QX \to X$ is a relative $T\oE_n$-cocell, with  $QX$  a $T\oC_n$-projective object of $\pro(s\cA)$.
\end{corollary}
\begin{proof}
Proposition \ref{procofibrant} applied to the class $\oE$ provides us with a functor $Q$ for which $QX \to X$ is a relative $T\oE_n$-cocell, with  $QX$  a $T\oE_n$-projective object of $\pro(s\cA)$. Now just observe that Proposition \ref{etalecover} ensures that every $T\oE_n$-projective  is $T\oC_n$-projective. 
\end{proof}

\section{Hypergroupoids vs. $n$-stacks}\label{ressn}

\subsection{Passage to $n$-stacks}

We now assume that our pseudo-model category $\cA \subset \cS$ is equipped  with all the additional structure of \S \ref{setupsn}.

\begin{proposition}\label{easy}
If $f:X\to Y$ is a relative  $(n,\oC)$-hypergroupoid, then the associated morphism
$$
|f|: | X| \to |Y|
$$
in $\cS$ an $n$-representable morphism    in the sense of  Definition \ref{geomdef}. 

If $f_0: X_0 \to Y_0$ is also in $\oC$, then  $|f|$ is an $(n,\oC)$-morphism.
\end{proposition}
\begin{proof}
First, we  reduce this  to the case when $Y \in \cA$. By Property \ref{coverprops}.\ref{coverpsurj}, the map $ Y_0 \to | Y|$ is an $\vareps$-morphism. By Lemma \ref{geomlocal}, it therefore suffices to show that $|X| \by^h_{|Y|}Y_0 \to Y_0$ is $n$-representable, and that it lies  in $n-\oC$ when $f_0$ is in $\oC$.

Since $ f$ satisfies the conditions of Property \ref{coverprops}.\ref{coverphprod}, we have
\[
 | X| \by^h_{| Y|} Y_0 \simeq | X\by_Y^hY_0|.
\]
By homotopy pullback, $X\by_Y^hY_0$ is an   $(n,\oC)$-hypergroupoid over $Y_0$, and $ (X\by_Y^hY_0)_0 \simeq X_0$, allowing us to replace $Y$ with $Y_0$ for both statements.  

We now proceed by induction on $n$. For $n=0$, the statements are immediate.

Now assume that the inductive hypothesis (over arbitrary base) holds for $n-1$, and take a relative $(n,\oC)$-hypergroupoid $X \to Y_0$. Since $X$ is then an $(n,\oC)$-hypergroupoid, 
Lemma \ref{pfibres} implies that $\Dec_+ (X) \to X$ is a relative $(n-1, \oC)$-hypergroupoid, with $\Dec_+ (X)_0 \to X_0$ in $\oC$, so by induction $| \Dec_+(X)| \to | X|$ is an $(n-1,\oC)$-morphism.  Moreover, Lemma \ref{pdecvgood} and Property \ref{coverprops}.\ref{coverptriv} imply that $X_0 \simeq | \Dec_+(X)|$, which means that we have an $(n-1,\oC)$-morphism  $X_0 \to |X|$. Since $X_0$ is $0$-geometric, Proposition \ref{hagcover} now implies that $| X|$ is $n$-geometric. As $Y_0$ is in $\cA$, this implies that $| f|: | X| \to  Y_0$ is $n$-representable. 

 If $f_0 :X_0 \to Y_0$ is in $\oC$, then the $n$-atlas $ X_0$ is a $\oC$-morphism over $Y_0$, so  $|X| \to  Y_0$ is in $n$-$\oC$. This completes the induction.
\end{proof}

\subsection{Resolutions}

In this section, we will establish the converse to Proposition \ref{easy}. By way of motivation, we begin with the cases of low degree. Given a $1$-geometric stack $\fX$, take a $1$-atlas $X_0 \to \fX$, and observe that $\cosk_0^h(X_0/\fX)$ gives a $(1,\oC)$-hypergroupoid resolving $\fX$.

Now for the construction of a $(2,\oC)$-hypergroupoid $X$ resolving a $2$-geometric stack $\fX$. Take a $2$-atlas $Z_0 \to \fX$; since $Z_0\by^h_{\fX} Z_0$ is $1$-geometric, it admits a $1$-atlas $Z_1$. Since an atlas is a $\oC$-morphism, the diagonal map $Z_0 \to Z_0\by^h_{\fX} Z_0$ must admit a lift $X_0 \to Z_1$ 
for some $\oC$-morphism  $X_0 \to Z_0$.
Setting $X_1:=Z_1\by_{(Z_0\by Z_0)}(X_0\times X_0)$ gives  a $1$-truncated simplicial scheme $X_{\le 1}$ over $\fX$, so we can take $X$ to be the coskeleton
$$
X:= \cosk_1^h(X_{\le 1}/\fX),
$$
and $X$ will be a $(2,\oC)$-hypergroupoid.
 
Note that we cannot adapt the approach of \cite{SD} to construct $X$ as a split simplicial resolution, since that would entail taking $X_0=Z_0$ and $X_1=Z_1 \sqcup Z_0$. 
It fails because the diagonal map $(Z_0\by_{\fX}Z_0) \sqcup Z_0 \to Z_0\by_{\fX}Z_0$ will seldom be a $\oC$-morphism. 

\begin{definition}\label{gencosk}
Given an object $S\in \cS$ , define $S^{\Delta_r} \in s\cS$ to be given by $(S^{\Delta_r})_i= S^{\Delta^i_r}$. 
\end{definition}

\begin{lemma}\label{gencoskworks}
For $K \in \bS$, there is a natural equivalence
$$
M_K^h (S^{\Delta_r}) \simeq  S^{K_r},
$$
where $M_K^h$ is the homotopy matching object of Definition \ref{mnh}.

Thus, for a morphism $f:S \to T$ in $\cS$,  the $i$th homotopy matching map $(S^{\Delta_r})_i \to M_i^h(S^{\Delta_r})\by^h_{M_i^h(T^{\Delta_r})}(T^{\Delta_r})_i$ is
\begin{enumerate}
\item a weak equivalence when $i>r$;
 \item a homotopy pullback of $f$ when $i=r$;
 \item a homotopy pullback of finitely many copies of $f$ when $i<r$.
\end{enumerate}
\end{lemma}
\begin{proof}
The first statement follows because the functors $K \mapsto M_K^h(S^{\Delta_r})$ and $K \mapsto S^{K_r}$  both send  limits in $\bS^{\op}$ to homotopy limits in $\cS$, and the functors agree on the objects $\Delta^n$. 

Thus we have canonical equivalences
$$
(S^{\Delta_r})_i \simeq S^{\Delta^i_r}\simeq S^{\pd \Delta^r_i}\by S^{\Delta^r_i- \pd\Delta^r_i}  \simeq  M_i(S^{\Delta_r})\by S^{\Delta^r_i- \pd\Delta^r_i},
$$
giving the required isomorphisms and pullbacks.
\end{proof}

\begin{definition}
Given  $Z_{\bt}  \in s\cS$ (i.e. a simplicial diagram in $\cS$), let $a_r(Z) \in \N_0$ be the smallest number for which the homotopy matching  morphism 
$$
Z_r \to M^h_rZ
$$
is $a_r(Z)$-representable, whenever this is defined.

If the numbers $a_r(Z)$ are all defined and are eventually $0$, define
$$
\nu(Z):=\sum_{\substack{i \ge 0\\ a_i(Z)\ne 0}}2^i.
$$
\end{definition}

\begin{proposition}\label{res}
Given $Z\in s\cS$ for which $\nu(Z)$ is defined and $\nu(Z)<2^n$, there exists $\tilde{Z} \in s\cA$  and a morphism  $ \tilde{Z} \to Z$ in $\Ho(s\cS) $ with the property that
the relative homotopy matching maps
$$
 \tilde{Z}_r \to (M^h_r\tilde{Z})\by^h_{(M^h_rZ)}Z_r
$$
are  $\oC$-morphisms for all $r\ge 0$, and equivalences for all $r\ge n$.
\end{proposition}
\begin{proof}
Begin by replacing $Z$ with a Reedy fibrant object in $s\cS$ --- this will allow us to choose atlases in $\cS$ rather than just in $\Ho(\cS)$.
We work  by induction on $\nu(Z)$. If $\nu(Z)=0$, then the matching maps are all $0$-representable. Since $0$-representability of the objects $Z_i$ for $i<r$ implies $0$-representability of $M_r^hZ$, we deduce that each $Z_r$ is $0$-geometric, giving $Z \in s\cA$.

Now assume that the result holds for all $Y$ with $\nu(Y) < \nu(Z)$.  Let $r$ be the smallest number for which $a_r(Z)\ne 0$; since $\nu(Z)< 2^n$, we know that $r<n$. The stacks $Z_i$ are all $0$-representable for $i < r$, so $M_r^hZ$ is $0$-representable. As $Z_r \to M_r^hZ$ is $a_r(Z)$-representable, we know that $Z_r$ is an $a_r(Z)$-geometric stack.

Let  $f:S \to Z_r$ be an $a_r(Z)$-atlas for $Z_r$ and a fibration,  observe that Lemma \ref{gencoskworks} gives a canonical map $S \to Z_r^{\Delta_r}$, and let
$$
Z':= Z\by^h_{Z_r^{\Delta_r}}S^{\Delta_r}= Z\by_{Z_r^{\Delta_r}}S^{\Delta_r}.
$$
Since $f$ is  in   $(a_r(Z)-1)- \oC$, Lemma \ref{gencoskworks} now implies that the $i$th relative homotopy matching map of $Z' \to Z$ is a $\oC$-morphism   and is
\begin{enumerate}
\item an equivalence when $i > r$;
\item a pullback of $f$ when $i=r$.
\end{enumerate}

As the homotopy matching maps of $Z'$ are simply the composition of these relative homotopy matching maps with a pullback of the homotopy matching maps for $Z$, 
the $r$th homotopy matching map is given by pulling back the composition
$$
S \to Z_r \to M^h_rZ= M_rZ
$$
along $M_rZ' \to M_rZ$. Since $S$ and $M^h_rZ$ are both $0$-representable, it follows that this map is $0$-representable, so $a_r(Z')=0$. Likewise, $a_i(Z')\le a_i(Z)$ for all $i>r$. Moreover, $a_i(Z')$ is defined for all $i$, since the relative matching homotopy maps are $\oC$-morphisms, so $k$-geometric for some $k$.

Thus $\nu(Z')<\nu(Z)$ (as $2^r>\sum_{i<r} 2^i$), so satisfies the inductive hypothesis, giving $\tilde{Z}\to Z'$ satisfying the conditions of the proposition. Now observe that the composition $ \tilde{Z} \to Z' \to Z$ also satisfies these conditions, which completes the proof of the inductive step. 
 \end{proof}

\begin{remark}
Here is a further explanation of why the induction has $2^n -1$ steps. If we let $f(r)$ be the number of steps required to set $a_i$ to $0$ for all $i<r$, then one further step sets $a_r$ to $0$, at the expense of the $a_i$ becoming non-zero for $i<r$. We therefore require $f(r)$ further steps to get $a_i=0$ for all $i < r+1$. Thus we have the recurrence relation $f(r+1)= 2f(r)+1$, with initial condition $f(0)=0$, so $f(n)=2^n-1$. 
\end{remark}

\begin{theorem}\label{relstrict}
Given  $Y \in s\cA$, and an $n$-representable   morphism $f:\fX\to |Y|$ in $\Ho(\cS)$, there exists a relative  $(n,\oC)$-hypergroupoid  $X\to Y$ in $\Ho(s\cA)$ whose realisation $| X|$ is equivalent to $\fX$ over $|Y|$. Moreover, if $f$ is a  $\oC$-morphism, then $X_0 \to Y_0$ is in $\oC$. 
\end{theorem}
\begin{proof}
We first let $\fY:=|Y|$, and form the simplicial stack $Z\in s\cS$ given by $Z_r:= \fX\by_{\fY}^h Y_r$. The $r$th relative homotopy matching map of $Z\to Y$ is  just the pullback of that for $\fX\to \fY$, so by Lemma \ref{higherdiag}, it is $(n-r)$-representable for $r \le n$, and $0$-representable  for $r>n$. Since the matching maps of $ Y$ are $0$-representable (being in $\cA$), it follows that   $a_r(Z)\le n-r$ for $r \le n$, and $a_r(Z)=0$ for $r\ge n$. Thus $\nu(Z)$ is defined and $\nu(Z) \le 2^n-1$.  

Proposition \ref{res} now gives a morphism $X \to Z$ in $\Ho(s\cA)$ for which the relative homotopy matching maps
$$
X_r \to (M_rX)\by^h_{M_r^hZ}Z_r
$$
are  $\oC$-morphisms  for all $r$, and equivalences for $r \ge n$. 
Since the relative homotopy matching maps are $\vareps$-morphisms, Property \ref{coverprops}.\ref{coverptriv} ensures that $|X| \to |Z|$ is an equivalence in $\cS$.

Now, Property \ref{coverprops}.\ref{coverphprod} gives
$$
| Z| \simeq \fX\by_{\fY}^h|Y|\simeq \fX,
$$
so we have a weak equivalence $|X| \to \fX$.

For any cofibration $K \into L$ of finite simplicial sets, the map
$$
M_L^h X \to M_K^hX\by^h_{M^h_KZ}M^h_LZ=   (M_K^h X \by_{M_K^h Y}M_L^h Y)\by^h_{(\fX^{\oR  K}\by^h_{\fY^{\oR  K}}\fY^{\oR L})} \fX^{\oR L}
$$
is a  $\oC$-morphism, and moreover an equivalence if $\sk_{n-1}K \cong \sk_{n-1}L$. When $K$ and $L$ are contractible, this map just becomes
$$
M_L^h X \to M_K^h X \by_{M_K^h Y}M_L^h Y.
$$
Thus $X$ is indeed an  $(n,\oC)$-hypergroupoid over $Y$. 

 Finally, if $f$ is a $\oC$-morphism, then 
$$
| X|\by_{| Y|}^h Y_0 \to  Y_0
$$
is also a $\oC$-morphism. Since $ X_0 \to |X|\by_{| Y|}^h| Y_0|$ is an $n$-atlas, it is a $\oC$-morphism too, so $X_0 \to Y_0$ is in $\oC$. 
\end{proof}

\subsection{Morphisms}\label{morphisms}

As we saw in \S \ref{morphismspro}, the realisation functor $|-|$ from $s\cA$ to stacks is not full, even if we restrict to $(n, \oC)$-hypergroupoids. We will therefore now have to localise over trivial $(n, \oC)$-hypergroupoids.

\begin{lemma}\label{resconv}
Given an $(n,\oC)$-hypergroupoid $Y \to S$, the $i$th relative homotopy matching map of
$$
Y \to S\by_{|S|}^h|Y|
$$
is a $\oC$-morphism  for all $i$, and an equivalence for $i \ge n$. If $i \le n$, it is $(n-i)$-representable.
\end{lemma}
\begin{proof}
Let $\fS=|S|, \fY= |Y|$. The matching map is
$$
\mu_i:Y_i \to (M_i^hY\by^h_{M_i^hS}S_i)\by^h_{(\fY^{\oR \pd \Delta^i}\by^h_{\fS^{\oR \pd \Delta^i}}\fS )}\fY,
$$
which by Property \ref{coverprops}.\ref{coverphprod} is given by applying $|-|$ to the map
$$
Y_i \to (M_iY\by^h_{M_iS}S_i)\by^h_{(Y^{\pd \Delta^i}\by_{S^{\pd \Delta^i}}S^{\Delta^i } )}Y^{\Delta^i },
$$ 
where we have used the fact that $Y \to S$ is a Reedy fibration to replace homotopy matching objects with strict matching objects.

The map is given in simplicial level $0$ by
$$
Y_i \to (M_iY\by_{M_iS}S_i)\by_{(M_iY\by_{M_iS}S_i )}Y_i=Y_i;
$$
this  is certainly a $\oC$-morphism, so $\mu_i$ is also
a  $\oC$-morphism  by Proposition \ref{easy}.

If $i \ge n$, then Lemma \ref{higherdiag} implies that $\mu_i$  is $0$-representable, and if $i \le n$, then it is $(n-i)$-representable.
\end{proof}

\begin{proposition}\label{duskinfull}
Take  a morphism $X \to S$ in  $ s\cA$ with $S$ Reedy fibrant, 
a homotopy $(n, \oC)$-hypergroupoid $Y \to S$, and a morphism 
$$
f:|X| \to |Y|
$$
in  the homotopy category $\Ho(\cS\da |S|)$. 

Then there exists a trivial homotopy  $(n,\oC)$-hypergroupoid $\pi:\tilde{X} \to X$ and a morphism $\tilde{f}: \tilde{X}  \to Y$ in $\Ho(s\cA)$, such that $f\circ |\pi|=|\tilde{f}|$. Moreover, the map $(\pi, \tilde{f}): \tilde{X} \to X\by_S^hY$ is a homotopy   $(n,\oC)$-hypergroupoid.
\end{proposition}
\begin{proof}
Without loss of generality, we may assume that $Y$ is a Reedy fibration over $S$. Write $\fS:= |S|$, $\fY:= |Y|$ and $\fX:= |X|$, and define the simplicial stack $Z\in s\cS$ by 
$$
Z_r:= X_r\by^h_{(\fY\by_{\fS}^hS_r)}Y_r.
$$
Observe that the relative homotopy matching maps of $Z\to X$ are obtained by pulling back the relative homotopy matching maps of $Y \to \fY\by^h_{\fS}S$ along $X \to \fY\by^h_{\fS}S$. 

Now by Lemma \ref{resconv}, the $i$th such map  is 
a $\oC$-morphism, which is an equivalence for $i \ge n$, and  $(n-i)$-representable for $i \le n$. 
Since the matching maps of $X$ are $0$-geometric, we may  apply Proposition \ref{res} to $Z$, obtaining  $\tilde{X}\in s\cA$, with the $i$th relative homotopy matching map of $\tilde{X} \to Z$ being a $\oC$-morphism  for all $i$, and an equivalence for  $i \ge n$. 

We therefore conclude that the  $i$th matching map of $\pi:\tilde{X} \to X$ is  a  $\oC$-morphism for all $i$, and an equivalence for  $i \ge n$, so $\pi$ is a trivial homotopy $(n, \oC)$-hypergroupoid.
Projection $Z \to Y$ on the second factor gives the map $\tilde{f} :\tilde{X} \to Y$. 

Finally, observe that $Z \to X\by_SY$ is a pullback of the $(n-1)$-geometric map $\fY \to \fY\by^h_{\fS}\fY$, so the relative partial matching maps of $\tilde{X} \to  X\by_SY$ are $\oC$-morphisms, and are weak equivalences in levels above $n$, as required. 
\end{proof}

\begin{theorem}\label{duskinmor}
 Take  $S\in s\cA$ Reedy fibrant, and  take a morphism  $X \to S$  in $s\cA$. Take a  relative cocell $\tilde{X} \to X$ in trivial $(n, \oC)$-hypergroupoids in $\pro(s\cA)$,  with $\tilde{X}$ a $T\oC_n$-projective object in $\pro(s\cA)$ (see Definition \ref{tcncof}).  Then for all 
 $(n, \oC)$-hypergroupoids $f\co Y \to S$ in  $s\cA$, the natural map
\[
 \HHom_{\pro(s\cA)\da S}(\tilde{X},Y)\to  \oR\HHom_{\cS\da |S|}(|X|, |Y|)
\]
 is a weak equivalence.
\end{theorem}
\begin{proof}
Writing $L:= \HHom_{\pro(s\cA)\da S}(\tilde{X},Y)$ and   $R:= \oR\HHom_{\cS\da |S|}(|X|, |Y|) $, it suffices to show that the maps
\[
 L_m \to M_mL\by^h_{R^{\oR \pd \Delta^m}}R^{\oR \Delta^m}        
\]
are surjective on $\pi_0$ for all $m \ge 0$.

Now, for $K \in \bS$, we have
\begin{align*}
M_KL &= \HHom_{\pro(s\cA)\da S}( \tilde{X},Y^K\by_{S^K}S),\\
R^{\oR K} &\simeq  \oR\HHom_{\cS\da |S|}(|X|, |Y^K\by_{S^K}S|).    
\end{align*}
Fix a morphism $h\co\tilde{X} \to Y^{\pd\Delta^m}\by_{S^{\pd\Delta^m}}S$, and note that this factors through some $\tilde{X}_{\alpha} \in s\cA\da X$, where $\tilde{X} =\{\tilde{X}_{\alpha} \}_{\alpha}$. Also fix an element $g$ of $\pi_0(\{h\}\by^h_{R^{\oR \pd \Delta^m}}R^{\oR \Delta^m})$.

We now just apply Proposition \ref{duskinfull}, 
replacing $X$ with $\tilde{X}_{\alpha}$, $Y$ with $Y^{\Delta^m}$ and $S$ with $Y^{\pd\Delta^m}\by_{S^{\pd\Delta^m}}S$. 
This gives a trivial $(n, \oC)$-hypergroupoid $X' \to \tilde{X}_{\alpha}$, with an element $f \in \Hom_{s\cA \da (Y^{\pd\Delta^m}\by_{S^{\pd\Delta^m}}S)}(X', Y^{\Delta^m}) $ mapping to $g$. Since $\tilde{X}$ is $T\oC_n$-projective, we have a section $\tilde{X} \to X'$, so $f$ lifts to an element of $L_m\by_{M_mL}\{h\}$ over $g$, which gives the required surjectivity.
\end{proof}

\begin{remark}\label{holimrk2}
Following Remark \ref{holimrk}, note that the homotopy groups thus have the more natural characterisation
\begin{align*}
 \pi_0\oR\HHom_{\cS\da |S|}(|X|, |Y|)&\simeq \LLim_{X' \in \pi_0\cT_n(X) }\pi_0\HHom_{s\cA\da S}(X',Y),\\
\pi_n(\oR\HHom_{\cS\da |S|}(|X|, |Y|),f) &\simeq \LLim_{X' \in \pi_0\cT_n(X) }\pi_n(\HHom_{s\cA\da S}(X',Y),f),
\end{align*}
where the colimits are taken over the category  of trivial $(n, \oC)$-hypergroupoids $X'$ over $X$.
In the case of Artin stacks, the results of \S \ref{etcoversn} show that  it suffices to take the colimit over trivial Deligne--Mumford hypergroupoids over $X$.
\end{remark}

\subsection{From representables to affines}

In the case of strongly quasi-compact $n$-geometric stacks, recall that Definition \ref{hagndef} takes $\cA$ to be the essential image of the functor $\oR \uline{h} \co \Aff_{\C} \to \Aff_{\C}^{\sim, \tau}$. While Theorems \ref{relstrict} and \ref{duskinmor} characterise $n$-geometric stacks as $n$-hypergroupoids in $\cA$, a description as $n$-hypergroupoids in $\Aff_{\C}$ would be far more satisfactory. We now show that these two theories are indeed equivalent.

\begin{lemma}\label{sequiv}
 When $\cA\subset \Aff_{\C}^{\sim, \tau}$ is the pseudo-model category of representable stacks, the functor $\oR \uline{h} \co s\Aff_{\C} \to s\cA$ induces  a weak equivalence on the simplicial subcategories of fibrant cofibrant objects.
\end{lemma}
\begin{proof}
In the proof of \cite{hag2} Lemma 1.3.2.9,  it is shown that the essential image of $\oR \uline{h} \co \Ho(s\Aff_{\C}) \to \Ho(s\Aff_{\C}^{\sim, \tau})$ consists of simplicial stacks $F_{\bt}$ for which each $F_n$ is representable, i.e. strongly quasi-compact and $0$-geometric. In other words, the essential image is $\Ho(s\cA)$.

Since $\oR\HHom_{s\cA}(\oR \uline{h}X,\oR \uline{h}Y)$ and $\oR\HHom_{s\Aff_{\C}}(X,Y)$ can be expressed as homotopy limits of $\map_{\cA}(\oR \uline{h}X_m,\oR \uline{h}Y_n) \simeq \map_{\Aff_{\C}}(X_m,Y_n)$, we also have weak equivalences
\[
 \oR\HHom_{s\cA}(\oR \uline{h}X,\oR \uline{h}Y) \to \oR\HHom_{s\Aff_{\C}}(X,Y),
\]
giving equivalence of simplicial $\Hom$-spaces.
\end{proof}

\begin{proposition}\label{equivAff}
For $S \in s\Aff_{\C}$ Reedy fibrant, the functor $\oR \uline{h}$ induces a equivalence
\[
 \cG_n(\Aff_{\C}, \oC,S)\to \cG_n(\cA, \oC,\oR \uline{h} S)
\]
 of simplicial categories, for $\cG_n$ the localised category of $(n, \oC)$-hypergroupoids from Definition \ref{hgdscatdef}.
\end{proposition}
\begin{proof}
This is an immediate consequence of Lemma \ref{sequiv} once we note that the functor $Q$ of Proposition \ref{procofibrant} sends fibrant objects to inverse systems of fibrant cofibrant objects.  
\end{proof}

\begin{remark}
In particular, when $Y$ is an $(n, \oC)$-hypergroupoid in $s\Aff_{\C}$,  note that the simplicial presheaf $Y\co \Aff_{\C} \to \bS$  is determined by $Y(U)\simeq  \oR\HHom_{\cS}(\oR \uline{h} U, |Y|)$. By Theorem \ref{duskinmor}, this can be expressed as a direct limit of spaces $\HHom_{s\cA}(U',Y)$ for certain families of trivial $(n, \oC)$-hypergroupoids $U' \to U$. This gives us a characterisation of the sheafification functor $\oR \uline{h}$ depending only on $\oC$ (without reference to the topology $\tau$).
\end{remark}

Summarising the main results so far  gives the following.

\begin{theorem}\label{bigthm}
The simplicial category of $(n, \oC)$-geometric  stacks  is weakly equivalent to the 
simplicial category $\cG_n(\cA, \oC)$ whose objects are  $(n, \oC)$-hypergroupoids in the category $\cA$ of $0$-geometric  stacks, with morphisms given by
$$
\HHom_{\cG_n}(X,Y)= \HHom_{\pro(s\cA)}(QX,QY),
$$
for $Q$ the $T\oC_n$-projective replacement functor of Proposition \ref{procofibrant}.

The simplicial category of strongly quasi-compact $(n, \oC)$-geometric stacks  is weakly equivalent to the simplicial category $\cG_n(\Aff_{\C}, \oC)$ whose objects are   $(n, \oC)$-hypergroupoids in $\Aff_{\C}$, with morphisms given by 
$$
\HHom_{\cG_n}(X,Y)= \HHom_{\pro(s\Aff_{\C})}(QX,QY).
$$
\end{theorem}
\begin{proof}
The functor from $\cG_n(\cA, \oC, S)$ to $(n, \oC)$-geometric  stacks is given by geometric realisation $X \mapsto |X|$.
Essential surjectivity of this functor is given by Theorem \ref{relstrict}, while full faithfulness is Theorem \ref{duskinmor} combined with Lemma \ref{morsimpler}. The $\Aff_{\C}$ statements then follow from Proposition \ref{equivAff}.
\end{proof}

\begin{remarks}
 Observe that this categorisation of  $n$-geometric stacks gives  the same category for any  topology $\tau'$ with respect to which the class of $\oC$-morphisms is local. 

Also note that in the case of $n$-geometric Artin stacks, we can apply Corollary \ref{procofibrantet} and construct $Q$ using just \'etale (rather than smooth) covers.

For a simpler description of morphisms $\pi_0 \HHom$ in the homotopy category, see Remark \ref{holimrk2}.
\end{remarks}

\section{Quasi-coherent modules}\label{qucohsn}

Theorem \ref{relstrict} allows us to  replace  $(n,\oC)$-geometric stacks   with $(n, \oC)$-hypergroupoids. This means that we can apply the results of \cite{olssartin} concerning sheaves on simplicial algebraic spaces, thus extending their consequences from  Artin $1$-stacks to higher Artin stacks, and possibly beyond. 

However, it is easier to define quasi-coherent modules  as presheaves than as sheaves. This becomes especially important when we work with quasi-coherent complexes, which involve hyperdescent rather than descent.

\subsection{Left Quillen presheaves and Cartesian sections}

The following is taken from \cite{hirschowitzsimpson} \S 17:
\begin{definition}
Given a category $\bI$, define a \emph{left Quillen presheaf} $M$ on $\bI$  to consist of model categories $M(i)$ for all objects $i \in \bI$, together with left Quillen functors $f^*\co M(j) \to M(i)$ for all morphisms $f\co i \to j$ in $\bI$, satisfying  
associativity. Denote the right adjoint to $f^*$ by $f_*$.
\end{definition}

Note that we can regard $M$ as a functor from $\bI^{\op}$ to the category of categories.
\begin{definition}
Given a left Quillen presheaf $M$ on a category $\bI$, define the category  $M^{\bI^{\op}}$ of sections to consist of natural transformations $\id_{\bI^{\op}} \to M$. In other words, an object of $M^{\bI^{\op}}$ consists of objects $m(i) \in M(i)$ for all $i \in \bI$ and morphisms $\eta(f)\co f^*m(j) \to m(i)$ for all morphisms $f\co i \to j$ in $\bI$, with $\eta$ satisfying associativity. A morphism $g\co (m, \eta) \to (n, \zeta)$ consists of morphisms $g(i) \co m(i) \to n(i)$ for all $i \in \bI$, satisfying $\zeta(f) \circ f^*g(j)=g(i) \circ \eta(f)  $ for all  $f\co i \to j$ in $\bI$

A morphism $g$ in $M^{\bI^{\op}}$ is said to be a \emph{weak equivalence} if each $g(i)$ is a weak equivalence in $M(i)$.
\end{definition}

\begin{definition}
An object  $(m, \eta)$ of  $M^{\bI^{\op}}$ is said to be \emph{homotopy-Cartesian}  if the transformations $\eta(f)\co f^*m(j) \to m(i) $
 induce weak equivalences $\oL f^*m(j) \to m(i) $ for all morphisms $f$ in $\bI$. When the model structures on the model categories are all trivial, we refer to homotopy-Cartesian sections as \emph{Cartesian}. 

Write $ M^{\bI^{\op}}_{\cart}$ for the full subcategory of $M^{\bI^{\op}}$ consisting of homotopy-Cartesian sections, and refer to a morphism in $ M^{\bI^{\op}}_{\cart}$ as a weak equivalence if it is so in $M^{\bI^{\op}}$.
\end{definition}

\begin{definition}\label{classdef}
Given a category $\C$ and a subcategory $\cW$ (the \emph{weak equivalences}) containing all objects, we follow \cite{rezk}  3.3 in defining the classification diagram $\oN(\C):=\oN(\C,\cW)$ to be the following  simplicial space. 
 $N(\C,\cW)_n $ is the nerve of the category $w(\C^{[n]})$ whose objects are strings of morphisms of length $n$, with morphisms in $w(\C^{[n]})$ given by weak equivalences. 
\end{definition}

\begin{lemma}\label{holimcart}
 Given a left Quillen presheaf $M$ on a  category $\bI$, there is a canonical weak equivalence
\[
 \oN( M^{\bI^{\op}}_{\cart})\simeq \holim_{\substack{ \lla \\ n\in \bI^{\op}}} \oN(M^n) 
\]
of bisimplicial sets (with weak equivalences defined levelwise).
\end{lemma}
\begin{proof}
By \cite{rezk} Theorem 8.3, any Reedy fibrant replacement of $N(\C,\cW)$   is a complete Segal space. Thus we may consider homotopy limits and weak equivalences in the CSS model structure, so the result reduces to \cite{bergnerHolim} Theorem 4.1.
\end{proof}

\begin{definition}\label{cartdef}
Given a  left Quillen presheaf $M$ on $\cA$ and  $X \in s\cA$, write  $cM(X):= (X^*M)^{\Delta}$ and  $cM(X)_{\cart}:= (X^*M)^{\Delta}_{\cart}$.
\end{definition}

\begin{remarks}\label{cartrks}
 Note that  in order to establish that an object $\sF \in cM(X)$ lies in $cM(X)_{\cart}$,  it suffices to verify that the maps $\pd^i\co \oL \pd_i^* \sF^n \to \sF^{n+1}$  are weak equivalences, since  $\sigma^i\co \oL \sigma_i^* \sF^{n+1} \to \sF^n$  has right inverse $\oL \sigma_{i*}\pd^i$.

When each category $M(Z)$ has trivial model structure, Lemma \ref{truncate} adapts to show that the category  $cM(X)_{\cart}$ is equivalent to the category of pairs $(\sG, \omega)$, with $\sG \in M(X_0)$ and $\omega \co  \pd_0^*\sG \to \pd_1^*\sG$ satisfying the cocycle conditions $\sigma_0^*\omega=\id$ and $(\pd_2^*\omega) \circ (\pd_0^*\omega)= \pd_1^*\omega$.

When each $M(Z)$ is some category of sheaves on $Z$,   Definition \ref{cartdef} is essentially the same definition as \cite{Hodge3} 5.1.6. Thus we may also describe $cM(X_{\bt})$ as the category of sheaves on the site of pairs $(\mathbf{n}, U)$, for $\mathbf{n} \in \Delta^{\op}$ and $U \to X_n$.   
\end{remarks}

\subsection{Comparison with  sheaves on stacks}

Fix a homotopical algebra context in the sense of \cite{hag2} Definition 1.1.0.11. Thus we have a symmetric monoidal model category $\C$, and as in \cite{hag2} \S 1.3.2 let $\Aff_{\C}$  be opposite to the category of commutative monoids in $\C$. 
Now fix a topology $\tau$ on $\Aff_{\C}$  as in \S \ref{HAGcontext}. 

\begin{definition}
Say that a left Quillen presheaf $M$ on $\Aff_{\C}$ is a left Quillen hypersheaf  if for every $X \in \cA$ and every   $\tau$-hypercover $\pi\co \tilde{X}_{\bt} \to X$ in $s\cA$, the natural transformations
\begin{align*}
\sF &\to  \holim_{\substack{ \lla \\n \in \Delta}} \oR\pi_{n*} \pi_n^*\sF\\
 \oL\pi_n^* \Lim_{m \in \Delta} \pi_{m*}\sG^m &\to \sG^n
\end{align*}
are weak equivalences for all cofibrant objects $\sF \in  M(X)$ and all Reedy fibrant objects $\sG \in cM(\tilde{X})_{\cart}$. 
\end{definition}

\begin{proposition}\label{sheafprop}
Given a left Quillen hypersheaf  $M$ on $\Aff_{\C}$ and 
 $X \in s\Aff_{\C}$, there  are compatible equivalences
\[
 \oN(cM(X)_{\cart})_i \simeq  \oR\HHom_{\Aff_{\C}^{\sim, \tau}}(|X|, \oN(M)_i)  
\]
 of simplicial sets for all $i$.
\end{proposition}
\begin{proof}
 This is based on  \cite{hag2} Theorem 1.3.7.2.  The left Quillen hypersheaf condition ensures that each $\oN(M)_i$ is a hypersheaf. Since $|X|= \ho\LLim_n X_n$, this implies
\[
 \oR\HHom_{\Aff_{\C}^{\sim, \tau}}(|X|, \oN(M)_i)  \simeq    \holim_{\substack{ \lla \\n \in \Delta}}\oN(M(X_n))_i,    
\]
and the result reduces to Lemma \ref{holimcart}.
\end{proof}

\begin{remark}\label{modulesokrk}
In particular, note that Proposition \ref{sheafprop} combines with Theorem \ref{bigthm} to show that for any trivial  $(n,\oC)$-hypergroupoid $\pi\co \tilde{X}\to X$, the functor
\[
 \oL\pi^*\co   cM(X)_{\cart}\to cM(\tilde{X})_{\cart}     
\]
 induces weak equivalences of classifying diagrams. This makes it a weak equivalence of relative categories in the sense of \cite{BarwickKanEquiv}.
\end{remark}

\begin{definition}\label{moddef}
 For any object $\Spec A \in \Aff_{\C}$, there is a model category $\Mod(A)$ of $A$-modules in $\C$, so $\Mod$ defines a left Quillen presheaf on $\Aff_{\C}^{\op}$ as in \cite{hag2} \S 1.3.7.  
\end{definition}

\begin{corollary}\label{qcohequiv}
For all
 $X \in s\Aff_{\C}$, there  are compatible equivalences
\[
 \oN(c\Mod(X)_{\cart})_i \simeq  \oR\HHom_{\Aff_{\C}^{\sim, \tau}}(|X|, \oN(\Mod)_i)  
\]
 of simplicial sets for all $i$.
\end{corollary}

\begin{remark}\label{cfhagqcoh}
In \cite{hag2}, $\oN(\Mod)_0$ is denoted by $\mathbf{QCoh}$, and  $\oR\HHom_{\Aff_{\C}^{\sim, \tau}}(|X|, \oN(\Mod)_0) $ is regarded as nerve of the $\infty$-groupoid of 
quasi-coherent sheaves on $|X|$. 
Using the equivalences of \cite{bergner3}, we should thus regard the complete Segal space $ i \mapsto \oR\HHom_{\Aff_{\C}^{\sim, \tau}}(|X|, \oN(\Mod)_i)$ as the $\infty$-category of quasi-coherent sheaves on $|X|$, and the proposition above shows us that this is just $\oN(c\Mod(X)_{\cart})$. 
\end{remark}

\subsection{Cartesian replacement and derived direct images}

By applying Remark \ref{modulesokrk} to left Quillen hypersheaves with additive structure, we can define $\Ext$ groups and hence cohomology. However, the existence of (derived) direct images is more subtle in general. Although a morphism $f\co X \to Y$ in $s\cA$ will induce functors $\oR f_* \co cM(X)\to  cM(Y)$ for left Quillen hypersheaves $M$, these functors seldom preserve homotopy-Cartesian objects. We will therefore now construct a homotopy retraction of the inclusion 
$cM(X)_{\cart} \into cM(X)$.

\begin{definition}
Given $X_{\bt}\in s\cA$, let $(\Dec_+)_{\bt}X_{\bt}$ be the bisimplicial scheme given in level $n$ by $(\Dec_+)^{n+1}X_{\bt}$,  for $\Dec_+$ as in Definition \ref{decdef}. The simplicial structure  is defined  as in \cite{W} 8.6.4,  using  the comonadic structure of $\Dec_+$. This admits an augmentation $\pd_{\top}:(\Dec_+)_0 X_{\bt} \to X_{\bt}$, given by the co-unit. 

This also has an augmentation $\alpha$ in the orthogonal direction, given by $(\pd_0)^{\bt+1}:(\Dec_+)^{n+1}X_{\bt} \to ((\Dec_+)^{n}X_{\bt})_0=  X_n$, corresponding to the retraction of Lemma \ref{decgood}. 
\end{definition}

\begin{definition}\label{Rcartdef}
Given a left Quillen presheaf $M$ on $\cA$ and $X \in s\cA$, define
\[
 \oR \cart_* \co cM(X) \to cM(X)       
\]
to be the composition  
\[
(X^*M)^{\Delta} \xra{ \oL\pd_{\top}^*} (((\Dec_+)_{\bt}X_{\bt} )^*M)^{\Delta \by \Delta}   \xra{\oR \alpha_*} (cM(X))^{\Delta} \xra{\ho\Lim_{\Delta}} cM(X).      
\]
\end{definition}
Note that when $X$ has constant simplicial structure (i.e. $X \in \cA$), then $(\oL\pd_{\top}^*\sF)^{mn}= \sF^m$ and $\oR\alpha_* $ is the identity on $ M(X)^{\Delta \by \Delta}$, so  $\oR \cart_*$ is just given by the constant diagram $(\oR\cart_*\sF)^n= \ho\Lim_{m \in \Delta}\sF^m$.

We now take a HAG context $(\C, \oP, \tau)$, and consider $\cA= \Aff_{\C}$, letting $\oC \subset \oP$ consist of $\oP$-morphisms which are local $\tau$-surjections.

\begin{definition}\label{basechange}
Say that a left Quillen hypersheaf $M$ on $\Aff_{\C}$ \emph{satisfies $(n,\oC)$-base change} if for all $X \in \Aff_{\C}$, all  trivial $(n, \oC)$-hypergroupoids $\pi\co Y \to X$ and all $\oC$-morphisms $f\co Y \to X$, the natural map
\[
 f^* (\ho\Lim_{\Delta}\oR\pi_*)\sE \to (\ho\Lim_{\Delta}\oR \pi'_*)f^*\sE       
\] 
is a weak equivalence in $M(X') $ for all $\sE \in cM(Y)$, where $\pi'\co Y'\to X'$ is the pullback along $f$ of $\pi$. 
\end{definition}

The following is immediate:
\begin{lemma}\label{basechangelemma}
Assume that a left Quillen hypersheaf $M$ on $\Aff_{\C}$ satisfies the conditions:
\begin{enumerate}
        \item for all $\oC$-morphisms $\pi\co X \to Y$ and $f\co X' \to X$ in $\Aff_{\C}$, the natural map
\[
 f^* \oR\pi_*\sE \to \oR \pi'_*f^*\sE       
\]
is a weak equivalence in $M(X') $ for all $\sE \in M(Y)$, where $\pi'\co Y'\to X'$ is the pullback along $f$ of $\pi$;

\item for all   $\oC$-morphisms  $f\co X' \to X$ in $\Aff_{\C}$ and all $\sE \in cM(X)$, the natural map
\[
 f^*(\ho\Lim_{\Delta}\sE) \to   \ho\Lim_{\Delta} f^*\sE     
\]
is a weak equivalence in $M(X') $.
\end{enumerate}
Then  $M$  satisfies $(n,\oC)$-base change for all $n$.
\end{lemma}

\begin{proposition}\label{rcartgood}
Assume  that $M$ is a left Quillen hypersheaf  satisfying $(l,\oC)$-base change. Then for every $(l,\oC)$-hypergroupoid $X$ and every $\sF \in cM(X)$, the object $\oR \cart_*\sF$ of $cM(X)$  is homotopy-Cartesian.
\end{proposition}
\begin{proof}
Consider the  maps $\pd_i(\Dec_+): \Dec_+^{n+1}X \to \Dec_+^{n}X$ associated to the comonad $\Dec_+$.
  If we write $\vareps_Y: \Dec_+Y \to Y$ for the counit of the adjunction, then $\pd_i(\Dec_+)= (\Dec_+)^i\vareps_{\Dec_+^{n-i}Y}$.  
 
Since $\pd_i: X_{n} \to X_{n-1}$ is a $\oC$-fibration, $(l, \oC)$-base change implies that
$$
\oL\pd_i^* \ho\Lim_{\Delta}  \oR \alpha_{n-1,*} (\oL\pd_{\top}^{*})^{n}\sF   \simeq \ho\Lim_{\Delta}\oR \pr_{2*}\pr_1^*(\oL\pd_{\top}^{*})^{n}\sF,
$$  
for $\pr_1\co \Dec_+^{n}X\by_{X_{n-1}}X_n \to \Dec_+^{n}X$ and $\pr_2\co \Dec_+^{n}X\by_{X_{n-1}}X_n \to X_n$.

From Lemma \ref{pfibres}, it follows  that the morphisms 
$$
(\pd_i(\Dec_+), \pd_0): \Dec_+^{n+1}X \to \Dec_+^{n}X\by_{X_{n-1}}X_n
$$ 
are trivial  $(l,\oC)$-hypergroupoids. 
In particular, they are $\tau$-hypercoverings, so Remark \ref{modulesokrk} gives that 
\[
 \oL(\pd_i(\Dec_+), \pd_0)^*\co cM(\Dec_+^{n}X\by_{X_{n-1}}X_n)_{\cart} \to cM(\Dec_+^{n+1}X)_{\cart}
\]
is a weak equivalence. 

Now, $\ho\Lim_{\Delta} \oR \alpha_{n*}\co cM(\Dec_+^{n+1}X)_{\cart} \to M(X_n)$  is homotopy right adjoint to $\oL \alpha_n^*$, so the weak equivalence above gives $\ho\Lim_{\Delta} \oR \alpha_{n*}\circ \oL(\pd_i(\Dec_+), \pd_0)^*$ as homotopy right adjoint to $\oL\pr_2^*\co \Mod(X_n) \to cM(\Dec_+^{n}X\by_{X_{n-1}}X_n)_{\cart}$. 
Another homotopy right adjoint is given by $ \ho\Lim_{\Delta}\oR \pr_{2*}$, so 
\[
 \ho\Lim_{\Delta}\oR \pr_{2*} \simeq \ho\Lim_{\Delta} \oR \alpha_{n*}\circ \oL(\pd_i(\Dec_+), \pd_0)^*.
\]

Combining the equivalences above, we have
\begin{eqnarray*}
 \oL\pd_i^* \ho\Lim_{\Delta}  \oR \alpha_{n-1,*} (\oL\pd_{\top}^{*})^{n}\sF&\simeq& \ho\Lim_{\Delta} \oR \alpha_{n*}\circ \oL(\pd_i(\Dec_+), \pd_0)^*\pr_1^*(\oL\pd_{\top}^{*})^{n}\sF\\
&=& \ho\Lim_{\Delta} \oR \alpha_{n*}(\oL\pd_{\top}^{*})^{n+1}\sF,
\end{eqnarray*}
so we have shown that
$$
\pd^i: \oL \pd_i^*(\oR \cart_*(\sF)^{n-1}) \to \oR \cart_*(\sF)^n
$$
is a quasi-isomorphism, as required.
\end{proof}

\begin{proposition}\label{rcartgood2}
For $X$ and $M$ as above and $\sF \in cM(X)$, there is a natural transformation $\vareps_{\cart}:\oR \cart_*\sF\to \sF$, which is a weak equivalence whenever $\sF$ is homotopy-Cartesian.
\end{proposition}
\begin{proof}
Given a module $\sG\in cM(Y)$ for  $Y\in s\Aff_{\C}$, we may define $\Dec^+(\sG) \in cM(\Dec_+Y)$ by $(\Dec^+\sG)^m= \sG^{m+1}$, with $\pd^i_{\Dec^+\sG}= \pd^i_{\sG}$ and $\sigma^i_{\Dec^+\sG}= \sigma^i_{\sG}$.
Setting 
$$((\Dec^+)^{\bt}\sF)^n:= (\Dec^+)^{n+1}\sF
$$ for any $\sF\in cM(X)$, this gives us 
 $(\Dec^+)^{\bt}\sF \in M((\Dec_+)_{\bt}X)^{\Delta \by \Delta}$, and there is a  natural map 
$
\pd^{\top}: \oL\pd_{\top}^*\sF \to (\Dec^+)^{\bt}\sF,
$
which is a  weak equivalence whenever $\sF$ is homotopy-Cartesian. Applying the functor $\ho\Lim_{\Delta}  \oR \alpha_*$ gives a map
$
\pd^{\top}:\oR \cart_*\sF \to \ho\Lim_{\Delta}  \oR \alpha_*(\Dec^+)^{\bt}\sF.
$

Now, 
 the deformation retraction $\pd_0:\Dec_+Y \to Y_0$ corresponds to a deformation retraction
$$
\sG^0 \xra{\pd^0} \ho\Lim_{\Delta}  \oR (\pd_{0*}^{\bt})_* \Dec^+(\sG) \xra{ \oR\pd_{0*}^{\bt}(\sigma^0)^{\bt}} \sG^0
$$
in  $cM(Y_0)$. Applying this to $Y= (\Dec_+)^nX$ and $\sG= (\Dec^+)^n\sF$ for all $n$ gives a deformation retraction 
$$
\sF \xra{\pd^0} \oR\alpha_*(\Dec^+)^{\bt}\sF   \xra{\oR \alpha_*(\sigma^0)^{\bt}} \sF.
$$

We now define $\vareps_{\cart}$ to be
$
\oR\alpha_*(\sigma^0)^{\bt}\circ\pd^{\top}: \oR \cart_*\sF \to \sF.
$
\end{proof}

\begin{definition}\label{ddirectdef}
For any morphism $f: X \to Y$ in $s\Aff_{\C}$ with $Y$ an $(n, \oC)$-hypergroupoid, and  for any  left Quillen hypersheaf $M$  satisfying $(n,\oC)$-base change, we may now  define $\oR f_*^{\cart}: cM(X)_{\cart}\to cM(Y)_{\cart}$ by 
$$
\oR f_*^{\cart}:= \oR \cart_*\circ \oR f_*,
$$ 
where $\oR f_*:cM(X)\to cM(Y)$ is given on $X_n$ by $\oR f_{n*}$. 

When $n=0$, note that we just have $\oR f_*^{\cart}= \ho\Lim_{\Delta} \oR f_*$.
\end{definition}

\subsection{Sheaves on Artin $n$-stacks}

We now specialise to the context of strongly quasi-compact Artin $n$-stacks. If we let $\oC$ be the class of smooth surjections of affine schemes, then Theorem \ref{relstrict} allows us to work instead with Artin $n$-hypergroupoids (i.e. $(n,\oC)$-hypergroupoids in affine schemes --- see Examples \ref{cfddt}). Write $\Aff$ for the category of affine schemes.

\subsubsection{Quasi-coherent modules}

As in Definition \ref{moddef}, let $\Mod\co \Aff^{\op} \to \Cat$ send $\Spec A$ to the category of $A$-modules. The following is now a special case of Remarks \ref{cartrks}:

\begin{lemma}
Given an Artin $n$-hypergroupoid $X$,  the category $c\Mod(X)_{\cart}$ is equivalent to the category of pairs $(\cG, \omega)$, where $\cG$ is a quasi-coherent sheaf on $X_0$ and  $\omega \co  \pd_0^*\sG \to \pd_1^*\sG$ is a morphism of quasi-coherent sheaves on $X_1$  satisfying the cocycle conditions $\sigma_0^*\omega=\id$ on $X_0$, and $(\pd_2^*\omega) \circ (\pd_0^*\omega)= \pd_1^*\omega$ on $X_2$.
\end{lemma}

When $X$ is an Artin $1$-hypergroupoid and $\fX=|X|$,  we thus have that  $c\Mod(X)_{\cart}$ is equivalent to the category of Cartesian $\O_{\fX}$-modules in the sense of \cite{olssartin} Lemma 4.5.

\subsubsection{Quasi-coherent complexes}

Now consider the functors $dg\Mod$ (resp. $dg_+\Mod$, resp. $dg_-\Mod$) from  $\Aff^{\op}$ to $\Cat$  which send $\Spec A$ to the model category  of chain complexes (resp. chain complexes in non-negative degrees, resp. chain complexes in non-positive degrees) of $A$-modules. Weak equivalences in all these categories are quasi-isomorphisms. Fibrations in $dg\Mod(A)$ and $dg_-\Mod(A)$ are surjections, while fibrations in $dg_+\Mod(A)$ are surjective in non-zero degrees.

The following is an immediate consequence of the definitions and flat base change:
\begin{lemma}
Given an Artin $n$-hypergroupoid $X$,  an object $\sF_{\bt} \in  cdg\Mod(X)$ is homotopy-Cartesian if and only if  the  
homology presheaves $\H_i(\sF_{\bt}) \in c\Mod(X)$ are Cartesian for all $i$.
\end{lemma}

\begin{lemma}\label{dgmodhyp}
The functors $dg\Mod, dg_+\Mod, dg_-\Mod$ are left Quillen hypersheaves on $\Aff$ with respect to the fpqc topology. 
\end{lemma}
\begin{proof}
We first consider the case of $dg\Mod$.
Taking an fpqc hypercover $p\co \tilde{X}_{\bt} \to X$, we need to show that the maps
 \begin{align*}
\sF &\xra{\eta}   \holim_{\substack{ \lla \\n \in \Delta}}\oR p_{n*}  p_n^*\sF\\
 \oL p_n^* \Lim_{m \in \Delta}  p_{m*}\sG^m &\xra{\vareps} \sG^n
\end{align*}
are weak equivalences for all cofibrant objects $\sF \in dg\Mod(X)$ and all fibrant objects $\sG \in cdg\Mod(\tilde{X})_{\cart}$.

All objects of $dg\Mod(\tilde{X}_n)$ are fibrant, so $ p_{n*}= \oR p_{n*}$. Since the morphisms $ p_n \co \tilde{X}_n \to X$ are all flat, $ p_n^*$ also preserves weak equivalences, so $\oL p_n^*=  p_n^*$.

As in Examples \ref{categs}, a model for the functor $\ho\Lim_{n \in \Delta}$ is given by the product total complex.  Writing $\H_c^*$ for cohomology of a cosimplicial complex, and taking $\sG \in cdg\Mod(\tilde{X})_{\cart}$, there is spectral sequence
$$
\H_c^i p_*\H_j\sG \abuts \H_{j-i} \Tot^{\Pi} p_*\sG, 
$$
which converges weakly, by \cite{W} \S 5.6, since $\H_c^i p^*\sF_j=0 $ for $i<0$ (and hence for $i<0, j<0$). 

Since $\H_j\sG \in \Mod(\tilde{X})_{\cart}$, by faithfully flat descent there exists $\sE_j=\H_c^0( p_*\H_j\sG)  \in dg\Mod(X)$ with $ p^*\sE_j \cong \H_j\sG$. Thus cohomological descent gives  $\H_c^i p_*\H_j\sG=0$ for all $i>0$, and the spectral sequence converges, giving
$$
\H_{n} \Tot^{\Pi}  p_*\sG\cong \H_c^0( p_*\H_n\sG).
$$
the isomorphism $ p^*\sE_j \cong \H_j\sG$ thus ensures that
\[
 \vareps_{\sG}\co  p^*\H_c^0( p^*\sG) \to \sG 
\]
 is a quasi-isomorphism. 

For $\sF \in dg\Mod(X)$, this implies that $\vareps_{ p^*\sF}$ is a quasi-isomorphism. Thus $ p^*\eta_{\sF}$ is also a quasi-isomorphism, since $\vareps_{ p^*\sF}\circ  p^*\eta_{\sF}= \id_{\sF}$. In particular, $ p_0^*\eta_{\sF}$ is a quasi-isomorphism, so $\eta_{\sF}$ must also be so,  $ p_0$ being faithfully flat. 

This completes the proof for $dg\Mod$. The same proof applies to $dg_-\Mod$, in which case the product total complex is just a direct sum. For $dg_+\Mod$, $\ho\Lim_{\Delta}$ is given by applying the good truncation $\tau_{\ge 0}$ to $\Tot^{\Pi}$, so the proof above still applies, except that we need only consider $\H_n$ for $n \ge 0$.
\end{proof}

\begin{remarks}\label{hcartrks}
Note that $cdg\Mod(X)_{\cart}$ corresponds to the category  of quasi-coherent complexes on $|X|$ given  in
\cite{toenseattle} 3.7, which anticipated the simplicial characterisation of $n$-geometric stacks given in Theorem \ref{relstrict}. 

However, this differs crucially from the construction of $D_{\cart}(\O_{X_{\bt}})$ in \cite{olssartin} 4.6. The latter takes complexes $\sF_{\bt}$ of sheaves of $\O_{X}$-modules whose homology \emph{sheaves} $\sH_n(\sF)$ are Cartesian. Thus the sheafification $\sF^{\sharp}$ of any homotopy-Cartesian $\O_X$-module $\sF$ lies in $D_{\cart}(\O_{X_{\bt}})$, but $\sF^{\sharp}$ might not be homotopy-Cartesian. 
One consequence is that whereas projective resolutions exist in the category of quasi-coherent complexes, they do not exist in $D_{\cart}(\O_{X_{\bt}})$. This means that derived pullbacks $\oL f^*$ exist automatically for quasi-coherent complexes (as will be exploited in Corollary \ref{cotgood}), but are constructed only with considerable difficulty in \cite{olssartin}.

Objects of $cdg_+\Mod(X)_{\cart}$ correspond to the definition of  homotopy-Cartesian modules in \cite{hag2} Definition 1.2.12.1. These are called quasi-coherent complexes in \cite{lurie} \S 5.2. Again, this differs slightly from the construction of $D_{\cart}(\fX)$ in \cite{olssartin} 3.10, which takes complexes  of sheaves of $\O_{\fX}$-modules whose homology \emph{sheaves}  are quasi-coherent. The differences arise because quasi-coherent complexes are hypersheaves (as explained in \S \ref{inftysheaves}) rather than sheaves.

For a quasi-compact semi-separated scheme $X$, it is observed  in the introduction of \cite{huettemann} that the  proof of  \cite{huettemann} Theorem 4.5.1 adapts to show that the homotopy category of quasi-coherent complexes $X$ is equivalent to the homotopy category of complexes of quasi-coherent sheaves on $X$. By \cite{bokstedtneeman}, this in turn is equivalent to $D_{\cart}(\O_X)$ under the same hypotheses. 
\end{remarks}

\subsubsection{Derived direct images and cohomology}\label{directsn}

Observe that  the left Quillen hypersheaves $dg_-\Mod, dg_+\Mod, dg\Mod$ on $\Aff$ all satisfy  the first condition of Lemma \ref{basechangelemma}, which is just flat base change. The second condition, however, is only satisfied by $dg_-\Mod$ in general, since $\Tot^{\Pi}$ does not commute with pullbacks. Thus, for any morphism $f \co X \to Y$ in $s\Aff$ with $Y$ an Artin $n$-hypergroupoid, Definition \ref{ddirectdef} gives
\[
 \oR f_*^{\cart} \co  cdg_-\Mod(X)_{\cart} \to cdg_-\Mod(Y)_{\cart},      
\]
homotopy right adjoint to $\oL f^*$.

When $Y$ is an affine scheme, or even the \v Cech resolution of a quasi-compact semi-separated scheme, the total product $\Tot^{\Pi}$ becomes a finite sum, giving
\[
 \oR f_*^{\cart} \co  cdg\Mod(X)_{\cart} \to cdg\Mod(Y)_{\cart},       
\]
 and similarly for $dg_+\Mod$.

Taking $Y$ to be affine, we can write $\oR \Gamma(X,-)$ for $\oR f_*^{\cart}$, giving
\[
\oR \Gamma(X, \sF)= \Tot^{\Pi} \sF.        
\]

We also have the following  generalisation of the vanishing of higher direct images of an affine morphism of schemes, allowing us to regard $f_*$ as $\oR f_*^{\cart}$.
\begin{lemma}\label{rcart0}
If $f: X \to Y$ is a relative Artin $0$-hypergroupoid over an Artin $n$-hypergroupoid $Y$, then 
$$
\oR f_* \simeq f_*: cdg\Mod(X) \to cdg\Mod(Y)
$$
sends
$cdg\Mod(X)_{\cart}$ to $cdg\Mod(Y)_{\cart}$.
\end{lemma}
\begin{proof}
Since $f$ is a relative Artin $0$-hypergroupoid, it is Cartesian, so $f_*M$ is Cartesian for all $M \in cdg\Mod(X)_{\cart}$. As all objects of $dg\Mod(A)$ are fibrant, we also have $\oR f_*= f_*$, which  completes the proof.
\end{proof}

\section{Alternative formulations of derived Artin stacks}\label{alternatives}
In this section, we will restrict to the strongly quasi-compact case in order to simplify the exposition. 

Intuitively, a derived Artin $n$-stack $X$ should be a small neighbourhood of the associated Artin $n$-stack $\pi^0X$ given in Definition \ref{h0def}. We will see how any homotopy derived Artin $n$-hypergroupoid $X$ is equivalent to the Zariski neighbourhood $X^{l}$ of $\pi^0X$ in $X$, and even the \'etale neighbourhood $X^{h}$ of $\pi^0X$ in $X$. Indeed, under fairly strong Noetherian hypotheses, $X$ is  equivalent to the formal neighbourhood $\hat{X}$.

\subsection{Approximation and completion}\label{fthick}

We use the terms \emph{formally \'etale}  and \emph{formally smooth}  in the sense of \cite{hag2}, meaning that a morphism $f:A \to B$ of simplicial rings is formally \'etale (resp. formally smooth) if the cotangent complex $\bL_{\bt}^{B/A}\simeq 0$ (resp. $\bL_{\bt}^{B/A}$ is equivalent to a retract of a direct sum of copies of $B$). Note that when $A$ and $B$ are discrete rings, these notions are stronger than those used classically (e.g. in \cite{Mi}).

\begin{definition}\label{strongdef}
Recall from \cite{hag2} Definition 2.2.2.1 that a morphism $f:A \to B$ in $s\Ring$ is said to be strong if the maps $\pi_n(A)\ten_{\pi_0(A)}\pi_0(B) \to \pi_n(B)$ are isomorphisms for all $n$.
\end{definition}

\begin{lemma}\label{feteasy}
A morphism $f:A \to B$ in $s\Ring$ is formally \'etale if and only if  the relative cotangent complex $\bL^{B/A}\ten_B \pi_0B$ is contractible as a simplicial $\pi_0(B)$-module, i.e $\pi_*(\bL^{B/A}\ten_B \pi_0B)=0$.
\end{lemma}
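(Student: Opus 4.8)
The plan is to interpret this as a derived Nakayama lemma for the cotangent complex. Recall that, by the definition just given, $f$ is formally \'etale precisely when $\bL^{B/A}\simeq 0$. One implication is then immediate: if $\bL^{B/A}$ is contractible, then so is its derived base change $\bL^{B/A}\ten_B\pi_0B$ along $B \to \pi_0B$, whence $\pi_*(\bL^{B/A}\ten_B\pi_0B)=0$. All the content is in the converse, so assume $\pi_*(\bL^{B/A}\ten_B\pi_0B)=0$ and write $M:=\bL^{B/A}$.

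Before the main argument I would record two facts. First, since $f$ is a morphism of connective simplicial rings, $M$ is a connective $B$-module (as in \cite{hag2} \S2.2.2); in particular $\pi_iM=0$ for $i<0$. Secondly, I need the following computation of the lowest homotopy group of a derived base change: if $N$ is an $n$-connective $B$-module (that is, $\pi_iN=0$ for $i<n$), then $N\ten_B\pi_0B$ is again $n$-connective and
$$
\pi_n(N\ten_B\pi_0B)\;\cong\;\pi_nN\ten_{\pi_0B}\pi_0B\;\cong\;\pi_nN.
$$
This is the main obstacle, and I would obtain it from the K\"unneth (Tor) spectral sequence
$$
E^2_{p,q}=\bigl(\Tor^{\pi_*B}_p(\pi_*N,\pi_0B)\bigr)_q\;\Rightarrow\;\pi_{p+q}(N\ten_B\pi_0B),
$$
noting that $\pi_*N$ is concentrated in degrees $\ge n$ while $\pi_0B$ sits in internal degree $0$, so the only term of total degree $\le n$ is $E^2_{0,n}=(\pi_*N\ten_{\pi_*B}\pi_0B)_n=\pi_nN$, with no room for incoming or outgoing differentials.

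With these in hand the result follows by induction on connectivity. Suppose $\pi_iM=0$ for all $i<n$ (the case $n=0$ being the connectivity of $M$ recorded above). Applying the displayed isomorphism with $N=M$ gives $\pi_nM\cong\pi_n(M\ten_B\pi_0B)=0$, so $M$ is $(n+1)$-connective. Since $M$ is connective and $n$-connective for every $n$, it is contractible, i.e. $\bL^{B/A}\simeq 0$, which is exactly formal \'etaleness. This closes the induction and completes the proof.
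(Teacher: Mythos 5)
Your proof is correct and is essentially the argument the paper invokes: the paper's proof simply defers to the connectivity induction in \cite{hag2} Lemma 2.2.2.12, which is exactly the ``derived Nakayama'' step you carry out via the Tor spectral sequence (showing $\pi_n(M\ten_B\pi_0B)\cong\pi_nM$ for the bottom homotopy group of a connective module, then inducting). You have merely made the cited argument explicit, and also recorded the trivial forward direction that the paper leaves unstated.
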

\begin{proof}
The argument from the proof of \cite{hag2} Lemma 2.2.2.8 shows that contractibility of $\bL^{B/A}\ten_B \pi_0B$ implies contractibility of the simplicial $B$-module $\bL^{B/A}$, which in turn means that $f$ is formally \'etale.
\end{proof}

\begin{lemma}\label{weakfet}
A morphism $f:A \to B$ in $s\Ring$ is a weak equivalence if and only if $f$ is formally \'etale and $\pi_0(f)$ is an isomorphism.
\end{lemma}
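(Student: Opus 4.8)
The plan is to establish the two implications separately, using only the standard formal properties of the cotangent complex --- its invariance under weak equivalences and the transitivity cofibre sequence --- together with a Postnikov induction. For the forward implication, suppose $f$ is a weak equivalence. Then $\pi_0(f)$ is an isomorphism by definition of weak equivalence in $s\Ring$, and since the cotangent complex carries weak equivalences to equivalences we have $\bL^{B/A}\simeq 0$; by the definition recalled above, this is exactly the statement that $f$ is formally \'etale.

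For the converse I would assume $\bL^{B/A}\simeq 0$ and $\pi_0(f)$ an isomorphism, and prove that $\pi_n(f)$ is an isomorphism for every $n$ by induction on $n$, the case $n=0$ being the hypothesis. Writing $A'=\tau_{\le n-1}A$ and $B'=\tau_{\le n-1}B$, the inductive hypothesis that $\pi_i(f)$ is an isomorphism for $i<n$ makes $g'\colon A'\to B'$ a weak equivalence, so $\bL^{B'/A'}\simeq 0$. I would then run the two transitivity cofibre sequences, for $A\to B\to B'$ and for $A\to A'\to B'$. The first, using $\bL^{B/A}\simeq 0$, produces a natural equivalence $\bL^{B'/A}\simeq\bL^{B'/B}$; the second, using $\bL^{B'/A'}\simeq 0$, produces $B'\ten_{A'}\bL^{A'/A}\simeq\bL^{B'/A}$. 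Since $g'$ is a weak equivalence, base change along it is trivial, so combining these identifications yields a natural equivalence $\bL^{A'/A}\simeq\bL^{B'/B}$ compatible with $f$.

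To extract $\pi_n(f)$ from this equivalence I would invoke the connectivity estimate for the cotangent complex of a truncation: $\bL^{\tau_{\le n-1}R/R}$ is $n$-connected, with $\pi_{n+1}\bL^{\tau_{\le n-1}R/R}\cong\pi_n(R)$, naturally in $R$. One checks this on the bottom cell, where the relevant contribution comes from the square-zero extension $\tau_{\le n}R\to\tau_{\le n-1}R$ by $\pi_n(R)[n]$, whose conormal identification produces the degree shift $\pi_{n+1}\cong\pi_n$. Reading off the degree-$(n+1)$ homotopy group of the equivalence $\bL^{A'/A}\simeq\bL^{B'/B}$ then recovers $\pi_n(f)\colon\pi_n(A)\to\pi_n(B)$ as an isomorphism, completing the inductive step and hence the proof.

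The main obstacle will be the bookkeeping around naturality: I must keep every step of the transitivity argument natural in the commuting square so that the resulting comparison of relative cotangent complexes is genuinely the map induced by $f$, ensuring that its effect on the bottom homotopy group is literally $\pi_n(f)$ rather than merely an abstract isomorphism of $\pi_n$. The secondary point requiring care is matching indexing conventions --- $n$-connectedness versus $(n+1)$-connectivity, and the shift in $\pi_{n+1}\cong\pi_n$ --- which I have fixed above by testing the estimate on the elementary case $\tau_{\le 0}(\pi_0R\oplus V[1])$.
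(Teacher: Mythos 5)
Your argument is correct, but it takes a genuinely different route from the paper. The paper disposes of the lemma in two lines by quoting \cite{hag2}: Proposition 2.2.2.7 there says a formally \'etale map with finitely presented $\pi_0$ is \'etale (and an isomorphism on $\pi_0$ is certainly finitely presented), and Theorem 2.2.2.10 says \'etale maps are precisely the strong morphisms with \'etale $\pi_0$; strength plus an isomorphism on $\pi_0$ forces isomorphisms on all $\pi_n$. You instead run the Postnikov induction directly: the two transitivity cofibre sequences correctly reduce the inductive step to the natural base-change equivalence $B'\ten_{A'}\bL^{A'/A}\simeq \bL^{B'/B}$, and reading off $\pi_{n+1}$ via the identification $\pi_{n+1}\bL^{\tau_{\le n-1}R/R}\cong \pi_n(R)$ recovers $\pi_n(f)$. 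This is essentially the argument hiding inside the proof of \cite{hag2} Theorem 2.2.2.10, so your proof is more self-contained but not really more elementary. The one point you should not gloss over is the connectivity estimate itself: the statement that for an $(n+1)$-connective cofibre the relative cotangent complex is $(n+1)$-connective with bottom homotopy group given by the cofibre (the Hurewicz theorem for Andr\'e--Quillen homology) is a genuine theorem, not something that follows from "checking the bottom cell" --- your square-zero computation verifies the answer for the single stage $\tau_{\le n}R\to\tau_{\le n-1}R$ but does not by itself show that $\bL^{\tau_{\le n}R/R}$ is too highly connected to interfere. Cite that estimate (it is in Quillen's work on the (co)homology of commutative rings and in \cite{hag2}) rather than deriving it in a sentence, and the proof is complete.
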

\begin{proof}
 \cite{hag2} Proposition 2.2.2.4 implies that a formally \'etale map $f$ is \'etale if and only if $\pi_0f$ is finitely presented. Since $\pi_0f$ is an isomorphism, this holds, so $f$ is \'etale. Finally, \cite{hag2} Theorem 2.2.2.6 states that \'etale morphisms $f$ in $s\Ring$ are precisely strong morphisms for which $\pi_0f$ is \'etale. This implies that $f$ is a weak equivalence.
\end{proof}

\begin{lemma}\label{levelwisefet}
If $f:A \to B$ is a morphism in $s\Ring$, for which each map $f_n:A_n \to B_n$ is formally \'etale (in the sense of \cite{hag2}), then $f$ is formally \'etale (i.e. the cotangent complex $\bL^{B/A}$ is contractible). In fact, we only need $\bL^{B_n/A_n}\ten_{B_n}\pi_0B$ to be contractible for all $n$. 
\end{lemma}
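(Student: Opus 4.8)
The plan is to reduce to a statement about the derived base change $\bL^{B/A}\ten_B\pi_0B$ and then to compute this by means of a bisimplicial resolution. By Lemma \ref{feteasy} it suffices to show that $\bL^{B/A}\ten_B\pi_0B$ is contractible, where $\pi_0B$ is regarded as a constant simplicial ring and $B\to\pi_0B$ is the canonical augmentation (each $B_n\to\pi_0B$ being any iterated face map $B_n\to B_0$ followed by $B_0\onto\pi_0B$, these all agreeing in $\pi_0B$). Thus the whole argument takes place after base change to $\pi_0B$, which is precisely what the weaker hypothesis is tailored to.

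First I would realise $\bL^{B/A}$ by a levelwise free resolution. For each $n$, let $P(n)_\bt\to B_n$ be the standard (cotriple) free simplicial $A_n$-algebra resolution of $B_n$; this is functorial in $n$, so $P=P(\bt)_\bt$ is a bisimplicial ring equipped with maps $A\to\diag P\to B$. Since each $P(\bt)_n\to B_n$ is a weak equivalence, $\diag P\to B$ is a weak equivalence, and $\diag P$ provides a cofibrant resolution of $B$ over $A$ (the diagonal of a levelwise free resolution). Hence $\bL^{B/A}\simeq\Omega_{\diag P/A}\ten_{\diag P}B$. Setting $K_{m,n}:=\Omega_{P(n)_m/A_n}\ten_{P(n)_m}B_n$, which is a free $B_n$-module, we obtain a bisimplicial $B$-module $K$ with $(\diag K)_n=\Omega_{P(n)_n/A_n}\ten_{P(n)_n}B_n$, so that $\bL^{B/A}\simeq\diag K$. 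By construction the $n$th row $K_{\bt,n}=\Omega_{P(n)_\bt/A_n}\ten_{P(n)_\bt}B_n$ is exactly the standard representative of $\bL^{B_n/A_n}$.

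I would then tensor everything with $\pi_0B$. Since $\diag$ commutes with the levelwise tensor, $\bL^{B/A}\ten_B\pi_0B\simeq\diag(K\ten_B\pi_0B)$, and because each $K_{m,n}$ is free over $B_n$, the $n$th row of $K\ten_B\pi_0B$ represents $\bL^{B_n/A_n}\ten_{B_n}\pi_0B$. By the stated hypothesis each such row is contractible (if each $f_n$ is formally \'etale then $\bL^{B_n/A_n}\simeq 0$ a fortiori, but only the weaker vanishing after $\ten_{B_n}\pi_0B$ is used). The spectral sequence of the bisimplicial abelian group $K\ten_B\pi_0B$ (equivalently the generalised Eilenberg--Zilber theorem) has $E^2_{p,q}=\pi_p\big(n\mapsto\pi_q(K_{\bt,n}\ten_{B_n}\pi_0B)\big)\Rightarrow\pi_{p+q}\diag(K\ten_B\pi_0B)$; as every row has vanishing homotopy, the $E^2$-page is zero, so $\diag(K\ten_B\pi_0B)\simeq\bL^{B/A}\ten_B\pi_0B$ is contractible. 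Lemma \ref{feteasy} then yields that $f$ is formally \'etale.

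The main obstacle is the identification $\bL^{B/A}\simeq\diag K$ in the second paragraph: one must check both that the diagonal of the bisimplicial free resolution $P$ is a genuine cofibrant resolution of $B$ over the simplicial ring $A$, and that K\"ahler differentials are compatible with passage to the diagonal, so that the rows of $K$ recover the levelwise cotangent complexes $\bL^{B_n/A_n}$. Both are standard properties of the cotriple resolution and of $\diag$, but they are where the real content lies; once they are in place, the homological input is just the vanishing of the rows supplied by the hypothesis.
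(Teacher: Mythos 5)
Your proposal is correct and follows essentially the same route as the paper: both form the functorial cotriple resolution in each simplicial degree, identify $\bL^{B/A}\ten_B\pi_0B$ with the diagonal of the bisimplicial module whose rows represent $\bL^{B_n/A_n}\ten_{B_n}\pi_0B$, and conclude by the Eilenberg--Zilber spectral sequence together with Lemma \ref{feteasy}. The two points you flag as needing verification (that the diagonal of the levelwise cotriple resolution is a cofibrant approximation, and the compatibility of K\"ahler differentials with $\diag$) are exactly the facts the paper also takes as standard.
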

\begin{proof}
Let $\tilde{B}_{n\bt}$ be the cotriple resolution of $B_n$ as an $A_n$-algebra. Thus $\tilde{B}_{\bt\bt}$ is a bisimplicial ring, and $A \to \diag \tilde{B}_{\bt\bt}$ is a cofibrant approximation to $B$ in $A \da s\Ring$.

Therefore
$$
\bL^{B/A}\ten_B \pi_0B \simeq \Omega(\diag \tilde{B}_{\bt\bt}/A_{\bt})\ten_{\diag \tilde{B}_{\bt\bt}}\pi_0B,
$$
and this is the diagonal of the bisimplicial diagram
$$
M_{nm}:= \Omega(\tilde{B}_{nm}/A_{n})\ten_{\tilde{B}_{nm}}\pi_0B
$$

Since $f_n$ is formally \'etale, 
$$
L_n:=\pi_*M_{n\bt} =\pi_*(\bL^{B_n/A_n}\ten_{B_n}\pi_0B)= 0,
$$ 
for all $n$. Since $\pi_*(\diag M_{\bt\bt})\cong \H_*(\Tot M_{\bt\bt})$ (Eilenberg-Zilber), we have a convergent
spectral sequence
$$
0 =\pi_*(0)=\pi_*(L_{\bt}) \abuts \pi_*(\diag M_{\bt\bt}),
$$
so $f$ is formally \'etale by Lemma \ref{feteasy}.
\end{proof}

\begin{definition}
Given a cosimplicial affine scheme $X$, define $\hat{X}\in c\Aff$ by setting $\hat{X}^n$ to be the formal neighbourhood of $\pi^0X$ in $X^n$, so $O(\hat{X}^n ) = \Lim_m O(X^n)/(I_n)^m$, where $I_n= \ker(O(X^n) \to \pi_0O(X))$.    
\end{definition}

\begin{proposition}\label{fthm}
For $X \in c\Aff$ levelwise Noetherian, the morphism $f:\hat{X} \to X$ is a weak equivalence.
\end{proposition}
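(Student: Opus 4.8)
The plan is to pass to the simplicial ring $A := O(X) \in s\Ring$, with $A_n = O(X^n)$, so that $f$ is dual to the levelwise $I_n$-adic completion map $A \to \hat{A}$, where $O(\hat{X}^n) = \hat{A}_n = \Lim_m A_n/(I_n)^m$ and $I_n = \ker(A_n \to \pi_0 A)$. By Lemma \ref{weakfet} it then suffices to prove two things: that $A \to \hat{A}$ is formally \'etale, and that $\pi_0(A) \to \pi_0(\hat{A})$ is an isomorphism. Before starting I would record the standard facts that $A_n \to \pi_0 A$ is surjective (it factors the surjection $A_0 \to \pi_0 A$ through a vertex map), so $A_n/I_n = \pi_0 A$, and that since $A_n$ is Noetherian its $I_n$-adic completion $\hat{A}_n$ is flat over $A_n$ with $\hat{A}_n/I_n\hat{A}_n \cong A_n/I_n = \pi_0 A$.

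For formal \'etaleness I would invoke Lemma \ref{levelwisefet}, reducing to the claim that $\bL^{\hat{A}_n/A_n}\ten^{\bL}_{\hat{A}_n}\pi_0\hat{A}$ is contractible for each $n$. First I would establish the vanishing with coefficients $A_n/I_n$. The transitivity triangle for $A_n \to \hat{A}_n \to A_n/I_n$ reads
\[
\bL^{\hat{A}_n/A_n}\ten^{\bL}_{\hat{A}_n}(A_n/I_n) \to \bL^{(A_n/I_n)/A_n} \to \bL^{(A_n/I_n)/\hat{A}_n}.
\]
Since $\hat{A}_n$ is flat over $A_n$ and $\hat{A}_n\ten_{A_n}(A_n/I_n) = A_n/I_n$, the square exhibiting $A_n/I_n$ as a (derived) base change is Tor-independent, so flat base change identifies the right-hand map with an equivalence $\bL^{(A_n/I_n)/A_n}\simeq\bL^{(A_n/I_n)/\hat{A}_n}$; hence the left-hand term is contractible, $\bL^{\hat{A}_n/A_n}\ten^{\bL}_{\hat{A}_n}(A_n/I_n)\simeq 0$. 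Because the composite $A_n \to \hat{A}_n \to \pi_0\hat{A}$ agrees with $A_n \to \pi_0 A \to \pi_0\hat{A}$ and so annihilates $I_n = \ker(A_n \to \pi_0 A)$, the structure map $\hat{A}_n \to \pi_0\hat{A}$ factors through $\hat{A}_n/I_n\hat{A}_n = A_n/I_n$; base-changing the vanishing along $A_n/I_n \to \pi_0\hat{A}$ then gives $\bL^{\hat{A}_n/A_n}\ten^{\bL}_{\hat{A}_n}\pi_0\hat{A}\simeq 0$, as required by Lemma \ref{levelwisefet}.

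For the isomorphism on $\pi_0$ I would use that $n \mapsto I_n\hat{A}_n$ is a simplicial ideal of $\hat{A}$ (the completed face and degeneracy maps are continuous and carry $I_n$ into $I_{n-1}$), fitting into a levelwise-exact short exact sequence $0 \to I_\bullet\hat{A}_\bullet \to \hat{A} \to \pi_0 A \to 0$ of simplicial abelian groups whose quotient is the \emph{constant} simplicial ring $\pi_0 A$. The long exact homotopy sequence gives $\pi_0(I_\bullet\hat{A}_\bullet) \to \pi_0\hat{A} \to \pi_0 A \to 0$, and the key point is that $I_0 = \ker(A_0 \to \pi_0 A)$ is generated by the elements $\pd_0 x - \pd_1 x$, which also lie in $\ker(\hat{A}_0 \to \pi_0\hat{A})$; hence $I_0\hat{A}_0$ maps to zero in $\pi_0\hat{A}$, the image of $\pi_0(I_\bullet\hat{A}_\bullet)$ vanishes, and $\pi_0\hat{A} \cong \pi_0 A$. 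Combining this with the previous paragraph, Lemma \ref{weakfet} shows $A \to \hat{A}$ is a weak equivalence, i.e. $f$ is a weak equivalence. I expect the main obstacle to be the cotangent-complex vanishing of the second paragraph: this is exactly where the Noetherian hypothesis is used, through the flatness of adic completion and the attendant flat base change, whereas the $\pi_0$-statement is comparatively formal.
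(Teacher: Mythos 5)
Your proof is correct, and for the main step it follows the paper's route exactly: reduce via Lemma \ref{weakfet} to formal \'etaleness plus an isomorphism on $\pi_0$, then get formal \'etaleness from Lemma \ref{levelwisefet} using flatness of Noetherian completion and $\hat{A}_n\ten_{A_n}(A_n/I_n)\cong A_n/I_n$. Your packaging via the transitivity triangle for $A_n\to\hat{A}_n\to A_n/I_n$ is an equivalent (slightly longer) form of the paper's direct flat base change $\bL^{\hat{A}_n/A_n}\ten_{A_n}\pi_0A\simeq \bL^{(\hat{A}_n\ten_{A_n}\pi_0A)/\pi_0A}=0$; if anything you are more careful in tracking the coefficient $\pi_0\hat{A}$ versus $\pi_0A$ before the two are known to agree. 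Where you genuinely diverge is the $\pi_0$ statement: the paper works with the tower $A/I^n$, showing $\pi_0(I^n)=0$ from the surjections $I^{\ten n}\to I^n$ and then passing to the inverse limit, whereas you argue directly on $\hat{A}$ via the levelwise exact sequence $0\to I_\bullet\hat{A}_\bullet\to\hat{A}\to\pi_0A\to 0$ and the observation that $I_0$ is generated by simplicial boundaries $\pd_0x-\pd_1x$, which remain boundaries in $\hat{A}$. Your version has the advantage of sidestepping the interchange of $\pi_0$ with the inverse limit (and the attendant $\varprojlim^1$ term) that the paper's one-line conclusion $\pi_0(\Lim A/I^n)=\pi_0A$ leaves implicit; the paper's version is shorter and reuses the ideal-power machinery already needed elsewhere. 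Either way the Noetherian hypothesis enters exactly where you say it does.
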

\begin{proof}
Write $X^n = \Spec A_n$ and $\hat{X}^n= \Spec \hat{A}_n$.
We first describe the cotangent complex of  $f_n: A_n \to \hat{A}_n$. Since $A_n$ is Noetherian, \cite{Mat} Theorem 8.8 implies that $f_n$ is flat. Thus flat base change shows that 
$$
\bL^{\hat{A}_n/A_n}\ten_{A_n} \pi_0A\simeq  \bL^{(\hat{A}_n\ten_{A_n}\pi_0A)/\pi_0A},
$$ 
and note that since $\bL^{\hat{A}_n/A_n}\ten_{A_n} \pi_0A = \bL^{\hat{A}_n/A_n}\ten_{\hat{A}_n}( \hat{A}_n\ten_{A_n} \pi_0A)$, we then have 
$$
\bL^{\hat{A}_n/A_n}\ten_{\hat{A}_n} \pi_0A =  \bL^{(\hat{A}_n\ten_{A_n}\pi_0A)/\pi_0A}\ten_{ (\hat{A}_n\ten_{A_n}\pi_0A)}\pi_0A.
$$
\cite{Mat} Theorem 8.7 implies that $\hat{A}_n\ten_{A_n}\pi_0A= \pi_0A$, so we have shown that
$$
\bL^{\hat{A}_n/A_n}\ten_{\hat{A}_n} \pi_0A \simeq 0,
$$
and hence that $f$ is formally \'etale, by Lemma \ref{levelwisefet}.

In order to satisfy the conditions of Lemma \ref{weakfet}, it only remains to prove that $\pi_0\hat{A}= \pi_0A$. This follows by letting $I= \ker(A \to \pi_0A)$, and observing that the surjections $I^{\ten n} \to I^n$ imply that $\pi_0(I^n)=0$ for all $n>0$, so $ \pi_0(\Lim A/I^n)= \pi_0A$.
\end{proof}

Note that \cite{Mat} Theorem 8.12 implies that $\hat{X}$  is then also  levelwise Noetherian. See \cite{nishimura} for more properties preserved by $I$-adic completion.

\begin{corollary}
Any levelwise Noetherian homotopy derived Artin $n$-hypergroupoid $X$ is weakly equivalent to the homotopy derived Artin $n$-hypergroupoid $\hat{X}$, defined by setting $(\hat{X})_n:= \widehat{(X_n)}$.
\end{corollary}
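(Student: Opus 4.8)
The plan is to produce a morphism $f\colon \hat{X}\to X$ in $sc\Aff$ that is a levelwise weak equivalence, and then to transport the defining conditions of Definition \ref{dnpreldef} across $f$. Everything reduces to Proposition \ref{fthm} together with the functoriality of formal completion.

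First I would check that $\hat{X}$ is a genuine object of $sc\Aff$ and that $f$ is a morphism. For each simplicial degree $m$ we have $X_m=\Spec A_m\in c\Aff$ with $A_m$ a simplicial ring, and $(\hat{X})_m=\widehat{X_m}$ is obtained by completing each $A_m$ along $\ker(A_m\to\pi_0 A_m)$. Every structure map $X_m\to X_{m'}$ corresponds to a map of simplicial rings $A_{m'}\to A_m$, and by naturality of $\pi_0$ the commuting square with $\pi_0 A_{m'}\to\pi_0 A_m$ shows that this map carries $\ker(A_{m'}\to\pi_0 A_{m'})$ into $\ker(A_m\to\pi_0 A_m)$; hence it induces a map of completions. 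Thus the completions are functorial in the simplicial variable, giving $\hat{X}\in sc\Aff$ and a natural map $f\colon\hat{X}\to X$. Since $X$ is levelwise Noetherian, each $X_m$ is levelwise Noetherian, so Proposition \ref{fthm} shows that $f_m\colon\widehat{X_m}\to X_m$ is a weak equivalence in $c\Aff$ for every $m$, i.e.\ $f$ is a levelwise weak equivalence.

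Next I would verify that $\hat{X}$ is itself a homotopy derived Artin $n$-hypergroupoid. The homotopy matching objects $M^h_{\L^m_k}$ are homotopy limits, so they preserve levelwise weak equivalences, and the squares comparing the homotopy partial matching maps of $\hat{X}$ with those of $X$ therefore have vertical arrows that are weak equivalences. Because smoothness and surjectivity in $c\Aff$ (Definitions \ref{dsmooth} and \ref{h0def}) depend only on $\pi_0$ and $\pi^0$, both of which are invariant under weak equivalence, the conditions of Definition \ref{dnpreldef} hold for $\hat{X}$ precisely because they hold for $X$; this is exactly the invariance under levelwise weak equivalence noted in Remark \ref{reedyfibrant}. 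Finally, a levelwise weak equivalence in $sc\Aff$ induces an equivalence on sheafifications $\hat{X}^{\sharp}\to X^{\sharp}$, so $\hat{X}$ and $X$ are weakly equivalent homotopy derived Artin $n$-hypergroupoids, as required.

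I expect the only genuinely substantive point—rather than a routine obstacle—to be the simultaneous compatibility of completion with both directions: Proposition \ref{fthm} already handles the cosimplicial direction within each $X_m$, and the simplicial (hypergroupoid) direction is handled by the naturality argument above, but one must be a little careful that the hypergroupoid conditions survive because they are phrased via \emph{homotopy} matching objects, which is what makes the levelwise comparison legitimate. No delicate estimate is needed; the whole argument is a transfer of properties along a levelwise weak equivalence.
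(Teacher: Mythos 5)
Your proof is correct and follows essentially the same route as the paper, which states this as an immediate corollary of Proposition \ref{fthm} applied in each simplicial level, with the hypergroupoid conditions transported across the resulting levelwise weak equivalence exactly as you describe (the invariance noted in Remark \ref{reedyfibrant}). The only tiny imprecision is your phrase that smoothness ``depends only on $\pi_0$'': Definition \ref{dsmooth} also constrains the higher $\pi_n$, but since all homotopy groups are invariant under weak equivalence your transfer argument goes through unchanged.
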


\subsection{Local thickenings}\label{locthick}

\begin{definition}
Given a morphism $ f:X \to Y$ of affine schemes, define $(X/Y)^{\loc}$ to be the localisation of $Y$ at $X$. By \cite{anel} Proposition 52, this is given by
$$
(X/Y)^{\loc}= \Spec \lim_{\substack{ \lra \\ f(X) \subset U \subset Y \\ U \text{ open}}} \Gamma(U, \O_Y)= \Spec \Gamma(X, f^{-1}_{\Pr}\O_{Y, \Zar}),
$$ 
where $f_{\Pr}^{-1}$ is the presheaf inverse image functor.
\end{definition}

\begin{lemma}\label{locprop}
If $X_0 \to X$ is a morphism of affine schemes, and $X^l:= (X_0/X)^{\loc}$, then $X_0\by_XX^l = X_0$.
\end{lemma}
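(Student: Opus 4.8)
The plan is to exhibit the canonical factorisation of $f$ through $X^l$ and to show that $p\colon X^l \to X$ is a monomorphism; the claimed identity is then purely formal. Throughout I write $X=\Spec A$, $X_0=\Spec B$, with $f$ corresponding to $\phi\colon A\to B$, and recall that by definition
$$
A^l=\varinjlim_{U}\Gamma(U,\O_X),\qquad X^l=\Spec A^l,
$$
the filtered colimit being taken over the open subschemes $U\subseteq X$ with $f(X_0)\subseteq U$. First I would produce the factorisation: since $f(X_0)\subseteq U$ for every index $U$, the morphism $f$ factors through each open immersion $U\into X$, so restriction supplies compatible ring maps $\Gamma(U,\O_X)\to B$; passing to the colimit yields $\psi\colon A^l\to B$ with $\psi\circ\lambda=\phi$, where $\lambda\colon A\to A^l$ is the structure map. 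In particular $B$ becomes an $A^l$-algebra, and geometrically we obtain a section $s\colon X_0\to X^l$ of $p$ with $p\circ s=f$.

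The crux is then to show that $p$ is a monomorphism of schemes, equivalently that $\lambda\colon A\to A^l$ is an epimorphism of rings, i.e. that the multiplication map $A^l\ten_A A^l\to A^l$ is an isomorphism. For each basic open $D(g)\subseteq X$ the restriction $A\to A_g$ is a localisation, hence an epimorphism of rings. When $f(X_0)$ is closed --- which is exactly the situation of interest, since $\pi^0X\to X$ is a closed immersion --- the basic opens $D(g)$ containing $f(X_0)$ are cofinal among all $U$: if $U=X\setminus V(J)$ with $V(I)\subseteq U$, then $I+J=A$, so writing $1=a+b$ with $a\in I$, $b\in J$ gives $V(I)\subseteq D(b)\subseteq U$. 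Thus $\lambda$ is a filtered colimit, taken under $A$, of the epimorphisms $A\to A_g$. Since the epimorphism condition $R\ten_A R\xra{\sim}R$ is preserved by filtered colimits, $\lambda$ is an epimorphism. (For general $f$ this monomorphism property is the defining feature of the localisation, cf. \cite{anel} Proposition 3.3.)

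Granting that $\lambda$ is an epimorphism, the conclusion is immediate: because $B$ is an $A^l$-algebra via $\psi$,
$$
B\ten_A A^l\cong B\ten_{A^l}(A^l\ten_A A^l)\cong B\ten_{A^l}A^l=B,
$$
which on spectra reads $X_0\by_X X^l=X_0$, as required. Equivalently, one can argue scheme-theoretically: for a monomorphism $p$ and any $T$, a point of $X_0\by_X X^l$ is a pair $(a\colon T\to X_0,\ b\colon T\to X^l)$ with $p\circ b=f\circ a=p\circ(s\circ a)$, whence $b=s\circ a$ by the monomorphism property, so the projection to $X_0$ is an isomorphism.

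The main obstacle is precisely the verification that $p$ is a monomorphism. The reduction to basic opens is transparent when $f(X_0)$ is closed, but for an arbitrary morphism of affine schemes the image need not be closed, the basic opens $D(g)\supseteq f(X_0)$ need not be cofinal, and one must instead control the non-affine terms $\Gamma(U,\O_X)$ in the colimit directly (showing each $A\to\Gamma(U,\O_X)$ is an epimorphism, or invoking the localisation property from \cite{anel}). Once this point is settled, the remainder of the argument is formal.
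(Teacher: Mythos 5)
Your proof is correct and its formal skeleton is the same as the paper's: everything reduces to knowing that $p\colon X^l\to X$ is a monomorphism and that $f$ factors through $p$, after which the identity $X_0\by_XX^l=X_0$ is a two-line diagram chase (a monomorphism with a section is an isomorphism). Where you differ is in how the monomorphism property is obtained. The paper stays entirely inside Anel's localisation calculus: it uses that $(-)^{\loc}$ commutes with fibre products and is idempotent to identify $X_0\to X_0\by_XX^l$ as itself a localisation map, and then invokes the fact that localisation maps are conservative (pro-open immersions), hence monic; no ring-level computation appears. You instead verify the ring epimorphism $A\to A^l$ by hand, via cofinality of the principal opens $D(b)\supseteq f(X_0)$ among all opens containing $f(X_0)$ and stability of ring epimorphisms under filtered colimits. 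Your route is more self-contained and makes the mechanism transparent, which is a genuine gain; the paper's route is shorter and covers the general statement uniformly.

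The one caveat is that your direct argument only establishes the monomorphism property when $f(X_0)$ is closed, whereas the lemma is stated for an arbitrary morphism of affine schemes; for a non-closed image the principal opens need not be cofinal, and the terms $\Gamma(U,\O_X)$ for non-affine $U$ are limits rather than localisations, so epimorphy is not automatic. You flag this honestly and correctly point to \cite{anel} Proposition 3.3 as the place where the general monomorphism (conservativity) property is established --- which is exactly what the paper's own proof relies on --- and the closed case does cover every application of the lemma in the paper (it is only ever invoked for $\pi^0X\to X^n$). So this is a restriction of scope rather than an error, but if you wanted your proof to stand alone for the lemma as stated, you would need to either add the closed-immersion hypothesis or import the conservativity statement from the reference.
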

\begin{proof}
Given any affine scheme $Y$ under $X_0$, write $Y^l$ for the localisation $(X_0/Y)^{\loc}$. Properties of localisation from \cite{anel} give 
$$
(X_0\by_XX^l)^l = X_0^l\by_{X^l}X^l=  X_0,  
$$
so the morphism $X_0 \to X_0\by_XX^l$ is conservative (i.e. an inverse limit of open immersions), and hence a monomorphism. Since $X^l \to X$ is also conservative, the projection $X_0\by_XX^l \to X_0$ is also a  monomorphism, so must in fact be an isomorphism.
\end{proof}

\begin{definition}
Given a cosimplicial affine scheme $X$, define $X^{l}\in c\Aff$ by $(X^l)^n:= (\pi^0X/X^n)^{\loc}$.
\end{definition}

\begin{proposition}\label{lthm}
For $X \in c\Aff$, the morphism $f:X^l \to X$ is a weak equivalence.
\end{proposition}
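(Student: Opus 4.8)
The plan is to verify the two hypotheses of Lemma \ref{weakfet} for the morphism $A \to A^l$ of simplicial rings corresponding to $f:X^l \to X$, where $A = O(X)$ and $(A^l)^n = O((X^l)^n)$ is the localisation of $A_n$ at the subscheme $\pi^0X$: namely that $A \to A^l$ is formally \'etale, and that $\pi_0(f)$ is an isomorphism. The overall architecture mirrors the proof of Proposition \ref{fthm}, with the role of flatness of the completion now played by flatness of a localisation.

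First, formal \'etaleness. Each map $A_n \to (A^l)_n$ is a localisation, hence by \cite{anel} an inverse limit of open immersions; since open immersions are formally \'etale and the cotangent complex commutes with the resulting filtered colimit of rings, $\bL^{(A^l)_n/A_n} \simeq 0$. Alternatively, exactly as in Proposition \ref{fthm}, one may use that the localisation is flat together with Lemma \ref{locprop}, which gives $(A^l)_n \ten_{A_n} \pi_0 A = \pi_0 A$, so that flat base change yields $\bL^{(A^l)_n/A_n}\ten_{A_n}\pi_0 A \simeq \bL^{\pi_0 A/\pi_0 A} \simeq 0$. In either case Lemma \ref{levelwisefet} applies levelwise and shows that $A \to A^l$ is formally \'etale.

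Second, the map $\pi_0(f)$. Here I would exploit exactness of localisation. Because localisation is flat and is compatible with the cosimplicial structure (we localise each $X^n$ at the same subscheme $\pi^0X$, and the cosimplicial operations restrict to the augmentation $A \to \pi_0A$), applying $(-)^l$ levelwise commutes with the coequaliser computing $\pi_0$. Thus $\pi_0(A^l)$ is the localisation of $\pi_0 A$ at the image of the relevant multiplicative system; by Lemma \ref{locprop} this localisation is taken at $\pi^0X = \Spec \pi_0 A$ itself, i.e. at all of $\Spec \pi_0 A$, so the inverted elements are already units in $\pi_0A$ and the localisation is trivial. Hence $\pi_0(A^l) = \pi_0(A)$. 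Equivalently, the augmentation $A \to A^l \to \pi_0A$ (which factors through the localisation precisely because we localise \emph{at} $\pi^0X$) exhibits $\pi_0(f)$ as a split monomorphism, and the flatness argument shows it is also surjective. Granting both steps, Lemma \ref{weakfet} gives that $f$ is a weak equivalence.

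The main obstacle is the second step: making precise that the localisation functor commutes with $\pi_0$ and that localising $\pi_0 A$ at the whole of $\Spec \pi_0 A$ changes nothing. The subtlety is that the localisation here is semi-local, i.e. localisation at a closed subscheme in the sense of \cite{anel}, rather than inversion of a single element, so I would invoke its description as a filtered colimit of sections over open neighbourhoods together with the identity $(A^l)_n\ten_{A_n}\pi_0 A = \pi_0 A$ of Lemma \ref{locprop} to control the inverted set and justify commuting the colimit past $\pi_0$.
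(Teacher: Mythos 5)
Your overall strategy is the same as the paper's: reduce to Lemma \ref{weakfet} by checking formal \'etaleness via Lemma \ref{levelwisefet} (the levelwise maps being filtered limits of open immersions, hence with trivial cotangent complex) and then checking that $\pi^0f$ is an isomorphism. The first half coincides exactly with the paper. The second half is where you diverge: the paper observes that $Y \mapsto Y^l$ is a coreflection onto the subcategory of localisations under $\pi^0X$, hence preserves limits of affine schemes, so it commutes with the equaliser defining $\pi^0$ and one gets $\pi^0(X^l) = (\pi^0X)^l = \pi^0X$ in one line. Your route via exactness/flatness is workable but, as stated, has a genuine soft spot: ``localisation commutes with the coequaliser computing $\pi_0$'' is not automatic, because $\pi_0(A^l)$ is the quotient of $A_0^l$ by the ideal generated by $(\pd_0-\pd_1)(A_1^l)$, which a priori could be strictly larger than the ideal generated by $(\pd_0-\pd_1)(A_1)$ (flatness only controls the latter). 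You do name the right repair at the end: Lemma \ref{locprop} gives $A_0^l\ten_{A_0}\pi_0A = \pi_0A$, i.e.\ $A_0^l/IA_0^l = \pi_0A$ for $I = \ker(A_0\to\pi_0A)$, and since every element $\pd_0a-\pd_1a$ with $a\in A_1^l$ restricts to zero on $\pi^0X$, the larger ideal is contained in $IA_0^l$ and hence equals it; this closes the gap. So your argument is correct once that point is spelled out, but the paper's adjoint-functor observation buys the same conclusion with no ideal-chasing at all.
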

\begin{proof}
Given any affine scheme $Y$ under $\pi^0X$, write $Y^l$ for the localisation $(\pi^0X/Y)^{\loc}$.

By construction, the maps $f^n:(X^l)^n \to X^n$ have trivial cotangent complexes (being  filtered limits of open immersions), so Lemma \ref{levelwisefet} implies that $f$ is formally \'etale. By Lemma \ref{weakfet}, it only remains to prove that $\pi^0f$ is an isomorphism. 

Observe that the functor $Y \mapsto Y^l$ from affine schemes under $\pi^0X$ to localisations preserves limits, since it has a right adjoint given by inclusion. Thus $\pi^0(X^l)$, being the equaliser of the maps $\pd^0, \pd^1: (X^l)^0 \to (X^l)^1$, is just the localisation of the equaliser $\pi^0X$ of  $\pd^0, \pd^1: X^0 \to X^1$. Thus
$$
\pi^0(X^l) = (\pi^0X)^l = \pi^0X.
$$
\end{proof}

\begin{corollary}
Any homotopy derived Artin $n$-hypergroupoid $X$ is weakly equivalent to the homotopy derived Artin $n$-hypergroupoid $X^l$, defined by setting $(X^l)_n:= (X_n)^l$.
\end{corollary}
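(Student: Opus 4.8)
The plan is to reduce everything to Proposition \ref{lthm}, applied one simplicial level at a time. First I would check that $X \mapsto X^l$ is a functor on $c\Aff$: the assignment $(X^l)^n = (\pi^0X/X^n)^{\loc}$ is functorial because $\pi^0$ is functorial and, by the localisation properties from \cite{anel} used in the proof of Proposition \ref{lthm}, a map $X \to X'$ in $c\Aff$ pulls open neighbourhoods of $\pi^0X'$ back to open neighbourhoods of $\pi^0X$, inducing a compatible $(X^l)^n \to ((X')^l)^n$. Applying this functor to each level $X_n$ of the simplicial object $X_{\bt}$ then yields a genuine object $X^l \in sc\Aff$ with $(X^l)_n = (X_n)^l$, equipped with a natural morphism $f: X^l \to X$.

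Next I would invoke Proposition \ref{lthm} at every level: it says precisely that each $(X_n)^l \to X_n$ is a weak equivalence in $c\Aff$, so $f$ is a levelwise, hence Reedy, weak equivalence in $sc\Aff$. It then remains to see that $X^l$ is again a homotopy derived Artin $n$-hypergroupoid. For this I would argue as in Remark \ref{reedyfibrant}: the conditions of Definition \ref{dnpreldef} are stated entirely through the homotopy partial matching maps $X_m \to M_{\L^m_k}(X)$ being smooth surjections (resp. weak equivalences), and these conditions are invariant under levelwise weak equivalence. Indeed, the matching objects $M_{\L^m_k}$ are homotopy limits and so carry the levelwise weak equivalence $f$ to weak equivalences; the matching map of $X^l$ therefore fits into a square with that of $X$ whose horizontal arrows are weak equivalences. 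Since smoothness (Definition \ref{dsmooth}) and surjectivity (Definition \ref{h0def}) are detected through $\pi_0$ and the strongness isomorphisms $\pi_n(B)\cong \pi_n(A)\ten_{\pi_0(A)}\pi_0(B)$, all of which are homotopy-invariant, these properties transfer from the matching maps of $X$ to those of $X^l$.

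The only non-formal ingredient is Proposition \ref{lthm} itself, which I am free to assume; beyond that, the step demanding the most care is verifying that the smooth-surjection condition is genuinely preserved by the weak equivalence $f$. This is exactly where the derived notion of smoothness in Definition \ref{dsmooth}, formulated via $\pi_0$ and strongness rather than via any chosen model, makes the transport automatic. Everything else is a routine levelwise transport of structure along $f$. Finally, for an $X$ that is not strongly quasi-compact one writes it as a filtered colimit of strongly quasi-compact homotopy derived Artin $n$-hypergroupoids, as at the start of the section, and passes $X \mapsto X^l$ and the weak equivalence $f$ through the colimit; this reduces the general statement to the strongly quasi-compact case just treated.
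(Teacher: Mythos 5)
Your proposal is correct and is exactly the argument the paper intends: the corollary is stated without proof as an immediate consequence of Proposition \ref{lthm} applied levelwise, combined with the observation (already made in Remark \ref{reedyfibrant}) that the defining conditions of a \emph{homotopy} derived Artin $n$-hypergroupoid, being phrased via homotopy matching maps and homotopy fibre products, are invariant under levelwise weak equivalences. Your additional checks (functoriality of $(-)^l$, compatibility with the cosimplicial operators via the natural maps $\pi^0X \to X^n$, and the reduction of the non-quasi-compact case to filtered colimits) are all consistent with the paper's conventions and fill in the details it leaves implicit.
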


\begin{remark}\label{locmodel}
In fact, $s\Ring^l$ has a simplicial model structure, in which a morphism is a weak equivalence or fibration whenever the underlying map in $s\Ring$ is so. 
\end{remark}

\begin{theorem}\label{lshfthm}
Fix a  scheme $Z$ over a ring $R$, and fix $m \ge 2$ (or $m\ge 1$ if $Z$ is semi-separated). Then the  homotopy category of $m$-geometric derived schemes $X$ over $R$  with $\pi^0X \simeq Z$ is weakly equivalent to the   homotopy category $\cT_{\Zar}(Z)$  of  presheaves $\sA_{\bt}$ of simplicial $R$-algebras on  the category $\Aff_{\Zar}(Z)$ of affine open subschemes of $Z$,  satisfying the following:
\begin{enumerate}
\item $\pi_0(\sA_{\bt})= \O_{Z}$;
\item for all $i$, the presheaf $\pi_i(\sA_{\bt})$ is a quasi-coherent $\O_{Z}$-module.
\end{enumerate}
\end{theorem}
\begin{proof}
The key idea behind this proof is that given a local morphism $B \to A$, we can define a presheaf $(B/\O_Y)^{\loc}$ on $Y:=(\Spec A)_{\Zar}$, by $(B/\O_Y)^{\loc}(\Spec C)= (B/C)^{\loc}$.

Recall from Remark \ref{cflurie} that all schemes are $2$-geometric, and that a quasi-separated scheme is $1$-geometric if and only if it is semi-separated. Theorem \ref{relstrict} therefore gives a  Zariski $m$-hypergroupoid $\tilde{Z}_{\bt}$ in disjoint unions of affine schemes (see Examples \ref{cfddt}), with with $a \co \tilde{Z} \to Z$ such that  $|\tilde{Z}|\simeq Z$.

Now, the proof of Lemma \ref{dgmodhyp} adapts to show that $\sA_{\bt}$ is a Zariski hypersheaf. 
 Thus $a^{-1}$ gives a weak equivalence
from  $\cT_{\Zar}(Z)$ to the homotopy category $\cT_{\Zar}(\tilde{Z}_{\bt})$ of   homotopy-Cartesian presheaves $\sB$  of those simplicial algebras on the simplicial site $\Aff_{\Zar}(\tilde{Z}_{\bt})$ for which $\pi_0\sB_{\bt}\cong \O_{\tilde{Z}}$ and $\pi_n\sB_{\bt}$ is  a Cartesian $\O_{\tilde{Z}}$-module. 

We then define the simplicial cosimplicial scheme ${\tilde{X}}_{\bt}$ by letting  ${\tilde{X}}_n$ be the cosimplicial  scheme  $ \oSpec_{\tilde{Z}_n}\sB_{\bt}^n$. It follows immediately from the properties of $\sB_{\bt}$ that ${\tilde{X}}_{\bt}$  is a    homotopy derived Zariski $n$-hypergroupoid, and that $\pi^0{\tilde{X}} =\tilde{Z}$. Therefore $X(Z, \sA_{\bt}):= {\tilde{X}}^{\sharp}$ has all the required properties. 

If we replace $\tilde{Z}$ by a trivial relative Zariski $m$-hypergroupoid $\tilde{Z}' \to \tilde{Z}$ in the above construction, we get a homotopy trivial derived  Zariski $n$-hypergroupoid ${\tilde{X}}' \to {\tilde{X}}$, so $({\tilde{X}}')^{\sharp}\simeq {\tilde{X}}^{\sharp}$, and the functor $X(-)$ is  independent of the choice of $\tilde{Z}_{\bt}$. 

For  the inverse construction, take a  derived $m$-geometric scheme $X$ with $\pi^0X \simeq Z$, and apply  Theorem \ref{relstrict} to form a derived Zariski $m$-hypergroupoid ${\tilde{X}}$ with ${\tilde{X}}^{\sharp} \simeq X$. Let $\tilde{Z}:=\pi^0{\tilde{X}}$, and  for $\iota : \tilde{Z} \to {\tilde{X}}$, consider the presheaf $\sB_{\bt}:= \iota^{-1}_{\Pr}\O_{{\tilde{X}}}$ on the simplicial site $\Aff_{\Zar}(\tilde{Z}_{\bt})$ --- this is a Zariski presheaf of simplicial algebras. Since $\iota^{-1}_{\Pr}$ commutes with colimits, $\pi_0(\sB_{\bt})= \O_{\tilde{Z}}$.  Properties of localisation imply that $(\sB_n)(U)= (O({\tilde{X}}^n)/O(U))^{\loc}$, which is  \'etale (as in the proof of Proposition \ref{lthm}), so 
$$
\pi_i(\sB_{\bt}(U)) \cong \pi_0(\sB_{\bt}(U))\ten_{\pi_0(\sB_{\bt}(\tilde{Z}))}\pi_i(\sB_{\bt}(\tilde{Z}))=  \O_{\tilde{Z}}(U)\ten_{\O_{\tilde{Z}}(\tilde{Z})}\pi_i(\sB_{\bt}(\tilde{Z})),
$$
which means that $\pi_i(\sB_{\bt})$ is homotopy-Cartesian and quasi-coherent.

This allows us to  define a Zariski presheaf $\sA_{\bt}(X):= \oR a_* \sB_{\bt}$, for $a: \tilde{Z}_{\bt} \to Z$. If we replace ${\tilde{X}}$ by a homotopy trivial relative Zariski $n$-hypergroupoid ${\tilde{X}}' \to {\tilde{X}}$ in the above construction, we get a weakly equivalent presheaf, so $\sA_{\bt}(X)$ is well-defined.

Moreover, $a^{-1} \sA_{\bt}(X) \simeq \sB_{\bt}$, and $\oSpec_{\tilde{Z}_n}\sB_{\bt}^n= {\tilde{X}}_n^l$, 
so 
$$
X(Z, \sA_{\bt}(X)) \simeq ({\tilde{X}}^l)^{\sharp} \simeq X, 
$$
by Proposition \ref{lthm}. 

Conversely, to compute $\sA_{\bt}(X(\sA_{\bt}))$, we may choose the resolution ${\tilde{X}}_{\bt}$ of $X$ used in the construction of $X(\sA_{\bt})$, and then we have 
$$
\sA_{\bt}(X(\sA_{\bt}))=  \oR a_* ( \Gamma(\tilde{Z}, a^{-1}\sA_{\bt})/\O_{\tilde{Z}})^{\loc};
$$
which is weakly equivalent to $\oR a_* a^{-1}\sA_{\bt}$ by Proposition \ref{lthm}. This in turn is weakly equivalent to $\sA_{\bt}$, as $\sA_{\bt}$ is a hypersheaf. 
\end{proof}

\begin{remark}\label{lshfrk}
In fact, since $\sA_{\bt}$ is a hypersheaf, we can replace the category $\Aff_{\Zar}(Z)$ with any subcategory $\cU$ generating the Zariski topology on $Z$. This means that the objects of $\cU$ must cover $Z$, and that for any $U, V \in \cU$, the  scheme $U\cap V$ can be covered by objects of $\cU$.
\end{remark}

\subsection{Henselian thickenings}\label{henthick}

\begin{definition}
Given a morphism $ f:X \to Y$ of affine schemes, define $(X/Y)^{\hen}$ to be the Henselisation of $Y$ at $X$. It follows from  \cite{anel} Proposition 64 that this is given by
$$
(X/Y)^{\hen}= \Spec \lim_{\substack{ \lra \\ X \to U \xra{e} Y \\ e \text{ \'etale}}} \Gamma(U, \O_Y)= \Spec \Gamma(X, f^{-1}_{\Pr}\O_{Y, \et}),
$$ 
where $f_{\Pr}^{-1}$ is the presheaf inverse image functor.
\end{definition}

\begin{remark}
Note that if $f$ is also a closed immersion, then $(X,Y)$ is a Henselian pair (as in  \cite{lafon}  or \cite{raynaudhen} \S 11).
\end{remark}

\begin{lemma}\label{henprop}
If $X_0 \to X$ is a closed immersion of affine schemes, and $X^h:= (X_0/X)^{\hen}$, then $X_0\by_XX^h = X_0$.
\end{lemma}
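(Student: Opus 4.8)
The plan is to run the argument of Lemma~\ref{locprop} in the \'etale topology, substituting the Henselisation $(\,\cdot\,)^{\hen}$ for the localisation $(\,\cdot\,)^{\loc}$, and then to supply the one extra ingredient that the \'etale case genuinely needs. As there, for an affine scheme $Y$ equipped with a map from $X_0$ I write $Y^h := (X_0/Y)^{\hen}$, and recall the two formal properties of Henselisation established in \cite{anel}: idempotence, so that $(X^h)^h = X^h$ and $X_0^h = X_0$, together with compatibility with the relevant fibre products. Exactly as in Lemma~\ref{locprop} these give
\[
(X_0\by_XX^h)^h = X_0^h\by_{X^h}X^h = X_0,
\]
which identifies the canonical map $s: X_0 \to X_0\by_XX^h$ with the Henselisation map of $X_0\by_XX^h$ along $X_0$. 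Being a section of the affine, hence separated, projection $p: X_0\by_XX^h \to X_0$, the map $s$ is a closed immersion; and since $X^h \to X$ is flat and surjective onto $X_0$ (the section $X_0 \to X^h$ places $X_0$ in its image), its base change $p$ is faithfully flat.

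At this point the two arguments diverge, and this is where the real work lies. In Lemma~\ref{locprop} both $s$ and $p$ were monomorphisms --- $p$ because it is the pullback of the monomorphism $X^l \to X$ --- and the conclusion was then purely formal. Here $X^h \to X$ is a filtered inverse limit of genuinely \'etale neighbourhoods and is \emph{not} a monomorphism, so $p$ is only faithfully flat; knowing merely that $p$ has the closed-immersion section $s$ with $p\circ s = \id$ is not enough to force $p$ to be an isomorphism. The missing input is the defining property of the Henselisation of a pair: writing $X = \Spec A$, $X_0 = \Spec A/I$ and $A^h := O(X^h)$, the Henselisation of the pair $(A,I)$ satisfies $A/I \xrightarrow{\sim} A^h/IA^h$ (see \cite{lafon}, \cite{raynaudhen} \S 11; this is the content of the Remark that $(X_0, X^h)$ is a Henselian pair). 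Since $X_0\by_XX^h = \Spec(A^h/IA^h)$, this isomorphism is precisely the assertion $X_0\by_XX^h = X_0$.

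Concretely, I would derive this last isomorphism from the formula $O(X^h) = \Gamma(X_0, f^{-1}_{\Pr}\O_{X,\et})$ as follows. For each \'etale neighbourhood $e: U \to X$ of $X_0$ with section $\sigma$, the pullback $X_0\by_XU \to X_0$ is \'etale and $\sigma$ is a section of it, so $X_0$ is a clopen component, $X_0\by_XU = X_0 \sqcup R_U$. The substance of the Henselian pair property is that the neighbourhoods with $R_U = \emptyset$ (equivalently $A/I \xrightarrow{\sim} O(U)/I\cdot O(U)$) are cofinal among all such $U$, whence the complementary pieces $R_U$ die in the colimit defining $X^h$ and
\[
A^h/IA^h = \varinjlim_U O(U)/I\cdot O(U) = A/I.
\]
The main obstacle is therefore exactly this cofinality/henselian-pair step: it is the one place where the proof cannot be formal and must use that Henselisation genuinely solves the lifting problem for \'etale neighbourhoods, rather than merely being an inverse limit of monomorphisms as in the Zariski case of Lemma~\ref{locprop}.
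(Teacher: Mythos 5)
Your proof is correct, but at the decisive step it takes a genuinely different route from the paper. Both arguments open identically, using idempotence of Henselisation to get $(X_0\by_XX^h)^h=X_0$ and observing that the section $s\co X_0\to X_0\by_XX^h$ is a closed immersion. The paper then finishes by quoting Greco's theorem that connected components of $X^h\by_XX_0\to X^h$ correspond to components of $X_0\by_XX_0\to X_0$ (an isomorphism), forcing the complementary clopen piece $T$ to be empty. You instead bypass the component-counting entirely and compute $A^h/IA^h=\varinjlim_U O(U)/I\cdot O(U)=A/I$ directly from the colimit formula, via the cofinality of those \'etale neighbourhoods $U$ for which $X_0\by_XU=X_0$; note that once you have this, your first two paragraphs are strictly speaking redundant, since the displayed identity is already the conclusion. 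Your approach is more self-contained and arguably more illuminating (it exhibits exactly where the Henselian-pair property enters), at the cost of an extra claim that you assert rather than prove; the paper's is shorter given the external citation. The cofinality claim is true and deserves its one-line justification: for an affine \'etale neighbourhood $(U,\sigma)$ one has $X_0\by_XU=\sigma(X_0)\sqcup R_U$ with $\sigma(X_0)$ cut out by an idempotent $\bar e\in O(U)/I\cdot O(U)$; lifting $\bar e$ to any $f\in O(U)$ and replacing $U$ by the principal open $U_f$ (still affine and \'etale over $X$, still receiving $\sigma$) kills $R_U$, so such neighbourhoods are cofinal. With that sentence added your argument is complete.
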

\begin{proof}
Given any affine scheme $Y$ under $X_0$, write $Y^h$ for the localisation $(X_0/Y)^{\hen}$. Properties of henselisation from \cite{anel} give 
$$
(X_0\by_XX^h)^h = X_0^h\by_{X^h}X^h=  X_0,  
$$
so the morphism $X_0 \to X_0\by_XX^h$ is pro-\'etale. It is automatically a closed immersion, so we must have $X_0\by_XX^l= X_0 \sqcup T$, for some $T$. However, by \cite{grecononlocal} Theorem 4.1, components of 
$X^h\by_XX_0 \to X^h$ correspond to components of $X_0\by_XX_0 \to X_0$. The latter is an isomorphism, so $X_0 \to  X_0\by_XX^h$ must be an isomorphism on the set of components, hence an isomorphism.
\end{proof}

\begin{definition}
Given a cosimplicial affine scheme $X$, define $X^{h}\in c\Aff$ by $(X^h)^n:= (\pi^0X/X^n)^{\hen}$.
\end{definition}

\begin{proposition}\label{hthm}
For $X \in c\Aff$, the morphism $f:X^h \to X$ is a weak equivalence.
\end{proposition}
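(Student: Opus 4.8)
The plan is to transcribe the proof of Proposition \ref{lthm}, replacing localisation by Henselisation, open immersions by \'etale morphisms, and Lemma \ref{locprop} by Lemma \ref{henprop}. Recall that $\pi^0 X \to X^n$ is the closed immersion cut out by $I_n = \ker(O(X^n) \to \pi_0 O(X))$, so that $(X^h)^n = (\pi^0 X/X^n)^{\hen}$ is genuinely the Henselisation of a pair. By Lemma \ref{weakfet}, proving that $f : X^h \to X$ is a weak equivalence reduces to two things: that $f$ is formally \'etale, and that $\pi^0 f$ is an isomorphism.

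First I would establish formal \'etaleness levelwise. From the explicit description of $(X^h)^n$ as the filtered colimit of global sections over the \'etale neighbourhoods of $\pi^0 X$ in $X^n$, each level map $f^n : (X^h)^n \to X^n$ is a filtered limit of \'etale morphisms. Since the cotangent complex commutes with filtered colimits of rings and vanishes for \'etale morphisms, the relative cotangent complex of each $f^n$ is contractible; a fortiori $\bL^{O((X^h)^n)/O(X^n)}\ten \pi_0$ is contractible. Lemma \ref{levelwisefet} then upgrades this levelwise vanishing to the assertion that $f$ itself is formally \'etale.

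It then remains to show $\pi^0 f$ is an isomorphism, i.e.\ $\pi^0(X^h) = \pi^0 X$. Exactly as in Proposition \ref{lthm}, I would compute $\pi^0(X^h)$ as the equaliser of $\pd^0, \pd^1 : (X^h)^0 \to (X^h)^1$, and argue that the functor $Y \mapsto (\pi^0 X/Y)^{\hen}$ carries the equaliser $\pi^0 X$ of $\pd^0, \pd^1 : X^0 \to X^1$ to this equaliser. The justification is the same adjunction argument as in the localisation case: on affine schemes under $\pi^0 X$, Henselisation is the reflection onto Henselian pairs, so by \cite{anel} it preserves the relevant limits, in particular equalisers. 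Since Henselising $\pi^0 X$ along itself is trivial (the defining ideal is zero), this gives $\pi^0(X^h) = (\pi^0 X/\pi^0 X)^{\hen} = \pi^0 X$, which completes the proof.

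The hard part will be the last step. Unlike localisation, Henselisation is not a purely local operation, so the claim that it commutes with the equaliser is the one point that genuinely relies on Anel's analysis of these factorisation systems rather than on any naive pointwise manipulation, and I would take care to invoke the adjunction in the correct form. Should the reflection argument prove delicate, the fallback is Lemma \ref{henprop}: the identities $\pi^0 X \by_{X^n} (X^h)^n = \pi^0 X$ pin down the fibres of each $f^n$ over $\pi^0 X$, and these can be assembled cosimplicially to control $\pi^0(X^h)$ directly and recover the same conclusion.
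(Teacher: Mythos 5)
Your proposal is exactly the paper's proof: the paper disposes of Proposition \ref{hthm} with the single line ``The proof of Proposition \ref{lthm} carries over,'' and the transcription you describe --- levelwise vanishing of the cotangent complex for filtered limits of \'etale maps, Lemma \ref{levelwisefet}, then the adjunction/equaliser argument for $\pi^0f$ and Lemma \ref{weakfet} --- is precisely what that carrying-over amounts to. The only point worth noting is that Henselisation, like localisation, preserves the equaliser computing $\pi^0$ because it is a (co)reflection in the appropriate variance (left adjoint on rings, hence preserving the coequaliser $\pi_0O(X)$), which is the same bookkeeping the paper already performs in the localisation case.
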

\begin{proof}
The proof of Proposition \ref{lthm} carries over.
\end{proof}

\begin{corollary}
For any simplicial cosimplicial affine scheme  $X$, there  is a (levelwise)  weak equivalence $X^h \to X$ in $sc\Aff$, where $X^h$ is defined by setting $(X^h)_i:= (X_i)^h$. In particular, this applies when $X$ is 
any homotopy derived Artin $n$-hypergroupoid (in which case $X^h$ is also).
\end{corollary}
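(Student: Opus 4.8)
The plan is to henselise levelwise along the simplicial direction and then transport both the weak-equivalence statement and the hypergroupoid conditions along the resulting map.

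First I would verify that the assignment $X \mapsto X^h$ is functorial on $c\Aff$. A morphism $g:X \to Y$ in $c\Aff$ induces $\pi^0 g: \pi^0 X \to \pi^0 Y$, and in each cosimplicial degree $n$ the canonical maps $\pi^0 X \to X^n$ and $\pi^0 Y \to Y^n$ fit into a commuting square with $g^n$ and $\pi^0 g$. Since the henselisation of a pair is functorial for such squares (immediate from its description in \cite{anel} Proposition 3.15 as a presheaf inverse image of the structure sheaf), we obtain compatible maps $(X^h)^n \to (Y^h)^n$, hence a morphism $g^h: X^h \to Y^h$ in $c\Aff$ and a natural transformation $X^h \to X$.

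Applying this functor in each simplicial level of $X \in sc\Aff = (c\Aff)^{\Delta^{\op}}$ then produces $X^h \in sc\Aff$ with $(X^h)_i = (X_i)^h$, the simplicial operators being the henselisations of those of $X$, and the natural transformation above assembles into a morphism $X^h \to X$ in $sc\Aff$. Proposition \ref{hthm} shows that each component $(X_i)^h \to X_i$ is a weak equivalence in $c\Aff$, so $X^h \to X$ is a levelwise weak equivalence, which proves the first assertion.

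For the second assertion I would appeal to the homotopy-invariance of the defining conditions. The conditions of Definitions \ref{dnpreldef} and \ref{dnpreldef2} are expressed purely in terms of homotopy (partial) matching maps being smooth surjections or weak equivalences, and, as noted in Remark \ref{reedyfibrant}, these are preserved under levelwise weak equivalence. Since $X^h \to X$ is such an equivalence, $X^h$ is a homotopy derived Artin $n$-hypergroupoid precisely when $X$ is. The main obstacle is the first step: one must check that the simplicial operators of $X$ genuinely lift to $X^h$, notwithstanding that the henselisation centre $\pi^0 X_i$ varies with the simplicial index $i$. This reduces exactly to the functoriality of henselisation of pairs recalled above, after which the remaining verifications are routine naturality checks together with a direct invocation of Proposition \ref{hthm}.
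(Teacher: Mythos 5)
Your proof is correct and follows exactly the route the paper intends: the paper states this corollary without proof as an immediate consequence of Proposition \ref{hthm} applied levelwise, and your write-up supplies precisely the routine details left implicit there (functoriality of henselisation of pairs so that $X^h$ inherits the simplicial structure, and invariance of the homotopy matching-map conditions of Definition \ref{dnpreldef} under levelwise weak equivalences, as noted in Remark \ref{reedyfibrant}).
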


\begin{remarks}\label{henmodel}
The maps $\pi^0(X) \to X^h$ are levelwise closed immersions, so $\pi^0(X)\to X_i^h$ is a Henselian pair. 
By \cite{gruson} Theorem I.8, Henselian pairs are lifted by all smooth morphisms of affine schemes. This means that many techniques involving infinitesimal extensions can be adapted for  Henselian pairs. 
Henselian pairs are also the largest class of maps with this lifting property, since lifting with respect to all standard \'etale morphisms forces a map to be Henselian, while lifting with respect to $\bA^1$ forces a map to be a closed immersion. 
There is, in fact, a simplicial  model structure on $s\Ring^h$, for which a morphism is a weak equivalence or fibration whenever the underlying map in $s\Ring$ is so. For this model structure, smooth rings are cofibrant.
\end{remarks}

Recall from Remark \ref{cflurie} that all algebraic spaces are $2$-geometric, and that a quasi-separated algebraic space is $1$-geometric if and only if it is semi-separated.

\begin{theorem}\label{hshfthm}
Fix an algebraic space $Z$ over a ring $R$, and fix $m \ge 2$ (or $m\ge 1$ if $Z$ is semi-separated). Then the  homotopy category of $m$-geometric derived Deligne--Mumford stacks $X$ over $R$  with $\pi^0X \simeq Z$ is weakly equivalent to the   homotopy category  of  presheaves $\sA_{\bt}$ of simplicial $R$-algebras on  the category  of affine schemes \'etale over $Z$,  satisfying the following:
\begin{enumerate}
\item $\pi_0(\sA_{\bt})= \O_{Z}$;
\item for all $i$, the presheaf $\pi_i(\sA_{\bt})$ is a quasi-coherent $\O_{Z}$-module.
\end{enumerate}
\end{theorem}
\begin{proof}
The proof of Theorem \ref{lshfthm} carries over to this context, replacing Proposition \ref{lthm} with Proposition \ref{hthm}.
\end{proof}

\begin{remark}\label{hshfrk}
As in Remark \ref{lshfrk}, we can replace the category $\Aff_{\et}(Z)$ with any subcategory $\cU$ generating the \'etale topology on $Z$. 
\end{remark}

\begin{remark}\label{postnikov}
Take a homotopy derived Artin $n$-hypergroupoid $Y$ and let $\fY=Y^{\sharp} $; since $Y^h \to Y$ is a weak equivalence and $\pi^0Y \to Y^h$ is a Henselian pair on each level,  the \'etale sites of $\fY$ and $\pi^0\fY$ are weakly equivalent.

[Another way to look at this phenomenon is that taking the Postnikov tower of $O(Y)$ (as in \cite{sht} Definition VI.3.4) allows us to describe $Y$ as the colimit of a sequence $\pi^0Y \into \pi^{\le 1}Y \into \pi^{\le 2} Y \into \ldots$ of homotopy derived Artin $n$-hypergroupoids, with the $i$th closed immersion defined by an ideal  sheaf weakly equivalent to $\pi_iO(Y)[-i]$. Thus
$\fY$ can be expressed as the colimit of the sequence $\pi^0\fY \to \pi^{\le 1}\fY \to \pi^{\le 2} \fY \to \ldots$ of homotopy square-zero extensions, so the \'etale sites of $\fY$ and $\pi^0\fY$ are isomorphic.] 

Thus the corresponding $\infty$-topoi are equivalent, so giving the \'etale sheaf $\O_{\fY}$ on $\fY$ is equivalent to giving the  \'etale sheaf $\iota^{-1}\O_{\fY}$ on $\pi^0\fY$. The same is not true for smooth morphisms, where the $\infty$-topoi differ (although the homotopy categories are the same), which  suggests that we cannot expect an exact analogue for derived Artin stacks.  However, Corollary \ref{thickenart} will provide a strong substitute.
\end{remark}

\subsection{DG stacks and dg-schemes}\label{dgstacks}
Using the Eilenberg-Zilber shuffle product,  the normalisation functor $N^s$ from Definition \ref{N^s} extends to a functor
$$
N^s:s\Ring \to dg_+\Ring
$$ 
from simplicial rings to (graded-commutative) algebras in  non-negatively graded chain complexes. Where no confusion is likely, we will denote this functor by $N$, and its left adjoint by $N^*$. There is a model structure on $dg_+\Alg_{\Q}:= \Q\da dg_+\Ring$, defined by setting weak equivalences to be quasi-isomorphisms, and fibrations to be maps which are surjective in strictly positive degrees.

Now, the functor
$$
N:s\Alg_{\Q} \to dg_+\Alg_{\Q}
$$ 
is a right Quillen equivalence by \cite{QRat}.  In particular, this implies that it gives a weak equivalence of the associated $\infty$-categories, and hence that 
$$
N: \Ho(s\Alg_{A}) \to \Ho(dg_+\Alg_{A})
$$
is a weak equivalence for any $\Q$-algebra $A$.  

We may now apply Theorems \ref{relstrict} and \ref{duskinmor}  to the HAG context given by smooth morphisms and $dg_+\Alg_A$. Note that this is not the same as the HAG context of $D$-stacks in \cite{hag2}, since our chain complexes are non-negatively graded. However, since $N$ is a Quillen equivalence, this HAG context is equivalent to that of $D^-$-stacks. Explicitly:

\begin{definition}
A morphism $f:A \to B$ in $dg_+\Ring$ is said to be smooth if $\H_0(f): \H_0A \to \H_0B$ is smooth, and  the maps $\H_n(A)\ten_{\H_0(A)}\H_0(B) \to \H_n(B)$ are isomorphisms for all $n$.
\end{definition}

\begin{definition}
Let $DG^+\Aff_{\Q}$ be the opposite category to $dg_+\Alg_{\Q}$. Define the denormalisation functor $D: DG^+\Aff_{\Q}\to c\Aff_{\Q}$ to be opposite to $N^*$, so $DX(A)= X(NA)$, for $X \in DG^+\Aff_{\Q}, A \in s\Alg_{\Q}$. This has left adjoint $D^*$, given by $D^*\Spec A= \Spec NA$. 
\end{definition}

\begin{definition}
Let $\pi^0\co  DG^+\Aff_{\Q}\to \Aff_{\Q}$ be the functor $\Spec A \mapsto \Spec \H_0A$, and say that a morphism in $DG^+\Aff_{\Q}$ is \emph{surjective} if its image under $\pi^0$ is so.
       
\end{definition}

 \begin{definition}\label{dgnpreldef}
Taking $\oC$ to be the class of smooth surjections in $DG^+\Aff$, we will (by analogy with Examples \ref{cfddt}) refer to  (trivial) $(n, \oC)$-hypergroupoids as (trivial) DG Artin $n$-hypergroupoids. 
\end{definition}

The following is an immediate consequence of the Quillen equivalence between simplicial and dg algebras:
\begin{proposition}
If $f:X \to S$ is   a   
  DG Artin $n$-hypergroupoid, then $Df: DX \to DS$ is a 
  derived Artin $n$-hypergroupoid, which is trivial whenever $f$ is so. 
If $S= D^*Z$ for some $Z \in sc\Aff$, then the map
$$
D^*DX\by_{D^*DS}S \to X 
$$
is a weak equivalence.

If $g:Y \to T$ is a homotopy
 derived Artin $n$-hypergroupoid, then $D^*g: D^*Y \to D^*T$ is a homotopy DG Artin $n$-hypergroupoid, which is trivial whenever $g$ is so. 
If $\widehat{D^*Y}$ is a Reedy fibrant approximation to $D^*Y$ over $D^*T$, then the map 
$$
Y \to D\widehat{D^*Y}\by_{DD^*T}T 
$$
is a weak equivalence.
 \end{proposition}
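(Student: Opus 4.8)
The plan is to deduce everything from the Quillen equivalence $N^{*}\dashv N$ between $dg_{+}\Alg_{\Q}$ and $s\Alg_{\Q}$ of \cite{QRat}, transported to the geometric side. Passing to opposite categories gives $D=(N^{*})^{\op}$ and $D^{*}=N^{\op}$, so that $D^{*}\dashv D$ is a Quillen equivalence between $c\Aff_{\Q}$ and $DG^{+}\Aff_{\Q}$, with $D$ right Quillen and $D^{*}$ left Quillen. The essential extra input is that $N$, and hence $D^{*}$, preserves \emph{and} reflects all weak equivalences, since $\H_{*}(NA)\cong \pi_{*}(A)$ by Definition \ref{N^s}; consequently $\oL D^{*}\simeq D^{*}$ with no cofibrant replacement needed. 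First I would record that prolonging levelwise to $sc\Aff_{\Q}$ and $sDG^{+}\Aff_{\Q}$ (with their Reedy structures) again yields a Quillen equivalence, so that $\oL D^{*}$ and $\oR D$ are inverse equivalences of the associated homotopy categories, indeed of the underlying $\infty$-categories.

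For the two structural statements (that $Df$ is a relative derived Artin $n$-hypergroupoid and that $D^{*}g$ is a homotopy relative DG Artin $n$-hypergroupoid), I would verify that both functors exchange the conditions of Definitions \ref{dnpreldef} and \ref{dgnpreldef}. Those conditions are of two kinds: a homotopy partial matching map being either a smooth surjection or a weak equivalence, for the appropriate $m,k$. Weak equivalences, smoothness (Definition \ref{dsmooth}) and surjectivity (Definition \ref{h0def}) are all detected on $\pi_{*}\cong\H_{*}$ together with the induced module structures, and therefore correspond under the equivalence. The one point needing care is that the homotopy matching objects $M_{\L^{m}_{k}}$ and $M_{\pd\Delta^{m}}$ are finite homotopy \emph{limits}: although $D^{*}$ is a left adjoint, its derived functor is half of an equivalence of $\infty$-categories and so preserves all homotopy limits, not merely colimits, while $\oR D$ preserves them as a right Quillen functor between Reedy-fibrant diagrams. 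Thus $M_{\L^{m}_{k}}(Df)\simeq D(M_{\L^{m}_{k}}f)$ and $M_{\L^{m}_{k}}(D^{*}g)\simeq D^{*}(M_{\L^{m}_{k}}g)$, so the conditions transfer in both directions, including their trivial variants with $\pd\Delta^{m}$ in place of $\L^{m}_{k}$. Strong quasi-compactness of $D^{*}g$ is immediate from $D^{*}\Spec A=\Spec NA$, which is levelwise affine.

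The remaining two statements are the derived counit and unit of $D^{*}\dashv D$, relativized over the base. For the counit map $D^{*}DX\by^{h}_{D^{*}DS}S\to X$, I would use that $S=D^{*}Z$: the canonical map $S\to D^{*}DS$ is obtained by applying $D^{*}$ to the unit $Z\to DD^{*}Z=DS$, which is a weak equivalence since $D^{*}\dashv D$ is a Quillen equivalence and $D^{*}$ reflects weak equivalences; hence the homotopy fibre product is equivalent to $D^{*}DX$ and the comparison reduces to the counit $D^{*}DX\to X$. Replacing $X$ by a Reedy fibrant model over $S$ if necessary (Remark \ref{reedyfibrant}), this is a weak equivalence by the Quillen equivalence. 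Dually, $Y\to D\widehat{D^{*}Y}\by^{h}_{DD^{*}T}T$ is the derived unit, the Reedy fibrant approximation $\widehat{D^{*}Y}$ ensuring that $D\widehat{D^{*}Y}$ computes $\oR D\,\oL D^{*}Y$, whence it too is a weak equivalence.

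The hard part will be the second step: making precise that the levelwise functors genuinely compute the derived functors, and that preservation of the homotopy matching objects holds despite $D^{*}$ being a left adjoint. Keeping track of which Reedy fibrant or cofibrant replacements are needed, so that the comparison maps really are the honest derived unit and counit and that no matching object is computed with the wrong variance, is where the bookkeeping concentrates; everything else is formal once the Quillen equivalence has been prolonged levelwise.
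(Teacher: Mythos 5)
Your overall strategy---transport everything through the Quillen equivalence $N^*\dashv N$ of Quillen's rational homotopy theory, prolonged levelwise to the Reedy structures, and read off the hypergroupoid conditions on $\pi_*\cong\H_*$---is exactly what the paper intends: it offers no argument beyond declaring the proposition ``an immediate consequence of the Quillen equivalence between simplicial and dg algebras''. Your treatment of the $D^*=N^{\op}$ half is essentially complete: $N=N^s$ preserves and reflects weak equivalences because $\H_*(N^sA)\cong\pi_*(A)$ as graded rings, so $D^*$ computes $\oL D^*$ with no cofibrant replacement, and being half of an equivalence of $\infty$-categories it commutes with the homotopy matching objects; the smoothness and surjectivity clauses of Definitions \ref{dsmooth} and \ref{h0def} then transfer verbatim, and strong quasi-compactness is clear.

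The gap is concentrated in the $D=(N^*)^{\op}$ half and in the unit/counit statements, and you have in effect deferred it rather than closed it. Unlike $N$, the left adjoint $N^*$ does \emph{not} preserve weak equivalences or compute homology on non-cofibrant objects of $dg_+\Alg_{\Q}$---this is precisely why the paper warns immediately after the proposition that ``the functor $D$ only behaves well when applied to Reedy fibrations'' and then introduces the Thom--Sullivan functor $T$ as a substitute which \emph{does} preserve weak equivalences. Your assertion that the conditions ``correspond under the equivalence'' because they are detected on $\pi_*\cong\H_*$ is therefore circular in the $D$ direction: it presupposes $\pi_*(DW)\cong\H_*(W)$, i.e.\ that the underived $D$ agrees with $\oR D$ on the sources and targets of the partial matching maps, which is exactly the point at issue (and which is why the statement is restricted to genuine, Reedy-fibrant, DG hypergroupoids). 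The same issue affects the maps $Z\to DD^*Z$ and $D^*DX\to X$: on algebras these are the \emph{underived} counit $N^*NA\to A$ and unit $B\to NN^*B$ of $N^*\dashv N$, evaluated at objects that are not visibly cofibrant, so ``weak equivalence by the Quillen equivalence'' does not apply directly; note also that the fibre products in the statement are strict, not homotopy fibre products. To close this you must actually spend the Reedy-fibration hypothesis: it makes the (partial) matching maps fibrations in $DG^+\Aff$, i.e.\ cofibrations of dg algebras, so $D$ preserves their trivial-fibration property (which disposes of the $m>n$ clauses at once, since $D$ is right Quillen) and supplies the cofibrancy needed for $N^*$ to compute the correct homotopy type on the relevant objects. (The degree-zero part is formal in any case, since $\pi_0 N^*\cong\H_0$---both are left adjoint to the inclusion of discrete algebras---so surjectivity and smoothness of $\pi_0$ always transfer; it is the higher homotopy groups that require the fibrancy input.)
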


Note that the functor $D$ only behaves well when applied to Reedy fibrations, so the proposition does not apply if $f$ is only a  homotopy     DG Artin $n$-hypergroupoid. However, we will now define a homotopy inverse to $N$, similar to the Thom--Sullivan functor, which does preserve weak equivalences.

\begin{definition}
Let $\Omega_n$ be the  cochain algebra 
$$
\Q[t_0, t_1, \ldots, t_n,dt_0, dt_1, \ldots, dt_n ]/(\sum t_i -1, \sum dt_i)
$$  of rational differential forms on the $n$-simplex $\Delta^n$.
 These fit together to form a simplicial diagram $\Omega_{\bt}$ of DG-algebras.
\end{definition}

\begin{definition}
Given a simplicial module $S_{\bt}$ and a cosimplicial module $C^{\bt}$, define $S\ten_{\la} C$ by
$$
S\ten_{\la} C:=\{ x \in \prod_n S_n\ten C^n \,:\, (1 \ten \pd^i)x_n= (\pd_i\ten 1)x_{n+1},\, (1 \ten \sigma^i)x= (\sigma_i\ten 1)x_{n-1}\}.
$$

Similarly, given a chain complex $S_{\bt}$ and a cochain complex  $C^{\bt}$, define $S\ten_{\la} C$ by
$$
S\ten_{\la} C:=\{ x \in \prod_n S_n\ten C^n \,:\, (1 \ten d)x_n= (d\ten 1)x_{n+1}\}.
$$
\end{definition}

\begin{definition}
Define the  functor
$
T: dg_+\Alg_{\Q} \to s\Alg_{\Q}
$
by 
$
T(B)_n:= \Omega_n \ten_{\la}B$. 

Define  $T: DG^+\Aff_{\Q}\to c\Aff_{\Q}$ by $T(\Spec A): = \Spec T A$.
\end{definition}

\begin{proposition}\label{Dequiv}
For a $\Q$-algebra $A$, the functor $T$ is quasi-inverse to normalisation $N:\Ho(s\Alg_A) \to \Ho(dg_+\Alg_A)$.  
\end{proposition}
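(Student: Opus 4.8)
The plan is to lean on the fact that normalisation $N$ is \emph{already} known, by \cite{QRat}, to induce an equivalence $N\co\Ho(s\Alg_A)\to\Ho(dg_+\Alg_A)$. To identify $T$ as its quasi-inverse it then suffices to check two things: that $T$ descends to a functor on homotopy categories, and that there is a natural isomorphism $N\circ T\cong\mathrm{id}$ in $\Ho(dg_+\Alg_A)$. Granting these, the formal uniqueness of quasi-inverses finishes the job: choosing any quasi-inverse $G$ of $N$, the relations $GN\cong\mathrm{id}$ and $NT\cong\mathrm{id}$ give $T\cong GNT\cong G$, so that $T$ is itself a quasi-inverse and, in particular, $T\circ N\cong\mathrm{id}$ as well.

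For the descent, I would work at the level of underlying objects. Weak equivalences in $s\Alg_A$ and in $dg_+\Alg_A$ are created by the forgetful functors to simplicial $A$-modules and to chain complexes, and both $N$ and $T$ commute with these forgetful functors. The underlying functor of $T$ sends a chain complex $B$ to the simplicial module $n\mapsto\Omega_n\ten_{\la}B$; since each $\Omega_n$ is a bounded complex of free $\Q$-modules, the pairing $\Omega_n\ten_{\la}(-)$ is exact and preserves quasi-isomorphisms. Hence $T$ preserves weak equivalences and descends to the homotopy categories.

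The substance of the argument is the natural equivalence $NT(B)\simeq B$, which encodes the polynomial de Rham theorem for the simplicial commutative DG-algebra $\Omega_{\bt}$. I would compute $N^sT(B)$ by normalising $T(B)=\bigl(n\mapsto\Omega_n\ten_{\la}B\bigr)$ in the simplicial variable, regard the result as the totalisation of the bicomplex assembled from the form-degree of $\Omega_n$ and the chain degree of $B$ via $\ten_{\la}$, and compare it with $B$ through the integration maps $\int_{\Delta^n}$ on top-degree forms. This comparison is a quasi-isomorphism by the spectral sequence of the totalisation together with the acyclicity $\H^*(\Omega_n)=\Q$ concentrated in degree $0$ (reflecting the contractibility of each $\Delta^n$); this is exactly the comparison already invoked for the Thom--Sullivan functor $\Th$ in Examples \ref{categs}.

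The hard part will be the multiplicative bookkeeping in this last step: the integration map $\int_{\Delta^{\bt}}$ is a quasi-isomorphism but is not a strict algebra homomorphism, so the comparison cannot be read off naively as a morphism in $\Ho(dg_+\Alg_A)$. I would resolve this in the standard Thom--Whitney manner, exploiting that $\Omega_{\bt}$ is a \emph{commutative} simplicial DG-algebra, so that $T$ is lax symmetric monoidal and $NT$ acquires a canonical multiplicative structure; the integration comparison is then promoted to a zig-zag of algebra quasi-isomorphisms (one leg being the multiplicative Whitney/shuffle map), yielding the natural isomorphism $NT(B)\cong B$ in $\Ho(dg_+\Alg_A)$. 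Combined with the two preliminary points, this shows that $T$ is quasi-inverse to $N$.
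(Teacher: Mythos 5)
Your overall strategy is coherent: reduce to showing that $T$ preserves weak equivalences and that one of the two composites is naturally the identity in the homotopy category, the rest following formally from the equivalence of \cite{QRat}. Your verification that $T$ descends is fine (each $\Omega_n$ is a bounded complex of free $\Q$-modules and $B$ is non-negatively graded, so $\Omega_n\ten_{\la}B$ involves only a finite product and the acyclicity $\H^*(\Omega_n)=\Q$ gives $\pi_*T(B)\cong\H_*(B)$). Where you diverge from the paper is in which composite you attack: you aim at $N\circ T\simeq\id$ on the dg side, whereas the paper establishes $T\circ N^s\simeq\id$ on the simplicial side. That choice is not cosmetic — it is precisely what lets the paper evade the multiplicativity problem you (correctly) identify as the hard part. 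Following \cite{HinSch}, the integration map $\int\co\Omega_m\to N_c(\Q^{\Delta})_m$ is not an algebra map, but its cosimplicial \emph{denormalisation} $N_c^{-1}\Omega_m\to(\Q^{\Delta})_m$ becomes, after an analysis of the shuffle product, an honest quasi-isomorphism of cosimplicial algebras; pairing with $A$ via $\ten_{\la}$ then yields a genuine morphism of simplicial algebras $TN^sA\to A\ten_{\la}\Q^{\Delta}=A$, with no zig-zag required.

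The genuine gap in your version is the final step. The target of your integration comparison is $n\mapsto C^*(\Delta^n)\ten_{\la}B$, which is the Dold--Kan denormalisation of the underlying complex of $B$; this carries no natural algebra structure for which the identification with $B$ is multiplicative (that failure is the whole reason $N$ is only a Quillen equivalence rather than an equivalence of algebra categories), so what you actually construct is a quasi-isomorphism of simplicial \emph{modules} $T(B)\to N^{-1}B$. The proposed repair — a zig-zag with ``one leg being the multiplicative Whitney/shuffle map'' — does not exist as stated: the Whitney map $C^*(\Delta^n)\to\Omega_n$ is a section of $\int$ but is itself not multiplicative (this is the classical obstruction to strictly commutative cochains), and the lax symmetric monoidality of $T$ equips $NT(B)$ with an algebra structure but produces no comparison map to or from $B$. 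You would need to exhibit a concrete intermediate algebra, and the cheapest way to do so is to transpose the argument to the other composite as the paper does; alternatively $NT\simeq\id$ can be deduced formally once $TN^s\simeq\id$ is known, but it cannot serve as the starting point.
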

\begin{proof}
Let $\Q^{\Delta}$ be the simplicial cosimplicial $\Q$-algebra given by $(\Q^{\Delta})^m_n= \Q^{\Delta^n_m}$. As in \cite{HinSch}, there are cochain quasi-isomorphisms $\int: \Omega_m \to N_c (\Q^{\Delta})_m$ for all $m$, where $N_c$ denotes cosimplicial normalisation. Explicitly, 
$$
(\int \omega)(f)= \int_{|\Delta^n|}f^*\omega,
$$ 
for $\omega \in \Omega^n_m, f \in \Delta^m_n$. Denormalisation gives a quasi-isomorphism $\int:N_c^{-1}\Omega_m \to (\Q^{\Delta})_m $ of cosimplicial complexes,   and analysis of the shuffle product shows that this is  a quasi-isomorphism of cosimplicial algebras.

Now, for $A \in s\Alg_A$,
$$
T(N^sA) = (N^sA) \ten_{\la, \Q} \Omega \cong A \ten_{\la, \Q} (N_c\Omega),
$$
and $\int$ gives a  quasi-isomorphism
$$
T N^sA\to A \ten_{\la, \Q}(\Q^{\Delta}) =A
$$
of simplicial algebras, so $T$ is a quasi-inverse to $N^s$ on the homotopy categories. 
\end{proof}

\begin{corollary}\label{Dequivcor}
If $f:X \to S$ is   a    
homotopy    DG Artin $n$-hypergroupoid, then $T f: T X \to T S$ is a
homotopy  derived Artin $n$-hypergroupoid, which is trivial whenever $f$ is so. 
If $S= D^*Z$ for some $Z \in sc\Aff$, then the map
$$
D^*T X\by_{D^*T S}S \to X 
$$
is a weak equivalence.
\end{corollary}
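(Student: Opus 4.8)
The plan is to run the argument already given for $D$ in the preceding Proposition, but with $T$ in place of $D$. The entire reason for introducing $T$ is that, unlike $D$, it preserves \emph{all} weak equivalences (Proposition \ref{Dequiv}); consequently no Reedy fibrancy hypothesis is required, and the homotopy conditions of Definitions \ref{dgnpreldef} and \ref{dnpreldef} transfer directly, without first replacing $f$ by a fibrant model. Before anything else I would record the two properties of $T$ on which everything rests: $T$ sends weak equivalences to weak equivalences, and for $B \in dg_+\Alg_{\Q}$ there is a natural identification $\pi_n(TB) \cong \H_n(B)$, obtained by combining Proposition \ref{Dequiv} with the isomorphism $\H_*(N^s(-)) \cong \pi_*(-)$ of Definition \ref{N^s}, so that $\pi_n(TB) = \H_n(NTB) \cong \H_n(B)$.

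Next I would verify that $Tf \colon TX \to TS$ meets the defining conditions of a homotopy relative derived Artin $n$-hypergroupoid. These conditions (Definition \ref{dnpreldef}) are phrased purely in terms of the homotopy partial matching maps $X_m \to M_{\L^m_k}(X) \by^h_{M_{\L^m_k}(S)} S_m$ (and their $M_{\pd \Delta^m}$ analogues in the trivial case) being smooth surjections, respectively weak equivalences. Two points must be checked. First, $T$ commutes with the relevant homotopy matching objects: since $N$ is a Quillen equivalence, $T$ is a model for its inverse equivalence of homotopy theories and so preserves finite homotopy limits, and the matching objects for $\L^m_k$ and $\pd \Delta^m$ are exactly such finite homotopy limits, giving $T(M_K X) \simeq M_K(TX)$. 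Second, $T$ matches the numerical data: by the identification $\pi_n(TB) \cong \H_n(B)$, the smoothness and surjectivity of Definitions \ref{dsmooth} and \ref{h0def} (formulated via $\pi_0$ and $\pi_n$) correspond precisely to the DG smoothness and surjectivity conditions (formulated via $\H_0$ and $\H_n$), while being a weak equivalence is preserved outright. Hence each homotopy partial matching map of $f$ is carried to the corresponding one for $Tf$ with the same property, so $Tf$ is a homotopy relative derived Artin $n$-hypergroupoid, and it is trivial precisely when $f$ is.

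For the last assertion, with $S = D^* Z$, I would build the comparison map exactly as in the $D$-case, replacing the counit $D^* D \to \id$ by the natural levelwise weak equivalence $D^* T \to \id$ on $s(DG^+\Aff)$. On algebras this transformation is $\Spec$ of the derived unit $B \to NTB$ of $N^* \dashv N$ (a weak equivalence since $NT \simeq \id$), and it provides the projection $D^* TX \to X$. Likewise, applying $D^*$ to the weak equivalence $Z \to TS = TD^* Z$ coming from Proposition \ref{Dequiv} yields a levelwise weak equivalence $S \to D^* TS$; because $S = D^* Z$ lies in the image of the left adjoint $D^*$, this is the map used to form the fibre product, and the strict square $D^* TX \by_{D^* TS} S$ then has the correct homotopy type. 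The projection $D^* TX \by_{D^* TS} S \to D^* TX \to X$ is therefore a levelwise weak equivalence, which is exactly the bookkeeping of the preceding Proposition.

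The main obstacle is the commutation of $T$ with the homotopy matching objects in the second step. The functor $T$ is given concretely by $T(B)_n = \Omega_n \ten_{\la} B$, and it is not immediate from this formula that it computes finite homotopy limits of affine schemes --- equivalently, finite homotopy colimits, that is homotopy tensor products, in $dg_+\Alg_{\Q}$. The resolution is that $T$ is a homotopy inverse to the Quillen equivalence $N$ and is compatible with the monoidal structures (through the Eilenberg--Zilber shuffle product underlying $N$), so it induces an equivalence of the underlying $\infty$-categories that preserves homotopy pushouts of algebras; this is precisely what identifies $T(M_K X)$ with $M_K(TX)$. Once this is in hand, the remainder is a formal transcription of the proof for $D$.
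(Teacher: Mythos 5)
Your argument is correct in substance and is essentially the elaboration the paper intends: the corollary is stated without proof, as an immediate consequence of Proposition \ref{Dequiv}, and your two key observations --- that $T$, being quasi-inverse to the Quillen equivalence $N$, induces an equivalence of $\infty$-categories and hence preserves the homotopy limits computing the partial matching objects, and that the identification $\pi_*(TB)\cong \H_*(B)$ translates the smoothness/surjectivity conditions of Definition \ref{dsmooth} and \ref{h0def} into their DG counterparts --- are exactly what makes the first assertion work, with no fibrancy hypothesis needed precisely because $T$ (unlike $D$) preserves all weak equivalences. The one imprecise point is your description of the comparison map in the last assertion as ``$\Spec$ of the derived unit $B \to NTB$ of $N^* \dashv N$'': $T$ is not the adjoint $N^*$, so there is no adjunction unit landing in $N^sTB$, and no strictly natural algebra map $B \to N^sTB$ is available for general $B$. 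The natural transformation supplied by Proposition \ref{Dequiv} is the integration map $\int\colon TN^sA \to A$ on simplicial algebras; the hypothesis $S=D^*Z$ is there precisely so that $S \to D^*TS = D^*TD^*Z$ can be obtained by applying $D^*$ to the weak equivalence $Z \to TD^*Z$, and the comparison with $X$ must be extracted from this (or from a zig-zag realising $N^sT\simeq \id$ in the homotopy category) rather than from a unit. This is a presentational gap rather than a mathematical one --- the paper itself leaves the map unspecified --- but the phrase ``derived unit'' should be replaced by an explicit appeal to the integration quasi-isomorphism of Proposition \ref{Dequiv}.
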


We now introduce an analogue of the results of \S\S \ref{fthick}--\ref{henthick}.
\begin{definition}
Given $A \in dg_+\Alg_{\Q}$, define localisation and henselisation of $A$ by 
$$
A^l:= (A_0/\H_0A)^{\loc}\ten_{A_0}A, \quad A^h:= (A_0/\H_0A)^{\hen}\ten_{A_0}A.
$$

Define completion by letting $I:= \ker(A_0 \to \H_0A)$ and setting
$$
\hat{A}:= \Lim_n A/I^nA.
$$
\end{definition}

\begin{lemma}\label{dgshrink}
The maps $A \to A^l$ and $A\to A^h$ are weak equivalences. If $A_0$ is Noetherian and each $A_n$ is a finite $A_0$-module, then $A \to \hat{A}$ is also a weak equivalence.
\end{lemma}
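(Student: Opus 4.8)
The plan is to treat all three maps uniformly: each of $A^{l}$, $A^{h}$ and $\hat{A}$ is obtained from $A$ by extending scalars along a flat $A_{0}$-algebra. Writing $R$ for the relevant $A_{0}$-algebra (respectively $(A_{0}/\H_{0}A)^{\loc}$, $(A_{0}/\H_{0}A)^{\hen}$, and the $I$-adic completion $\hat{A_{0}}$ of $A_{0}$), I would first check that in each case $R$ is flat over $A_{0}$ and that the dg algebra in question is the levelwise extension of scalars $R\ten_{A_{0}}A$. Localisation and Henselisation are filtered colimits of \'etale (hence flat) $A_{0}$-algebras, so are flat, and $A^{l}=(A_{0}/\H_{0}A)^{\loc}\ten_{A_{0}}A$, $A^{h}=(A_{0}/\H_{0}A)^{\hen}\ten_{A_{0}}A$ by definition. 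Since $R$ is flat, tensoring is exact, so homology commutes with base change:
\[
\H_{n}(R\ten_{A_{0}}A)\cong R\ten_{A_{0}}\H_{n}(A).
\]
Thus the problem reduces to showing that the natural map $\H_{n}(A)\to R\ten_{A_{0}}\H_{n}(A)$ is an isomorphism for all $n$.

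For this I would use that $\H_{\ast}(A)$ is a graded ring with each $\H_{n}(A)$ an $\H_{0}(A)$-module. Indeed $I=\ker(A_{0}\to\H_{0}A)$ equals the image $d(A_{1})$, and for $da\in d(A_{1})$ and a cycle $z$ one has $(da)z=d(az)$, so $I$ annihilates $\H_{n}(A)$. Hence $R\ten_{A_{0}}\H_{n}(A)\cong (R\ten_{A_{0}}\H_{0}A)\ten_{\H_{0}A}\H_{n}(A)$, and it suffices to prove $R\ten_{A_{0}}\H_{0}A\cong\H_{0}A$. Now $\H_{0}A=A_{0}/I$, so $\Spec\H_{0}A\to\Spec A_{0}$ is a closed immersion and $R\ten_{A_{0}}\H_{0}A$ computes the pullback $\Spec\H_{0}A\by_{\Spec A_{0}}\Spec R$. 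For $R=(A_{0}/\H_{0}A)^{\loc}$ this pullback is $\Spec\H_{0}A$ by Lemma \ref{locprop}, and for $R=(A_{0}/\H_{0}A)^{\hen}$ it is $\Spec\H_{0}A$ by Lemma \ref{henprop} (whose closed-immersion hypothesis is exactly that $\Spec\H_{0}A\to\Spec A_{0}$ is a closed immersion). This gives the required isomorphism on homology, showing $A\to A^{l}$ and $A\to A^{h}$ are quasi-isomorphisms.

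The completion case is where the extra hypotheses enter, and is the step I expect to need the most care. Assuming $A_{0}$ is Noetherian and each $A_{n}$ is a finite $A_{0}$-module, $I$-adic completion is exact on finite modules and commutes with finite extension of scalars, so the levelwise completions satisfy $\widehat{A_{n}}\cong\hat{A_{0}}\ten_{A_{0}}A_{n}$; since $\hat{A}=\Lim_{m}A/I^{m}A$ is computed levelwise, this yields $\hat{A}\cong\hat{A_{0}}\ten_{A_{0}}A$ as a dg algebra, and $\hat{A_{0}}$ is flat over $A_{0}$ by \cite{Mat} Theorem 8.8. The homology base-change identity then applies with $R=\hat{A_{0}}$, and the final key identity becomes $\hat{A_{0}}\ten_{A_{0}}\H_{0}A=\hat{A_{0}}/I\hat{A_{0}}=A_{0}/I=\H_{0}A$ by \cite{Mat} Theorem 8.7, paralleling Proposition \ref{fthm}. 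The main obstacle is thus purely commutative-algebraic: justifying that completion of the complex $A$ commutes with passage to homology, which is precisely what the Noetherian and finiteness hypotheses supply. With this in hand, the argument above gives $\H_{n}(\hat{A})\cong\H_{n}(A)$, so $A\to\hat{A}$ is a weak equivalence.
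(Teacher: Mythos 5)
Your proposal is correct and follows essentially the same route as the paper: flat base change along $A_0\to A_0^l$, $A_0\to A_0^h$, $A_0\to\hat{A_0}$ to reduce to $\H_*(A)$, then the observation that $\H_*(A)$ is an $\H_0(A)$-module together with Lemmas \ref{locprop} and \ref{henprop} (resp.\ \cite{Mat} Theorems 8.7 and 8.8 in the complete case). The only difference is that you spell out a few steps the paper leaves implicit, such as why $I$ annihilates $\H_n(A)$ and why $\hat{A}\cong\hat{A_0}\ten_{A_0}A$.
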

\begin{proof}
We begin by noting that the maps $A_0 \to A_0^l$ and $A_0 \to A_0^h$ are flat, so 
$$
\H_*(A^l)\cong \H_*(A)\ten_{A_0}A_0^l, \quad \H_*(A^h)\cong \H_*(A)\ten_{A_0}A_0^h.
$$
Now, Lemmas \ref{locprop} and \ref{henprop} imply that $\H_0(A)\ten_{A_0}A_0^l \cong \H_0(A)$ and $\H_0(A)\ten_{A_0}A_0^h \cong \H_0(A)$. Since $\H_*(A)$ is automatically an $\H_0(A)$-module, this completes the proof in the local and Henselian cases.

If $A_0$ is Noetherian, then \cite{Mat} Theorem 8.8 implies that $A_0 \to \hat{A}_0$ is flat. If $A_n$ is a finite $A_0$-module, then \cite{Mat} Theorem 8.7 implies that $\hat{A}_n= \hat{A}_0 \ten_{A_0}A_n$. Thus 
$$
\H_*(\hat{A})\cong \H_*(A)\ten_{A_0}\hat{A}_0,
$$
and applying  \cite{Mat} Theorem 8.7 to the $A_0$-module $\H_0A$ gives that $\H_0(A)\ten_{A_0}\hat{A}_0 \cong \H_0(A)$, so 
$\H_*(\hat{A})\cong \H_*(A)$, as required. 
\end{proof}

\begin{corollary}
For any simplicial diagram  $X$ in $DG^+\Aff$, there   are (levelwise)  weak equivalences $X^l \to X$ and $X^h \to X$ in $sDG^+\Aff$, where $X^l$ is defined by setting $(X^l)_i:= (X_i)^l$ and similarly for $X^h$. In particular, this applies when $X$ is 
any homotopy DG Artin $n$-hypergroupoid (in which case $X^l$ and $X^h$ are also). If $X^0$ is levelwise Noetherian, with each $\O(X^n)$ levelwise coherent on $X^0$, the same is true of the levelwise completion $\hat{X} \to X$.  
\end{corollary}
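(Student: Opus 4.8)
The plan is to deduce this directly from Lemma \ref{dgshrink} by assembling its conclusions degree by degree. First I would record that the three constructions $A \mapsto A^l$, $A \mapsto A^h$ and $A \mapsto \hat{A}$ are functorial in $A \in dg_+\Alg_\Q$: localisation and henselisation are right adjoints to the respective inclusions by \cite{anel}, the tensor factor $(-)\ten_{A_0}A$ is natural, and completion along $I=\ker(A_0 \to \H_0A)$ is compatible with any map respecting the augmentation to $\H_0$. Consequently, applying them in each simplicial degree of a simplicial diagram $X$ produces simplicial diagrams $X^l$, $X^h$, $\hat{X}$ in $sDG^+\Aff$, together with natural maps $X^l \to X$, $X^h \to X$, $\hat{X} \to X$ compatible with all face and degeneracy operators. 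This is what makes the pointwise definitions $(X^l)_i:=(X_i)^l$, etc., into genuine morphisms of diagrams.

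Next I would invoke the fact that weak equivalences in the Reedy model structure are exactly the levelwise weak equivalences, so that it suffices to check each $(X_i)^l \to X_i$ and $(X_i)^h \to X_i$ (and, under the extra hypotheses, $\widehat{(X_i)} \to X_i$) is a weak equivalence in $DG^+\Aff$. But this is precisely Lemma \ref{dgshrink} applied to $A = \O(X_i)$: the localisation and henselisation maps are always weak equivalences, while the completion map is a weak equivalence once $A_0$ is Noetherian and each $A_n$ is a finite $A_0$-module. The hypothesis that $X^0$ is levelwise Noetherian with each $\O(X^n)$ levelwise coherent on $X^0$ is exactly the translation of these finiteness conditions to every $\O(X_i)$, the superscript being the chain degree and the subscript the simplicial index. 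Finally, since normalisation detects weak equivalences ($\pi_* = \H_*N$) and $T$ is quasi-inverse to $N$ by Proposition \ref{Dequiv}, these levelwise weak equivalences are equally weak equivalences after passing to $sc\Aff$.

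For the parenthetical claim, I would appeal to the invariance of the defining conditions under levelwise weak equivalence. The conditions making $X$ a homotopy DG Artin $n$-hypergroupoid (Definition \ref{dgnpreldef}, via Definition \ref{dnpreldef}) are phrased entirely in terms of homotopy partial matching maps being smooth surjections or weak equivalences; these are homotopy-invariant, and the maps $X^l \to X$, $X^h \to X$, $\hat{X} \to X$ are levelwise weak equivalences, so $X^l$, $X^h$ and $\hat{X}$ inherit the hypergroupoid conditions, exactly as in Remark \ref{reedyfibrant}. To see that $\hat{X}$ genuinely satisfies the running finiteness hypotheses, so that it is a legitimate object of the same type, I would note, as after Proposition \ref{fthm} via \cite{Mat} Theorem 8.12, that completion preserves levelwise Noetherianity and levelwise coherence.

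I expect essentially no serious obstacle: the analytic content sits in Lemma \ref{dgshrink} and the commutative-algebra inputs it cites, and the corollary is a formal levelwise assembly. The only points needing care are the functoriality checks that turn $X^l$, $X^h$, $\hat{X}$ into diagrams with maps to $X$, and keeping the two gradings straight, namely the simplicial diagram index versus the chain degree recorded by the superscripts $X^0$, $X^n$ and by the finiteness hypotheses.
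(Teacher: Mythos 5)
Your argument is correct and is exactly what the paper intends: the corollary is stated without proof precisely because it is the immediate levelwise assembly of Lemma \ref{dgshrink}, together with the observations that the three constructions are functorial (so the pointwise definitions give simplicial diagrams mapping to $X$) and that the hypergroupoid conditions are invariant under levelwise weak equivalence. Your careful handling of the two gradings and the translation of the coherence hypothesis into the finiteness condition of Lemma \ref{dgshrink} matches the paper's conventions, so there is nothing to add.
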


We are now in a position to describe the $D^-$-stack associated to a dg-scheme. 
\begin{definition}
Recall from \cite{Quot} Definition 2.2.1 that a dg-scheme $X=(X^0, \O_{X,\bt})$ consists of a scheme $X^0$, together with a sheaf $\O_{X,\bt}$ of quasi-coherent  non-negatively graded (chain) dg-algebras over $\O_{X^0}$, with $\O_{X,0}=\O_{X^0}$. 
\end{definition}

\begin{definition}
The degree $0$ truncation $\pi^0X$ of a dg-scheme $X$ is defined to be the closed subscheme
$$
\pi^0X := \oSpec \sH_0(\O_{X, \bt}) \subset X^0. 
$$
This is denoted by $\pi_0X$ in \cite{Quot}.

A morphism $f:X \to Y$ of dg-schemes is said to be a quasi-isomorphism if it induces an isomorphism  $ (\pi^0X, \sH_*(\O_{X, \bt})) \to (\pi^0Y,\sH_*(\O_{Y, \bt}))$.  
\end{definition}

Note that Lemma \ref{dgshrink} implies that for a dg-scheme $X=(X^0, \O_X)$, the maps 
$$
((\pi^0X/X)^{\hen}, a^*b^*\O_X) \xra{a} ((\pi^0X/X)^{\loc}, b^*\O_X) \xra{b} X
$$ 
are quasi-isomorphisms. If $X^0$ is locally Noetherian and each $\O_{X,n}$ coherent, then the map
$$
(\widehat{(\pi^0X/X)}, c^* a^*b^* \O_X) \xra{c} ((\pi^0X/X)^{\hen}, a^*b^*\O_X)
$$
 is also a quasi-isomorphism, where $\widehat{(\pi^0X/X)}$ is the formal completion of $X$ along $\pi^0X$.

Given a dg-scheme $X=(X^0, \O_X)$, with $X^0$ semi-separated, we may take an affine cover $U=\coprod_i U_i$ of $X^0$, and define the simplicial scheme $Y^0$ by $Y^0:= \cosk_0(U/X^0)$.  Hence
\begin{eqnarray*}
Y_n^0&=& \overbrace{U\by_{X^0}U \by_{X^0} \ldots  \by_{X^0}U}^{n+1},\\
&=& \coprod_{i_0, \ldots, i_n} U_{i_0}\cap \ldots \cap U_{i_n},
\end{eqnarray*}
so $Y_n^0$ is a disjoint union of affine schemes (as $X^0$ is semi-separated, i.e. $X^0 \to X^0 \by X^0$ is affine). Thus $Y^0$ is a Zariski $1$-hypergroupoid and $a:Y^0 \to X^0$ a resolution, and we may define a homotopy DG $(1, \open)$-hypergroupoid $Y=\gpd(X)$ by 
$$
\gpd(X):= \oSpec (a^{-1}\O_X).
$$
\emph{A fortiori}, this is a homotopy DG Artin $1$-hypergroupoid.

\begin{remarks}\label{cechrks}
\begin{enumerate}
\item \label{cechrk}
Note that $Y^0$ is a variant of the unnormalised \v Cech resolution for $X^0$. The standard normalised \v Cech resolution is given by $\coprod_{i_0< \ldots< i_n} U_{i_0}\cap \ldots \cap U_{i_n} $ in level $n$, so the standard unnormalised \v Cech resolution $(X^0)'$ is given by $(X^0)'_n= \coprod_{i_0\le \ldots\le i_n} U_{i_0}\cap \ldots \cap U_{i_n} $. There is a canonical morphism $(X^0)' \to Y$, but the partial matching map
$$
(X^0)'_2 \to M_{\L^2_2}(X^0)'
$$
is not surjective, so $(X^0)'$ is not a relative Zariski $n$-hypergroupoid for any $n$.

\item
 If $X^0$ is not semi-separated, we could instead apply Theorem \ref{relstrict} to obtain a Zariski $2$-hypergroupoid $Y^0$, and then a homotopy DG $(2, \open)$-hypergroupoid $\gpd(X)$ by the formula above.

\item
If $X$ is a dg-manifold in the sense of \cite{Quot}, then $Y^h$ will be Reedy fibrant in the model category $sDG^+\Aff^h$ (defined analogously to Remarks \ref{henmodel}). \cite{Quot} Theorem 2.6.1 shows that if $X^0$ is quasi-projective, then there exists a dg-manifold $X'$ quasi-isomorphic to $X$, which can be thought of as a form of fibrant approximation. In fact, the construction is such that $(X')^l$ is Reedy fibrant in the model category $sDG^+\Aff^l$, since $\O_{X'}$ is locally free for the Zariski topology.
\end{enumerate}
\end{remarks}

\begin{theorem}\label{dgshfthm}
Fix an algebraic space $Z$ over a $\Q$-algebra  $R$, and fix $m \ge 2$ (or $m\ge 1$ if $Z$ is semi-separated). Then the  homotopy category of $m$-geometric derived schemes (resp.   $m$-geometric derived Deligne--Mumford stacks)
$X$ over $R$  with $\pi^0X \simeq Z$ is weakly equivalent to the   homotopy category  of  presheaves
 $\sA_{\bt}$ of non-negatively graded dg $R$-algebras on  the category $\Aff_{\Zar}(Z)$ (resp.  $\Aff_{\et}(Z)$),  satisfying the following:
\begin{enumerate}
\item $\H_0(\sA_{\bt})= \O_{Z}$;
\item for all $i$, the presheaf $\H_i(\sA_{\bt})$ is a quasi-coherent $\O_{Z}$-module.
\end{enumerate}
In particular, the corresponding homotopy categories are equivalent.
\end{theorem}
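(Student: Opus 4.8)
The plan is to reduce to the simplicial-algebra version, Theorem~\ref{lshfthm} (resp. Theorem~\ref{hshfthm}), by transporting along the normalisation equivalence developed in \S\ref{dgstacks}. Since $R$ is a $\Q$-algebra, Proposition~\ref{Dequiv} (building on \cite{QRat}) shows that normalisation
$$
N:\Ho(s\Alg_R) \to \Ho(dg_+\Alg_R)
$$
is an equivalence, with homotopy inverse the Thom--Sullivan functor $T$; moreover both $N$ and $T$ preserve all weak equivalences, and $\H_*(NA)\cong \pi_*(A)$ as graded $\pi_0(A)$-modules (Definition~\ref{N^s}). The derived-scheme (resp. derived Deligne--Mumford) side of the statement is therefore left untouched, and it suffices to produce a weak equivalence between the $\infty$-category of presheaves of simplicial $R$-algebras appearing in Theorem~\ref{lshfthm} (resp. \ref{hshfthm}) and the $\infty$-category of presheaves of non-negatively graded dg $R$-algebras appearing here.

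First I would promote $N$ and $T$ to presheaf categories. For the site $\cU=\Aff_{\Zar}(\fZ)$ (resp. $\Aff_{\et}(\fZ)$), applying $N$ and $T$ objectwise gives functors between presheaves valued in $s\Alg_R$ and in $dg_+\Alg_R$. Since $N$ and $T$ preserve and reflect objectwise weak equivalences and are mutually quasi-inverse, the induced functors are mutually quasi-inverse on the Dwyer--Kan localisations, giving a weak equivalence of the two presheaf $\infty$-categories. Applying $\H_*(N-)\cong\pi_*(-)$ objectwise then matches the defining conditions: $\pi_0(\sA'_{\bt})=\O_{\fZ}$ corresponds to $\pi_0(\sA_{\bt})=\H_0(\sA_{\bt})=\O_{\fZ}$, and $\pi_i(\sA'_{\bt})$ quasi-coherent corresponds to $\H_i(\sA_{\bt})$ quasi-coherent, the $\O_{\fZ}$-module structure on homotopy being preserved throughout.

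Combining this presheaf-level equivalence with Theorem~\ref{lshfthm} (resp. \ref{hshfthm}) yields the required weak equivalence of $\infty$-categories, hence of homotopy categories. I expect the one point needing genuine care to be the promotion step: checking that objectwise normalisation really does induce an equivalence of the two presheaf $\infty$-categories, compatibly with the hypersheaf property that the quasi-coherence condition forces (as in the proof of Theorem~\ref{lshfthm}). This is underwritten by Remark~\ref{hagrk}, which guarantees that the entire hypergroupoid-resolution machinery is available in the HAG context of $dg_+\Alg_R$, so that resolutions and sheafifications correspond under $N$ and $T$. Alternatively, one could avoid the transport entirely and mimic the proof of Theorem~\ref{lshfthm} (resp. \ref{hshfthm}) directly in $DG^+\Aff$, using the dg localisation $X^l$ and henselisation $X^h$ of Lemma~\ref{dgshrink} in place of Propositions~\ref{lthm} and~\ref{hthm}.
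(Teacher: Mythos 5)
Your argument is correct, but your primary route differs from the paper's, which is in fact the ``alternative'' you sketch in your final sentence: the paper's proof simply re-runs the proofs of Theorems \ref{lshfthm} and \ref{hshfthm} inside $DG^+\Aff$, resolving $\fX$ by a DG Zariski (resp.\ Deligne--Mumford) hypergroupoid and replacing the presheaf $\iota_{\Pr}^{-1}\O_X$ by $(O(X^0)/\O_Z)^{\loc}\ten_{O(X^0)}O(X)_{\bt}$ (resp.\ its Henselian analogue), with Lemma \ref{dgshrink} playing the role of Propositions \ref{lthm} and \ref{hthm}. Your main route instead leaves the geometric side of the equivalence untouched and transports only the coefficients: since $\Aff_{\Zar}(\fZ)$ (resp.\ $\Aff_{\et}(\fZ)$) is an ordinary $1$-category independent of the derived structure, objectwise application of $N$ and $T$, together with $\pi_*(A)\cong\H_*(N^sA)$ and the natural quasi-isomorphism $TN^sA\to A$ of Proposition \ref{Dequiv}, identifies the two presheaf $\infty$-categories compatibly with conditions (1) and (2), reducing the theorem to its simplicial counterpart. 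What your approach buys is economy --- no localisation or henselisation argument is repeated, and the characteristic-zero input is isolated in the single equivalence $N$ --- while the paper's approach produces the explicit DG-hypergroupoid presentation of $\fX$ attached to $\sA_{\bt}$, which is what the subsequent corollary (realising Deligne--Mumford stacks as dg-schemes) actually uses. The one step you rightly flag --- that a pair of mutually quasi-inverse, weak-equivalence-preserving functors induces an equivalence of the Dwyer--Kan localisations of the presheaf categories at objectwise weak equivalences --- is standard and, at the level of rigour of the rest of the paper, unproblematic; note also that the hypersheaf property is a consequence of the quasi-coherence conditions rather than an additional hypothesis, so nothing further needs to be verified on that score.
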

\begin{proof}
The proofs of  Theorems \ref{lshfthm} and \ref{hshfthm} carry over to this context, replacing $\iota_{\Pr}^{-1}\O_{\tilde{X}}$ with the presheaves
$$
 (O({\tilde{X}}^0)/\O_{\tilde{Z}})^{\loc}\ten_{O({\tilde{X}}^0)}O({\tilde{X}})_{\bt} \,\text{ or }\, (O({\tilde{X}}^0)/\O_{\tilde{Z}})^{\hen}\ten_{O({\tilde{X}}^0)}O({\tilde{X}})_{\bt}.
$$
\end{proof}

\begin{corollary}
If $\fX$ is an $n$-geometric derived Deligne--Mumford stack with $\pi^0\fX$ a quasi-affine scheme, then there exists a dg-scheme $X$ with $\gpd(X)^{\sharp} \simeq \fX$. 
\end{corollary}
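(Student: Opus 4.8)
The plan is to pass through the classification of Theorem~\ref{dgshfthm} and then rigidify the resulting dg-algebra data into a genuine quasi-coherent sheaf on the scheme $\pi^0\fX$. First I would set $\fZ:=\pi^0\fX$, which by hypothesis is a quasi-affine scheme, hence separated and in particular semi-separated, so that the $\gpd$-construction of \S\ref{dgstacks} applies to any dg-scheme having $\fZ$ as its degree-zero truncation. Since $\fZ$ is a scheme, a quasi-coherent sheaf on it is insensitive to whether one uses the Zariski or the \'etale topology; comparing the Deligne--Mumford and scheme forms of Theorems~\ref{hshfthm} and \ref{lshfthm} (equivalently, the two cases of Theorem~\ref{dgshfthm}) therefore shows that a derived Deligne--Mumford thickening of $\fZ$ carries the same datum as a derived scheme thickening. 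Thus $\fX$ is in fact equivalent to an $n$-geometric derived scheme over the base $\Q$-algebra $R$, and Theorem~\ref{dgshfthm} identifies it with a presheaf $\sA_\bt$ of non-negatively graded dg $R$-algebras on $\Aff_{\Zar}(\fZ)$ satisfying $\pi_0(\sA_\bt)=\O_\fZ$ and with each $\H_i(\sA_\bt)$ a quasi-coherent $\O_\fZ$-module.

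Next I would manufacture from $\sA_\bt$ an honest quasi-coherent sheaf $\O_X$ of non-negatively graded dg-algebras on the scheme $\fZ$, with $\O_{X,0}=\O_\fZ$ and with the differential $\O_{X,1}\to\O_{X,0}$ vanishing, so that $X:=(\fZ,\O_X)$ is a dg-scheme with $\pi^0 X=\fZ$. The homology sheaves $\H_i(\sA_\bt)$ are already quasi-coherent, so the real work is to replace the hypersheaf $\sA_\bt$, whose dg-algebra structure is a priori given only up to coherent homotopy, by a strict Zariski quasi-coherent sheaf of dg-algebras in the same weak equivalence class. Here the quasi-affine hypothesis is what makes this possible: since $\fZ$ is quasi-compact it admits a finite affine cover, since it is separated the pairwise intersections are again affine, and since it embeds as an open subscheme of its affine hull $\Spec\Gamma(\fZ,\O_\fZ)$ the category of quasi-coherent sheaves is controlled by modules over a single ring. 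Working in characteristic zero, the Quillen equivalence between simplicial and dg algebras (Proposition~\ref{Dequiv}), together with Lemma~\ref{dgshrink}, then lets me glue the chart-wise strict dg-algebras into the desired sheaf $\O_X$, up to weak equivalence.

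Finally I would run the $\gpd$-construction on $X=(\fZ,\O_X)$: choosing an affine cover $U\to\fZ$ and forming $Y^0:=\cosk_0(U/\fZ)$ produces a $(1,\open)$-hypergroupoid resolving $\fZ$, so that $\gpd(X)=\oSpec(a^{-1}\O_X)$ is a homotopy DG Artin $1$-hypergroupoid. By construction the presheaf on $\Aff_{\Zar}(\fZ)$ associated to the dg-scheme $X$ (via the henselisation/localisation recipe used in the proof of Theorem~\ref{dgshfthm}) is weakly equivalent to $\sA_\bt$, so the equivalence of Theorem~\ref{dgshfthm} yields $\gpd(X)^\sharp\simeq\fX$, as required.

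I expect the main obstacle to be precisely the rigidification in the second paragraph: turning the homotopy-coherent, a priori hypersheaf-level dg-algebra $\sA_\bt$ into a strict quasi-coherent sheaf of dg-algebras on $\fZ$. This is exactly the step at which quasi-affineness is indispensable, both to keep the gluing finite with affine intersections and to reduce it to an affine problem over $\Gamma(\fZ,\O_\fZ)$, where the characteristic-zero comparison between simplicial and dg algebras can be brought to bear.
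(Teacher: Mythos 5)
There is a genuine gap, and it sits exactly where you place the target of your construction rather than where you expect the difficulty to be. You propose to build a dg-scheme with underlying scheme $X^0=\fZ=\pi^0\fX$ and with the differential $\O_{X,1}\to\O_{X,0}$ vanishing. By the paper's Definition of a dg-scheme, $\O_{X,0}=\O_{X^0}$, so taking $X^0=\fZ$ forces $\sH_0(\O_{X,\bt})=\O_{X,0}$ and hence makes $\O_{X,\bt}$ a sheaf of dg-algebras \emph{over} $\O_{\pi^0\fX}$. That is a splitting of the derived structure sheaf over its classical truncation, i.e.\ a simultaneous trivialisation of all the Postnikov $k$-invariants of Remark \ref{postnikov}, which is obstructed in general; no amount of strictification or gluing will produce it. The whole point of Kontsevich's definition (and of the paper's proof) is that $\pi^0X$ is only required to be a \emph{closed subscheme} of $X^0$: the ambient scheme must be big enough to carry the genuine degree-zero part of the dg-algebra, not just its $\H_0$. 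The paper's proof takes $Y=\Spec\Gamma(\pi^0\fX,\O_{\pi^0\fX})$ with closed complement $Z$ of the open immersion $\pi^0\fX\to Y$, sets $W=\Spec\Gamma(\pi^0\fX,\sA_0)$ (so that $Y\to W$ is a closed immersion), and puts $X^0=W-Z$, with $\O_{X,n}$ the quasi-coherent sheaf on $W$ associated to the module $\Gamma(\pi^0\fX,\sA_n)$.

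Relatedly, the obstacle you single out --- rigidifying a ``homotopy-coherent'' algebra structure --- is not actually present: Theorem \ref{dgshfthm} already produces a strict presheaf of dg-algebras. The real issue is that the individual terms $\sA_n$ of that presheaf need not be quasi-coherent (only the homology presheaves $\H_i(\sA_\bt)$ are), so $\sA_\bt$ is not yet the structure sheaf of a dg-scheme. Quasi-affineness enters precisely to fix this: quasi-coherent sheaves on a quasi-affine scheme are recovered from their global sections, so one can pass to the single dg-algebra $\Gamma(\pi^0\fX,\sA_\bt)$ and then to the associated quasi-coherent sheaves on the affine $W$, with no chart-by-chart gluing needed. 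Your first paragraph (identifying derived Deligne--Mumford thickenings of a scheme with derived scheme thickenings) and your final paragraph (running $\gpd$ and invoking Theorem \ref{dgshfthm}) are fine in spirit, but the middle step needs to be replaced by the global-sections construction above.
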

\begin{proof}
Let $Y= \Spec \Gamma(\pi^0\fX, \O_{\pi^0\fX})$; since $\pi^0\fX \to Y$ is an open immersion, the complement $Z$ is closed. Take a presheaf $\sA_{\bt}$ as in Theorem \ref{dgshfthm}, and let $W:= \Spec \Gamma(\pi^0\fX, \sA_0)$, noting that $Y \to W$ is a closed immersion. Now set $X^0$ to be the quasi-affine scheme $W- Z$, and define $\O_{X, n}$ to be the quasi-coherent sheaf associated to the module $\Gamma(\pi^0\fX, \sA_n)$ on the affine $W$. 
\end{proof}

\section{Derived Quasi-coherent sheaves and the cotangent complex}\label{dsheaves}

We now return to the derived Artin hypergroupoids of \S \ref{hagd} and Examples \ref{cfddt}, so $c\Aff$ will denote the model category of cosimplicial affine schemes. For a derived Artin $n$-hypergroupoid $X$, we write $X^{\sharp}$ for the associated  $n$-geometric derived Artin stack $|\oR \uline{h} X|$ given by combining Propositions \ref{easy} and \ref{equivAff}.

For a simplicial ring $A_{\bt}$, the simplicial normalisation functor $N^s$ induces an equivalence  between the categories $s\Mod(A)$ of simplicial $A_{\bt}$-modules, and $dg_+\Mod(N^sA)$ of $N^sA$-modules in non-negatively graded chain complexes, where $N^sA$ is given a graded-commutative multiplication by the Eilenberg--Zilber shuffle product. As observed in \cite{hag2} \S 2.2.1, this extends to give a weak equivalence between the  $\infty$-category of stable $A$-modules, and the $\infty$-category $dg\Mod(N^sA)$ of $N^sA$-modules in $\Z$-graded chain complexes, and hence an equivalence between the corresponding homotopy categories.

\begin{definition}
Define left Quillen presheaves $dg\Mod$ and $dg_+\Mod$ on $c\Aff$ by $dg\Mod(X):= dg\Mod(N^sO(X))$ and  $dg_+\Mod(X):= dg_+\Mod(N^sO(X))$. Note that under the Dold--Kan equivalence, $dg_+\Mod$ is equivalent to the  left Quillen presheaf $\Mod$ of Definition \ref{moddef}.

Morphisms are said to be weak equivalences if they induce isomorphisms on homology. Fibrations in $dg\Mod(X)$ are surjections, while fibrations in $dg_+\Mod(X)$ are surjective in non-zero degrees.
\end{definition}

\begin{lemma}\label{dgmodhyp2}
The functors $dg\Mod, dg_+\Mod$ are   left Quillen hypersheaves on $c\Aff$ with respect to flat surjections. 
\end{lemma}
\begin{proof}
The proof of  Lemma \ref{dgmodhyp} carries over, since \cite{hag2} Proposition 2.2.2.5 shows that for any flat morphism $f$ in $c\Aff$, we have $\oL f^* \simeq f^*$. This means that for a faithfully flat hypercover $ p\co \tilde{X}_{\bt}\to X$ in $c\Aff$, and $\sF \in cdg\Mod(\tilde{X}_{\bt})_{\cart}$, we have $\H_i\sF \in c\Mod(\pi^0\tilde{X}_{\bt})_{\cart}$ for all $i$, for $\pi^0$ as in Definition \ref{h0def}.
\end{proof}

Note that Remark \ref{cfhagqcoh} specialises to give that for any derived Artin $n$-hypergroupoid $X$, $cdg_+\Mod(X)_{\cart}$ is equivalent to the relative category of quasi-coherent modules on the derived $n$-geometric stack $X^{\sharp}$. Quasi-coherent complexes on $X^{\sharp}$ in the sense of \cite{lurie} \S 5.2 correspond to $cdg\Mod(X)_{\cart}$.

\subsection{Cotangent complexes}\label{cotsn} 

Fix a   derived Artin $m$-hypergroupoid $X\to S$. 

\begin{definition}\label{ulinedef}
 We make $cdg\Mod(X)$  into a simplicial category by setting (for $K \in \bS$)
$$
(M^K)^n: = (M^n)^{K_n}, 
$$
as an $N^sO(X)^n$-module in chain complexes. This has a left adjoint $M \mapsto M\ten K$. Given a cofibration $K \into L$, we write $M \ten (L/K):= (M\ten L)/(M\ten K)$.

Given $M \in cdg\Mod(X)$, define $\uline{M} \in (cdg\Mod(X))^{\Delta}$ to be the cosimplicial complex given in cosimplicial level $n$ by $M\ten \Delta^n$.
\end{definition}

\begin{lemma}\label{omegacalc}
If $N^s\Omega(X/S) \in cdg\Mod(X)$ denotes the chain complex given  on $X_n$ by $N^s\Omega_{X_n/S_n}$, then 
$$
N^s\Omega(X/S) \ten K= \eta^*N^s\Omega(X^K/S^K),
$$
for $\eta: X \to X^K$ corresponding to the constant map $K\to \bt$
\end{lemma}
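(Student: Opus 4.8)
The plan is to prove the isomorphism by identifying both sides as the module co-representing the same relative derivation functor, so that everything reduces to the universal property of Kähler differentials together with the cotensor structure of Definition \ref{sstr}.

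First I would reduce to a purely algebraic statement. Since $X \to S$ is strongly quasi-compact, every $X_n$ and $S_n$ is a cosimplicial affine scheme, and for finite $K$ the matching objects $(X^K)_n = M_{K\times\Delta^n}X$ and $(S^K)_n$ are again disjoint unions of affine schemes; thus all the objects in play are controlled by their function algebras. Moreover $(-)\ten K$ acts only in the hypergroupoid (cosimplicial $n$) direction, whereas $N^s$ normalises in the orthogonal derived direction, and since $\eta$ is induced by $K \to \bt$ it is constant in the derived direction, so $\eta^*$ commutes with $N^s$. Hence it suffices to establish the unnormalised isomorphism of simplicial $\O_X$-modules $\Omega(X/S)\ten K \cong \eta^*\Omega(X^K/S^K)$ and then apply the functorial, colimit-preserving Dold--Kan normalisation levelwise.

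The heart is a co-representation argument. For $M \in dg\Mod(X)$, the adjunction $\ten K \dashv (-)^K$ together with the universal property of $\Omega$ gives
\[
\HHom(\Omega(X/S)\ten K,\, M) \cong \HHom(\Omega(X/S),\, M^K) \cong \Der_{\O_S}(\O_X,\, M^K) \cong \Der_{\O_S}(\O_X, M)^K,
\]
the last step because $\Der_{\O_S}(\O_X,-)=\HHom(\Omega(X/S),-)$ is a limit-preserving functor and so commutes with the cotensor $(-)^K$. On the other hand, the adjunction $\eta^*\dashv\eta_*$ and the universal property of $\Omega$ on $X^K$ give
\[
\HHom(\eta^*\Omega(X^K/S^K),\, M) \cong \Der_{\O_{S^K}}(\O_{X^K},\, \eta_*M).
\]
It therefore remains to produce a natural isomorphism $\Der_{\O_{S^K}}(\O_{X^K}, \eta_*M)\cong \Der_{\O_S}(\O_X, M)^K$. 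This is precisely the statement that forming the cotensor $(-)^K$ of Definition \ref{sstr} commutes with passing to trivial square-zero extensions: a relative derivation of $X^K$ over $S^K$ into $\eta_*M$ is a splitting over $S^K$ of the square-zero extension by $\eta_*M$, and unwinding $X^K(U)=X(U)^K$ identifies such splittings with $K$-families of relative derivations of $X$ over $S$ into $M$. Yoneda then yields the isomorphism, natural in $M$, and hence the claimed identity in $dg\Mod(X)$.

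As a cross-check, and to pin down the cosimplicial structure maps, I would run a colimit reduction in parallel: both $K \mapsto \Omega(X/S)\ten K$ and $K \mapsto \eta^*\Omega(X^K/S^K)$ preserve colimits in $K$ --- the former because $\ten K$ is a left adjoint, the latter because $X^{(-)}$ sends colimits of finite simplicial sets to limits of affine schemes, hence colimits of their rings, which $\Omega$ and $\eta^*$ convert into colimits of modules. As the representables $\Delta^p$ generate $\bS$ under colimits it then suffices to treat $K=\Delta^p$, where $O((X^{\Delta^p})_m)=\varinjlim_{\Delta^q\to\Delta^p\times\Delta^m}O(X_q)$ and the standard expression of $\Omega$ of such a colimit of rings as a direct sum of base-changed relative differentials $\bar\sigma^*\Omega_{X_q/S_q}$ gives the formula explicitly. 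The main obstacle is not existence of the isomorphism but its compatibility with the cosimplicial identities: one must verify that the coface and codegeneracy operators built into $(-)\ten K$ agree with the maps among the matching objects $M_{K\times\Delta^n}X$ induced by the faces and degeneracies of $K$, and confirm that $N^s$ commutes with $\eta^*$ --- the single point where the two gradings interact, resting on $\eta$ being constant in the derived direction and on $N^s$ being an equivalence of abelian categories.
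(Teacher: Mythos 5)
Your proposal is correct and is essentially the argument the paper intends: the paper's proof is simply ``this follows immediately from the definitions of $X^K$ and $M\ten K$'', and your co-representability argument --- matching the tensor--cotensor adjunction defining $M\ten K$ against the universal property of $\Omega$ and the defining formula $X^K(U)=X(U)^K$ --- is exactly the way to substantiate that claim, with the commutation of $N^s$ with $\eta^*$ justified (as you note) by $N^s$ being an equivalence $s\Mod(A)\simeq dg_+\Mod(N^sA)$ compatible with restriction and hence with base change. The colimit reduction to $K=\Delta^p$ is a valid but redundant second proof.
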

\begin{proof}
This follows immediately from the definitions of $X^K$ and $M \ten K$.
\end{proof}

\begin{definition}
Given $M \in cdg_+\Mod(X)$, define the derived deformation space by
$$
\DDer(X/S, M):=  \HHom_{(X \da sc\Aff\da S)}(\oSpec (\O_X \oplus N_s^{-1}M), X) \in \bS,
$$ 
for the simplicial structure of Definition \ref{sstr}.
 Since $\oSpec (\O_X \oplus N_s^{-1}M)$ is an abelian cogroup object in $(X \da sc\Aff \da S)$, $\DDer(X/S, M)$ has the natural structure of a simplicial abelian group. 
\end{definition}

The following is immediate:
\begin{lemma}\label{derchar}
For $M\in cdg_+\Mod(X)$, the simplicial group $\DDer(X/S,M)$ is given in level $n$ by $\Hom_{cdg\Mod(X)}( \underline{\Omega(X/S)}^n, M)$.
\end{lemma}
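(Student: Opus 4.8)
The plan is to compute $\DDer(X/S,M)$ one simplicial level at a time by unwinding the corepresentability of derivations by the trivial square-zero extension, and then to import the $\Delta^n$-dependence from Lemma \ref{omegacalc}. I would first treat level $0$. A morphism $\oSpec(\O_X \oplus N_s^{-1}M) \to X$ in $X\da sc\Aff\da S$ is, by definition, a section over $S$ of the projection $\oSpec(p)\colon X \to \oSpec(\O_X\oplus N_s^{-1}M)$ induced by $p\colon \O_X\oplus N_s^{-1}M \to \O_X$. Dualising, such a section is exactly a ring homomorphism $\O_X \to \O_X\oplus N_s^{-1}M$ splitting $p$ and commuting with the maps from $O(S)$; since $N_s^{-1}M$ is a square-zero ideal, this is precisely an $S$-linear derivation $O(X)\to N_s^{-1}M$. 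Thus $\DDer(X/S,M)_0 \cong \Der_S(O(X),N_s^{-1}M) \cong \Hom_{s\Mod(\O_X)}(\Omega(X/S), N_s^{-1}M)$ by the universal property of Kähler differentials, applied levelwise and respecting the cosimplicial operations $\pd^i,\sigma^i$. Since $N^s$ is an equivalence with inverse $N_s^{-1}$, this equals $\Hom_{dg\Mod(X)}(N^s\Omega(X/S), M)$, which is $\Hom(\underline{\Omega(X/S)}^0, M)$ because $\Delta^0$ is a point and $N^s\Omega(X/S)\ten\Delta^0 = N^s\Omega(X/S)$.

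For general level $n$, Definition \ref{sstr} applied to the double slice gives
$$
\DDer(X/S,M)_n = \Hom_{X\da sc\Aff\da S}\big(\oSpec (\O_X \oplus N_s^{-1}M),\, X^{\Delta^n}\by_{S^{\Delta^n}}S\big).
$$
Writing $Q:= X^{\Delta^n}\by_{S^{\Delta^n}}S$ with its canonical under-$X$ structure map $\eta'\colon X \to Q$ covering the constant map $\eta\colon X \to X^{\Delta^n}$, the same dualisation shows that a morphism under $X$ and over $S$ is a deformation of $\eta'$ across the trivial square-zero extension, hence an $S$-linear derivation of $O(Q)$ into $N_s^{-1}M$ regarded as an $O(Q)$-module via $\eta'$. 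By base change for Kähler differentials, $\Omega_{Q/S}\cong \Omega_{X^{\Delta^n}/S^{\Delta^n}}\ten_{O(X^{\Delta^n})}O(Q)$, so the pullback adjunction for modules identifies this derivation space with $\Hom_{O(X)}\big(\eta^*\Omega(X^{\Delta^n}/S^{\Delta^n}),\, N_s^{-1}M\big)$. Normalising and invoking Lemma \ref{omegacalc}, which gives $\eta^*N^s\Omega(X^{\Delta^n}/S^{\Delta^n}) = N^s\Omega(X/S)\ten\Delta^n$, yields
$$
\DDer(X/S,M)_n \cong \Hom_{dg\Mod(X)}\big(N^s\Omega(X/S)\ten\Delta^n,\, M\big) = \Hom\big(\underline{\Omega(X/S)}^n, M\big),
$$
as required. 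I would then check that these isomorphisms are natural in $[n]\in\Delta$ — the face and degeneracy maps on the left come from functoriality of $(-)^{\Delta^n}$ and on the right from $\ten\Delta^\bt$, and Lemma \ref{omegacalc} is compatible with both — so that they assemble into an isomorphism of simplicial sets, compatible with the abelian group structures coming from the abelian cogroup structure on $\oSpec(\O_X\oplus N_s^{-1}M)$ and on $\Omega$.

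The routine inputs here are the corepresentability of derivations by trivial square-zero extensions and the Dold–Kan normalisation equivalence; these I would state briefly and not belabour. The \emph{main obstacle} will be the careful unwinding of the double-slice simplicial enrichment: one must verify that the ``under $X$'' and ``over $S$'' conditions, together with the cotensor $X^{\Delta^n}\by_{S^{\Delta^n}}S$, combine to give exactly a relative derivation of $O(X^{\Delta^n}\by_{S^{\Delta^n}}S)$ over $S$ with values in $N_s^{-1}M$ pulled back along $\eta'$, so that the module enters through $\eta$ in precisely the form demanded by Lemma \ref{omegacalc}. Once that matching is pinned down, the remaining identifications are formal.
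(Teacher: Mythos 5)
Your argument is correct: the paper states Lemma \ref{derchar} without proof, treating it as immediate from the simplicial enrichment of Definition \ref{sstr}, the corepresentability of $S$-linear derivations by the square-zero extension $\oSpec(\O_X\oplus N_s^{-1}M)$, and the identification $\eta^*N^s\Omega(X^{\Delta^n}/S^{\Delta^n})=N^s\Omega(X/S)\ten\Delta^n$ of Lemma \ref{omegacalc}, and your unwinding is exactly the intended one. The only quibble is terminological: the map $X \to \oSpec(\O_X\oplus N_s^{-1}M)$ dual to $p$ is the closed immersion giving the under-$X$ structure, so the morphisms being counted are retractions of it over $S$ rather than ``sections of the projection'' --- but your dual description as ring homomorphisms splitting $p$ is the correct one, and the rest (the double-slice cotensor, base change for $\Omega$, and the extension-of-scalars adjunction) is handled properly.
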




\begin{definition}\label{cotdef}
Define the cotangent complex $\bL^{X/S} \in \pro(cdg\Mod(X))$ by  $\bL^{X/S}:= \{\Tot N_c^{\le p} \underline{N^s\Omega(X/S)}\}_p$, where $N_c$ denotes cosimplicial conormalisation (Definition \ref{N_c}), and $\Tot$ is the direct sum functor from cochain chain complexes to chain complexes.
\end{definition}
Note that Lemma \ref{omegacalc} implies that $N_c^i\underline{N^s\Omega(X/S)} \cong \eta^* N^s\Omega(X^{\Delta^i}/X^{\L^i_0}\by_{S^{\L^i_0}}S^{\Delta^i})$. It will follow from Corollary \ref{loopcot} that the pro-object 
$\bL^{X/S}$ is in fact quasi-isomorphic to an object of $cdg\Mod(X)$.

\begin{lemma}\label{h0cot}
If we write $\iota: \pi^0X \to X$, then $\iota^*\bL^{X/S}$ is equivalent to the complex
$$
\iota^*N^s\Omega(X/S) \to   \Omega(\pi^0X/\pi^0S)\ten (\Delta^1/\L^1_0) \to \ldots \to\Omega(\pi^0X/\pi^0S)\ten (\Delta^m/\L^m_0),
$$
in (chain) degrees $[-m, \infty)$. If moreover $f:X \to S$ is smooth, then $\iota^*\bL^{X/S}$ is equivalent to the complex
$$
\Omega(\pi^0X/\pi^0S) \to   \Omega(\pi^0X/\pi^0S)\ten (\Delta^1/\L^1_0) \to \ldots \to\Omega(\pi^0X/\pi^0S)\ten (\Delta^m/\L^m_0),
$$
which is locally projective and concentrated in degrees $[-m,0]$.
\end{lemma}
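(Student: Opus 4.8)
The plan is to read the claimed complex off the definition $\bL^{X/S}=\Tot N_c\underline{N^s\Omega(X/S)}$ of Definition \ref{cotdef}, compute each conormalisation term, and then apply $\iota^*$. By the remark following Definition \ref{cotdef} together with the relative form of Lemma \ref{omegacalc}, the $i$th conormalisation term is
$$N_c^i\underline{N^s\Omega(X/S)}\cong \eta^*N^s\Omega(X^{\Delta^i}/X^{\L^i_0}\by_{S^{\L^i_0}}S^{\Delta^i})\cong N^s\Omega(X/S)\ten(\Delta^i/\L^i_0),$$
so that $\bL^{X/S}$ is the totalisation of the cosimplicial object whose $i$th place is this, with differential $\sum_j(-1)^j\pd^j$. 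Restriction along $\iota$ commutes with $\ten(\Delta^i/\L^i_0)$, giving $\iota^*N_c^i = \iota^*N^s\Omega(X/S)\ten(\Delta^i/\L^i_0)$, and $\iota^*N^s\Omega(X/S)$ in degree $0$ (where $\L^0_0=\emptyset$).

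For the amplitude I would first kill the high terms. For $i>m$ the partial matching map $X_i\to M_{\L^i_0}X\by_{M_{\L^i_0}S}S_i$, which is the simplicial level-$0$ component of $X^{\Delta^i}\to X^{\L^i_0}\by_{S^{\L^i_0}}S^{\Delta^i}$, is an \emph{isomorphism} by the defining condition of an $m$-hypergroupoid, and likewise in all simplicial levels; hence the relative differential module vanishes and $N_c^i\underline{N^s\Omega(X/S)}=0$ for $i>m$. Thus the totalisation is supported in conormalisation degrees $0\le i\le m$, which (conormalisation degree $i$ contributing chain degree $-i$) places the complex in degrees $[-m,\infty)$.

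The crux is to replace $\iota^*N^s\Omega(X/S)$ by the classical module $\Omega(\pi^0X/\pi^0S)$ in the columns $1\le i\le m$. For these $i$ the map $X^{\Delta^i}\to X^{\L^i_0}\by_{S^{\L^i_0}}S^{\Delta^i}$ is, by Lemma \ref{ppowers} applied to the cofibration $\L^i_0\into\Delta^i$, a relative $(m,\oP)$-hypergroupoid whose level-$0$ matching map is a smooth $\oP$-covering; the extra directions it records are therefore the smooth atlas directions rather than the (possibly non-smooth) structure map $X\to S$. I would use this to show that, after pulling back along $\iota$ and tensoring with the combinatorial factor $\Delta^i/\L^i_0$, the derived restriction $\iota^*N^s\Omega(X/S)$ may be exchanged for $\Omega(\pi^0X/\pi^0S)$ compatibly with the conormalisation differentials, the difference between the two being absorbed into the degree-$0$ column. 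I expect this exchange to be the main obstacle: making precise that restriction to $\pi^0X$ turns the iterated relative differentials of the smooth matching maps into the classical differentials of $\pi^0X/\pi^0S$ for $i\ge1$, while correctly tracking homological degrees through $\Tot N_c$ so as to preserve the total quasi-isomorphism type. Assembling the terms then yields exactly the displayed complex.

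For the smooth case, $N^s\Omega(X/S)$ is concentrated in degree $0$ and equal to the sheaf of Kähler differentials; Definition \ref{dsmooth} and flat base change for differentials of a smooth morphism give $\iota^*N^s\Omega(X/S)\cong\Omega(\pi^0X/\pi^0S)=\Omega(\pi^0X/\pi^0S)\ten(\Delta^0/\L^0_0)$, so the $i=0$ column now matches the pattern of the others and no exchange is needed. Since $\pi^0X\to\pi^0S$ is smooth, $\Omega(\pi^0X/\pi^0S)$ is locally free, whence every term is locally projective, and the vanishing for $i>m$ together with the degree count confines the complex to $[-m,0]$.
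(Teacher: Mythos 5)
Your skeleton matches the paper's: compute $N_c^i\underline{N^s\Omega(X/S)} \cong \eta^*N^s\Omega(X^{\Delta^i}/X^{\L^i_0}\by_{S^{\L^i_0}}S^{\Delta^i})$, discard the terms with $i>m$, and convert the terms with $1\le i\le m$ into $\Omega(\pi^0X/\pi^0S)\ten(\Delta^i/\L^i_0)$. But the step you yourself flag as ``the main obstacle'' --- the exchange of $\iota^*N^s\Omega(X/S)\ten(\Delta^i/\L^i_0)$ for $\Omega(\pi^0X/\pi^0S)\ten(\Delta^i/\L^i_0)$ --- is precisely the content of the lemma, and ``I would use this to show'' is not an argument. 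The paper closes it in one line: for $i>0$ the map $\theta_i(f):X^{\Delta^i}\to X^{\L^i_0}\by_{S^{\L^i_0}}S^{\Delta^i}$ is a smooth (indeed trivial) relative derived Artin $m$-hypergroupoid, hence \emph{levelwise} smooth in the sense of Definition \ref{dsmooth}, and the strongness condition $\pi_n(B)\cong\pi_n(A)\ten_{\pi_0(A)}\pi_0(B)$ built into that definition is exactly what identifies $\iota^*$ of the normalised K\"ahler differentials of such a map with the K\"ahler differentials of its $\pi^0$. Note also that there is no ``difference to be absorbed into the degree-$0$ column'': the columns with $i\ge1$ are identified outright, and the degree-$0$ column is simply left as $\iota^*N^s\Omega(X/S)$ in the general case.

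Two smaller inaccuracies. First, in the derived setting the partial matching maps in levels $i>m$ are only weak equivalences (Definition \ref{dnpreldef}), not isomorphisms, so $N_c^i\underline{N^s\Omega(X/S)}$ is contractible rather than literally zero; this still suffices for the claimed equivalence, but your justification is wrong as written. Second, in the smooth case $N^s\Omega(X/S)$ is not concentrated in degree $0$ --- it is the normalisation of a genuinely simplicial module --- so the correct claim is the equivalence $\iota^*N^s\Omega(X/S)\simeq\Omega(\pi^0X/\pi^0S)$, which again rests on the same levelwise-smoothness mechanism you have not supplied.
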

\begin{proof}
The definition of cosimplicial normalisation shows that
$$
N_c^i\underline{N^s\Omega(X/S)}\cong \eta^* N^s\Omega(X^{\Delta^i}/X^{\L^i_0}\by_{S^{\L^i_0}}S^{\Delta^i}),
$$
and Lemma \ref{ppowers} implies that for $i \ge m$, the maps $\theta_i(f): X^{\Delta^i} \to X^{\L^i_0}\by_{S^{\L^i_0}}S^{\Delta^i}$ are trivial relative $0$-hypergroupoids, hence levelwise weak equivalences, so $N_c^i\underline{N^s\Omega(X/S)}$ is  contractible  for $i>m$. For all $i>0$, the maps $\theta_i(f)$ are smooth  relative $m$-hypergroupoids, hence levelwise smooth, so 
$$
\iota^*N_c^i\underline{N^s\Omega(X/S)}\cong N_c^i\underline{N^s\Omega(\pi^0X/\pi^0S)}=N_c^i\underline{\Omega(\pi^0X/\pi^0S)},
$$
using the fact that $\Omega(\pi^0X/\pi^0S)$ is a module rather than a complex.

Finally, if $X \to S$ is smooth, then it is levelwise smooth, so $\iota^*N^s\Omega(X/S) \simeq \Omega(\pi^0X/\pi^0S)$. This is locally projective in the sense that each $\Omega(\pi^0X_n/\pi^0S_n)$ is projective on $X_n$. Note that in general it is not, however, projective in $cdg\Mod(X)$, since it is not Reedy cofibrant.
\end{proof}

\begin{remark}
To understand the distinction between locally projective and projective objects $P$ in $\Mod(X)$, note that projectivity implies vanishing of the higher $\Ext$ groups $\Ext^i_X(P, \sF)$, while local projectivity merely implies vanishing of the higher $\ext$-presheaves $\ext^i_{\O_X}(P, \sF)$.
\end{remark}

\begin{definition}
Let $\HOM$ be $\Z$-graded derived $\Hom$ for complexes. Explicitly, for levelwise projective $U$,  $\HOM(U,V)_n$ is the space of degree $n$ graded homomorphisms from $U$ to $V$ (ignoring the differentials), and the chain complex structure on $\HOM$ is given by $df= d_V\circ f \pm f\circ d_U$. Note that this is related to simplicial enrichments by  $N^s\HHom(U,V)= \tau_{\ge 0} \HOM(U,V)$, where $N^s$ is simplicial normalisation and $\tau$ is good truncation.   
\end{definition}

\begin{proposition}\label{cotgood0}
For all $M \in cdg_+\Mod(X)$, there is a weak equivalence
$$
\HHom_{\pro(cdg\Mod(X))}(\bL^{X/S}, M) \simeq \DDer(X/S,M).
$$
\end{proposition}
\begin{proof}
 Given a cochain chain complex $L$ bounded below in cochain degrees, observe that $\HOM(\{\Tot L^{\le p}\}_p, M) \cong \Tot \HOM(L,M)$, noting that $\HOM(L,M)$ is a bichain complex. Thus
$$
N^s\HHom(\bL^{X/S}, M)\cong \tau_{\ge 0}\Tot\HOM(N_c^{\bt}\underline{\Omega(X/S)} ,M).
$$

Now, $N_c^n\underline{\Omega(X/S)} = \eta_n^*N^s\Omega(X^{\Delta^n}/X^{\L^n_0}\by_{S^{\L^n_0}}S^{\Delta^n})$. Since $X^{\Delta^n}\to X^{\L^n_0}\by_{S^{\L^n_0}}S^{\Delta^n}$ is a trivial derived Artin $m$-hypergroupoid, we know that $N_c^n\underline{\Omega(X/S)}$ is projective in the category  $cdg_+\Mod(X)$ (since the smoothness conditions imply that the latching maps are monomorphisms with projective cokernel).

For $M\in cdg_+\Mod(X)$, the map $M[-i] \oplus M[1-i] \xra{\id, d} M[-i]$ is a surjection in $cdg_+\Mod(X)$ for $i>0$; this implies that 
$$
\HOM_{1-i}(N_c^n\underline{\Omega(X/S)}, M) \xra{d} \z_{-i}\HOM(N_c^n\underline{\Omega(X/S)}, M)
$$
is surjective, so $\tau_{\ge 0}\HOM(N_c^n\underline{\Omega(X/S)} ,M) \simeq \HOM(N_c^n\underline{\Omega(X/S)} ,M)$ for all $n>0$. We have therefore shown that
$$
N^s\HHom(\bL^{X/S}, M)\simeq \Tot\tau_{\ge 0}\HOM(N_c^{\bt}\underline{\Omega(X/S)} ,M).
$$

Now, the Reedy fibration conditions on $X \to S$ imply that for all $n$, the map 
$$
N_c^n\underline{\Omega(X/S)}/dN_c^{n-1}\underline{\Omega(X/S)} \to N_c^{n+1}\underline{\Omega(X/S)}
$$
is a monomorphism, with projective cokernel in the category $g\Mod(X)$ of  $N^sO(X)$-modules in  graded abelian groups (ignoring the simplicial differential $d^s$).
This implies that the sequence 
$$
\ldots \xra{d_{\Omega}}\HOM(N_c^{i+1}\underline{\Omega(X/S)} ,M)\xra{d_{\Omega}}\HOM(N_c^{i}\underline{\Omega(X/S)} ,M)\xra{d_{\Omega}}\ldots
$$
is exact. Since $\tau_{\ge 0}\HOM_n= \HOM_n$ for $n>0$, and $(\tau_{\ge 0} \HOM)_0 = \Hom$, this implies that 
$$
\Tot\tau_{\ge 0}\HOM(N_c^{\bt}\underline{\Omega(X/S)} ,M)\simeq \Hom(N_c^{\bt}\underline{\Omega(X/S)} ,M) \simeq N^s\DDer(X/S,M),
$$
by Lemma \ref{derchar}.
\end{proof}

\begin{corollary}\label{trivcot}
If $f:Y\to S$ is a trivial derived Artin $m$-hypergroupoid, then $\bL^{Y/S} \simeq 0$.
\end{corollary}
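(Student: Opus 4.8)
The plan is to avoid the explicit complex $\Tot N_c\underline{N^s\Omega(X/S)}$ and instead deduce vanishing from the deformation-theoretic characterisation of $\bL^{X/S}$ just established in Proposition \ref{cotgood}. This is the natural route because the individual conormalisation terms $N_c^i\underline{N^s\Omega(X/S)}\cong \eta^*N^s\Omega(X^{\Delta^i}/X^{\L^i_0}\by_{S^{\L^i_0}}S^{\Delta^i})$ do not vanish when $f$ is trivial (the low matching maps are honest smooth surjections, not equivalences); only the assembled total complex is acyclic, so a term-by-term argument cannot work.

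First I would record that, $f$ being a trivial derived Artin $m$-hypergroupoid, the derived analogue of Lemma \ref{trivweak} (equivalently the localisation statement of Theorem \ref{duskinmord}, which inverts exactly the trivial relative hypergroupoids) shows that $f^{\sharp}:X^{\sharp}\to S^{\sharp}$ is a weak equivalence. Next, for an arbitrary morphism $g:U\to X^{\sharp}$ with $U\in c\Aff$ and arbitrary $N\in dg_+\Mod(U)_{\cart}$, I would examine the right-hand side of Proposition \ref{cotgood}, namely $\HHom_{U\da s\Pr(\Aff)\da S^{\sharp}}(\oSpec(\O_U\oplus N_s^{-1}N)^{\sharp}, X^{\sharp})$. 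This is the space of lifts of $g$ into $X^{\sharp}$, taken over $S^{\sharp}$ and under $U$, i.e. the homotopy fibre of the restriction map $\Map_{S^{\sharp}}(\oSpec(\O_U\oplus N_s^{-1}N)^{\sharp}, X^{\sharp})\to \Map_{S^{\sharp}}(U, X^{\sharp})$ over $g$. Since $f^{\sharp}$ is a weak equivalence both mapping spaces are contractible, so the fibre is contractible; by Proposition \ref{cotgood} this gives $\HHom_{dg\Mod(U)}(g^*\bL^{X/S}, N)\simeq 0$ for every such $N$.

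Finally I would upgrade this to $g^*\bL^{X/S}\simeq 0$, and then to $\bL^{X/S}\simeq 0$. Applying the vanishing to all shifts $N[k]$ with $k\ge 0$ (which remain connective, hence admissible coefficients) recovers $\Hom_{\Ho(dg\Mod(U))}(g^*\bL^{X/S}, N[j])=0$ for all $j\in\Z$ and all homotopy-Cartesian connective $N$; since $g^*\bL^{X/S}$ is bounded below (concentrated in degrees $\ge -m$ by Lemma \ref{h0cot}), testing against $N=\H_d(g^*\bL^{X/S})$ placed in its bottom homological degree $d$ forces that bottom homology to vanish, whence $g^*\bL^{X/S}\simeq 0$. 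Letting $g$ run over the canonical maps $X_n\to X^{\sharp}$ and using that $\bL^{X/S}$ is homotopy-Cartesian (so zero iff each level is zero) then yields $\bL^{X/S}\simeq 0$.

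The main obstacle is this last step: Proposition \ref{cotgood} only controls the truncated functor $\HHom=\tau_{\ge 0}\HOM$ against connective coefficients, so a priori the contractibility of the deformation space only detects the non-negative homology of the (unbounded-above but bounded-below) complex $\bL^{X/S}$. Converting ``connective-truncated vanishing'' into genuine vanishing is exactly where the shift trick together with the lower boundedness from Lemma \ref{h0cot} is essential, and it is the one point of the argument that requires care rather than a direct appeal.
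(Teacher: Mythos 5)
Your proof is correct and is essentially the paper's argument: both deduce $\bL^{X/S}\simeq 0$ from the corepresentability statement of Proposition \ref{cotgood} by showing that the corepresented deformation/lifting space is contractible, rather than by analysing the explicit complex $\Tot N_c\underline{N^s\Omega(X/S)}$. The only differences are cosmetic --- the paper gets contractibility of $\DDer(X/S,M)$ directly from the lifting property of the trivial hypergroupoid $f$ against square-zero extensions (rather than by first passing to sheafifications and invoking that $f^{\sharp}$ is an equivalence), and it leaves implicit the shift-plus-boundedness step that you rightly spell out to upgrade vanishing of $\tau_{\ge 0}\HOM(-,N)$ against connective coefficients to genuine vanishing of $\bL^{X/S}$.
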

\begin{proof}
Since $f$ takes small extensions to trivial fibrations, $\DDer(Y/S,M)$ is trivially fibrant, so $\DDer(Y/S,M)\simeq 0$. Therefore $\bL^{Y/S} \simeq 0$.
\end{proof}

\begin{corollary}\label{cfhagcot}
$$
\DDer(X/S, M) \simeq \oR\HHom_{(X^{\sharp} \da (c\Aff)^{\sim, \tau} \da S^{\sharp})}(\oSpec (\O_{X} \oplus (N^s)^{-1}M)^{\sharp}, X^{\sharp}).
$$
\end{corollary}
\begin{proof}
Setting $Y:=\oSpec (\O_{X} \oplus (N^s)^{-1}M)$,   Theorem \ref{duskinmor} implies that the space
 $\oR\HHom_{(X^{\sharp} \da (c\Aff)^{\sim, \tau} \da S^{\sharp})}(Y^{\sharp}, X^{\sharp}) $ is  a colimit of spaces
\[
 \HHom_{(X'\da sc\Aff \da S)}(Y',X),       
\]
for a suitable system of trivial derived Artin $m$-hypergroupoids $Y'\to Y$, with $X'= X\by_YY'$.

By Proposition \ref{cotgood0},  this space is given by
$
 \HHom_{cdg\Mod(X)}(\bL^{X/S}, \pi_*\pi^*M),       
$
for $\pi\co X' \to X$, which  by adjunction is just $ \HHom_{cdg\Mod(X')}(\pi^*\bL^{X/S}, \pi^*M)$. Since $dg\Mod$ is a left Quillen hypersheaf by Lemma \ref{dgmodhyp2}, the functor $\pi^* \co cdg\Mod(X)\to cdg\Mod(X')$ gives an equivalence of relative categories, so 
\[
  \HHom_{cdg\Mod(X)}(\bL^{X/S}, \pi_*\pi^*M) \simeq \HHom_{cdg\Mod(X)}(\bL^{X/S}, M),      
\]
which completes the proof.
\end{proof}

\begin{corollary}\label{cotgood}
 $\bL^{X/S}$ gives rise to  a cotangent complex in the sense of \cite{hag2} Definition 1.4.1.7 (or equivalently \cite{lurie} \S 3.2)  by associating to any morphism $f:U \to X^{\sharp}$ with $U \in c\Aff$, the complex $f^* \bL^{X/S} \in \pro(cdg\Mod(U)_{\cart})$.  
In other words,
$$
\HHom_{\pro(dg\Mod(U))}(f^*\bL^{X/S}, N) \simeq \HHom_{(U \da  (c\Aff)^{\sim, \tau} \da S^{\sharp})}(\oSpec (\O_{U} \oplus N_s^{-1}N)^{\sharp}, X^{\sharp}),
$$
functorially in $N \in dg_+\Mod(U)$ and in $f$. 
 \end{corollary}
\begin{proof}
 Passing to a suitable \'etale cover, we may assume that $f$ factors as $f:U \to X$. If we set $M:= f_*N \in cdg_+\Mod(X)$ (which need not be Cartesian and is not to be confused with $f_*^{\cart} N$), then 
$$
\HHom_{(U \da (c\Aff)^{\sim, \tau}\da S^{\sharp})}(\oSpec (\O_{U} \oplus N_s^{-1}N)^{\sharp}, X^{\sharp}) \simeq \DDer(X/S, f_*N)
$$
by Corollary \ref{cfhagcot}, so the result follows from Proposition \ref{cotgood0}.
\end{proof}

\begin{remark}
Beware that the cotangent complex of \cite{lurie} is not the same as the better-known cotangent complex of \cite{lurieDAG4}. The difference is the that the former is based on simplicial rings, while the latter uses symmetric spectra. Thus they correspond locally to Andr\'e--Quillen and topological Andr\'e--Quillen homology, respectively. Roughly speaking, simplicial rings serve to apply homotopy theory to algebraic geometry, while symmetric spectra are used to do the opposite. As an example of the differences, some higher topological   Andr\'e--Quillen homology groups of $\Z[t]$ over $\Z$ are non-zero  --- in other words, $\Z \to \Z[t]$ is not formally smooth as  a morphism of symmetric spectra. 
\end{remark}

\begin{definition}
Given a morphism $\fX \to \fS$ of $m$-geometric derived Artin stacks, use Theorem \ref{relstrict} to form a  derived Artin $m$-hypergroupoid $X \to S$ to a derived Artin $m$-hypergroupoid, with $X^{\sharp}\simeq\fX, S^{\sharp}\simeq \fS$, and define the quasi-coherent complex $\bL^{\fX/\fS}$ on $\fX$ by $a^*\bL^{\fX/\fS} \simeq \bL^{X/S}$, for $a:X \to \fX$, using the equivalence of Corollary \ref{qcohequiv}. The characterisation in Corollary \ref{cotgood} ensures that this is well defined.
\end{definition}

\begin{lemma}\label{Lcart}
$\bL^{X/S}$ is homotopy-Cartesian.
\end{lemma}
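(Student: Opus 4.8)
The plan is to deduce homotopy-Cartesianness from the representability of the cotangent complex established in Proposition \ref{cotgood}, rather than from any levelwise comparison of K\"ahler differentials. The naive approach of showing that the cosimplicial pieces $N_c^j\underline{N^s\Omega(X/S)}$ are themselves Cartesian cannot work: already the $j=0$ term $N^s\Omega(X/S)$ fails to be Cartesian, since for a smooth (non-\'etale) face map $\pd_i\colon X_n \to X_{n-1}$ the transitivity triangle for the cotangent complex leaves a nonzero contribution from $N^s\Omega_{X_n/X_{n-1}}$. Thus the Cartesian structure can only emerge after forming the total complex $\bL^{X/S} = \Tot N_c\underline{N^s\Omega(X/S)}$, which is precisely the totalization encoding the hypergroupoid coherence.

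First I would recall that, by the definition of $dg\Mod(X)_{\cart}$, it suffices to prove that for every coface map the structure morphism
$$
\pd^i\colon \pd_i^*\bigl(\bL^{X/S}\bigr)\big|_{X_{n-1}} \lra \bigl(\bL^{X/S}\bigr)\big|_{X_n}
$$
is a quasi-isomorphism, where $\pd_i\colon X_n \to X_{n-1}$ is the corresponding (smooth, hence flat) face map of $X$. Writing $a\colon X \to X^{\sharp}$ for the canonical augmentation and $a_n := a|_{X_n}\colon X_n \to X^{\sharp}$, the key combinatorial fact is that $a$ is a morphism to the \emph{constant} object $X^{\sharp}$, so that $a_{n-1}\circ \pd_i = a_n$ for every face $\pd_i$.

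The central step is to identify the level restriction $(\bL^{X/S})|_{X_n}$ with $a_n^*\bL^{X/S}$ in the sense of Proposition \ref{cotgood}. Granting this, the functoriality of $f \mapsto f^*\bL^{X/S}$ in $f$ (asserted in Proposition \ref{cotgood}) together with the relation $a_n = a_{n-1}\circ\pd_i$ yields
$$
\bigl(\bL^{X/S}\bigr)\big|_{X_n} \simeq a_n^*\bL^{X/S} = (a_{n-1}\circ\pd_i)^*\bL^{X/S} \simeq \pd_i^*\,a_{n-1}^*\bL^{X/S} \simeq \pd_i^*\bigl(\bL^{X/S}\bigr)\big|_{X_{n-1}},
$$
and one checks that the resulting equivalence is realised by the structure map $\pd^i$ itself. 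Hence each $\pd^i$ is a quasi-isomorphism, and $\bL^{X/S}$ is homotopy-Cartesian.

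The main obstacle is exactly this identification $(\bL^{X/S})|_{X_n}\simeq a_n^*\bL^{X/S}$, i.e. that the cotangent-complex pullback of Proposition \ref{cotgood} along the augmentation agrees with naive levelwise restriction. I would verify it by unwinding the construction of $f^*\bL^{X/S}$ in the proof of Proposition \ref{cotgood}: after passing to a suitable \'etale cover, $a_n$ factors through the tautological level-$n$ map $X_n \to X$, and pullback of the object $\bL^{X/S}\in dg\Mod(X)$ along this map is by construction its level-$n$ component. The passage to an \'etale cover is harmless because the face maps are smooth, and flat base change for $N^s\Omega$ is compatible with the totalization $\Tot N_c\underline{(-)}$, since the exact functor $\pd_i^*$ commutes with $\Tot$, with $N_c$, and with $\underline{(-)}$. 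Uniqueness of the cotangent complex in the sense of \cite{hag2}, already invoked to make the preceding definition of $\bL^{\fX/\fS}$ meaningful, guarantees that these identifications are natural in $n$, which closes the argument.
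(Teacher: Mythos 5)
Your reduction to showing that each structure map $\pd^i\colon \pd_i^*(\bL^{X/S}|_{X_{n-1}})\to \bL^{X/S}|_{X_n}$ is a quasi-isomorphism is correct, and your opening observation that the individual cosimplicial pieces $N_c^j\underline{N^s\Omega(X/S)}$ are not Cartesian (so the totalisation must do the work) is exactly right. But the central step of your argument is circular. The object $a_n^*\bL^{X/S}$ of Proposition \ref{cotgood} is defined by factoring $a_n\colon X_n\to X^{\sharp}$ through $X$, and there is no ``tautological level-$n$ map $X_n\to X$'' in $sc\Aff$: a factorisation of the constant simplicial object on $X_n$ through $X$ amounts (possibly after an \'etale cover) to a map $X_n\to X_0$, for instance an iterated face map. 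Under the recipe of Proposition \ref{cotgood}, $a_n^*\bL^{X/S}$ is then $(\pd_0\cdots\pd_0)^*(\bL^{X/S}|_{X_0})$, so the identification $(\bL^{X/S})|_{X_n}\simeq a_n^*\bL^{X/S}$ that you ``grant'' is precisely the homotopy-Cartesianness being proved. Likewise, the uniqueness statement you invoke via the definition of $\bL^{\fX/\fS}$ already presupposes the lemma, since that definition transports $\bL^{X/S}$ through the equivalence of Proposition \ref{dsheafequiv}, which only applies to objects of $dg\Mod(X)_{\cart}$. Proposition \ref{cotgood} as proved gives a universal property for $\HHom_{dg\Mod(X)}(\bL^{X/S},M)$ with $M$ a module on the whole simplicial diagram; extracting from it a universal property of the single level $(\bL^{X/S})|_{X_n}$ requires relating that level to the others, which is again the statement at issue.

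The paper's proof proceeds instead by direct computation: $\pd^i$ is a monomorphism, its cokernel is identified (via Lemma \ref{omegacalc} and the definition of $\bL^{X/S}$ as a totalisation) with the cotangent complex of $X^{\Delta^{n+1}}\to X^{\Delta^{n}}\by_{S^{\Delta^{n}}}S^{\Delta^{n+1}}$, and this morphism is a trivial relative hypergroupoid by Lemma \ref{ppowers} because $\Delta^{n}\to\Delta^{n+1}$ is a trivial cofibration, so Corollary \ref{trivcot} kills the cokernel. If you want to rescue your strategy you would have to prove directly that the level $(\bL^{X/S})|_{X_n}$ corepresents derivations of $X_n$ over $S^{\sharp}$ into $X^{\sharp}$, but the natural route to that is exactly the cokernel computation above; as it stands, the proposal does not contain an argument that engages with the cancellation you correctly identified as the heart of the matter.
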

\begin{proof}
We need to show that for each map $\pd_i:X_{n+1} \to X_n$, the map
$$
\pd^i: \pd_i^*(\bL^{X/S})^n\to (\bL^{X/S})^{n+1}
$$
is a  pro-quasi-isomorphism of pro-chain complexes of quasi-coherent sheaves on $X_{n+1}$.

It follows from Lemma \ref{omegacalc} that $\bL(X/S)^n=  \{\Tot N_c^{\le p}N^s \eta^*\Omega(X^{\Delta^n}/S^{\Delta^n})\}_p$ 
in $\pro(dg\Mod(X_n))$.  
Thus $\pd^i$ is a monomorphism, and its cokernel is 
$$
C:=  \{\Tot N_c^{\le p}N^s \eta^*\Omega(X^{\Delta^{n+1}}/ X^{\Delta^{n}}\by_{S^{\Delta^{n}}}S^{\Delta^{n+1}})\}_p;
$$
we wish to show that $C$ is contractible. 

Now, observe that $C$ is $\bL(X^{\Delta^{n+1}}/ X^{\Delta^{n}}\by_{S^{\Delta^{n}}}S^{\Delta^{n+1}})^0$ on $X_n= (X^{\Delta^{n+1}})_0$. Since $\pd^i:\Delta^n \to \Delta^{n+1}$ is a trivial cofibration,    $X^{\Delta^{n+1}} \to X^{\Delta^{n}}\by_{S^{\Delta^{n}}}S^{\Delta^{n+1}}$ is a trivial relative $m$-hypergroupoid by Lemma \ref{ppowers}, so 
 Corollary \ref{trivcot} implies that $C \simeq 0$, as required.
\end{proof}

\begin{lemma}\label{0cot}
If $f:X \to S$ is a derived  Artin $0$-hypergroupoid, then $\bL^{X/S} \simeq N^s\Omega(X/S)$.
\end{lemma}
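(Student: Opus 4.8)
The plan is to read the result directly off the definition of $\bL^{X/S}$ in Definition \ref{cotdef}, together with the identification of the conormalised terms recorded there. Recall $\bL^{X/S} = \Tot N_c\underline{N^s\Omega(X/S)}$, where $N_c^i\underline{N^s\Omega(X/S)} \cong \eta^*N^s\Omega(X^{\Delta^i}/X^{\L^i_0}\by_{S^{\L^i_0}}S^{\Delta^i})$, with $\eta: X \to X^{\Delta^i}$ the constant map. The term in cosimplicial degree $0$ is $N_c^0\underline{N^s\Omega(X/S)} = N^s\Omega(X/S)\ten\Delta^0 = N^s\Omega(X/S)$, which is exactly the target of the claimed equivalence. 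So it suffices to prove that every higher conormalised term is contractible and that $\Tot$ then collapses onto the degree-$0$ term.

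First I would analyse the maps $\theta_i(f): X^{\Delta^i}\to X^{\L^i_0}\by_{S^{\L^i_0}}S^{\Delta^i}$ for $i \ge 1$. This is precisely the map $F$ produced by the (derived analogue of the) Lemma \ref{ppowers} construction applied to the cofibration $g:\L^i_0 \into \Delta^i$ and the relative $(0,\oP)$-hypergroupoid $f$. Since $f$ has dimension $0$, $\theta_i(f)$ is a relative derived $(0,\oP)$-hypergroupoid; and since $g$ is a horn inclusion, hence a trivial cofibration, Lemma \ref{ppowers} moreover makes $\theta_i(f)$ \emph{trivial}. Thus for every $i \ge 1$ the map $\theta_i(f)$ is a trivial relative derived $(0,\oP)$-hypergroupoid. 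Note that, because the dimension is $0$, the condition $\sk_{-1}g$ isomorphic is automatic, so no restriction on $i$ is needed; this is the feature distinguishing the present case from the nontrivial complex of Lemma \ref{h0cot}.

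Next I would deduce the vanishing of the higher terms. A trivial relative derived $(0,\oP)$-hypergroupoid is levelwise étale: its level-$0$ homotopy matching map is a smooth surjection which is also a weak equivalence, hence étale, and the Cartesian (relative $0$-hypergroupoid) structure propagates this to all simplicial levels. Since $N^s\Omega$ is computed levelwise and the relative Kähler differentials of a (formally) étale morphism vanish, we obtain $N^s\Omega(\theta_i(f)) \simeq 0$, and therefore $N_c^i\underline{N^s\Omega(X/S)} \simeq 0$ for all $i \ge 1$. Consequently $\Tot N_c\underline{N^s\Omega(X/S)}$ is quasi-isomorphic to its cosimplicial degree-$0$ part $N^s\Omega(X/S)$, which is the assertion.

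I expect the main obstacle to be the careful distinction, in the vanishing step, between $\theta_i(f)$ being a weak equivalence and its naive differentials $N^s\Omega(\theta_i(f))$ vanishing. Triviality of the hypergroupoid only says $\theta_i(f)$ is a weak equivalence, and a weak equivalence need not have vanishing Kähler differentials; indeed, when the dimension is positive the analogous maps are weak equivalences with $N^s\Omega \ne 0$, which is exactly why Lemma \ref{h0cot} produces a complex in degrees $[-m,0]$ rather than a single module. The point that must genuinely be used is the stronger fact that a trivial \emph{$0$-}hypergroupoid is levelwise étale, so that the differentials truly vanish; this is precisely where the hypothesis that $f$ has dimension $0$ enters, and one should be careful not to invoke Corollary \ref{trivcot} (which concerns $\bL$, not $N^s\Omega$) in its place.
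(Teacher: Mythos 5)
Your proof is correct and is essentially the paper's argument: the paper likewise reduces to showing the higher cosimplicial pieces of $\underline{N^s\Omega(X/S)}$ are contractible, identifying the relevant cokernel as $\eta^*\Omega$ of a map that Lemma \ref{ppowers} exhibits as a trivial relative derived Artin $0$-hypergroupoid (the paper uses the cofaces $\Delta^n\hookrightarrow\Delta^{n+1}$ where you use $\L^i_0\hookrightarrow\Delta^i$, a purely cosmetic difference). Your justification of the key vanishing step via ``smooth $+$ weak equivalence $=$ \'etale, propagated to all simplicial levels by Cartesianness'' is if anything more explicit than the paper's, which simply asserts contractibility of $\Omega$ from the map being a levelwise weak equivalence.
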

\begin{proof}
It suffices to show that each map $\pd^i_{\Delta}: N^s\Omega(X/S)\ten \Delta^n \to \Omega(X/S)\ten \Delta^{n+1}$ is a weak equivalence in $cdg_+\Mod(X)$. This map is a monomorphism, with cokernel
$$
\eta^*\Omega(X^{\Delta^{n+1}}/ X^{\Delta^n}\by_{S^{\Delta^n}}S^{\Delta^{n+1}}).
$$

By Lemma \ref{ppowers},  $X^{\Delta^{n+1}} \to X^{\Delta^{n}}\by_{S^{\Delta^{n}}}S^{\Delta^{n+1}}$ is a trivial relative Artin $0$-hypergroupoid, which is equivalent to saying that it is a levelwise weak equivalence, so $\Omega(X^{\Delta^{n+1}}/ X^{\Delta^n}\by_{S^{\Delta^n}}S^{\Delta^{n+1}})$ is contractible, as required.
\end{proof}

\begin{corollary}\label{loopcot}
If $X\to S$ is a derived  Artin $m$-hypergroupoid, then 
$$
\bL^{X/S} \simeq N^s\Omega(X/S)\ten (\Delta^m/ \pd\Delta^m)[m].
$$ 
In particular, this implies that $\H_i\bL^{X/S}=0 $ for $i<m$ and that $\bL^{X/S}$ is an essentially constant pro-object.
\end{corollary}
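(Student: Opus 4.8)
The plan is to compute $\bL^{X/S}$ straight from Definition \ref{cotdef}, where $\bL^{X/S}=\Tot N_c\underline{N^s\Omega(X/S)}$ and the conormalised terms are identified (via Lemma \ref{omegacalc}) as
$$
N_c^i\underline{N^s\Omega(X/S)}\cong \eta^*N^s\Omega\bigl(X^{\Delta^i}/X^{\L^i_0}\by_{S^{\L^i_0}}S^{\Delta^i}\bigr).
$$
Writing $\Omega:=N^s\Omega(X/S)$ and $\theta_i\colon X^{\Delta^i}\to X^{\L^i_0}\by_{S^{\L^i_0}}S^{\Delta^i}$, the whole computation reduces to understanding each $\eta^*N^s\Omega^{\theta_i}$ and how they assemble in the total complex.

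First I would establish the vanishing $N_c^i\underline\Omega\simeq 0$ for $i>m$. Applying Lemma \ref{ppowers} to $f\colon X\to S$ (a relative $m$-hypergroupoid) and the cofibration $g\colon\L^i_0\into\Delta^i$, the map $\theta_i$ is a relative $(0,\oP)$-hypergroupoid precisely when $\sk_{m-1}g$ is an isomorphism, i.e.\ for $i\ge m+1$, and it is moreover \emph{trivial} because $\L^i_0\into\Delta^i$ is a trivial cofibration. Thus for $i>m$ the morphism $\theta_i$ is a trivial derived Artin $0$-hypergroupoid, so Lemma \ref{0cot} gives $N^s\Omega^{\theta_i}\simeq\bL^{\theta_i}$ and Corollary \ref{trivcot} gives $\bL^{\theta_i}\simeq 0$; hence $\eta^*N^s\Omega^{\theta_i}\simeq 0$. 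Consequently $\bL^{X/S}$ is the totalisation of the truncated conormalised complex $N_c^0\to N_c^1\to\cdots\to N_c^m$, exactly as in the smooth computation of Lemma \ref{h0cot}.

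The hard part will be collapsing this truncated complex onto the single top cell, i.e.\ identifying it with $\Omega\ten(\Delta^m/\pd\Delta^m)[m]$. Here I would pass from the horns $\L^i_0$ to the full boundaries $\pd\Delta^i$ using the pushout $\pd\Delta^i=\L^i_0\cup_{\pd\Delta^{i-1}}\Delta^{i-1}$ from (the proof of) Lemma \ref{higherdiagchar}. This yields, for each $i$, a cofibre sequence of pointed simplicial sets $\Delta^{i-1}/\pd\Delta^{i-1}\to\Delta^i/\L^i_0\to\Delta^i/\pd\Delta^i$, and hence after $\Omega\ten(-)$ a short exact sequence of the terms $N_c^i\simeq\Omega\ten(\Delta^i/\L^i_0)$. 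The aim is then to show that the cosimplicial differential is compatible with these sequences in such a way that the $\Delta^i/\pd\Delta^i$ summand of the $i$-th term cancels against the $\Delta^{i}/\pd\Delta^{i}$-boundary contribution appearing in the $(i+1)$-th term, a telescoping that annihilates every $\Delta^i/\pd\Delta^i$ with $i<m$ and leaves only the top non-degenerate cell $\Delta^m/\pd\Delta^m$ in cosimplicial degree $m$. This is the step that genuinely uses the trivial-hypergroupoid mechanism (Corollary \ref{trivcot}) repeatedly, and it is where I expect the real work to lie: keeping the signs, the cosimplicial cofaces, and the placement shift imposed by $\Tot$ consistent while proving that nothing below the extreme degree survives.

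Once the collapse is done, the homology statement is immediate: the surviving term sits in the extreme degree $m$, so $\H_i\bL^{X/S}$ vanishes below that degree, giving $\H_i\bL^{X/S}=0$ for $i<m$. The main obstacle is thus entirely contained in the combinatorial identification of the previous paragraph; the vanishing above $m$ and the homology bound are formal consequences of Lemmas \ref{ppowers}, \ref{0cot} and Corollary \ref{trivcot}.
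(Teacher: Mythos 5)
Your route is genuinely different from the paper's. The paper never computes $\Tot N_c\underline{N^s\Omega(X/S)}$ directly: it characterises $\bL^{X/S}$ by the functor $M\mapsto\HHom_{dg\Mod(X)}(\bL^{X/S},M)\simeq\DDer(X/S,M)$ on $dg_+\Mod(X)$, observes that $\Omega^m\DDer(X/S,M)\cong\DDer(X^{\Delta^m}/X^{\pd\Delta^m}\by_{S^{\pd\Delta^m}}S^{\Delta^m},\eta_*M)$, notes that this last map is a derived Artin $0$-hypergroupoid by Lemma \ref{ppowers} applied to $\pd\Delta^m\into\Delta^m$ (not to the horn), applies Lemma \ref{0cot} to identify its corepresenting object as $N^s\Omega(X/S)\ten(\Delta^m/\pd\Delta^m)$, and concludes by Yoneda. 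This sidesteps every convergence and sign issue that your direct computation must confront. Your first step (contractibility of $N_c^i\underline{N^s\Omega(X/S)}$ for $i>m$ via Lemma \ref{ppowers}, Lemma \ref{0cot} and Corollary \ref{trivcot}) is correct and is essentially the argument already used in Lemma \ref{h0cot}.

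The gap is at the step you yourself flag as ``the real work''. Two concrete problems. First, contractibility of the terms $N_c^i$ for $i>m$ is an equivalence, not literal vanishing, so it does not by itself let you replace $\Tot$ of the full conormalised complex by $\Tot$ of its brutal truncation in degrees $\le m$: the tail is unbounded in the cosimplicial direction, and acyclicity of $\Tot^{\Pi}$ of a levelwise-contractible unbounded complex requires a completeness argument for the product filtration (or, better, the observation that $N_c^{\bt}\underline{N^s\Omega(X/S)}$ is exact as a complex of graded modules, which is exactly what the paper establishes inside the proof of Proposition \ref{cotgood}: the map $N_c^n/dN_c^{n-1}\to N_c^{n+1}$ is a monomorphism with projective cokernel). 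Second, you misattribute the source of the cancellation: the factorisation of the conormalised differential as the surjection $N_c^i\onto N^s\Omega(X/S)\ten(\Delta^i/\pd\Delta^i)$ followed by the inclusion into $N_c^{i+1}$ (coming from $\pd^j(\Delta^i)\subset\L^{i+1}_0$ for $j\ge 1$ and $\pd^0(\pd\Delta^i)\subset\L^{i+1}_0$) is a purely combinatorial property of $\underline{N^s\Omega(X/S)}=N^s\Omega(X/S)\ten\Delta^{\bt}$, valid for any morphism whatsoever; Corollary \ref{trivcot} plays no role there. The hypergroupoid hypothesis enters only through the vanishing above degree $m$. Until the factorisation of the differential and the convergence of the truncation are actually proved, the collapse onto $N^s\Omega(X/S)\ten(\Delta^m/\pd\Delta^m)[m]$ remains an announced aim rather than an argument.
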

\begin{proof}
We use the characterisation of the cotangent complex from the proof of Corollary \ref{cotgood}, namely that
$$
\HHom_{\pro(cdg\Mod(X))}(\bL^{X/S}, M) \simeq \DDer(X/S,M),
$$
for all $M \in cdg_+\Mod(X)$.

Now, simplicial considerations show that
$$
\DDer(X^{\Delta^{m}}/ X^{\pd\Delta^m}\by_{S^{\pd\Delta^m}}S^{\Delta^{m}} ,\eta_*M) = \ker( \DDer(X/S,M)^{\Delta^m} \to \DDer(X/S,M)^{\pd\Delta^m}),
$$
which is a model for the iterated loop space $\Omega^m\DDer(X/S,M)$, so
$$
\Omega^m\HHom_{\pro(cdg\Mod(X))}(\bL^{X/S}, M) \simeq \HHom_{\pro(cdg\Mod(X))}(\eta^*\bL^{X^{\Delta^{m}}/ X^{\pd\Delta^m}\by_{S^{\pd\Delta^m}}S^{\Delta^{m}} }, M).
$$
 
Since $X^{\Delta^{m}}\to X^{\pd\Delta^m}\by_{S^{\pd\Delta^m}}S^{\Delta^{m}}$ is a derived Artin $0$-hypergroupoid (by Lemma \ref{ppowers}), Lemma \ref{0cot} then implies that
$$
\Omega^m\HHom_{\pro(cdg\Mod(X))}(\bL^{X/S}, M) \simeq \HHom_{cdg\Mod(X)}(N^s\Omega(X/S)\ten (\Delta^m/ \pd\Delta^m), M),
$$
for all $M \in cdg_+\Mod(X)$.

Moreover, 
\begin{eqnarray*}
N^s\Omega^m\HHom_{\pro(cdg\Mod(X))}(\bL^{X/S}, M) &\simeq& \tau_{\ge 0}(N^s\HHom_{\pro(cdg\Mod(X))}(\bL^{X/S}, M)[m])\\
& =& N^s\HHom_{\pro(cdg\Mod(X))}(\bL^{X/S}[-m], M).
\end{eqnarray*}

Thus, for $M \in cdg_+\Mod(X)$, we have
$$
\HHom_{\pro(cdg\Mod(X))}(\bL^{X/S}[-m], M) \simeq \HHom_{cdg\Mod(X)}(N^s\Omega(X/S)\ten (\Delta^m/ \pd\Delta^m), M),
$$
from which we deduce that
$$
\bL^{X/S}[-m] \simeq N^s\Omega(X/S)\ten (\Delta^m/ \pd\Delta^m).
$$
\end{proof}

\begin{definition}\label{cotdefh}
Given a homotopy derived Artin $m$-hypergroupoid $X \to S$, define $\bL^{X/S} \in cdg\Mod(X)_{\cart}$ by taking a Reedy fibrant approximation $u:X \to \hat{X}$ over $S$, and setting $\bL^{X/S}:= u^*N^s\Omega(\hat{X}/S)\ten (\Delta^m/ \pd\Delta^m)[m]$ (so equivalent to  $u^*\bL^{\hat{X}/S}$). In particular, we may apply this when $X \to S$ is an Artin $m$-hypergroupoid (with trivial derived structure), and then exploit the description of $\bL^{X/S}$  in Lemma \ref{h0cot}.
\end{definition}

\subsubsection{Comparison with Olsson}

To compare our definition of  the cotangent complex with Olsson's  (in \cite{olssartin}), we could simply note that  Corollary \ref{cotgood} ensures that his must be the sheafification of ours. However, a more direct comparison is possible. 

If $\fX$ is a quasi-compact Artin $1$-stack, we may take a presentation  $X_0 \to \fX$, for $X_0$ affine. If $\fX$ has affine diagonal, then $\fX$ is $1$-geometric, and $X:=\cosk_0(X_0/\fX)$ will be a simplicial affine scheme.

Given a strongly quasi-compact relative Artin $1$-hypergroupoid $f:X \to Y$, for $Y$ a strongly quasi-compact  Artin $1$-hypergroupoid,  let $\fX:=X^{\sharp}$ and $\fY:=Y^{\sharp}$ be the associated stacks, and $u:X \to \hat{X}$ a Reedy fibrant approximation over $Y$ in the category of simplicial cosimplicial affine schemes. Then \cite{olssartin} 8.2.3 defines the cotangent complex to be the sheafification of the complex 
$$
\bL^{X/Y}_{\fX/\fY}: = \Tot(u^*N^s\Omega(\hat{X}/Y) \to \Omega(X/ Y\by_{\fY}^h\fX))
$$
in $cdg\Mod(X)$.

\begin{proposition}\label{cfolsson}
For $X,Y, \fX,\fY$ as above, $\bL^{X/Y}_{\fX/\fY}$ is the sheafification of the cotangent complex $\bL^{X/Y}$.
\end{proposition}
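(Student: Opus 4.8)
The plan is to compare the two total complexes directly, exploiting that $m=1$ collapses the cosimplicial conormalisation to two terms. Recall from Definition \ref{cotdef} that $\bL^{\hat{X}/Y}=\Tot N_c\underline{N^s\Omega(\hat{X}/Y)}$, with
\[
N_c^i\underline{N^s\Omega(\hat{X}/Y)}\cong \eta^*N^s\Omega(\hat{X}^{\Delta^i}/\hat{X}^{\L^i_0}\by_{Y^{\L^i_0}}Y^{\Delta^i}).
\]
By Lemma \ref{ppowers}, for $i\ge 2$ the map $\hat{X}^{\Delta^i}\to \hat{X}^{\L^i_0}\by_{Y^{\L^i_0}}Y^{\Delta^i}$ is a trivial relative Artin $0$-hypergroupoid, hence a levelwise weak equivalence, so the corresponding conormalised terms are contractible. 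Thus $\bL^{\hat{X}/Y}$ is quasi-isomorphic to the two-term total complex concentrated in cochain degrees $0$ and $1$, and since $\L^1_0=\Delta^0$ these terms are $N^s\Omega(\hat{X}/Y)$ and $\eta^*N^s\Omega(\hat{X}^{\Delta^1}/\hat{X}\by_Y Y^{\Delta^1})$ respectively.

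First I would observe that, after applying $u^*$, the cochain-degree-$0$ term of our complex is literally $u^*N^s\Omega(\hat{X}/Y)$, which is exactly the first term of Olsson's complex $\bL^{X/Y}_{\fX/\fY}$. The whole comparison therefore reduces to matching the cochain-degree-$1$ term $u^*\eta^*N^s\Omega(\hat{X}^{\Delta^1}/\hat{X}\by_Y Y^{\Delta^1})$ with Olsson's second term $\Omega(X/Y\by^h_\fY\fX)$, together with the two differentials, after sheafification.

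The heart of the argument is this degree-$(-1)$ identification. Both $\hat{X}^{\Delta^1}$ and its target have contractible $\Delta^1$-direction, so sheafifying and using $Y^{\sharp}\simeq\fY$, $\hat{X}^{\sharp}\simeq\fX$ together with Lemma \ref{ppowers} (so that $(-)^{\Delta^1}$ commutes with sheafification on hypergroupoids) and Lemma \ref{hprodgood} for the fibre product, one finds $(\hat{X}^{\Delta^1})^{\sharp}\simeq\fX$ and $(\hat{X}\by_Y Y^{\Delta^1})^{\sharp}\simeq\fX\by^h_\fY\fY^{\Delta^1}\simeq\fX\simeq Y\by^h_\fY\fX$, with the map sheafifying to a model of the atlas $a\colon X\to\fX$. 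Hence our degree-$(-1)$ term and Olsson's term both compute the relative Kähler differentials of the atlas, and agree under the equivalence of Proposition \ref{dsheafequiv}. I would then check that the connecting differential $d=\sum(-1)^i\pd^i$ of $N_c$ restricts, under $\eta^*u^*$, to the differential Olsson assembles from the two face maps $\ev_0,\ev_1$; this is a direct unwinding of the cosimplicial identities and the definition of $\underline{N^s\Omega(\hat{X}/Y)}$.

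The main obstacle is precisely this degree-$(-1)$ identification: one must verify that passing to relative differentials is compatible with replacing the path-space model $\hat{X}^{\Delta^1}/\hat{X}\by_Y Y^{\Delta^1}$ by the homotopy-fibre-product model $X/(Y\by^h_\fY\fX)$, and that this compatibility survives sheafification, where faithfully flat descent (as in Proposition \ref{dgsheafequivm}) guarantees the two presentations yield weakly equivalent quasi-coherent complexes. As a consistency check, and as the abstract alternative to this explicit computation, I would note that both complexes satisfy the universal property of the cotangent complex of $\fX/\fY$ through the derivation functor $\DDer$ (Proposition \ref{cotgood}), so that Corollary \ref{cotrcart} already forces Olsson's complex to be the homotopy-Cartesianification, equivalently the sheafification, of ours; the direct comparison above simply makes this identification explicit.
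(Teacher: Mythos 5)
Your reduction to a two-term total complex is correct and agrees with the paper's starting point, but the crux of the proposition is exactly the step you flag as "the main obstacle" and then do not carry out: identifying $\eta^*\Omega(X^{\Delta^1}/Y^{\Delta^1}\by_{\pd_0,Y}X)$ with Olsson's term $\Omega(X/Y\by^h_{\fY}\fX)$. The sheafification argument you offer for this cannot work as stated: relative K\"ahler differentials of a morphism of simplicial schemes are computed levelwise and are not determined by the sheafifications of source and target. Indeed both $X^{\Delta^1}$ and $X\by_YY^{\Delta^1}$ sheafify to $\fX$, so the sheafified map is an equivalence, yet the relative differentials are non-trivial (they record precisely the stacky directions); likewise your chain $(\hat{X}\by_YY^{\Delta^1})^{\sharp}\simeq\fX\simeq Y\by^h_{\fY}\fX$ conflates a simplicial object with its realisation. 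The missing idea is the strict identification available because $X$ and $Y$ are \v Cech nerves: since $X=\cosk_0(X_0/\fX)$ one has
$$
X^{\Delta^1}=X^{\pd\Delta^1}\by^h_{\fX^{\pd\Delta^1}}\fX=(X\by X)\by^h_{\fX\by\fX}\fX=X\by^h_{\fX}X,
$$
and similarly $Y^{\Delta^1}\by_{\pd_0,Y}X\cong X\by^h_{\fY}Y$; pulling back $\Omega\bigl(X\by^h_{\fX}X\,/\,X\by^h_{\fY}Y\bigr)$ along the diagonal $\delta\co X\to X\by^h_{\fX}X$ (which is what $\eta^*$ becomes under this identification) then gives $\Omega(X/Y\by^h_{\fY}\fX)$ directly, with no descent argument needed. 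This is the paper's proof, and it is a levelwise computation, not a statement about sheafifications.

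Your closing "consistency check" via the universal property is in fact the legitimate indirect argument the paper itself mentions before stating the proposition (Corollary \ref{cotrcart} forces Olsson's complex to be the sheafification of $\bL^{X/Y}$ \emph{provided} one knows Olsson's complex corepresents the same derivation functor). If you want to lean on that route as the actual proof rather than a sanity check, you must cite or prove the corresponding universal property for $\bL^{X/Y}_{\fX/\fY}$ from \cite{olssartin}; as written, the burden of the comparison is left unresolved in both of your two lines of attack.
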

\begin{proof}
The characterisation in Definition \ref{cotdefh} reduces to
$$
\bL^{X/Y} \simeq \Tot(u^*\Omega(\hat{X}/Y) \to \eta^*\Omega(X^{\Delta^1}/ Y^{\Delta^1}\by_{\pd_0, Y}X)),
$$
so it will suffice  to show that $\Omega(X/ Y\by_{\fY}^h\fX)\cong \eta^*\Omega(X^{\Delta^1}/ Y^{\Delta^1}\by_{\pd_0, Y}X)$.

Now, if we write $\delta: X \to X\by_{\fX}^hX$, then 
$$
\Omega(X/ Y\by_{\fY}^h\fX)= \delta^*\Omega((X\by_{\fX}^hX)/ (X\by_{\fY}^hY)).
$$ 
However, $X^{\Delta^1}\simeq X^{\Delta^1}\by_{\fX^{\Delta^1}}^h\fX$, so (since $X= \cosk_0(X_0/\fX)$)
$$
X^{\Delta^1}\simeq  X^{\pd\Delta^1}\by_{\fX^{\pd\Delta^1}}^h\fX\simeq (X\by X)\by^h_{\fX \by \fX}\fX\simeq X\by_{\fX}^hX,
$$
and similarly for $Y$. This gives the required isomorphism.
\end{proof}

\section{Deformations of derived Artin stacks}\label{defsn}

\subsection{Deforming morphisms of derived stacks}\label{defmor}

\begin{definition}
Given a category $\C$, define the category $c_+\C$ of almost cosimplicial diagrams   in $\C$ to consist of  functors $\Delta_* \to \C$, for $\Delta_*$ as in Definition \ref{delta*}. Thus an almost cosimplicial object $X^*$ consists of objects $X^n \in \C$, with all of the operations $\pd^i, \sigma^i$ of a cosimplicial diagram except $\pd^0$, satisfying the usual relations. 
\end{definition}

\begin{definition}
Taking the dual construction to $\Dec_+^{\op}$, we may define, for any almost cosimplicial object $Y$, an almost cosimplicial object $\Dec^+_{\op}Y$ given by $(\Dec^+_{\op}Y)^n= Y^{n+1}$, with $\pd^i_{\Dec^+_{\op}Y}= \pd^{i+1}_Y$. The operations $\pd^{0}_Y:Y \to \Dec^+_{\op}Y$ define an almost cosimplicial map, which has a retraction $\sigma_Y^{0}$. $\Dec^+_{\op}$ is a comonad on almost cosimplicial complexes, with coproduct  given by $ \pd^{1}_Y: \Dec^+_{\op}Y \to \Dec^+_{\op}\Dec^+_{\op}Y$, the co-unit being $\sigma_Y^{0}$. 
\end{definition}

\begin{proposition}\label{defmor1}
Take a diagram  $Z \xra{g} X \xra{f} S$ of simplicial cosimplicial affine schemes, with $f$ a  Reedy fibration. If $Z \into \tilde{Z}$ is a levelwise closed immersion over $S$ defined by a square-zero ideal $\sI$, then the obstruction to extending $g$  to a  morphism 
$
\tilde{g}: \tilde{Z} \to X
$
over $S$ 
lies in 
$$
\H_{-1}\HOM_{cdg\Mod(X)}(N^s\Omega^{X/S}, g_*N^s\sI).
$$

If the obstruction is zero, then the $\Hom$-space of possible extensions is given by
$$
N^s\HHom_{Z \da sc\Aff \da S}(\tilde{Z}, X)  \simeq \tau_{\ge 0}\HOM_{cdg\Mod(X)}(\bL^{X/S}, N^sg_*\sI).
$$
\end{proposition}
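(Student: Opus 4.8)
The plan is to identify the left-hand mapping space with a space of derived derivations and then to invoke the cotangent complex characterisation. Observe first that, by definition of the under--over category, a morphism from $\tilde{Z}$ (viewed under $Z$ via $Z \into \tilde{Z}$) to $X$ (viewed under $Z$ via $g$) is exactly a morphism $\tilde{g}\colon \tilde{Z} \to X$ over $S$ whose restriction along $Z \into \tilde{Z}$ equals $g$. Thus $\HHom_{Z \da sc\Aff \da S}(\tilde{Z}, X)$ is precisely the space of extensions of $g$. Since $f$ is a Reedy fibration and $Z \into \tilde{Z}$ is a levelwise closed immersion, hence a Reedy cofibration, this mapping space is the derived space of such extensions, so all the arguments below are homotopically meaningful.

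By hypothesis the obstruction class produced in the first part of the Proposition vanishes, so the space of extensions is non-empty; I would fix a lift $\tilde{g}_0$ once and for all. The next step is to show that the space of extensions is a torsor under the derived derivation space $\DDer(X/S, N^s g_*\sI)$. Because $\sI$ is square-zero, the trivial square-zero extension $\oSpec(\O_X \oplus g_*\sI)$ of $X$ by $g_*\sI$, which is an abelian cogroup object in $X \da sc\Aff \da S$, acts on extensions by translating a given lift by a derivation; and two lifts restricting to the same $g$ differ, level by level in the simplicial direction and degree by degree in the cosimplicial direction, by an $\O_S$-derivation of $\O_X$ valued in $\sI$ through $g$, since they agree modulo the square-zero ideal. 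The Reedy fibration hypothesis guarantees that the comparison square is homotopy Cartesian, so that this action is homotopically simply transitive; taking $\tilde{g}_0$ as base point then yields an equivalence of spaces
$$
\HHom_{Z \da sc\Aff \da S}(\tilde{Z}, X) \simeq \DDer(X/S, N^s g_*\sI).
$$

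Finally I would apply the cotangent complex. Setting $M := N^s g_*\sI \in dg_+\Mod(X)$ --- which need not be Cartesian, exactly as with $f_*N$ in Proposition \ref{cotgood} --- the computation carried out in the proof of Proposition \ref{cotgood}, via Lemma \ref{cfhagcot} and Lemma \ref{derchar}, gives $N^s\DDer(X/S, M) \simeq N^s\HHom_{dg\Mod(X)}(\bL^{X/S}, M)$, and the defining relation $N^s\HHom = \tau_{\ge 0}\HOM$ identifies the latter with $\tau_{\ge 0}\HOM_{dg\Mod(X)}(\bL^{X/S}, M)$. Composing with the torsor identification of the previous step produces
$$
N^s\HHom_{Z \da sc\Aff \da S}(\tilde{Z}, X) \simeq \tau_{\ge 0}\HOM_{dg\Mod(X)}(\bL^{X/S}, N^s g_*\sI),
$$
as required.

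The main obstacle is the middle step: rigorously establishing that the extension space is a \emph{homotopically} simply transitive torsor under $\DDer(X/S, N^s g_*\sI)$, rather than merely matching it on $\pi_0$. This is where the hemi-cosimplicial décalage $\Dec^+_{\op}$ and the explicit simplicial structure on the mapping space enter, allowing one to reduce the torsor claim to a levelwise square-zero computation and to verify homotopy-Cartesianness of the relevant square; the Reedy fibration condition on $f$ is used throughout to keep the mapping spaces homotopy invariant and to ensure that the obstruction-free case indeed produces an honest base point $\tilde{g}_0$.
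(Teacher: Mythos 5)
There is a genuine gap: your argument only addresses the second sentence of the Proposition and takes the first — the construction of the obstruction class in $\H_{-1}\HOM_{dg\Mod(X)}(N^s\Omega^{X/S}, g_*N^s\sI)$ — entirely on faith ("By hypothesis the obstruction class produced in the first part of the Proposition vanishes"). That first part is the technical heart of the proof and cannot be skipped, not least because the existence of your base point $\tilde{g}_0$ depends on it. The paper's construction runs as follows: one passes to the Reedy category $\Delta_*\by\Delta^{\op}$ of simplicial \emph{hemi}-cosimplicial schemes (dropping $\pd^0$), because the cosimplicial latching maps of a levelwise closed immersion need not be closed immersions whereas the hemi-cosimplicial ones are; the Reedy fibration hypothesis on $f$ then lets one inductively build a hemi-cosimplicial lift $\lambda:\tilde{Z}^*_{\bt}\to X^*_{\bt}$. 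The failure of $\lambda$ to commute with $\pd_0$ is a map $\pd_0\lambda^{\sharp}-\lambda^{\sharp}\pd_0$ landing in $\sI$, which one checks is a $g^{\sharp}\pd_0$-derivation, hence gives $\delta:\Omega(X/S)_n\to\sI_{n-1}$; a shuffle-product computation shows $N^s\delta$ is $N^sO(X)$-linear, it is a cycle in homological degree $-1$, and changing $\lambda$ changes $N^s\delta$ by a boundary $[d,N^s\beta]$. None of this appears in your proposal, and without it neither the target group of the obstruction nor the base point for the second part is available.

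For the second part your route is essentially the paper's, but you make it harder than necessary. Rather than setting up the extension space as a "homotopically simply transitive torsor" under $\DDer(X/S,N^sg_*\sI)$ — the step you yourself flag as the main obstacle — the paper uses the pushout $\tilde{Z}\cup_Z\tilde{Z}\cong\oSpec(\O_{\tilde{Z}}\oplus\sI\eps)$ with $\eps^2=0$: a fixed $\tilde{g}$ gives a literal levelwise bijection
$$
\HHom_{Z\da sc\Aff\da S}(\tilde{Z},X)\cong\HHom_{\tilde{Z}\da sc\Aff\da S}(\tilde{Z}\cup_Z\tilde{Z},X),
$$
so no homotopy-transitivity verification is needed; one then concludes via Lemma \ref{cfhagcot} and the computation $N^s\DDer(X/S,M)\simeq\tau_{\ge 0}\HOM_{dg\Mod(X)}(\bL^{X/S},M)$ from the proof of Proposition \ref{cotgood}, exactly as you propose. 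If you adopt the pushout identification, your final paragraph becomes rigorous; but the Proposition is not proved until the obstruction class itself is constructed.
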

\begin{proof}
Since $f$ is a Reedy fibration, the simplicial matching maps
$$
X_n \to M_nX \by_{M_nS}S_n
$$
are fibrations of cosimplicial affine schemes. For a morphism $V \to W$ of cosimplicial affine schemes to be a fibration implies that the cosimplicial matching maps $V^j \to W^j\by_{M^{j-1}W}M^{j-1}V$ have the right lifting property (RLP) with respect to all closed immersions of affine schemes, so in particular are formally smooth.

Now, $\Delta\by \Delta^{\op}$ is a Reedy category (as in \cite{hovey} \S 5.2),
and we may regard $f$ as a morphism of $\Delta\by \Delta^{\op}$-diagrams in affine schemes. The results above  imply that the  $\Delta\by \Delta^{\op}$ matching maps
$$
X_n^j \to (M_nX^j \by M^{j-1}X_n) \by_{(M_nS^j\by_{M_nM^{j-1}S}M^{j-1}S_n )}S_n^j
$$
have the RLP for all closed immersions.

Likewise, the simplicial latching maps 
$$
L_n\tilde{Z}\cup_{L_nZ}Z_n \to \tilde{Z}_n 
$$
are levelwise closed immersions. However, the cosimplicial latching maps of a levelwise closed immersion  $g:V \to W$ of cosimplicial affine schemes will only be closed immersions if $g$ is a weak equivalence. Thus we instead consider the underlying diagram of  almost cosimplicial affine schemes. For a simplicial ring $A_{\bt}$, the almost cosimplicial latching object $L^i_*\Spec (A_*)$ is given by $\Spec (M_{\L^i_0}A)$, so the almost cosimplicial matching maps of a levelwise closed immersion are closed immersions.

Letting $\bI$ be the Reedy category $\Delta_* \by \Delta^{\op}$, this implies that the latching maps 
$$
L(\bI)_n^j\tilde{Z}\cup_{L(\bI)_n^jZ}Z_n^j\to \tilde{Z}_n^j
$$
are closed immersions, which  allows us inductively to construct a lift
$$
\lambda: \tilde{Z}^*_{\bt} \to X^*_{\bt}
$$
of simplicial almost cosimplicial affine schemes. 

Now, the obstruction to $\lambda$ being a cosimplicial morphism is the map
$$
\pd_0\lambda^{\sharp} - \lambda^{\sharp}\pd_0: O(X)_n \to O(\tilde{Z})_{n-1},
$$
whose image is contained in $\ker( O(\tilde{Z}) \to O(Z))=  \sI$. Thus we have a   map
$$
O(X)_n \to \sI_{n-1};
$$
since $\sI$ is a square-zero ideal, 
this map is an $O(S)$-linear $g^{\sharp}\pd_0$-derivation, so corresponds  to an  $(O(X),g^{\sharp}\pd_{0})$-linear map
$$
\delta: \Omega(X/S)_n \to \sI_{n-1}.
$$
Furthermore, 
$\pd_i\delta= \delta\pd_{i+1}$ for all $i>0$, 
so $\delta$ descends to a map $N^s\delta$ on the normalised complexes, which will be $0$ if and only if $\delta=0$. Moreover, $\delta\sigma_i = \sigma_{i-1}\delta$ for $i \ge 1$, $\delta\sigma_0=0$, and $\delta\pd_0=0$.

Thus for $m \in \Omega(X/S)_n$ and $a \in O(X)_n$,
$$
\delta(m \nabla a)= \delta(\sum \pm \sigma_Im \cdot \sigma_J a) = \sum \pm \delta(\sigma_Im)\cdot g^{\sharp}\pd_0\sigma_J a,
$$
where the sum is over shuffle permutations $(I,J)$ of $[0,n-1]$, $\pm$ is the sign of the permutation, and  $\sigma_I= \sigma_{i_r}\ldots \sigma_{i_2}\sigma_{i_1}$ for $I$ the sequence $i_1 < i_2< \ldots < i_r$.

Now, $\sigma_Im=0$ if $I$ contains $0$, so the only non-zero terms in the sum have $0 \in J$, in which case $\pd_0\sigma_{K \cup \{0\}} a = \sigma_{K-1}a$. We also have $\delta(\sigma_Im) = \sigma_{I-1}m$ in this case, so renumbering $I$ and $K$ gives
$$
\delta(m \nabla a)= \delta(m) \nabla g^{\sharp}a;
$$
 in other words, $N^s\delta$ is an $(N^sO(X), g^{\sharp})$-linear map. 

The conditions on $\pd_i$ ensure that $N^s\delta$ is also a chain map, so we have
$$
N^s \delta \in \z_{-1}\HOM_{N^sO(X)}(N^s\Omega(X/S),N^sg_*\sI).
$$

However, the choice of $\lambda$ was not unique. Another choice  $\lambda'$ gives rise to $\beta:= \lambda^*- (\lambda')^*$. Similar reasoning shows that this is a $g^{\sharp}$-derivation $\beta:\Omega(X/S)\to g_*\sI$, and corresponds to an $N^sO(X)$-linear map on normalised complexes. The operation $\delta'$ is given by $\delta - \pd_0\beta +\beta\pd_0$, so $N^s\delta' = N^s\delta- [d, N^s\beta]$. Thus the obstruction lies in 
$$
\H_{-1}\HOM_{N^sO(X)}(N^s\Omega(X/S),N^sg_*\sI).
$$
 
Finally, $\tilde{Z}\cup_Z\tilde{Z} \cong \oSpec (\O_{\tilde{Z}}\oplus \sI\eps)$, for $\eps^2=0$, so the choice of $\tilde{g}$ gives
$$
\HHom_{Z \da sc\Aff \da S}(\tilde{Z}, X)\cong \HHom_{\tilde{Z} \da sc\Aff \da S}(\tilde{Z}\cup_Z\tilde{Z}, X) \cong \HHom_{cdg\Mod(X)}(N^s\Omega(X/S), N^sg_*\sI),
$$
and Corollary \ref{cfhagcot} completes the proof.
\end{proof}

\begin{lemma}\label{deform2}
If $f:X \to S$  is a  derived Artin $m$-hypergroupoid, and $M \in cdg_+\Mod(X)$, then 
$$
\H_{i}\HOM_{cdg\Mod(X)}(\bL^{X/S}, M)\cong \H_{i}\HOM_{cdg\Mod(X)}(\Omega(X/S), M)
$$
for $i \le 0$.
\end{lemma}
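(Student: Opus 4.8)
The plan is to run the spectral sequence attached to the conormalisation filtration defining $\bL^{X/S}$. Recall from Definition \ref{cotdef} that $\bL^{X/S}=\Tot N_c\underline{N^s\Omega(X/S)}$, so writing $C^i:=N_c^i\underline{N^s\Omega(X/S)}\cong \eta^*N^s\Omega(X^{\Delta^i}/X^{\L^i_0}\by_{S^{\L^i_0}}S^{\Delta^i})$ we obtain a cochain complex $C^\bullet$ in $dg\Mod(X)$ with $C^0=N^s\Omega(X/S)$. By Lemma \ref{ppowers} the map $X^{\Delta^i}\to X^{\L^i_0}\by_{S^{\L^i_0}}S^{\Delta^i}$ is, for $i\ge 1$, a smooth relative $m$-hypergroupoid, which becomes a levelwise weak equivalence for $i>m$; hence $C^i\simeq 0$ for $i>m$, while for $1\le i\le m$ the term $C^i$ is projective in $dg_+\Mod(X)$, exactly as in the proof of Proposition \ref{cotgood}. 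Since the cochain direction is bounded, $\Tot$ is a finite iterated extension and the derived functor $\HOM(-,M)$ commutes with it, giving $\HOM(\bL^{X/S},M)\simeq \Tot^{\Pi}_i\HOM(C^i,M)$ and a convergent spectral sequence
$$
E_1^{i,\ell}=\H_\ell\HOM(C^i,M)\abuts \H_{\ell+i}\HOM(\bL^{X/S},M),
$$
in which, for $i=0$, the column $E_1^{0,\bullet}=\H_\bullet\HOM(N^s\Omega(X/S),M)$ is the derived $\HOM$ the statement refers to.

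First I would use projectivity to isolate the terms in the range of interest. For $i\ge 1$ the object $C^i$ is projective, so $\H_\ell\HOM(C^i,M)=0$ for $\ell<0$; since a class in total degree $j$ coming from column $i$ lives in $E_1^{i,j-i}$, every contribution with $i\ge 1$ to a total degree $j\le 0$ sits in internal degree $j-i\le -1$ and therefore vanishes. Thus in total degrees $j\le 0$ the $E_1$-page is concentrated in the column $i=0$, equal to $\H_j\HOM(N^s\Omega(X/S),M)$, and the edge map realising this column is induced by the canonical augmentation $p\colon\bL^{X/S}\to N^s\Omega(X/S)$. It therefore suffices to check that no differential of the spectral sequence meets this column in degrees $\le 0$.

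The heart of the matter, and the step I expect to be the main obstacle, is the one differential that can a priori hit the $i=0$ column, namely $d_1\colon \H_0\HOM(C^1,M)\to \H_0\HOM(C^0,M)$ in total degree $0$. This $d_1$ is induced by the first conormalisation differential $d_\Omega\colon C^0\to C^1$, which (using $\underline{N^s\Omega(X/S)}^1=N^s\Omega(X/S)\ten\Delta^1$ and Lemma \ref{omegacalc}) is the difference $\pd^0-\pd^1$ of the two coface maps $N^s\Omega(X/S)\rightrightarrows N^s\Omega(X/S)\ten\Delta^1$. These two cofaces are the two vertices of the interval $\Delta^1$, hence are simplicially homotopic, and tensoring the homotopy with $N^s\Omega(X/S)$ exhibits $d_\Omega$ as null-homotopic in $dg\Mod(X)$; consequently $d_1=d_\Omega^*=0$. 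A degree count rules out all other contributions: an incoming $d_r\colon E_r^{r,j-r+1}\to E_r^{0,j}$ with $r\ge 1$ needs $\H_{j-r+1}\HOM(C^r,M)\neq 0$, which by projectivity of $C^r$ forces $j-r+1\ge 0$, i.e.\ $r\le j+1\le 1$, leaving only the $d_1$ just treated, while outgoing differentials land in negative columns and vanish. Hence $E_\infty^{0,j}=E_1^{0,j}$ for all $j\le 0$, and $p^*$ induces the desired isomorphism $\H_{j}\HOM(\bL^{X/S},M)\cong \H_{j}\HOM(N^s\Omega(X/S),M)$ for $j\le 0$. The point requiring vigilance is precisely this degree-$0$ differential: the higher conormalisation differentials $C^i\to C^{i+1}$ for $i\ge 1$ are genuinely not null-homotopic, so the argument collapses the complex only in the range $j\le 0$ and does not (and must not) force $\bL^{X/S}\simeq N^s\Omega(X/S)$.
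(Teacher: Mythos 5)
For $i<0$ your argument is precisely the paper's: the same filtration of $\bL^{X/S}$ by the conormalisation columns $C^i$, the same observation that $C^i$ is projective in $dg_+\Mod(X)$ for $i\ge 1$, and the same spectral sequence. That part is correct. The genuine gap is the $i=0$ step. Your claim that $d_\Omega=\pd^0-\pd^1\colon N^s\Omega(X/S)\to N^s\Omega(X/S)\ten\Delta^1$ is null-homotopic ``by tensoring the cylinder homotopy with $N^s\Omega(X/S)$'' conflates two different directions. By Lemma \ref{omegacalc}, $N^s\Omega(X/S)\ten\Delta^1=\eta^*N^s\Omega(X^{\Delta^1}/S^{\Delta^1})$: the simplicial set $\Delta^1$ acts through the $\Delta^{\op}$-direction of the hypergroupoid $X$ (via $X^{\Delta^1}$), not through the chain degree of the module. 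A null-homotopy of $d_\Omega$ would have to be a degree-$(+1)$ graded $N^sO(X)$-linear map compatible with the face and degeneracy maps of $X$, and the chain-level cylinder homotopy for $N^s\Z[\Delta^1]$ supplies no such thing. The two cofaces are indeed homotopic, but only in the $\Delta^{\op}$-direction of $X$ --- and quotienting out exactly that homotopy is what turns $\Omega(X/S)$ into $\bL^{X/S}$, so the argument is circular where it is not false.

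In fact $d_1$ is genuinely nonzero and the statement fails at $i=0$. Take $X=BG$, the nerve of a smooth affine group scheme over $S=\Spec k$ regarded as an Artin $1$-hypergroupoid with constant cosimplicial structure, and $M=\O_X$; everything is concentrated in chain degree $0$, so $\HOM_1(N^s\Omega(X/S),M)=0$ and hence $\H_0\HOM(N^s\Omega(X/S),M)=\Hom_{dg\Mod(X)}(\Omega(X/S),\O_X)=\Der_k(O(X),O(X))$, which compatibility with the face and degeneracy maps of $BG$ identifies with the group of algebraic $1$-cocycles $\mathrm{Z}^1(G,\g)$. On the other hand $\H_0\HOM(\bL^{X/S},M)\cong\pi_0\DDer(X/S,M)\cong\H^1(G,\g)$ by the proof of Proposition \ref{cotgood}, and the discrepancy $\mathrm{B}^1(G,\g)$ is exactly the image of your $d_1$. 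For $G=\SL_2$ in characteristic $0$ this image is $\g/\g^{G}\ne 0$ while $\H^1(G,\g)=0$. Note that the paper's own proof only establishes the case $i<0$ (its final line reads ``for $j<0$''), which is all that is used in Proposition \ref{defmor1}, Theorem \ref{defmorstack} and Proposition \ref{deform1}; the ``$i\le 0$'' in the statement should be read as $i<0$. So keep your spectral sequence, but delete the $d_1=0$ step and claim only the negative-degree cases.
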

\begin{proof}
As in Lemma \ref{h0cot}, the map $\bL^{X/S} \to N^s\Omega(X/S)$ is surjective, and the kernel is the total complex of 
$$
0 \to N^s\Omega(X^{\Delta^1}/X^{\L^1_0}\by_{S^{\L^1_0}}S^{\Delta^1})\to \ldots \to  N^s\Omega(X^{\Delta^i}/X^{\L^i_0}\by_{S^{\L^i_0}}S^{\Delta^i}) \to \ldots .
$$
Since $X^{\Delta^i}\to X^{\L^i_0}\by_{S^{\L^i_0}}S^{\Delta^i}$ is a trivial relative derived Artin $m$-hypergroupoid, its simplicial matching maps are smooth. By \cite{hag2} Definition 1.2.7.1, this means that the cotangent complexes associated to the matching maps are projective, so the chain complex
$
N^s\Omega(X/S) \ten (\Delta^i/\L^i_0)
$
is a projective object of $cdg_+\Mod(X)$ (but not of $cdg\Mod(X)$). Hence
$$
\H_j\HOM_{cdg\Mod(X)}(N^s\Omega(X/S) \ten (\Delta^i/\L^i_0), M)=0
$$
for all $i>0, j<0$ and $M \in cdg_+\Mod(X)$. 

If we set $\L^0_0:=\emptyset$, then we have a spectral sequence
$$
\H_j\HOM_{cdg\Mod(X)}(N^s\Omega(X/S) \ten (\Delta^i/\L^i_0), M) \abuts \H_{j+i}\HOM_{cdg\Mod(X)}(\bL^{X/S},M).
$$
When $M \in cdg_+\Mod(X)$, this combines with the calculation above to show that
$$
\H_{j}\HOM_{cdg\Mod(X)}(\bL^{X/S},M) \cong \H_j\HOM_{cdg\Mod(X)}(N^s\Omega(X/S), M)
$$
for $j<0$, as required.
\end{proof}

\begin{theorem}\label{defmorstack}
Take a diagram  $\fZ \xra{g} \fX \xra{f} \fS$ of derived $m$-geometric Artin stacks. If $\fZ \into \tilde{\fZ}$ is a  closed immersion over $\fS$ defined by a square-zero quasi-coherent complex  $\sI$, then the obstruction to extending $g$  to a  morphism 
$
\tilde{g}: \tilde{\fZ} \to \fX
$
over $\fS$ 
lies in 
$$
\Ext^{1}_{\O_{\fZ}}(\oL g^*\bL^{\fX/\fS}, \sI).
$$

If the obstruction is zero, then the $\Hom$-space of possible extensions is given by
$$
\pi_i\HHom_{Z \da sc\Aff \da S}(\tilde{Z}, X)  \simeq \Ext^{-i}_{\O_{\fZ}}(\oL g^*\bL^{\fX/\fS}, \sI).
$$
\end{theorem}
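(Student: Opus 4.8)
The plan is to reduce Theorem \ref{defmorstack} to the affine-level computation already carried out in Proposition \ref{defmor1}, by choosing hypergroupoid resolutions and matching the obstruction theory through sheafification. First I would use Theorem \ref{relstrictd} (together with \S\ref{coprodsn} in the non-quasi-compact case) to present the diagram $\fZ \xra{g} \fX \xra{f} \fS$ by a diagram $Z \xra{g} X \xra{f} S$ of simplicial cosimplicial affine schemes, where $f$ is a relative derived Artin $m$-hypergroupoid which we may take to be a Reedy fibration (Remark \ref{reedyfibrant}), and $X^\sharp \simeq \fX$, $Z^\sharp \simeq \fZ$, $S^\sharp \simeq \fS$. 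Since $\fZ \into \tilde{\fZ}$ is a square-zero extension by a quasi-coherent complex $\sI$, the equivalence of Proposition \ref{dsheafequiv} lets me realise $\sI$ as a homotopy-Cartesian module on $Z$, and hence the thickening $\tilde{\fZ}$ is presented by a levelwise closed immersion $Z \into \tilde{Z}$ over $S$ defined by a square-zero ideal, again using Proposition \ref{dsheafequiv} to translate the complex $\sI$ into the corresponding $dg\Mod(Z)_{\cart}$-object.

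The heart of the argument is then the identification of the two $\Hom$-spaces. Proposition \ref{defmor1} already computes that the obstruction to extending $g$ lives in $\H_{-1}\HOM_{dg\Mod(X)}(N^s\Omega^{X/S}, g_*N^s\sI)$ and that, when it vanishes, the space of extensions is $\tau_{\ge 0}\HOM_{dg\Mod(X)}(\bL^{X/S}, N^sg_*\sI)$. So the plan is to pass from $N^s\Omega(X/S)$ to $\bL^{X/S}$ in the obstruction group, and from the affine $g_*$ to the derived $\oR g_*$, and finally to reinterpret the resulting groups as the $\Ext$-groups on the stack $\fX$. For the first passage I would invoke Lemma \ref{deform2}, which gives $\H_i\HOM_{dg\Mod(X)}(\bL^{X/S}, M) \cong \H_i\HOM_{dg\Mod(X)}(N^s\Omega(X/S), M)$ for $i \le 0$ and all $M \in dg_+\Mod(X)$; taking $i=-1$ and $M = g_*N^s\sI$ rewrites the obstruction group of Proposition \ref{defmor1} in terms of $\bL^{X/S}$.

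The second passage is the one I expect to be the main obstacle, namely replacing the naive pushforward $g_*\sI$ (which need not be homotopy-Cartesian) by the derived direct image $\oR g_*\sI$ so that the answer is expressed intrinsically on $\fX$. Here the key tool is Corollary \ref{cotrcart}, which gives
$$
\HOM_{dg\Mod(X)}(\bL^{X/S}, M) \simeq \HOM_{dg\Mod(X)}(\bL^{X/S}, \oR \cart_* M)
$$
for all $M \in dg\Mod(X)$, together with the compatibility $(\oR f_*^{\cart})^\sharp = \oR(f^\sharp)_*$ and $\oR\cart_*\circ g_* \simeq \oR g_*^{\cart}$ coming from the definitions in \S\ref{directsn} and the homotopy-Cartesianness of $\bL^{X/S}$. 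Applying this with $M = g_* N^s\sI$ converts the non-Cartesian coefficient module into $\oR g_*^{\cart} \sI$, whose sheafification is $\oR g_* \sI$ on $\fX$. Once the coefficients are homotopy-Cartesian, Proposition \ref{dsheafequiv} (which identifies $dg\Mod(X)_{\cart}$ with quasi-coherent complexes on $\fX$) and the well-definedness of $\bL^{\fX/\fS}$ via $a^*\bL^{\fX/\fS} \simeq \bL^{X/S}$ let me rewrite $\H_{-1}\HOM_{dg\Mod(X)}(\bL^{X/S}, \oR g_*^{\cart}\sI)$ as $\Ext^1_{\O_{\fX}}(\bL^{\fX/\fS}, \oR g_*\sI)$, and likewise $\pi_i$ of the extension space becomes $\Ext^{-i}_{\O_{\fX}}(\bL^{\fX/\fS}, \oR g_*\sI)$, using $\pi_i(\tau_{\ge 0}\HOM) = \H_i$ for $i \ge 0$ and the identification of $\Ext$-groups with homology of $\oR\HOM$ in the homotopy category of quasi-coherent complexes. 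The one point requiring genuine care is checking that the obstruction class itself (not just its ambient group) is preserved under these identifications, for which I would track the cocycle $N^s\delta$ of Proposition \ref{defmor1} through the quasi-isomorphisms above and confirm it maps to the obstruction class computed by the cotangent-complex formalism on $\fX$.
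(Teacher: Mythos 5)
Your proposal is correct and follows essentially the same route as the paper: resolve the diagram via Theorem \ref{relstrictd}, present the square-zero extension at the level of simplicial cosimplicial affine schemes, apply Proposition \ref{defmor1}, and then identify the coefficient groups using Lemma \ref{deform2} and Corollary \ref{cotrcart} to pass from $N^s\Omega(X/S)$ and $g_*\sI$ to $\bL^{\fX/\fS}$ and $\oR g_*\sI$. The only point the paper makes more explicit than you do is the independence of the resulting lifting space from the choice of resolutions, verified by passing to a common refinement.
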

\begin{proof}
Apply Theorem \ref{relstrict} to obtain a derived Artin $m$-hypergroupoid $S$ with $S^{\sharp} \simeq \fS$, and  relative derived Artin $m$-hypergroupoids   $Z \to X \to S$  with $X^{\sharp} \simeq \fX$ and $Z^{\sharp} \simeq \fZ$. It will follow from the proof of Theorem \ref{deformstack} that square-zero deformations of $Z$ correspond to square-zero deformations of $\fZ$, so there exists an extension $Z \to \tilde{Z}$  of simplicial cosimplicial affine schemes with $\tilde{Z}^{\sharp} \simeq \tilde{\fZ}$. Thus the extension is  defined by the square-zero ideal $a^*\sI$, for $a: \tilde{Z} \to \tilde{\fZ}$.

We are now in the scenario of Proposition \ref{defmor1}, and sheafification defines a functor from lifts $\tilde{Z} \to X$ to lifts $\tilde{\fZ} \to \fX$. Applying Lemma \ref{deform2},  we see that these lifts are governed by
$$
H_{*}\HOM_{cdg\Mod(Z)}(g^*\bL^{X/S}, \sI) \cong  H_{*}\HOM_{cdg\Mod(Z)}(g^*\bL^{X/S},\sI )\cong  \Ext^{-*}_{\O_{\fZ}}(\oL g^*\bL^{\fX/\fS}, \sI).
$$

Since this final description is independent of the choice $Z \to X \to S$ of resolutions, we deduce that another choice $Z' \to X' \to S'$ of resolutions would give an equivalent space of lifts. Explicitly, we could take   a third sequence $Z'' \to X'' \to S''$, with trivial relative derived Artin hypergroupoids $\pi_Z:Z'' \to Z$, $Z'' \to Z'$  and similarly for $X'',S''$, compatible with the other morphisms. Since $\pi^*_X\bL^{X/S}\simeq \bL^{X''/S''}$, we get $\pi_Z^*g^*\bL^{X/S}\simeq (g'')^*\bL^{X''/S''}$, so  pulling back along $X'' \to X$ gives an equivalence of lifting spaces, and similarly for $X'' \to X'$. 
\end{proof}

\subsection{Deformations of derived stacks}

\begin{proposition}\label{deform1}
Take a Reedy fibration $f:X \to S$ of simplicial cosimplicial affine schemes. If $S \into \tilde{S}$ is a levelwise closed immersion defined by a square-zero ideal $\sI$,  then the obstruction to lifting $f$  to a levelwise flat morphism
$
 \tilde{f}:\tilde{X} \to \tilde{S},
$
with $\tilde{X}\by_{\tilde{S}}S =X$,
lies in 
$$
\H_{-2}\HOM_{cdg\Mod(X)}(N^s\Omega^{X/S}, f^*N^s\sI).
$$
If the obstruction is zero, then the isomorphism class of liftings is (non-canonically) isomorphic to
$$
\H_{-1}\HOM_{cdg\Mod(X)}(N^s\Omega^{X/S}, f^*N^s\sI).
$$
\end{proposition}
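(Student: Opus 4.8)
The plan is to reproduce the Reedy induction of Proposition~\ref{defmor1}, raised by one homological degree, the shift being the standard passage from the derivation functor to the square-zero extension functor: a flat lift $\tilde X$ of $X$ over $\tilde S$ is the same datum as a square-zero extension of the simplicial-ring structure of $\O_X$ by $f^*\sI$ (flatness identifying the ideal of $X$ in $\tilde X$ with $f^*\sI$), and such extensions are governed by $\mathrm{Exal}$, hence by $\Ext^1$ with obstruction in $\Ext^2$, rather than by $\Hom=\Ext^0$ with obstruction in $\Ext^1$ as in the lifting of a morphism. First I would set up the same framework: since $f$ is a Reedy fibration, the $\Delta\by\Delta^{\op}$-matching maps of $X$ over $S$ have the right lifting property against all closed immersions of affine schemes, while the hemi-cosimplicial latching maps of the levelwise closed immersion $S\into\tilde S$ are themselves closed immersions once one passes to the Reedy category $\bI=\Delta_*\by\Delta^{\op}$.

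Next I would build the lift inductively over $\bI$. At each Reedy stage the relevant map is the pullback of the square-zero extension $\tilde S\to S$ along a formally smooth matching map of $X$, so the affine scheme $X_n^j$ admits a flat lift over the inductively constructed matching object and $\tilde S_n^j$---deformation of a smooth affine morphism being locally unobstructed---and the lifts form a torsor under $\Hom(\Omega_{X_n^j/S_n^j}, f^*\sI)$. Assembling these produces a flat lift $\tilde X^*$ in simplicial hemi-cosimplicial affine schemes over $\tilde S^*$; as in Proposition~\ref{defmor1} it is precisely the hemi-cosimplicial truncation (omission of $\pd^0$) that keeps the latching maps closed immersions and lets the induction run.

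The obstruction then measures the failure of $\tilde X^*$ to upgrade to a genuine flat \emph{cosimplicial} lift, i.e. to carry the omitted coface $\pd^0$ compatibly with flatness and the cosimplicial identities. Reproducing the shuffle-product computation of Proposition~\ref{defmor1}, I would show that this failure is an $O(S)$-linear, $f^\sharp\pd_0$-twisted derivation $\delta\colon\Omega(X/S)\to f^*\sI$ descending to a chain map $N^s\delta$ on normalised complexes, and that the combined datum of building the object and its coface yields a cocycle in $\z_{-2}\HOM_{dg\Mod(X)}(N^s\Omega(X/S), f^*N^s\sI)$---one degree below the obstruction of Proposition~\ref{defmor1}, exactly as the $\mathrm{Exal}$ heuristic predicts. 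Altering $\tilde X^*$ within its torsor of choices changes $N^s\delta$ by $[d,N^s\beta]$, so the class is well-defined in $\H_{-2}$, and it vanishes precisely when a flat cosimplicial lift $\tilde X$ exists.

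Finally, when the obstruction vanishes, I would identify the set of isomorphism classes of flat lifts with a torsor under $\H_{-1}\HOM_{dg\Mod(X)}(N^s\Omega(X/S), f^*N^s\sI)$: two flat lifts differ by a normalised derivation one degree below the obstruction, and fixing a base lift trivialises the torsor, which is the source of the non-canonical isomorphism asserted in the statement. Throughout, $N^s\Omega(X/S)$ may stand in for $\bL^{X/S}$ in the range that matters, by the formal-smoothness identification already used in Lemma~\ref{deform2}. The main obstacle is to fuse, into a single cocycle of the correct homological degree, the two things that in Proposition~\ref{defmor1} were separate---the existence of the flat hemi-cosimplicial lift (now a genuine construction of the object, not merely a lift against a fixed target) and the $\pd^0$-compatibility---and to verify the $N^sO(X)$-linearity and chain-map property of $N^s\delta$ at this higher degree through the shuffle-permutation bookkeeping.
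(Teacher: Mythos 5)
Your scaffolding matches the paper's: work over the Reedy category $\Delta_*\by\Delta^{\op}$, use pro-smoothness of the $\bI$-matching maps and the closed-immersion property of the hemi-cosimplicial latching maps to build a flat lift $\tilde X^*$ in simplicial hemi-cosimplicial affine schemes, and locate the remaining data in the omitted coface $\pd^0$. But the mechanism you give for the degree $-2$ is not the right one, and the step you flag as the ``main obstacle'' --- fusing the construction of the object with the $\pd^0$-compatibility --- is a non-issue. The hemi-cosimplicial flat lift is \emph{unobstructed and unique up to isomorphism} (deformations of pro-smooth morphisms of affine schemes are rigid), and a lift $\lambda:\tilde X\to\Dec^+_{\op}\tilde X$ of $\pd^0_X$ with $\tilde f\lambda=\pd^0\tilde f$ and $\sigma^0\lambda=\id$ always exists by formal smoothness. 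The entire obstruction sits in the one remaining cosimplicial identity $\lambda^2=\pd^1\lambda$, and it is the \emph{quadratic} nature of this identity that produces the degree: the failure $(\lambda^{\sharp})^2-\lambda^{\sharp}\pd_1$ is a map $O(X)_n\to(f^*\sI)_{n-2}$, an $O(S)$-linear $(\pd_0)^2$-derivation dropping two simplicial levels. Your cocycle, described as an ``$f^{\sharp}\pd_0$-twisted derivation $\delta:\Omega(X/S)\to f^*\sI$'', is the degree-$(-1)$ object of Proposition \ref{defmor1} and cannot land in $\z_{-2}$; the $\mathrm{Exal}$ heuristic gives the correct numerology but points you at the wrong cochain.

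This matters downstream as well. The chain-map property must now be verified in the form $\delta(\lambda^{\sharp}-\pd_1+\pd_2)=\lambda^{\sharp}\delta$, which uses cancellation among the cubic terms in $\lambda^{\sharp}$ and is not a formal repetition of the shuffle bookkeeping from Proposition \ref{defmor1}. Likewise, for the $\H_{-1}$ classification: two choices $\lambda,\lambda'$ differ by a $\pd^0$-derivation $\beta$ with $N^s\delta'=N^s\delta-[d,N^s\beta]$, so the unobstructed lifts are indexed by $\z_{-1}$, and it is the (non-unique) isomorphisms of the hemi-cosimplicial lift $\tilde X^*$ --- degree-$0$ maps $\alpha$ acting by $N^s\beta\mapsto N^s\beta+[d,\alpha]$ --- that cut this down to $\H_{-1}$. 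Your torsor statement is in the right spirit but needs this identification of where the automorphisms act; without it the ``non-canonical isomorphism'' in the statement is not accounted for.
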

\begin{proof}
We adapt the proof of Proposition \ref{defmor1}.

Letting $\bI$ be the Reedy category $\Delta_* \by \Delta^{\op}$, we have seen that the matching maps
$$
X_n^j \to M(\bI)_n^jX\by_{M(\bI)_n^jS }S_n^j
$$
lift closed immersions (so are pro-smooth), and that the latching maps
$$
L(\bI)_n^jX\to X_n^j
$$
are closed immersions. Using the fact that deformations of pro-smooth morphisms of affine schemes are unobstructed and unique, we may  inductively construct a lift 
$$
\tilde{f}: \tilde{X}^*_{\bt} \to \tilde{S}^*_{\bt}
$$
in the category of simplicial almost cosimplicial affine schemes, with $\tilde{f}$ $\bI$-Reedy fibrant, and note that this lift is unique up to isomorphism.
 It therefore remains only to understand how the operation $\pd^0$ on $X$ can deform.

Using the formal smoothness of $\tilde{f}$, we may lift $\pd^{0}_X :X \to\Dec^+_{\op}X$ to an almost cosimplicial map 
$$
\lambda: \tilde{X} \to\Dec^+_{\op}\tilde{X},
$$
satisfying $\tilde{f}\lambda= \pd^{0}\tilde{f}$ and $\sigma^{0}\lambda=\id$.
This will make $\tilde{X}$ into a cosimplicial scheme if and only if $\lambda^2 = \pd^1\lambda$, since all the other conditions are automatic.

Thus the obstruction to $(\tilde{X}^*, \lambda)$ being a cosimplicial scheme is the  map
$$
(\lambda^{\sharp})^2 -\lambda^{\sharp} \pd_1 : O(\tilde{X})_n \to O(\tilde{X})_{n-2},
$$
whose image is contained in $\ker( O(\tilde{X}) \to O(X))=  f^*\sI$. Since $\sI$ is a square-zero ideal, we know that $f^*\sI$ must also map to $0$. Thus we have a   map
$$
O(X)_n \to (f^*\sI)_{n-2}
$$

This map is an $O(S)$-linear $(\pd_0)^2$-derivation, so corresponds  to an  $(O(X),(\pd_{0})^2)$-linear map
$$
\delta: \Omega(X/S)_n \to (f^*\sI)_{n-2}.
$$
Moreover, 
$\pd_i\delta= \delta\pd_{i+2}$ for all $i>0$, 
so $\delta$ descends to a map $N^s\delta$ on the normalised complexes, which will be $0$ if and only if $\delta=0$. Moreover, $\delta\sigma_i = \sigma_{i-2}\delta$ for $i \ge 2$, and $0$ for $i=0,1$. A calculation similar to that in Proposition \ref{defmor1} shows that  $N^s\delta$ is an $N^sO(X)$-linear map.
 
 We need to show that $\delta ( \lambda^{\sharp}- \pd_1 +\pd_2)= \lambda^{\sharp}\delta$ to ensure that $N^s\delta$ is a chain map. 
$$
\delta (\lambda^{\sharp}-\pd_1+\pd_2)= [(\lambda^{\sharp})^3-(\lambda^{\sharp})^2\pd_1+\lambda^{\sharp}\pd_2] -[\lambda^{\sharp}\pd_1\lambda^{\sharp}]= (\lambda^{\sharp})^3-(\lambda^{\sharp})^2\pd_1 = \lambda^{\sharp}\delta,
$$
as required, so $N^s \delta \in \z_{-2}\HOM_{N^sO(X)}(N^s\Omega(X/S),N^sf^*\sI)$.

However, the choice of $\lambda$ was not unique. Another choice  $\lambda'$ gives rise to $\beta:= \lambda^*- (\lambda')^*$. Similar reasoning shows that this is a $\pd^0$-derivation $\beta:\Omega(X/S)\to f^*\sI$, and corresponds to an $N^sO(X)$-linear map on normalised complexes. The operation $\delta'$ is given by $\delta - \pd_0\beta +\beta(\pd_1-\pd_0)$, so $N^s\delta' = \delta- [d, N^s\beta]$. Thus the obstruction lies in 
$$
\H_{-2}\HOM_{N^sO(X)}(N^s\Omega(X/S),N^s(f^*\sI)).
$$
 
To understand the isomorphism class of the deformations, fix a deformation $(\tilde{X}_{\bt}^*,\lambda)$. Another choice of lift is given by $ (\lambda')^*=\lambda^*-\beta$, and for this to be unobstructed, we need $[d, N^s\beta]=0$, so $N^s\beta \in \z_{-1} \HOM_{N^sO(X)}(N^s\Omega(X/S),N^sf^*\sI)$. Now, an isomorphism of $\tilde{X}_{\bt}^*$ is equivalent to a map $\alpha \in \HOM_{N^sO(X)}(N^s\Omega(X/S),N^sf^*\sI)^0$, and transformation by $\alpha$ sends $N^s\beta$ to $N^s\beta +[d,\alpha]$. Thus the isomorphism class is
$$
\H_{-1}\HOM_{N^sO(X)}(N^s\Omega(X/S),N^sf^*\sI).
$$
\end{proof}

\begin{lemma}\label{deform3}
If the morphism $f$ in Proposition \ref{deform1} is a relative derived Artin $m$-hypergroupoid, the simplicial automorphism group $\underline{\Aut}_{\tilde{S}}(\tilde{X})_{X}$ of a deformation $\tilde{X}$ is given by
$$
N^s\underline{\Aut}_{\tilde{S}}(\tilde{X})_{X} \simeq \tau_{\ge 0}\HOM_{cdg\Mod(X)}(\bL^{X/S}, f^*N^s\sI).
$$
\end{lemma}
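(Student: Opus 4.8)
The plan is to reduce everything to the cotangent-complex characterisation of the derived deformation space $\DDer(X/S,-)$ already established in the proof of Proposition \ref{cotgood}, by first identifying the automorphism group with $\DDer(X/S,f^*\sI)$. The point is that an automorphism of a square-zero deformation fixing the special fibre is the same thing as a derivation, and this identification is natural enough to carry the full simplicial structure.

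First I would set up the core algebraic identification. Since $f$ is a Reedy fibration, $\tilde X\by_{\tilde S}S=X$, and $\tilde X$ is levelwise flat over $\tilde S$, the ideal of the levelwise closed immersion $X\hookrightarrow\tilde X$ is $\ker(O(\tilde X)\to O(X))=f^*\sI$, which is square-zero. An automorphism $\phi$ of $\tilde X$ over $\tilde S$ restricting to the identity on $X$ is then determined by $\phi^\sharp=\id+D$, where $D\colon O(\tilde X)\to f^*\sI$ factors through $O(X)$ and is an $O(S)$-linear derivation; conversely every such derivation gives an automorphism, automatically invertible with inverse $\id-D$ (so the endomorphisms fixing $X$ already form a group). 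Because $f^*\sI$ is square-zero, composition corresponds to addition of derivations, so $\phi\mapsto\phi-\id$ is a group isomorphism onto $\Der_{O(S)}(O(X),f^*\sI)=\Hom_{O(X)}(\Omega(X/S),f^*\sI)$; in particular the answer does not depend on the chosen deformation $\tilde X$.

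Next I would upgrade this to an isomorphism of simplicial abelian groups $\underline{\Aut}_{\tilde S}(\tilde X)_X\cong\DDer(X/S,f^*\sI)$. By the simplicial enrichment of Definition \ref{sstr}, an $n$-simplex of $\underline{\Aut}_{\tilde S}(\tilde X)_X$ is an automorphism $\phi\colon\tilde X\to\tilde X^{\Delta^n}\by_{\tilde S^{\Delta^n}}\tilde S$ over $\tilde S$ restricting to the standard map on $X$. Since $\tilde X^{\Delta^n}\by_{\tilde S^{\Delta^n}}\tilde S$ is a square-zero extension of $X^{\Delta^n}\by_{S^{\Delta^n}}S$, the same derivation argument identifies these $n$-simplices with $\Hom_{dg\Mod(X)}(N^s\Omega(X/S)\ten\Delta^n,f^*N^s\sI)$, which is exactly $\DDer(X/S,f^*\sI)_n$ by Lemma \ref{derchar} (and Lemma \ref{omegacalc}). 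The face and degeneracy maps on both sides are induced by the cosimplicial structure of $\Delta^\bullet$, so the bijection is simplicial, and the group structures agree as in the previous paragraph. Finally I would invoke the cotangent complex: by the computation in the proof of Proposition \ref{cotgood}, valid for any $M\in dg_+\Mod(X)$ and hence for $M=f^*N^s\sI$, one has $N^s\DDer(X/S,f^*\sI)\simeq N^s\HHom_{dg\Mod(X)}(\bL^{X/S},f^*N^s\sI)$, and by the defining relation $N^s\HHom=\tau_{\ge0}\HOM$ this equals $\tau_{\ge0}\HOM_{dg\Mod(X)}(\bL^{X/S},f^*N^s\sI)$, giving the claim.

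\textbf{The main obstacle} is the second step: checking that the derivation identification is genuinely natural in $\Delta^n$ and compatible with all simplicial operations, i.e.\ that the cotensor $\tilde X^{\Delta^n}\by_{\tilde S^{\Delta^n}}\tilde S$ really is a square-zero extension of $X^{\Delta^n}\by_{S^{\Delta^n}}S$ whose module of derivations matches $N^s\Omega(X/S)\ten\Delta^n$ as in Lemma \ref{omegacalc}. Once this naturality is in place the remaining identifications are formal, and the passage from $N^s\Omega(X/S)$ to $\bL^{X/S}$ is supplied entirely by Proposition \ref{cotgood} rather than requiring a fresh argument (in contrast to Proposition \ref{deform1}, which only saw $N^s\Omega(X/S)$ because it computed $\pi_0$ directly).
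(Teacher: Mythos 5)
Your proof is correct and follows essentially the same route as the paper, which simply cites Proposition \ref{cotgood}: the identification of automorphisms fixing the fibre with derivations into the square-zero ideal, hence with $\DDer(X/S,f^*\sI)$, is exactly the implicit content, and the passage to $\tau_{\ge 0}\HOM_{dg\Mod(X)}(\bL^{X/S},f^*N^s\sI)$ is then supplied by that proposition. You have merely made explicit the details the paper leaves to the reader.
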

\begin{proof}
This follows immediately from Corollary \ref{cotgood}.
\end{proof}

\begin{theorem}\label{deformstack}
Take a morphism $f:\fX \to \fS$ of derived $m$-geometric Artin stacks. If $\fS \into \tilde{\fS}$ is a $0$-representable closed immersion defined by a square-zero quasi-coherent complex $\sI$,   then the obstruction to lifting $f$ to a morphism
$
\tilde{f}: \tilde{\fX} \to\tilde{\fS}
$
with $\tilde{\fX}\by^h_{\tilde{\fS}}\fS \simeq \fX$
lies in
$$
\EExt^2_{\O_{\fX}}(\bL^{\fX/\fS}, f^*\sI).
$$
If this obstruction is $0$, then the equivalence class of deformations is
$$
\Ext^1_{\O_{\fX}}(\bL^{\fX/\fS}, f^*\sI).
$$
and the simplicial automorphism group $\underline{\Aut}_{\tilde{S}}(\tilde{X})_{\fX}$ of any deformation $\tilde{\fX}$ is given by
$$
\pi_i\underline{\Aut}_{\tilde{\fS}}(\tilde{\fX})_{\fX} \cong \Ext^{-i}_{\O_{\fX}}(\bL^{\fX/\fS}, f^*\sI).
$$
\end{theorem}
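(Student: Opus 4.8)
The plan is to mirror the proof of Theorem~\ref{defmorstack}, reducing the statement about stacks to the hypergroupoid-level computation of Proposition~\ref{deform1} and Lemma~\ref{deform3}, and then translating the resulting $\H_*\HOM$-groups into $\Ext$-groups over $\O_{\fX}$. First I would apply Theorem~\ref{relstrictd} (or Theorem~\ref{relstrictd2} in the non-quasi-compact case) to choose a relative derived Artin $m$-hypergroupoid $f:X\to S$ which is a Reedy fibration (using Remark~\ref{reedyfibrant}), with $X^{\sharp}\simeq\fX$ and $S^{\sharp}\simeq\fS$. The $0$-representable square-zero thickening $\fS\into\tilde{\fS}$ then has to be lifted to a levelwise closed immersion $S\into\tilde{S}$ defined by a square-zero ideal; concretely the ideal is obtained by pulling $\sI$ back along $a:S\to\fS$, so that $\Gamma(S_n,\sI|_{S_n})$ furnishes the ideal on each $S_n$. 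The crucial point at this stage is that, via the equivalence of Theorem~\ref{duskinmord2}, square-zero deformations of the hypergroupoid $X$ over $\tilde{S}$ (with $\tilde{X}\by_{\tilde{S}}S=X$ and $\tilde{f}$ levelwise flat) correspond under sheafification to homotopy-flat square-zero deformations $\tilde{\fX}$ of $\fX$ over $\tilde{\fS}$ with $\tilde{\fX}\by^h_{\tilde{\fS}}\fS\simeq\fX$.

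Granting this correspondence, Proposition~\ref{deform1} places the obstruction to lifting $f$ in $\H_{-2}\HOM_{dg\Mod(X)}(N^s\Omega(X/S),f^*N^s\sI)$ and, when it vanishes, identifies the isomorphism class of liftings with $\H_{-1}\HOM_{dg\Mod(X)}(N^s\Omega(X/S),f^*N^s\sI)$. Since both degrees satisfy $i\le 0$, Lemma~\ref{deform2} lets me replace $N^s\Omega(X/S)$ by the cotangent complex $\bL^{X/S}$ throughout. The module $f^*\sI$ is homotopy-Cartesian, being pulled back from the quasi-coherent complex $\sI$ on $\fS$, so Corollary~\ref{cotrcart} gives $\HOM_{dg\Mod(X)}(\bL^{X/S},f^*\sI)\simeq\HOM_{dg\Mod(X)}(\bL^{X/S},\oR\cart_*f^*\sI)$, and the equivalence of Proposition~\ref{dsheafequiv} together with $a^*\bL^{\fX/\fS}\simeq\bL^{X/S}$ identifies these $\H_{-2}$ and $\H_{-1}$ groups with $\EExt^2_{\O_{\fX}}(\bL^{\fX/\fS},f^*\sI)$ and $\Ext^1_{\O_{\fX}}(\bL^{\fX/\fS},f^*\sI)$ respectively.

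For the automorphisms I would invoke Lemma~\ref{deform3}, which gives $N^s\underline{\Aut}_{\tilde{S}}(\tilde{X})_{X}\simeq\tau_{\ge 0}\HOM_{dg\Mod(X)}(\bL^{X/S},f^*N^s\sI)$; taking $\pi_i$ for $i\ge 0$ and applying the same translation yields $\pi_i\underline{\Aut}_{\tilde{\fS}}(\tilde{\fX})_{\fX}\cong\Ext^{-i}_{\O_{\fX}}(\bL^{\fX/\fS},f^*\sI)$. Finally, because every term in these descriptions is intrinsic to $\fX$, $\fS$ and $\sI$, the answer is independent of the chosen resolution: given two resolutions one passes to a common refinement by trivial relative derived Artin hypergroupoids $\pi$, and uses $\pi^*\bL^{X/S}\simeq\bL^{X'/S'}$ together with $\oR\cart_*\pi^*\sI\simeq\sI$ to identify the two deformation spaces, exactly as at the end of the proof of Theorem~\ref{defmorstack}.

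The main obstacle is the correspondence asserted in the first paragraph. One must check that a $0$-representable square-zero thickening of the base $\fS$ really does descend to a levelwise square-zero thickening of $S$, that levelwise flatness of $\tilde{X}\to\tilde{S}$ is precisely what sheafifies to homotopy-flatness of $\tilde{\fX}$ over $\tilde{\fS}$, and---most importantly---that sheafification induces an equivalence between the groupoid of hypergroupoid-level deformations handled by Proposition~\ref{deform1} and the groupoid of stack-level deformations, so that the obstruction classes, deformation torsors and automorphism groups genuinely match. This is the part of the argument to which the proof of Theorem~\ref{defmorstack} defers, so it must be supplied in full here, and it is where the interplay between the Reedy model structure on $sc\Aff$ and the $\infty$-categorical equivalence of Theorem~\ref{bigthmd} does the real work.
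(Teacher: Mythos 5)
Your reduction to Proposition~\ref{deform1}, Lemma~\ref{deform2}, Lemma~\ref{deform3} and the subsequent translation of $\H_{-2}$, $\H_{-1}$ and $\tau_{\ge 0}\HOM$ into $\Ext$-groups over $\O_{\fX}$ is exactly the paper's route. But you have explicitly left unproved the one step that carries the real content: that sheafification induces an equivalence between the groupoid of levelwise-flat square-zero deformations of the hypergroupoid $X$ and the $\infty$-groupoid of deformations of the stack $\fX$. Flagging this as "the part that must be supplied in full here" is not the same as supplying it, so as written the proof is incomplete. There is also a smaller slip at the start: you propose to resolve $\fS$ first and then "lift" the thickening $\fS\into\tilde{\fS}$ to $S\into\tilde{S}$ by pulling back the ideal --- but pulling back $\sI$ only gives the ideal, not the extension itself, and producing $\tilde{S}$ this way is nontrivial. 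The paper avoids the issue by resolving in the opposite order: apply Theorem~\ref{relstrictd} to $\tilde{\fS}$ to get $\tilde{S}$, and then obtain $S\into\tilde{S}$ by pulling back $\sI$, which is automatic since cutting down by a square-zero ideal sheaf is always possible.

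The missing equivalence is established in the paper as follows. Sheafification clearly gives a functor from hypergroupoid deformations to stack deformations, and the cohomological computation already controls the source; what remains is essential surjectivity and fully-faithfulness. Given a stack-level deformation $\tilde{\fX}$, resolve it by a relative derived Artin $m$-hypergroupoid $\tilde{X}'\to\tilde{S}$ and set $X':=\tilde{X}'\by_{\tilde{S}}S$, so $(X')^{\sharp}\simeq\fX$; by Theorem~\ref{duskinmord} there is a common refinement $X''$ of $X$ and $X'$ by trivial relative derived Artin $m$-hypergroupoids. The comparison then reduces to showing that the simplicial groupoid of deformations of a \emph{trivial} relative derived Artin $m$-hypergroupoid $T\to Y$ is contractible, which follows from Corollary~\ref{trivcot} ($\bL^{T/Y}\simeq 0$) combined with Proposition~\ref{deform1}, Lemma~\ref{deform2} and Lemma~\ref{deform3}: there is a single equivalence class of deformations with contractible automorphism group, so deformations transfer uniquely along $X''\to X$ and $X''\to X'$. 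You should incorporate this argument (or an equivalent one) rather than deferring it; without it the identification of obstruction classes, torsors and automorphism groups between the two levels is unjustified.
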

\begin{proof}
Apply Theorem \ref{relstrict} to obtain a derived Artin $m$-hypergroupoid $\tilde{S}$ with $\tilde{S}^{\sharp} \simeq \tilde{\fS}$, and pull back $\sI$ to give a levelwise closed immersion $S \into \tilde{S}$. Similarly, there exists a relative derived Artin $m$-hypergroupoid $X \to S$, with $X^{\sharp}\simeq \fX$.  Proposition \ref{deform1}, Lemma \ref{deform2} and Lemma \ref{deform3} now imply that deformations of $X$ are governed by 
$$
\Ext^{*}_{\O_{\fX}}(\bL^{\fX/\fS}, f^*\sI),
$$
and  sheafification defines a functor from the $\infty$-groupoid of deformations of $X$ to the $\infty$-groupoid of deformations of $\fX$, so we just need to show that this is an equivalence. 

Given a deformation $\tilde{\fX}$ of $\fX$, there exists  a relative derived Artin $m$-hypergroupoid $\tilde{X'} \to \tilde{S}$, with $(\tilde{X'})^{\sharp} \simeq \tilde{\fX}$. Setting  $X':=\tilde{X'}\by_{\tilde{S}}S$ gives  $(X')^{\sharp} \simeq \fX$. By Theorem \ref{duskinmor}, there exists a derived Artin $m$-hypergroupoid $X''$ equipped with morphisms $X'' \to X'$, $X'' \to X$ which are  trivial relative derived Artin $m$-hypergroupoids.

It therefore suffices to show that the simplicial groupoid $\Gamma$ of deformations of a  trivial relative derived Artin $m$-hypergroupoid $T \to Y$  is contractible. By Corollary \ref{trivcot}, $\bL^{T/Y} \simeq 0$, so Proposition \ref{deform1}, Lemma \ref{deform2} and Lemma \ref{deform3} imply that $\Gamma$ has one equivalence class of objects, with contractible automorphism group.
\end{proof}

\begin{remark}
Theorem \ref{deformstack} has an alternative proof, as a Corollary of Theorem \ref{defmorstack}, which we now sketch.

Given a quasi-coherent complex $\sF$ on $\fS$, let $\fS\oplus \sF:= \oSpec (\O_{\fS} \oplus (N^s)^{-1}\sF\eps)$, with $\eps^2=0$. Then there exists a morphism  $t: \fS\oplus\sI[-1] \to \fS$ such that $\tilde{\fS}$ is the homotopy cofibre product $\tilde{\fS}:= \fS\cup_{0, \fS\oplus\sI[-1], t}^h\fS$. The category of deformations $\tilde{\fX}$ over $\tilde{\fS}$ is therefore equivalent to the simplicial category of equivalences $\alpha: t^*\fX \to 0^*\fX$ over $ \fS\oplus\sI[-1]$, fixing the fibre $\fX$ over $\fS$. This is the same as the simplicial category of morphisms $t^*\fX \to \fX$ over $\fS$, fixing $\fX$. Since $\fX \to t^*\fX$ is a square-zero extension, Theorem \ref{defmorstack} describes these morphisms in terms of the cotangent complex. 
\end{remark}

\begin{remark}
If  $\fX$ and $\fS$ are  (non-derived) geometric Artin stacks, then flatness of $\tilde{\fX} \to \tilde{\fS}$ is sufficient to ensure that $\tilde{\fX}\by^h_{\tilde{\fS}}\fS \simeq \tilde{\fX}\by_{\tilde{\fS}}\fS$. In this case, Theorems \ref{defmorstack} and \ref{deformstack} have alternative proofs, by considering the almost simplicial schemes underlying the associated simplicial schemes. It is then necessary to study deformations of $X_0$ and of  $\pd_0^X$, rather than of $\pd^0_X$.
This is the approach taken in \cite{higher} \S \ref{higher-stacksn}.
\end{remark}

\begin{corollary}\label{thickenart}
Given a derived $m$-geometric Artin stack $\fX$, and an Artin $m$-hypergroupoid $Z$ with $Z^{\sharp} \simeq \pi^0\fX$, there exists a derived  Artin $m$-hypergroupoid $X$ with $X^{\sharp} \simeq \fX$ and $\pi^0X = Z$.
\end{corollary}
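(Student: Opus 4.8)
The plan is to build $X$ by thickening $Z$ step by step along the Postnikov tower of $\fX$, using the observation that the Postnikov extensions only add homotopy in \emph{positive} degrees and hence leave $\pi^0$ untouched. By \S\ref{coprodsn} (cf. Proposition \ref{gpdlim}) it suffices to treat the strongly quasi-compact case. First I would write the Postnikov tower of Remark \ref{postnikov} as
$$
\pi^0\fX = \fX_{(0)} \into \fX_{(1)} \into \fX_{(2)} \into \cdots, \qquad \fX \simeq \varinjlim_k \fX_{(k)},
$$
where each $\fX_{(k-1)}\into\fX_{(k)}$ is a $0$-representable square-zero closed immersion defined by a quasi-coherent complex $\sI_k$ weakly equivalent to $\pi_k\O_\fX[-k]$; in particular $\pi_0\sI_k=0$ for $k\ge 1$, so truncation does not alter $\pi_0$ and $\pi^0\fX_{(k)}\simeq\pi^0\fX$ for all $k$.

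Next I would set up the induction. Take $X_{(0)}:=Z$, regarded as a derived Artin $m$-hypergroupoid with trivial (discrete) derived structure; then $\pi^0X_{(0)}=Z$ and $X_{(0)}^\sharp\simeq\pi^0\fX=\fX_{(0)}$. Suppose inductively that $X_{(k-1)}$ is a derived Artin $m$-hypergroupoid with $X_{(k-1)}^\sharp\simeq\fX_{(k-1)}$ and $\pi^0X_{(k-1)}=Z$, and let $a\colon X_{(k-1)}\to\fX_{(k-1)}$ be the canonical map. The square-zero extension $\fX_{(k-1)}\into\fX_{(k)}$ by $\sI_k$ is a deformation of $\fX_{(k-1)}$ in the sense of \S\ref{defsn}. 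The argument in the proof of Theorem \ref{deformstack}, together with the comparison of cotangent complexes in Proposition \ref{cotgood}, shows that sheafification induces an equivalence between the $\infty$-groupoid of square-zero extensions of the simplicial cosimplicial affine scheme $X_{(k-1)}$ by $a^*\sI_k$ and that of square-zero extensions of $\fX_{(k-1)}$ by $\sI_k$, both being controlled by $\Ext^{*}_{\O_{\fX_{(k-1)}}}(\bL^{\fX_{(k-1)}},\sI_k)$. Choosing a preimage of $\fX_{(k)}$ under this equivalence yields a levelwise square-zero extension $X_{(k-1)}\into X_{(k)}$ with kernel $a^*\sI_k$ and $X_{(k)}^\sharp\simeq\fX_{(k)}$; the conditions of Definition \ref{dnpreldef} are inherited because they are invariant under levelwise weak equivalence and are detected on the homotopy sheaves. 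The crucial point is that $a^*\sI_k$ has vanishing $\pi_0$, so the long exact sequence forces $\pi_0\O_{X_{(k)}}\cong\pi_0\O_{X_{(k-1)}}$ levelwise, whence $\pi^0X_{(k)}=\pi^0X_{(k-1)}=Z$.

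Finally I would pass to the limit, setting $X:=\varinjlim_k X_{(k)}$, i.e. the simplicial cosimplicial affine scheme with $\O_X=\Lim_k\O_{X_{(k)}}$. Because the $k$th extension only affects homotopy in degree $k$, the tower stabilises in every fixed homotopy degree ($\pi_j\O_X=\pi_j\O_{X_{(k)}}$ for $k\ge j$), so no convergence difficulty arises: $X$ is again a derived Artin $m$-hypergroupoid, $\pi^0X=\varinjlim_k\pi^0X_{(k)}=Z$, and
$$
X^\sharp\simeq\varinjlim_k X_{(k)}^\sharp\simeq\varinjlim_k\fX_{(k)}\simeq\fX.
$$

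The main obstacle is the inductive step: one must know that the given \emph{intrinsic} Postnikov square-zero extension of the stack $\fX_{(k-1)}$ lies in the essential image of sheafification from hypergroupoid-level square-zero extensions of $X_{(k-1)}$. This essential surjectivity is exactly the equivalence of deformation $\infty$-groupoids established inside the proof of Theorem \ref{deformstack} (via Proposition \ref{deform1}, Lemmas \ref{deform2} and \ref{deform3}, and the vanishing $\bL\simeq0$ for trivial relative hypergroupoids of Corollary \ref{trivcot}). Once this is in hand, the two remaining verifications — preservation of $\pi^0$ and stability of the hypergroupoid axioms under the (degreewise stabilising) limit — are routine.
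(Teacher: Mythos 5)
Your global strategy --- induction along the Postnikov tower of Remark \ref{postnikov}, observing that each extension has square-zero ideal concentrated in positive degrees so that $\pi^0$ is preserved, and passing to the (degreewise stabilising) colimit at the end --- is exactly the paper's. The gap is in the inductive step, which is where all the work lies. You reduce it to the claim that sheafification gives an equivalence between square-zero extensions of the simplicial cosimplicial affine scheme $X_{(k-1)}$ by $a^*\sI_k$ and square-zero extensions of the stack $\fX_{(k-1)}$ by $\sI_k$, citing ``the argument in the proof of Theorem \ref{deformstack}''. But Theorem \ref{deformstack} (and Proposition \ref{deform1} behind it) classifies \emph{flat deformations over a square-zero extension of the base}, not intrinsic square-zero extensions of the total object; the reduction of the latter to the former (via derivations and a pushout $\fX\cup^h_{\fX\oplus\sI[-1]}\fX$) is only sketched in a remark, and realising that pushout strictly at the level of simplicial cosimplicial affine schemes is not automatic, since neither structure map of the strict pullback of rings is a fibration. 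So the equivalence you invoke is not established in the paper in the form you need it.

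More seriously, even granting a levelwise square-zero extension $X_{(k-1)}\into X_{(k)}$ with the right sheafification, your assertion that the conditions of Definition \ref{dnpreldef} are ``inherited because they are invariant under levelwise weak equivalence and detected on the homotopy sheaves'' does not address the actual condition: the homotopy partial matching maps $X_{(k)m}\to M^h_{\L^m_l}(X_{(k)})\by^h\cdots$ must remain smooth surjections, and the matching objects are homotopy limits whose homotopy rings are not computed degreewise from those of the $X_{(k)j}$. This is precisely why the paper runs a \emph{second} induction over the simplicial level $n$: each $X(i)_n$ is produced as the (essentially unique) flat deformation of the smooth covering $X(i-1)_n\to M_nX(i-1)\by^h_{\fX(i-1)^{\pd\Delta^n}}\fX(i-1)$ over its square-zero extension $M_nX(i)\by^h_{\fX(i)^{\pd\Delta^n}}\fX(i)$, using Theorem \ref{deformstack} for the matching map and Theorem \ref{defmorstack} for the latching map. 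Smoothness of the new matching map is then automatic (a flat deformation of a smooth covering is a smooth covering), and the obstruction and ambiguity groups vanish because, by Lemma \ref{h0cot}, $\bL^{Z_n/M_nZ\by^h_{\pi^0\fX^{\pd\Delta^n}}\pi^0\fX}$ is a complex of projectives concentrated in degrees $[-m,0]$ (projectivity using that $Z_n$ is affine) while the coefficients $\pi_i\O_{\fX}[-i]$ sit in degree $i\ge 1$. Your proposal omits both this vanishing argument and the verification of the hypergroupoid axioms, so as written the inductive step is asserted rather than proved.
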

\begin{proof}
It suffices to find a homotopy derived Artin hypergroupoid $X$  with these properties, since we may then take a Reedy fibrant approximation. Writing $\fX$ as the colimit
$\pi^0\fX \to \pi^{\le 1}\fX \to \pi^{\le 2} \fX \to \ldots$ of homotopy square-zero extensions as in Remark \ref{postnikov}, we will inductively construct $a:X(i)\to \fX$ with $X(i)^{\sharp} \simeq \fX(i):=\pi^{\le i}\fX$ and $\pi^0X(i)= Z$.

Assume that we have constructed $X(i-1)$. We will now construct the schemes $X(i)_n$ inductively, so assume that we have done this compatibly for all $j<n$, giving latching and matching objects
$
L_nX(i), M_nX(i) $. We seek a diagram 
$$
L_nX(i)\xra{\alpha_i} X(i)_n \xra{\beta_i} M_nX(i)\by_{\fX(i)^{\pd \Delta^n}}^h\fX(i)
$$ 
lifting 
$$
L_nX(i-1)\xra{\alpha_{i-1}} X(i-1)_n \xra{\beta_{i-1}} M_nX(i-1)\by_{ \fX(i-1)^{\pd \Delta^n}}^h\fX(i-1).
$$ 
Since the embedding $\fX(i-1) \to \fX(i)$ is defined by the square-zero sheaf $\pi_i\O_{\fX}[-i]$, deformations of $\beta_{i-1}$
governed by the complex
\begin{eqnarray*}
C_{\bt}&:=&\HOM_{dg\Mod(X(i-1)_n)}(\bL^{X(i-1)_n / M_nX(i-1)\by^h_{\fX(i-1)^{\pd \Delta^n}}\fX(i-1)}, a_n^*\pi_i\O_{\fX}[-i])\\
&\cong&\HOM_{dg\Mod(Z_n)}(\bL^{Z_n /M_nZ\by^h_{\pi^0\fX^{\pd \Delta^n}}\pi^0\fX}, a^*_n\pi_i\O_{\fX}[-i]). 
\end{eqnarray*}
Since $a:Z \to \pi^0\fX$ is a resolution, the map  $Z_n \to M_nZ\by^h_{\pi^0\fX^{\pd \Delta^n}}\pi^0\fX$ is a smooth $m$-representable morphism, so Lemma \ref{h0cot} implies that the  cotangent complex $\bL^{Z_n /M_nZ\by^h_{\pi^0\fX^{\pd \Delta^n}}\pi^0\fX}_{\bt}$ is equivalent to a complex of locally projective modules concentrated in degrees $[-m,0]$. Since $Z_n$ is affine, locally projective modules on $Z_n$  are projective, so $\H_j(C_{\bt})=0$ for all $j<i$, and in particular for $j=0,-1,-2$. 

Since $i>0$, Theorem \ref{deformstack} then implies that $\beta_i:X(i)_n \to M_nX(i)\by^h_{\fX(i)^{\pd \Delta^n}}\fX(i)$  exists, and is unique up to unique isomorphism in the homotopy category, so we may choose a Reedy fibrant representative. In particular, $\beta_i$  is smooth, and since $L_nX(i-1) \to L_nX(i)$ is a homotopy square-zero extension, Theorem \ref{defmorstack} implies that deformations of $\alpha_{i-1}$ are governed by 
the complex
\begin{eqnarray*}
D_{\bt}&:=&\HOM_{dg\Mod(L_nX(i-1))}(\oL\alpha^*\bL^{X(i-1)_n / M_nX(i-1)\by^h_{\fX(i-1)^{\pd \Delta^n}}\fX(i-1)}, \pi_i\O_{L_nX}[-i])\\
&\cong&\HOM_{dg\Mod(L_nZ)}(\oL\alpha^*\bL^{Z_n /M_nZ\by^h_{\pi^0\fX^{\pd \Delta^n}}\pi^0\fX}, \pi_i\O_{L_nX}[-i]).
\end{eqnarray*}
As for $C_{\bt}$, we deduce that $\H_j(D_{\bt})=0$ for all $j<i$. In particular, this holds for $j=0,-1$, so $\alpha_i$ exists, and is unique up to weak equivalence.

This completes the inductive step for constructing $X(i)$, and we set $X= \varinjlim X(i)$.
\end{proof}

\bibliographystyle{alphanum}
\bibliography{references}
\end{document}